\def\?[#1]{\textbf{[#1]}\marginpar{\Large{\textbf{??}}}}
\numberwithin{equation}{section}
\newtheorem{theorem}{Theorem}[section]
\newtheorem{lemma}[theorem]{Lemma}
\newtheorem{proposition}[theorem]{Proposition}
\newtheorem{corollary}[theorem]{Corollary}
\newtheorem{remark}[theorem]{Remark}
\newtheorem{definition}[theorem]{Definition}
\newcommand{\mc}{\mathcal}
\newcommand{\nn}{\mathbb{N}}
\newcommand{\la}{\lambda}
\newcommand{\eps}{\epsilon}
\newcommand{\pl}{\partial}
\newcommand{\bbar}{\overline}
\newcommand{\cjd}{\rangle}
\newcommand{\cjg}{\langle}
\newcommand{\caO}{{\mathcal O}}
\newcommand{\caT}{{\mathcal T}}
\def\caC{\mathcal{C}}
\def\caH{\mathcal{H}}
\let\Im=\Imag
\let\Re=\Real
\def\indic{\operatorname{1\hskip-2.75pt\relax l}}
\renewcommand{\tilde}{\widetilde}          
\DeclareMathSymbol{\leqslant}{\mathalpha}{AMSa}{"36} 
\DeclareMathSymbol{\geqslant}{\mathalpha}{AMSa}{"3E} 
\DeclareMathSymbol{\eset}{\mathalpha}{AMSb}{"3F}     
\renewcommand{\leq}{\;\leqslant\;}                   
\renewcommand{\geq}{\;\geqslant\;}                   
\newcommand{\dd}{\text{\rm d}}             
\newcommand{\A}{\mathbb{A}}
\newcommand{\C}{\mathbb{C}}
\newcommand{\D}{\mathbb{D}}
\newcommand{\R}{\mathbb{R}}
\newcommand{\Z}{\mathbb{Z}}
\renewcommand{\H}{\mathbb{H}}
\newcommand{\N}{\mathbb{N}}
\newcommand{\Q}{\mathbb{Q}}
\newcommand{\E}{\mathbb{E}}
\renewcommand{\P}{\mathbb{P}}
\newcommand{\caA}{\mathcal{A}}
\newcommand{\caD}{\mathcal{D}}
\newcommand{\caB}{\mathcal{B}}
\def\eps{\varepsilon}
\def\T{\mathbb{T}}
\def\bi{\begin{itemize}}
\def\ei{\end{itemize}}
\def\bnum{\begin{enumerate}} 
\def\enum{\end{enumerate}}
\def\<#1{\langle #1 \rangle}
\begin{document}
\title{Conformal Bootstrap on the Annulus in Liouville CFT}
\author{Baojun Wu}

\maketitle
\begin{abstract}
This paper is the first of a series of works on the conformal bootstrap in Liouville conformal field theory (CFT) with boundaries. We focus here on the case of the annulus with two boundary insertions, each of which lies on the different connected components of the boundary. In the course of proving the bootstrap formula, we established several properties on the corresponding annulus conformal blocks: 

1) we show that they converge everywhere on the spectral line and they are continuous with respect to the spectrum and the primary weights.

2) we relate them to their torus counterparts by rigorously implementing Cardy's doubling trick for boundary CFT, 

3) we solve a conjecture of Martinec on the annulus partition function,

4) we also extend the bootstrap formula to the one-point case. 

As an application of our bootstrap result, we give an exact formula for the bosonic LQG partition function of the annulus when $\gamma\in (0,2)$. Our paper serves as a key ingredient in the recent derivation of the random moduli for the Brownian annulus by Ang, Remy, and Sun (2022). We also solve several other conjectures related to torus conformal blocks which arise from physics literature.
\end{abstract}

\footnotesize
\tableofcontents


\normalsize




\section{Introduction}
The  Liouville conformal field theory (LCFT) was introduced by A. Polyakov in his path integral formulation of String Theory \cite{Pol} and it served as a motivation for Belavin, Polyakov, and Zamolodchickov in their work on conformal field theory \cite{BPZ}. It also plays a fundamental role in the study of random surfaces, statistical physics, $4\dd$ supersymmetric Yang-Mills theory, and many other fields of physics and mathematics. It corresponds to taking the particular action functional, called Liouville action, defined for $\phi:\Sigma\to\R$  on closed Riemann surface $(\Sigma,g)$ by
\begin{equation}\label{introaction}
S_{\Sigma}(g,\phi):= \frac{1}{4\pi}\int_{\Sigma}\big(|d\phi|_g^2+QK_g \phi  + 4\pi \mu e^{\gamma \phi  }\big)\,{\rm dv}_g.
\end{equation}
where $K_g$ is the scalar curvature and ${\rm v}_g$ the volume form on $\Sigma$ determined by the metric $g$. The parameters of LCFT are cosmology constant $\mu>0$,  $\gamma\in(0,2)$ and background charge $Q=\frac{\gamma}{2}+ \frac{2}{\gamma}$. The LCFT is described in terms of positive measure on a set  $\mathcal{D}(\Sigma)$  of (generalized) real-valued functions $\phi$ on $\Sigma$.   Expectation (denoted by $\langle \cdot\rangle^{\Sigma}_g$ in what follows) under this measure is formally given as a {\it path integral}  
\begin{align}\label{pathi}
\langle F\rangle^{\Sigma}_g=\int_{\mathcal{D}(\Sigma)} F(\phi)e^{-S_{\Sigma}(g,\phi)}D\phi
\end{align}
for suitable observables $F:\mathcal{D}(\Sigma)\to \C$ and $D\phi$ a formal Lebesgue measure on $\mathcal{D}(\Sigma)$. The basic observables in LCFT are the vertex operators, which are formally defined by $e^{\alpha\phi(z)}$, $\alpha\in\C$ and $z\in\Sigma$. The fundamental objects in the LCFT are correlation functions, which can be described by path integral as follows. For $z_1, z_2,...,z_n\in\Sigma$ and $\alpha_1, \alpha_2,..,\alpha_n\in \C$
\begin{align}\label{definitionofcorrelation}
    \langle \prod_{i=1}^n e^{\alpha_i\phi(z_i)}\rangle^{\Sigma}_g
\end{align}
This theory, with central charge $c_{{\rm L}}:=1+6Q^2$, has been extensively studied in theoretical physics. 
Its resolution, i.e. finding explicit formulae for the correlation functions, can be summarized into $4$ fundamental steps when $\Sigma$ is a closed Riemann surface: 
\begin{itemize}
\item Step 1: Giving a probabilistic construction of the path integral \eqref{pathi} for the correlation function, and show it is a conformal field theory (means the correlation function behaves well under diffeomorphism coordinates change and conformal metrics change), 
\item Step 2: Finding a formula for the $3$-point functions $ \langle  e^{\alpha_1\phi(0)}e^{\alpha_2\phi(1)}e^{\alpha_3\phi(\infty)}\rangle^{\hat\C}_g$, called \emph{structure constant}, on the Riemann sphere,
\item Step 3:  A spectral resolution of the Hamiltonian of the theory in terms of primary fields and other fields called descendant fields, which is the fundamental tool for the so-called \emph{conformal bootstrap} method,
\item Step 4: Expression of the correlations functions in terms of amplitudes of pairs of pants, annuli, or discs. Roughly speaking, the {\bf amplitude} is described as an integral kernel (possibly with some vertex operator insertions)
\begin{align}\label{ampli}
\caA_{\Sigma,g}(\tilde{\bm{\varphi}})=\int_{\phi|_{\boldsymbol{\mc{C}}}=\tilde{\bm{\varphi}}}e^{-S_\Sigma (g,\phi)}D\phi
\end{align}
where $\boldsymbol{\varphi}:=(\tilde{\varphi}_1,\dots,\tilde{\varphi}_n)$ and $\tilde{\varphi}_i$ are  (generalized) functions defined on the boundary circles $\boldsymbol{\mc{C}}:=(\mc{C}_1,\dots,\mc{C}_n)$ of $\partial\Sigma$. We can associate to each component $\mc{C}_i$ a Hilbert space $\mathcal{H}$. 

Let us briefly introduce what is {\bf Segal's axiom}. In 1987 Graeme Segal \cite{segal} gave a functorial definition of Conformal Field Theory (CFT) that was
designed to capture the mathematical essence of the Conformal Bootstrap formalism pioneered in physics
by A.A. Belavin, A.M Polyakov and A.B. Zamolodchikov. For Riemann surface $\Sigma$, we separate the boundary components into $\partial^{in}\Sigma$ and $\partial^{out}\Sigma$ due to the orientation. The above integral kernel produces an operator $\mathcal{A}_{\Sigma,g}$ (we use the same notation for an operator and its integral kernel) from the Hilbert space of function on $\partial^{in}\Sigma$ to the Hilbert space of function on $\partial^{out}\Sigma$.
When we glue two surfaces $\Sigma_1$ and $\Sigma_2$ by identifying $\partial^{out}\Sigma_1=\partial^{in}\Sigma_2$, we have $\caA_{\Sigma_1\#\Sigma_2,g}=\caA_{\Sigma_2,g}\circ \caA_{\Sigma_1,g}$. By using this geometric picture and Steps 2 and 3, we can get arbitrary correlation functions for Riemann surfaces. 
\end{itemize}

It has been a major challenge to make a mathematical sense of these four steps. First, a rigorous probabilistic construction of the path integral was given in \cite{DKRV}, opening the 
possibilities for a mathematical resolution of the LCFT. The correlation function is defined non trivially in the Seiberg bound i.e. $\alpha_i<Q$, $\sum_{i=1}^n\alpha_i>\chi(\Sigma)Q$ and $\gamma\in (0,2)$, here $\chi(\Sigma)$ is the Euler characteristic of $\Sigma$. For genus $0$ case, the minimal number of insertions is $n=3$. This approach was then extended to the genus $1$ case if $n\geq 1$ in \cite{DRV} and in genus ${\bf g}\geq 2$ for all $n\geq 0$ in \cite{GRV19}, these works solve step 1.

Step 2 has then been solved in \cite{KRV}, where they proved the so-called DOZZ formula \eqref{theDOZZformula} for the structure constant; this explicit formula was proposed in physics by Dorn-Otto and Zamolodchikov-Zamolodchikov in the nineties.

Step 3 was recently mathematically settled in \cite{GKRV} using tools from scattering theory, semigroups, and probability, allowing them to prove the conformal bootstrap for the 4-point sphere. The Hamiltonian $\mathbf{H}$ for the Liouville theory is determined by the annulus amplitude $\sqrt{2}e^{c_Lt/12}e^{-t\mathbf{H}}=\mathcal{A}_{\A_{e^{-t}},g_\A}$, where $\A^1_{e^{-t}}=\{e^{-t}\leq |z|\leq 1\}$ and $g_\A=\frac{|dz|^2}{|z|^2}$. The associated Hilbert space is
 \begin{align*}
 \caH=L^2(H^{s}(\T),\dd\mu_0)
\end{align*}
and the Sobolev space $H^{s}(\T)$ with $s<0$ is equipped with a cylinder sigma-algebra and a Gaussian cylinder measure $\mu_0$ coming from the restriction of the Gaussian Free Field to $\T$ (see the exact definition in Subsection \ref{sub:hilbert}). This Hilbert space $\mathcal{H}$ can be diagonalized by the generalized eigenstates of $\mathbf{H}$.

Step 4 was finally solved in  \cite{GKRV21}. They defined amplitudes by rigorously implementing the path integral method, and showed amplitudes are some nice operators which are called Hilbert-Schmidt operators. They proved Segal's axioms for LCFT on closed Riemann surfaces, which allows to represent $n$-point correlation functions on Riemann surface with genus $\mathrm{g}$ by the structure constants and conformal blocks, i.e. the conformal bootstrap equation
   \begin{align*}
\langle \prod_{j=1}^m V_{\alpha_j}(x_{j})\rangle_{\Sigma_{\bf q},g_{\bf q}}=\int_{\R_+^{3{\bf g}-3+m}}|\mathcal{F}_{\bf P}(\boldsymbol{\alpha},{\bf q})|^2\rho({\bf P},\boldsymbol{\alpha})\dd{\bf P},
\end{align*}
Here $\rho({\bf P},\boldsymbol{\alpha})$ is the product of DOZZ formula, conformal blocks $\mathcal{F}_{\bf P}(\boldsymbol{\alpha},{\bf q})$ are power series of moduli parameter $\bf q$ whose coefficients rely on the weight of primary fields $\Delta_{\boldsymbol{\alpha}}$, the weight of intermediate spectrum $\Delta_{Q+i\bf P}$ $(\bf P\in\R^{3g-3+m}_+)$, Young diagrams. The surface $(\Sigma_{\bf q},g_{\bf q})$ corresponds a point in moduli space parameterized by certain coordinate named plumbing coordinates. The reader may temporarily think of conformal blocks as some nice function satisfying the conformal bootstrap equation, their explicit definition given in \cite{GKRV21} is much more involved. 

Now we come to the story of the {\bf boundary Liouville field theory} (BLCFT). The action functional of boundary LCFT is given by 
\begin{equation}\label{boundaryaction}
S_{\Sigma}(g,\phi):= \frac{1}{4\pi}\int_{\Sigma}\big(|d\phi|_g^2+QK_g \phi  + 4\pi \mu e^{\gamma \phi  }\big)\,{\rm dv}_g+\frac{1}{2\pi}\int_{\partial\Sigma}\big(Qk_g \phi  + 2\pi \mu_{\partial} e^{\frac{\gamma}{2} \phi  }\big)\,{\rm d\lambda}_g.
\end{equation}
Here $k_g$ is the geodesic curvature, $\lambda_g$ is the boundary measure with respect to the metric $g$, and $\mu_\partial$ is called boundary cosmology constant (we will sometimes let $\mu_\partial$ be piecewise-constant in this paper). In the boundary case, the vertex operators can be located both in the interior of $\Sigma$ by $e^{\alpha\phi(z)}$ and on the boundary of $\Sigma$ by $e^{\frac{\beta}{2}\phi(s)}$, which are called bulk insertions and boundary insertions respectively. The correlation function for bulk insertions $(z_i,\alpha_i)_{1\leq i\leq n}$ and boundary insertions $(s_j,\beta_j)_{1\leq j\leq m}$ is formally given by
\begin{align}\label{boundarycorrelation function}
  \langle \prod_{i=1}^n e^{\alpha_i\phi(z_i)}\prod_{j=1}^m e^{\frac{\beta_j}{2}\phi(s_j)}\rangle^{\Sigma}_g   
\end{align}
The mathematical definition of path integral with action functional \eqref{boundaryaction} was first treated in \cite{HRV} for the disk, and then \cite{Remy} for the annulus. Recently, the author has provided a unified approach to define the LCFT on the Riemann surface with boundaries \cite{Wu1}. We can naturally ask whether the last three steps above can be implemented for boundary LCFT.

For step 2, the basic topology in the boundary LCFT is the half upper-plane $\H$. There are four basic structure constants in the boundary LCFT (for Segal's proposal): bulk 1-point correlator $U_{\rm FZZ}(\alpha)$, bulk-boundary correlator $G(\alpha,\beta)$, boundary 2-point $R(\beta, \mu_1, \mu_2)$, and boundary 3-point  $H^{(\beta_1, \beta_2, \beta_3)}_{(\mu_1, \mu_2, \mu_3)}$ on the upper half-plane. Here we only give the explicit definition of the first two kinds (see section \eqref{definition of disk}), these are the only cases we need to deal with in this paper \footnote{Here we use a renormalization strategy as in \cite{RZ20} to coincide with the physicists' notation and introduce these to readers neatly. Their explicit definition will be given in \eqref{DOZZ2point} and \eqref{onepointfunction}.}
\begin{itemize}
\item \textbf{Bulk one-point function.} For $z \in \mathbb{H}$:
\begin{equation}\label{c1}
\left \langle e^{\alpha \phi(z)} \right \rangle^{\H} = \frac{U_{\rm FZZ}(\alpha)}{\vert z - \overline{z} \vert^{2 \Delta_{\alpha}}}.
\end{equation}
\item \textbf{Bulk-boundary correlator.} For $ z \in \mathbb{H}$, $s \in \mathbb{R}$:
\begin{equation}\label{c2}
\left \langle e^{\alpha \phi(z)} e^{\frac{\beta}{2} \phi(s)} \right \rangle^{\H} = \frac{G(\alpha, \beta)}{\vert z - \overline{z} \vert^{2 \Delta_{\alpha} - \Delta_{\beta}}\vert z -s \vert^{2 \Delta_{\beta}}}.
\end{equation}
\end{itemize}
Here $\Delta_{\alpha} = \frac{\alpha}{2}(Q - \frac{\alpha}{2})$, $\Delta_{\beta} = \frac{\beta}{2}(Q - \frac{\beta}{2})$ are called conformal weights of bulk insertion and boundary insertion respectively.
For the bulk cosmology constant $\mu=0$ case, these four constants are characterized in \cite{RZ20} by using the BPZ equation. For the $\mu>0$ case, Ang, Remy, and Sun solved the bulk 1-point constant, so-called the FZZ formula \eqref{fzz} by combining LCFT techniques and mating of trees strategies \cite{ARS21}. In this paper, we show how to represent the Liouville correlation function on annulus by the above structure constants. We need the explicit FZZ formula as an input.

For step 3, when considering Riemann surfaces with boundaries, we can choose either to cut a full circle within the interior of the surface as in \cite{GKRV21} or a half-circle with its two ends on the boundary of the surface to determine the spectrum resolution of different types of Hamiltonian. In the latter case, the Hamiltonian of boundary LCFT $\mathbf{H}^B$ describes the dynamic evolution in the upper half-plane and should correspond to the semi-group generated by half-annulus amplitude. We will treat the half-cutting in another paper. In this paper, we mainly focus on separating an annulus into two annuli by a concentric circle inside. In this case, the Hamiltonian is still $\mathbf{H}$.

For the forthcoming work, we will solve the spectrum resolution corresponding to boundary Liouville Hamiltonian $\mathbf{H}^B$ and give its geometric interpretation. This can, for example, represent the correlation of $n$-point on disk as products of structure constants in boundary LCFT and conformal blocks. We will give a full answer for step 4, i.e. Segal's axiom for Riemann surfaces with boundaries.

To state our main theorems, we need the notion of bulk one-point correlator \eqref{fzz} and bulk-boundary correlator.
At this moment, the bulk-boundary correlator $G(\alpha,\beta)$ hasn't been solved for the $\mu>0$ case. The full formula was conjectured in physics literature by using the shift equation, see \cite{Hos}. We first define the bulk-boundary correlator by Liouville correlation function within the Seiberg bound: $\alpha,\beta<Q$, $\alpha+\frac{\beta}{2}>Q$. Then we show $G(\alpha,\beta)$ can be analytic continued to a large region which contains the spectrum line $Q+i\R_+$ in Section \eqref{gluingdefinition of G}, we still denote it as $G(Q+iP,\beta)$.

Another ingredient for stating our main theorem is the conformal block. As mentioned above, conformal blocks are formal power series that appear when we represent the $n$-point correlation function by the structure constants. The mathematical definition of conformal blocks \cite{GKRV21} is given by the Fourier-type transform of amplitudes, we won't explain the precise definition at this moment to avoid too much-undeveloped terminology. For an explicit definition, we refer the reader to Section \ref{two point proof}.
For moduli parameter $q\in \D$, We define the torus $\T_q$ by gluing two boundaries of flat annulus $\A_q^1:=\{z\mid|q|\leq|z|\leq 1\}$ which inherits the flat metric $g_\A=\frac{|dz|^2}{|z|^2}$ from $\A_q^1$. The torus $n$-point conformal block is a power series in $n$ plumbing  coordinate $q_i\in \D$, whose coefficients depend on $n$ primary weights $\Delta_{\alpha_i}$, $n$ intermediate weights (spectrum weight) $\Delta_{Q+iP_i}$ and $n$ pairs of Young diagrams.
For later convenience, here we recall some concrete expressions for 1 and 2-point cases.

\noindent The {\bf torus $1$-point block} \footnote{Here $\mathcal{F}$ is slightly different from the conformal block in \cite{GKRV21}. In our paper, it's more convenient to define it as a power series in $q$ with first term (constant term) $1$.} with $\alpha$ weight insertion and moduli $q$ is given by 
\begin{align}
    \mathcal{F}^{\T}(\Delta_{\alpha},\Delta_{Q+iP},q):=\sum^{\infty}_{n=0}\Big(\sum_{|\nu|=n}\mathcal{W}_{\mathbb{A}}(\Delta_{\alpha},\Delta_{Q+iP},\Delta_{Q+iP},
 \nu,\nu )\Big)q^n.
\end{align}
And the {\bf torus $2$-point block} with weights $\alpha_1$, $\alpha_2$ and moduli $q_1$ and $q_2$ is given by
\begin{align}\label{definition of torus 2-point block}
   &\mathcal{F}^{\T}(\Delta_{\alpha_1},\Delta_{\alpha_2},\Delta_{Q+iP_1},\Delta_{Q+iP_2},q_1,q_2)\nonumber\\:=&\sum^{\infty}_{n,m=0}\Big(\sum_{|\nu|=n,|\tilde{\nu}|=m}\mathcal{W}_{\mathbb{A}}(\Delta_{\alpha_1},\Delta_{Q+iP_1},\Delta_{Q+iP_2},
 \nu,\tilde{\nu} )\mathcal{W}_{\mathbb{A}}(\Delta_{\alpha_2},\Delta_{Q+iP_2},\Delta_{Q+iP_1},
 \tilde{\nu}, \nu)\Big)q_1^n q_2^m
\end{align}
where $\mathcal{W}_{\mathbb{A}}$ are some coefficients determined by Virasoro algebra and central charge $c_L$. They are polynomials in primary weight $\Delta_\alpha$ and rational functions in spectrum weight $\Delta_{Q+iP}$ whose degree depend on the size of Young diagrams $\nu$, $\tilde{\nu}$. We refer readers to \cite[Corollary 11.10]{GKRV} for their explicit definition. 

In \cite{GKRV21}, they prove for almost all $(P_1,..,P_n)\in \R^n_+$, the torus $n$-point conformal block is convergent in $n$ moduli parameters $q_i\in \D$. In our paper, the construction of annulus conformal blocks is parallel to theirs. We cut a round concentric circle inside the annulus (which surrounds the inner circle) to derive the conformal block. We will prove the annulus $2$-point conformal block is actually convergent for all $P\in \R_+$ \eqref{continuous2point} and can be matched with torus 2-point conformal block in some sense. 

There is another construction of torus $1$-point conformal block in \cite{GRSS} by Gaussian multiplicative chaos (GMC). They show that for $q\in (\{\text{a small complex neighbourhood of }(0,1)\}\cup\{|z|\leq \frac{1}{2}\})$, the conformal block $(\alpha,P)\mapsto\mathcal{F}^{\T}(\Delta_{\alpha},\Delta_{Q+iP},q)$ has nice analyticity in $\alpha$ and $P$. We need this analyticity result as an input to our paper. Two construction of the torus 1-point block coincides since their coefficients are the same, we will explain this in the appendix \eqref{Identification}.

In this paper, we always suppose the boundary cosmology constant $\mu_\partial>0$ and the bulk cosmology constant $\mu\geq 0$. We define $q\in (0,1)$ as the moduli parameter of the annulus. For the annulus $\mathbb{A}^1_{q}=\{z\mid q\leq|z|\leq 1\}$, the inner boundary $\{|z|=q\}$ is denoted by $\partial^{in}\mathbb{A}^1_{q}$ and the outer boundary $\{|z|=1\}$ is denoted by $\partial^{out}\mathbb{A}^1_{q}$. 
We note boundary cosmology constants $\mu_\partial=\mu_{B_1}>0$ on $\partial^{out}\mathbb{A}^1_{q}$ and $\mu_\partial=\mu_{B_2}>0$ on $\partial^{in}\mathbb{A}^1_{q}$. Our main results are the following. First, we define a universal constant  $C_\D$ by equation \eqref{definition CD}
\begin{theorem}\label{two point bootstrap}
For $\mu\geq 0$, $\mu_\partial>0$, $b_1, b_2\in\partial^{out}\mathbb{A}^1_{q}$ and $\beta_1, \beta_2\in (0,Q)$, we have \footnote{All integral contours appear in following theorems can be modified from $\R_+$ to $\R$, this nontrivial consequence follows reflection property of the Liouville eigenstates, see proposition \eqref{realline}.}
$$\left\langle V_{\frac{\beta_1}{2}}(b_1)V_{\frac{\beta_2}{2}}(\frac{q}{b_2})\right\rangle^{\mathbb{A}^1_{q}}_{g_{\mathbb{A}},\mu, \mu_\partial}=  \frac{C_{\D}^2}{2\sqrt{\pi}}q^{-\frac{1}{12}}\int_{\R^{+}}G^{\mu_{B_1}}(Q+iP,\beta_1)G^{\mu_{B_2}}(Q-iP,\beta_2)q^{\frac{P^2}{2}}\mathcal{F}^{\A}(\Delta_{\beta_1},\Delta_{\beta_2},\Delta_{Q+iP},q,b_1,b_2)dP$$
where the annulus 2-point conformal block $\mathcal{F}^{\A}(\Delta_{\beta_1},\Delta_{\beta_2},\Delta_{Q+iP},q,b_1,b_2)$ is given in \eqref{2pointblock} and 
\begin{align}\label{annulus=torus}
    \mathcal{F}^{\A}(\Delta_{\beta_1},\Delta_{\beta_2},\Delta_{Q+iP},q,b_1,b_2)= \mathcal{F}^{\T}(\Delta_{\beta_1},\Delta_{\beta_2},\Delta_{Q+iP},\Delta_{Q+iP},\frac{q}{b_1b_2},qb_1b_2)
\end{align}
It's convergent for all $P\in\R^+$ in $q\in (0,1)$. \end{theorem}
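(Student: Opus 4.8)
The plan is to derive the bootstrap formula by a Segal-type gluing along a concentric circle in the interior of $\A^1_q$, and then to identify the resulting structure constants and holomorphic coefficients with $G$ and $\cF^\T$ respectively. Concretely, I would start from the probabilistic (Gaussian multiplicative chaos) definition of the left-hand side, write the underlying Gaussian free field on $\A^1_q$ through its radial/harmonic decomposition, and use the Markov property to split the field across the circle $\{|z|=r\}$ with $q<r<1$. After a Girsanov (Cameron--Martin) shift absorbing the two boundary vertex operators $V_{\frac{\beta_1}{2}}(b_1)$ and $V_{\frac{\beta_2}{2}}(q/b_2)$, the correlator becomes an inner product in the Hilbert space $\caH$ attached to the cut circle, into which I insert the spectral resolution of the Liouville Hamiltonian established in Step~3 (following \cite{GKRV}). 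Since that spectrum is $\{Q+iP:P\geq 0\}$ together with its Virasoro descendants, the inner product collapses to a single integral $\int_{\R^+}dP$, producing the overall structure of the claimed formula together with the explicit numerical and $\zeta$-regularised prefactors coming from the free-field partition function on the annulus.

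\textbf{Identifying the building blocks.} Each of the two pieces of $\A^1_q$ cut at $\{|z|=r\}$ is an annulus carrying exactly one boundary insertion and one gluing circle. In the collar limit the gluing circle is represented by a bulk primary $V_{Q\pm iP}$ inserted at the core, so the leading component of each half-amplitude along the primary state $|Q\pm iP\rangle$ is precisely the disc bulk--boundary correlator: $G(Q+iP,\beta_1)$ for the outer piece and $G(Q-iP,\beta_2)$ for the inner one (using the analytic continuation of $G$ to the spectral line from Section~\ref{gluingdefinition of G}). The components along the descendants $|Q\pm iP,\nu\rangle$ then assemble, by definition \eqref{2pointblock}, into the annulus block $\cF^\A$, whose coefficients are the Virasoro matrix elements $\mathcal{W}_\mathbb{T}$. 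This yields the integrand $G(Q+iP,\beta_1)G(Q-iP,\beta_2)\,q^{\frac{P^2}{2}}\cF^\A$.

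\textbf{The doubling identity.} To prove \eqref{annulus=torus} I would implement Cardy's doubling rigorously: the Schottky double of $\A^1_q$ across both boundaries is the torus $\T_q$, a boundary insertion $V_{\frac{\beta_i}{2}}$ sitting on the fixed locus of the reflection becomes a bulk insertion $V_{\beta_i}$ of weight $\Delta_{\beta_i}$, and the reflection identifies the inner and outer cut data, forcing the two intermediate momenta of the torus $2$-point block to be the complex-conjugate pair $Q\pm iP$ (hence the single $P$) and the two plumbing parameters to be $q/(b_1b_2)$ and $qb_1b_2$, which indeed satisfy $q_1q_2=q^2$, $|q_1|=|q_2|=q$, $q_2=\overline{q_1}$. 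Rather than argue geometrically at the level of correlators, I would prove the identity at the level of the defining power series: under doubling the boundary Virasoro module with its boundary three-point vertices maps to the chiral module on $\T_q$ with bulk three-point vertices, so the coefficient of each monomial in $(q,b_1,b_2)$ of $\cF^\A$ matches the corresponding coefficient of $\cF^\T(\Delta_{\beta_1},\Delta_{\beta_2},\Delta_{Q+iP},\Delta_{Q+iP},\tfrac{q}{b_1b_2},qb_1b_2)$. This is a representation-theoretic bookkeeping matching, which I expect to verify order by order against \cite[Corollary 11.10]{GKRV}.

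\textbf{Convergence and continuity.} Finally, for convergence for all $P\in\R^+$ and $q\in(0,1)$, I would combine two inputs. On the spectral line $\Delta_{Q+iP}=\tfrac{Q^2+P^2}{4}$, so that $|q|^{-c_L/12+2\Delta_{Q+iP}}=q^{-\frac{1}{12}}q^{\frac{P^2}{2}}$; this both explains the prefactor $q^{-\frac{1}{12}}$ and supplies the decisive Gaussian factor $q^{\frac{P^2}{2}}$. First, the robust estimates on the bulk--boundary correlator near the spectral line (the intermediate result advertised in the abstract) give continuity of $P\mapsto G(Q\pm iP,\beta_i)$ together with growth no faster than exponential in $P$. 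Second, since $c_L>1$ and $\Delta_{Q+iP}>0$, the Shapovalov/Gram form of the Virasoro module is positive-definite and invertible, so the coefficients $\mathcal{W}_\mathbb{T}$ have no poles and $\cF^\A$ is finite and continuous for every $P\in\R^+$, with growth in $P$ controlled polynomially for fixed $q<1$. The factor $q^{\frac{P^2}{2}}$ then dominates the integrand and gives absolute convergence and continuity for all $P$. I expect this last step---upgrading the almost-everywhere convergence of \cite{GKRV21} to everywhere convergence and continuity on the full spectral line---to be the main obstacle, since it hinges on the uniform control of $G$ near the spectrum and of the block's $P$-dependence rather than on the softer generic bound.
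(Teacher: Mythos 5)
Your opening moves---cutting $\mathbb{A}^1_q$ along an interior concentric circle, invoking the Markov property of the GFF to glue two type~A amplitudes, inserting the spectral resolution of $\mathbf{H}$, and reading off $q^{-\frac{1}{12}}q^{\frac{P^2}{2}}$ from $|q|^{-c_L/12+2\Delta_{Q+iP}}$---match the paper's Steps 1--2. The genuine gap is in the evaluation of the half-amplitudes against the descendant states. You assert that the components along the descendants ``assemble, by definition \eqref{2pointblock}, into $\mathcal{F}^{\A}$ with coefficients $\mathcal{W}_{\mathbb{T}}$,'' but that factorization is the theorem's hard content, not a definition: one must prove the disk descendants formula \eqref{diskdescendant}, i.e.\ that $\mathcal{A}^{m}_{g_{\mathbb{A}},\mathbb{A}^1_{\sqrt{q}},b,\beta}(\Psi_{Q+iP,\nu,\tilde{\nu}})$ equals $G(Q+iP,\beta)$ times an explicit coefficient times powers of $q$ and $b$, and separately that $\mathcal{W}_{\mathbb{A}}=\mathcal{W}_{\mathbb{T}}$ (Corollary \eqref{w coefficient}). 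A ``collar limit'' does not produce this: the states $\Psi_{Q+iP,\nu,\tilde{\nu}}$ come from scattering theory and cannot be paired directly with the amplitude at $\alpha=Q+iP$ with a single boundary weight $\beta<Q$ (the Seiberg bound fails), so the paper must represent them at small real $\alpha$ as contour integrals of SET insertions on a shrinking disk, glue that disk into $\H\setminus\Omega$, add artificial boundary insertions $\beta^{\rm ar}_i$ to restore the Seiberg bound, evaluate via Ward identities on $\H$, and analytically continue in $(\alpha,\beta,\boldsymbol{\beta}^{\rm ar})$ back to the spectral line (Proposition \eqref{abr}). Your ``order by order representation-theoretic bookkeeping'' for \eqref{annulus=torus} likewise skips the content of Section \eqref{Cardytrick}: on $\H$ the fields $T$ and $\bar T$ are correlated, and the statement that the half-plane Ward recursion coincides with the full-plane one (Proposition \eqref{cardydoublingproperty}) is the outcome of a long Gaussian integration-by-parts computation, not an a priori module isomorphism. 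Note also that \eqref{annulus=torus} cannot simply be ``verified against'' the torus 2-point block of \cite{GKRV21}, since that object is only defined for a.e.\ $(P_1,P_2)$ and the diagonal $P_1=P_2$ has measure zero; the paper instead proves the coefficient identity through the squared relation between the mixed-boundary amplitude and the Dirichlet--Dirichlet annulus amplitude.

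On convergence, your argument that $\mathcal{F}^{\A}$ is finite for every $P$ with polynomial growth ``because the Shapovalov form is positive-definite and invertible'' does not work: invertibility controls each coefficient of $q^n$ individually but says nothing about the convergence of the infinite series in $q$ at a fixed $P$, which is precisely why \cite{GKRV21} only obtains almost-everywhere convergence. The paper's route is different: a.e.\ convergence follows from the $L^2(\R^+,dP;{\rm HS})$ bound on the block amplitudes plus Cauchy--Schwarz, and the upgrade to all $P$ goes through continuity in $P$ (Proposition \eqref{continuous2point}), obtained by combining the continuity of the torus 1-point block from the GMC representation of \cite{GRSS} (which gives convergence of the trace norms) with weak convergence of the block operators (each matrix entry is a rational function of $\Delta_{Q+iP}$ with no poles on the real spectral line) and an operator-theoretic lemma upgrading weak plus trace-norm-value convergence to trace-norm convergence. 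Without an external input of this kind, your dominated-convergence step has nothing to dominate, so the ``main obstacle'' you correctly identify remains open in your proposal.
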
 
There is no priori reason that the RHS of \eqref{annulus=torus} is well defined. In \cite{GKRV21}, they prove torus 2-point block $\mathcal{F}^{\T}(\Delta_{\alpha_1},\Delta_{\alpha_2},\Delta_{Q+iP_1},\Delta_{Q+iP_2},q_1,q_2)$ converges only for almost $(P_1,P_2)\in\R^2_+$. But the diagonal $\{(P_1,P_2)\in\R^2_+\mid P_1=P_2\}$ has measure $0$. The proof will be given in Section \ref{two point proof} and proposition \ref{continuous2point}.

The following theorem answers a conjecture in the physics literature \cite{AB}, which numerically checked that torus 1-point block is a special case of annulus 2-point block. This theorem is independent of other parts of this paper
\begin{theorem}\label{relation of one two}
For $q\in (0,1)$, the annulus 2-point block $ \mathcal{F}^{\A}(\Delta_{\beta_1},\Delta_{\beta_2},\Delta_{Q+iP},q,b_1,b_2)$ is separately continuous in $\beta_1\in[0,Q)$, $\beta_2\in[0,Q)$ and $P\in[0,\infty)$ and $$ \lim_{\beta_2\to 0}\mathcal{F}^{\A}(\Delta_{\beta_1},\Delta_{\beta_2},\Delta_{Q+iP},q,b_1,b_2)=\mathcal{F}^{\T}(\Delta_{\beta_1},\Delta_{Q+iP},q^2).$$
\end{theorem}
The proof will be given in proposition \ref{continuous2point}.
\begin{theorem}\label{gammainsertion}
For $\mu\geq 0$, $\mu_\partial>0$, we have $$\partial_{\mu_{B_1}} \partial_{\mu_{B_2}} \langle 1\rangle^{\mathbb{A}^1_{q}}_{g_{\mathbb{A}},\mu, \mu_\partial}=\frac{C_{\D}^2}{2\sqrt{\pi}}\int_{\R^{+}}\partial_{\mu_{B_1}}U^{\mu_{B_1}}_{\rm FZZ}(Q+iP)\partial_{\mu_{B_2}}U^{\mu_{B_2}}_{\rm FZZ}(Q-iP)\frac{q^{\frac{P^2}{2}}}{\eta(q^2)} dP $$
where $U^{\mu_{B_i}}_{\rm FZZ}$ is the FZZ formula with boundary cosmology constant $\mu_{B_i}$. The LHS is defined by first formally taking the derivatives of boundary cosmology constants, the result correlation function is well defined.
\end{theorem}
The proof will be given in Proposition \ref{proof of gamma insertion}.
\begin{remark}
In the Theorem \ref{two point bootstrap} and Theorem \ref{gammainsertion}, we only give the proof of $\mu>0$ case in this paper. For $\mu=0$ case, one can use the free eigenstate $\Psi^{0}_{Q+iP,\nu,\tilde{\nu}}$ introduced in \cite[Proposition 4.9]{GKRV} and spectrum resolution formula \eqref{fcomplete} as a replacement to repeat the same proof. We also gather the basic properties of $\Psi^{0}_{Q+iP,\nu,\tilde{\nu}}$ in Section \ref{freetheory} for self-consistent. In the next theorem, we require $\mu>0$ to make sure proposition \ref{bbestimate1} holds.
\end{remark}

The following theorem confirms that the conformal bootstrap formula still holds when one of the $\beta_i$'s equal to $0$.
\begin{theorem}\label{one point bootstrap}
For $\mu> 0$, $\mu_\partial>0$ and $\beta\in(0,Q)$, we have
$$\langle V_{\frac{\beta}{2}}(q)\rangle^{\mathbb{A}^1_q}_{g_{\mathbb{A}}}=\frac{C_{\D}^2}{2\sqrt{\pi}}q^{-\frac{1}{12}}\int_{\R^{+}}U^{\mu_{B_1}}_{\rm FZZ}(Q+iP)G^{\mu_{B_2}}(Q-iP,\beta)q^{\frac{P^2}{2}}\mathcal{F}^{\T}(\Delta_{\beta},\Delta_{Q+iP},q^2)dP.$$
\end{theorem}
The proof will be given in Section \ref{proof of 12}.
\begin{remark}
Theorem \ref{gammainsertion} gives an explicit formula for the annulus partition function in LCFT. This conjecture was first proposed in physics literature \cite{Ma}, and later mathematically formulated in \cite[conjecture 1.4]{ARS21}. We also provide an explicit formula for torus 1-point conformal block with weight $\gamma$ \eqref{torus 1-point block}, this was predicted in physics paper \cite{Be} by numerical method. Our derivation of this torus block actually relays on its annulus counterpart formulation.
\end{remark}
\begin{remark}
Theorem \ref{two point bootstrap} shows the bootstrap formula in the {\bf amplitude bound}: $\beta_1,\beta_2\in (0,Q)^2$, see \eqref{annulusamplitudebound}, which is the largest region covered in \cite{GKRV21}. In the case of $\beta_2=0$, $\beta_1\in(0,Q)$, the conformal block can not be directly defined as the Fourier transform of amplitudes, since the amplitude with $0$-insertion is not Hilbert-Schmidt. In theorem \ref{relation of one two}, we see the two-point annulus conformal block still makes sense when taking $\beta_2=0$ and \eqref{one point bootstrap} shows the bootstrap formula still holds (namely the integral contour is still the spectrum $Q+i\R$). Outside the amplitude bound, the bootstrap formula may no longer hold, we need to deform the integral contours and add some discrete terms which correspond to the poles of the structure constant, this phenomenon is discussed in physics \cite{ZaZa} and hasn't been proved.
\end{remark}

\subsection{Outline of this proof}
 We divide the proof of our main theorems into the following steps:
\begin{itemize}
    \item {\bf Step 1.} We first need to cut the two-point correlation function in theorem \ref{two point bootstrap} into two   annulus amplitudes, see Figure \ref{Fig.main1}. These annulus amplitudes have the Dirichlet boundary condition on the red circle, which is called the gluing circle; and the Neumann boundary condition on the black circle, which is decorated by a boundary insertion. Roughly speaking, for $\tilde{\bm{\varphi}}\in \mathcal{H}$, when we parameterize the Dirichlet boundary as $\sqrt{q}e^{i\theta}$,  $\theta\in [0,2\pi)$ 
\begin{align*}
\mathcal{A}^{m}_{g_{\mathbb{A}},{\mathbb{A}}^1_{\sqrt{q}},b_1, \beta_1,\sqrt{q}e^{i\theta}}(\tilde{\bm{\varphi}}):=\int_{\phi|_{\{|z|=\sqrt{q}\}}:=\tilde{\bm{\varphi}}}e^{\frac{\beta_1}{2}\phi(b_1)}e^{-S_{\A^1_{\sqrt{q}}} (g_\A,\phi)}D\phi\\
\mathcal{A}^{m}_{g_{\mathbb{A}},{\mathbb{A}}^{\sqrt{q}}_q,\frac{q}{b_2}, \beta_2,\sqrt{q}e^{i\theta}}(\tilde{\bm{\varphi}}):=\int_{\phi|_{\{|z|=\sqrt{q}\}}:=\tilde{\bm{\varphi}}}e^{\frac{\beta_2}{2}\phi(\frac{q}{b_2})}e^{-S_{\A^{\sqrt{q}}_q} (g_\A,\phi)}D\phi
\end{align*}
For their explicit definition, see \eqref{specialcase}. Then we can prove the following gluing formula
    $$\left\langle V_{\frac{\beta_1}{2}}(b_1)V_{\frac{\beta_2}{2}}(\frac{q}{b_2})\right\rangle^{\mathbb{A}^1_{q}}_{g_{\mathbb{A}},\mu, \mu_\partial}=\sqrt{\pi}\int  \mathcal{A}^{m}_{g_{\mathbb{A}},{\mathbb{A}}^1_{\sqrt{q}},b_1, \beta_1,\sqrt{q}e^{i\theta}}(\tilde{\bm{\varphi}})\times \mathcal{A}^{m}_{g_{\mathbb{A}},{\mathbb{A}}^{\sqrt{q}}_q,\frac{q}{b_2}, \beta_2,\sqrt{q}e^{i\theta} }(\tilde{\bm{\varphi}}) d\mu_0(\tilde{\bm{\varphi}})$$
where $\sqrt{q}e^{i\theta}$ is the parametrization of the cutting circle.
To prove this, we will develop a general framework for amplitudes with Neumann boundary condition in Section \ref{section amplitude} and prove the gluing formula \eqref{gluingtypeI} based on the Markov property of the Gaussian free field.
\begin{figure}[H] 
\centering 
\includegraphics[width=0.5\textwidth]{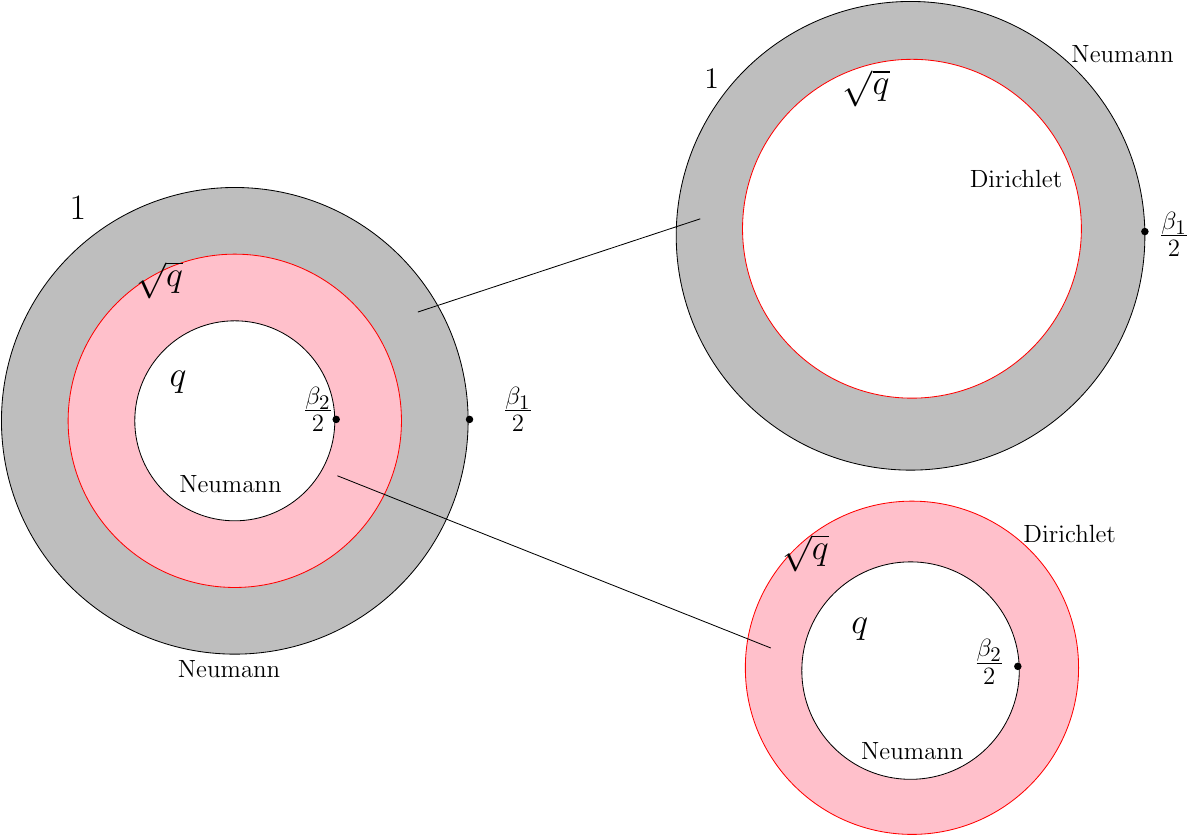} 
\caption{Cut the Neumann annulus into two amplitudes with Neumann boundary.} 
\label{Fig.main1} 
\end{figure}

\item {\bf Step 2.}  In proposition \ref{estimateamplitude}, we prove the above two amplitudes are Hilbert-Schmidt. Then we can apply the Plancherel formula \eqref{holomorphicpsi2} for Hilbert space $\mathcal{H}$ with scalar product $\langle\cdot\mid\cdot\rangle_{L^2}$, 
\begin{align}
    \frac{1}{2\sqrt{\pi}}\sum_{\nu_1, \tilde\nu_1,\nu_2, \tilde\nu_2}\int_{\R_+}& \langle\mathcal{A}^m_{g_{\mathbb{A}},{\mathbb{A}}^1_{\sqrt{q}},b_1, \beta_1,\sqrt{q}e^{i\theta}}\mid \Psi_{Q+ip,\nu_1,\tilde{\nu}_1}\rangle_{L^2}\langle\Psi_{Q+iP,\nu_2,\tilde{\nu}_2}\mid\mathcal{A}^m_{g_{\mathbb{A}},{\mathbb{A}}^{\sqrt{q}}_{q},\frac{q}{b_2}, \beta_2,\sqrt{q}e^{i\theta} }\rangle_{L^2}\nonumber\\
    &\times F^{-1}_{Q+iP}(\nu_1,\tilde{\nu_1}) F^{-1}_{Q+iP}(\nu_2,\tilde{\nu_2})\dd{P} 
\end{align}
here $\Psi_{Q+iP,\nu,\tilde{\nu}}$ is the generalized eigenstate of Liouville Hamiltonian $\mathbf{H}$ labelled by Young diagrams $\nu$, $\tilde{\nu}$, and $F_{Q+iP}(\nu,\tilde{\nu})$ is the Schapovalov matrix (see proposition \ref{prop:mainvir0} for its definition, it appears since $\Psi_{Q+iP,\nu,\tilde{\nu}}$ are not orthogonal). To compute $\langle\mathcal{A}^m_{g_{\mathbb{A}},{\mathbb{A}}^1_{\sqrt{q}},b_1, \beta_1,\sqrt{q}e^{i\theta}}\mid \Psi_{Q+ip,\nu_1,\tilde{\nu}_1}\rangle_{L^2}$, we hope to interpret eigenstates as some amplitudes and use the gluing property. So we need to consider the probabilistic counterpart of the eigenstates $\Psi_{Q+ip,\nu,\tilde\nu}$, which means we analytic $\alpha$ from $Q+iP$ to real $\alpha$ which is very negative. These $\Psi_{\alpha,\nu,\tilde{\nu}}$ can be linked with a disk amplitude with stress energy tensors (SET) around the $\alpha$ insertion, see Figure \ref{Fig.mainA}. The SET is a random field $T(z)$ formally given by $T(z)=Q\partial^2_z\phi-(\partial_z\phi)^2$ and can be rigorously defined through regularized expressions. When take the residue with respect to SET, we can produce Virasoro algebra and Young diagrams, which corresponds to the subscript of the eigenstates.
\begin{figure}[H] 
\centering 
\includegraphics[width=0.5\textwidth]{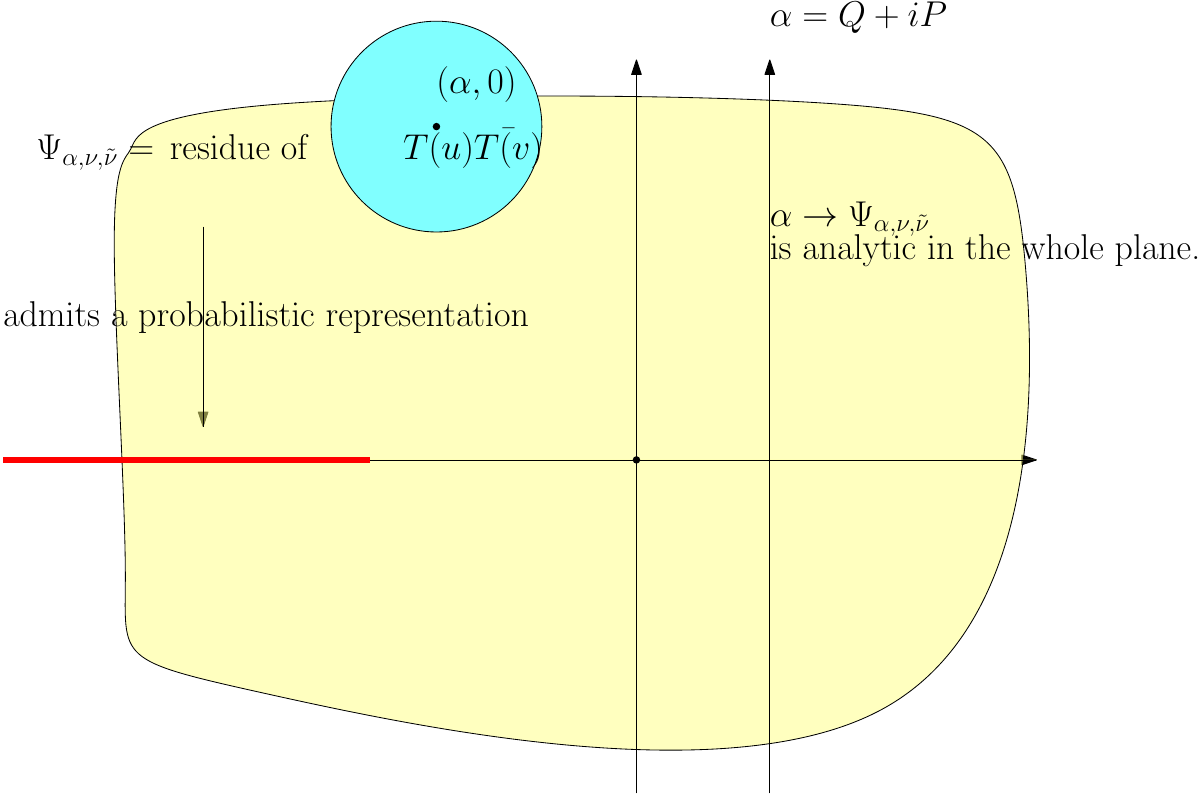} 
\caption{When $\alpha$ negative enough (in the red interval), the eigenstates admit a probabilistic representation.} 
\label{Fig.mainA} 
\end{figure}
For convenience, we first map the annulus amplitude $\mathcal{A}^m_{g_{\mathbb{A}},{\mathbb{A}}^1_{\sqrt{q}},b_1, \beta_1}$ to the upper half-plane by $\psi(z)=i\frac{1+z}{1-z}$. Note $\mathcal{S}=\psi(\mathbb{A}^1_{\sqrt{q}})$ and $s=\psi(b_1)$, the resulting amplitude is $\mathcal{A}_{\psi_*g_{\mathbb{A}},\mathcal{S},\beta,s}$. Since $\Psi_{\alpha,\nu,\tilde{\nu}}$ for $\alpha$ negative enough is not a nice function, we need to regularize the amplitude by adding more insertions. We insert $m$ artificial boundary insertions
$(\beta^{\rm ar}_i,s^{\rm ar}_i)_{1\leq i\leq m}$ so that $\alpha+\frac{\beta}{2}+\sum_i\frac{\beta^{\rm ar}_i}{2}>Q$.
Then we can glue $\Psi_{\alpha,\nu,\tilde{\nu}}$ with the amplitude $\mathcal{A}_{\psi_*g_{\mathbb{A}},\mathcal{S},\beta,s,(\beta^{\rm ar}_i,s^{\rm ar}_i)}$ by $\psi_1(z)=\psi(\sqrt{q}z)$, see Figure \ref{Fig.main2}. 
We call these insertions artificial since their weights $\beta_i^{\rm ar}$ will be finally sent to $0$ when we analytically continue $\langle\mathcal{A}_{\psi_*g_{\mathbb{A}},\mathcal{S},\beta,s,(\beta^{\rm ar}_i,s^{\rm ar}_i)}\mid \Psi_{Q+ip,\nu_1,\tilde{\nu}_1}\rangle_{L^2}$ from $\alpha$ real and negative to $Q+i\R_+$. 

To compute $\langle\mathcal{A}_{\psi_*g_{\mathbb{A}},\mathcal{S},\beta,s,(\beta^{\rm ar}_i,s^{\rm ar}_i)}\mid \Psi_{Q+ip,\nu_1,\tilde{\nu}_1}\rangle_{L^2}$, we first define the generalized correlation function with SET in the half-upper plane in Section \ref{SET}.
Then we are left with computing Ward's identity, which allows us to express the correlation functions with SET
insertions (the generalized correlation function) in terms of derivatives of the ones without SET insertions, which is the ordinary correlation function.
\begin{figure}[H] 
\centering 
\includegraphics[width=0.7\textwidth]{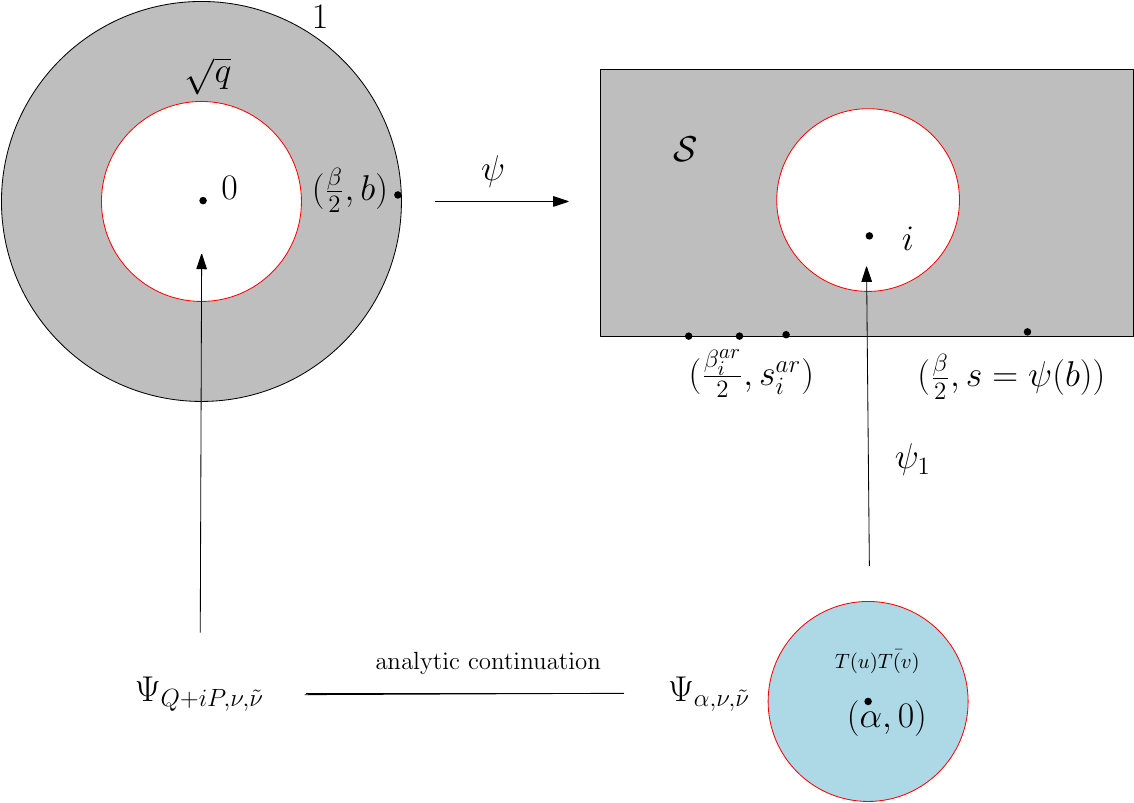} 
\caption{Glue the amplitude of $\mathcal{S}=\psi(\mathbb{A}^1_{\sqrt{q}})$ with the disk amplitude. } 
\label{Fig.main2} 
\end{figure}
\item {\bf Step 3.} The Ward's identity in the upper half-plane can be linked with the full plane case, see Figure \ref{Fig.main3}. This was first pointed out in the physics literature by Cardy \cite{Car}, so-called Cardy's doubling trick, when he studies the Ising model. He defines SET as the metric variation of the correlation functions. For our purpose of gluing amplitudes, we need to define SET by some Gaussian functionals, i.e. $T(z)=Q\partial^2_z\phi-(\partial_z\phi)^2$. Then the gluing property comes from the Markov property of GFF $\phi(z)$. In our setting, Cardy's doubling trick is highly nontrivial. We give the explicit statement in Section \ref{Cardytrick} and postpone its proof to the Appendix \ref{proof cardy}. 

After doubling, we can view the resulting generalized correlation function by gluing two copies of $\Psi_{\alpha,\nu,\tilde{\nu}}$ to $\mathcal{A}_{\psi_*g_{\mathbb{A}},\mathcal{S}\cup\mathcal{S}',\beta,s,(\beta^{\rm ar}_i,s^{\rm ar}_i)}$, where $\mathcal{S}'$ is the complex conjugate of $\mathcal{S}$ and all insertions should be thought as bulk insertions. We know $\mathcal{S}\cup\mathcal{S}'$ is nothing but the image of a thicker annulus $\mathbb{A}^{\frac{1}{\sqrt{q}}}_{\sqrt{q}}$.
We give the precise definition of conformal block by using Ward's identity in section \ref{two point proof} and see the conformal blocks of the annulus can be linked with the conformal blocks of the torus. This leads to the proof of $2$-point bootstrap formula \eqref{two point bootstrap} and indicates the convergence of annulus 2-point conformal blocks as a power series in $q\in(0,1)$ for almost all $P\in\R_+$.
\begin{figure}[H] 
\centering 
\includegraphics[width=0.7\textwidth]{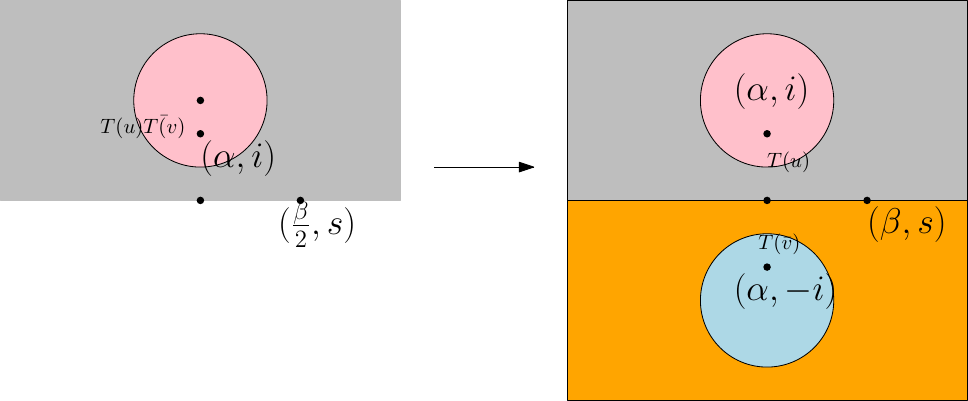} 
\caption{Link the Ward's identity in the upper half-plane to the full plane case by Cardy's trick. } 
\label{Fig.main3} 
\end{figure}
\begin{figure}[H] 
\centering 
\includegraphics[width=0.7\textwidth]{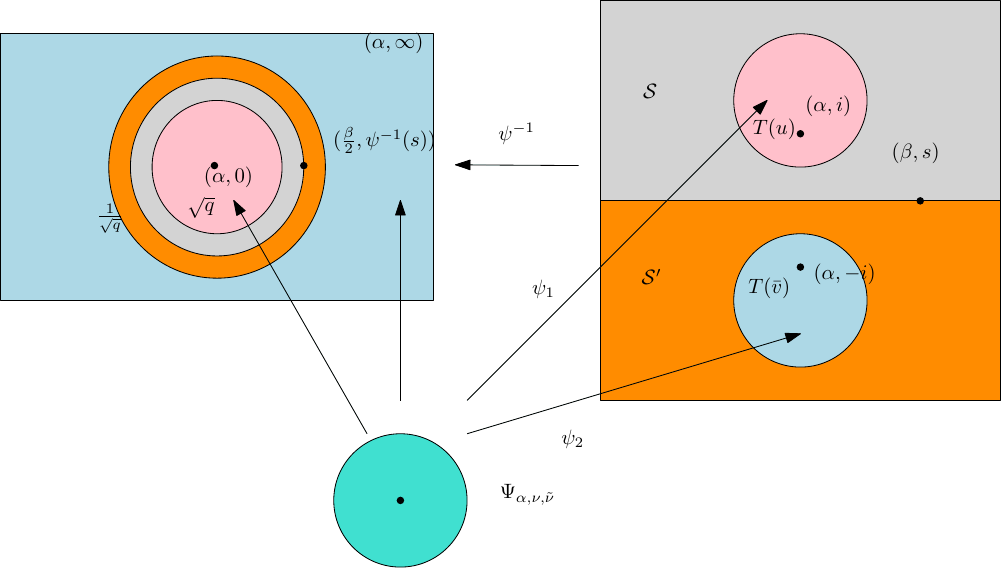} 
\caption{Build a bridge between the annulus conformal block to the torus conformal block. } 
\label{Fig.main4} 
\end{figure}
\item {\bf Step 4.} To show the annulus 2-point conformal block is separately continuous in $\beta_1\in[0,Q)$, $\beta_2\in[0,Q)$ and spectrum parameter $P\in[0,\infty)$. We need some operator formalism for the torus block amplitude \eqref{blockamplitude}. Instead of giving its explicit definition now, we convince the reader that when $q\in (0,1)$, torus block amplitude can be viewed as a positive, self-adjoint operator on the $L^2$ space of Young diagrams. Its trace produces the torus 1-point conformal block and Hilbert-Schmidt norm produces the annulus 2-point block. Then we can use analytic result in \cite{GRSS} and some comparison result between above two norms to get Theorem \ref{relation of one two}. This theorem is independent of other theorems in our paper.
\item {\bf Step 5.} The final step is to prove \eqref{one point bootstrap}. In this case, the amplitude $\mathcal{A}^{m}_{g_{\mathbb{A}},\mathbb{A}^1_{\sqrt{q}},0}$ is no more Hilbert-Schmidt, we can not directly pair it with the spectrum resolution $\Psi_{Q+iP,\nu,\tilde{\nu}}$ and use Plancherel formula. To treat this, we first develop a weaker version of conformal bootstrap \eqref{weaker bootstrap}. These formulae contain the differential with respect to the boundary cosmology constant. We show when boundary insertions have weights $\beta_1=0$ and $\beta_2=\beta\in(0, Q)$, this derivative can actually be removed, which finishes the proof of Theorem \ref{one point bootstrap}. Removing this derivative relies on the explicit FZZ formula and the torus $1$-point block is integrable near $0$ with respect to $P$.
\end{itemize}
In the last section, we give an explicit formula for the bosonic LQG partition function of annulus \eqref{LQG partition} and elucidate some future links.


\subsection{Related work}
Recently, there has been a large effort in probability theory to make sense of Polyakov's path integral
formulation of LCFT within the framework of random conformal geometry. One special case is called pure gravity also known as the scaling limit of random
planar maps in mathematical language. See \cite{LeGall}, \cite{Mier}, \cite{BM17} for construction of these limiting random metric surfaces, called Brownian surface in various topologies. In \cite{MS1}, \cite{MS2}, \cite{MS3}, they establish on the continuum level, the Brownian sphere coincide with $\sqrt{\frac{8}{3}}$-LQG sphere. But when the topology has higher genus, the Brownian surface is less understood, one essential question is what is the law of the moduli of Brownian surface.

In \cite{ARS22} they solve the law of random modulus for Brownian annulus. Their strategies are based on conformal welding and the integrability of random planar map and boundary Liouville CFT, our theorem \eqref{gammainsertion} serve as a fundamental ingredient in their work. Segal's axiom on LCFT should play a crucial role for solving random moduli problem in higher genus case. 

When the Liouville CFT is coupled with conformal matter field, like critical Ising model, a key role was played by the pioneering work \cite{DMS} where was devised a set of tools based on couplings between LQG surface and Schramm-Loewner evolution (SLE) or variants like conformal loop ensembles (CLE,  see \cite{ShWe}). So Segal's axiom for LCFT would
illuminate integrability and bootstrap type formula of $\rm SLE_{\kappa}/\rm CLE_{\kappa}$. See \cite{AS}, \cite{AHS21},  \cite{ARS22} for breakthroughs already achieved by combining different types of integrability (LCFT and random planar maps) in Liouville quantum gravity.

\subsubsection*{Acknowledgements:} The author would like to thank R\'emi Rhodes, Colin Guillarmou, Guillaume Remy and Xin Sun for their useful conversations. 

\section{The LCFT on the Riemann surfaces with boundaries}
In this section, we review some standard facts about LCFT on the Riemann surfaces. The construction of LCFT on all closed Riemann surfaces was established \cite{GRV19}, see also \cite[section 2]{GKRV21} for a review of these standard facts. We will restrict our attention to the boundary case. We summarize without proof the relevant material on boundary LCFT, the detailed proofs can be found in \cite{Wu1}. For notation $(\Sigma,g)$, we mean a compact Riemann surface $\Sigma$ equipped with a metric $g$ with associated volume form ${\rm dv}_g$ and inducing boundary measure ${\rm d\lambda}_g$. We will denote $K_g$ the scalar curvature and $k_g$ the geodesic curvature along the boundary. We consider the standard non-negative Laplacian $\Delta_{g}=d^*d$ where $d^*$ is the $L^2$ adjoint of the exterior derivative $d$. We note $ \partial^g_n$ as the normal inward derivative along the boundary with respect to metric $g$. 
We note $\Delta_{g,D}$, $\Delta_{g,N}$, $\Delta_{g,m}$ as the Laplacians with Dirichlet, Neumann and mixed boundary condition (after self-adjoint extension). When $\Sigma$ is compact and $\partial\Sigma$ is smooth, $\Delta_{g,D}$, $\Delta_{g,N}$, $\Delta_{g,m}$ all have discrete spectrum. As in \cite{RS}, we can define the spectrum zeta functions correspond to different boundary conditions (we note $\la_{N_0}=0$): $\zeta_D(s):=\sum_{j=1}^\infty (\la_{D_j})^{-s}$, $\zeta_N(s):=\sum_{j=1}^\infty (\la_{N_j})^{-s}$ and $\zeta_m(s):=\sum_{j=1}^\infty (\la_{m_j})^{-s}$. Then we can define the determinant of Laplacian correspond to various boundary conditions ${\det}(\Delta_{g,D})=\exp(-\pl_s\zeta_D(s)|_{s=0})$, ${\det} '(\Delta_{g,N})=\exp(-\pl_s\zeta_N(s)|_{s=0})$ and ${\det} '(\Delta_{g,m})=\exp(-\pl_s\zeta_{m}(s)|_{s=0})$. They all admit a meromorphic continuation from ${\rm Re}(s)\gg 1$ to $s\in \C$ and is holomorphic at $s=0$.


The Neumann Green function $G_{g,N}$ on a surface $\Sigma$ is defined 
to be the integral kernel of the resolvent operator $\Delta_{g,N}R_{g,N}=2\pi ({\rm Id}-\Pi_0)$, $\partial^n_{g}R_{g,N}=0$, $R_{g,N}^*=R_{g,N}$ and $R_{g,N}1=0$, where $\Pi_0$ is the orthogonal projection in $L^2(\Sigma,{\rm dv}_g)$ on $\ker \Delta_{g,N}$ (the constants). By integral kernel, we mean that for each $f\in L^2(\Sigma,{\rm dv}_g)$
\[ R_{g,N}f(x)=\int_{\Sigma} G_{g,N}(x,x')f(x'){\rm dv}_g(x').\] 
The Laplacian $\Delta_{g,N}$ has an orthonormal basis of real valued eigenfunctions $(\varphi_{N_j})_{j\in \N_0}$ in $L^2(\Sigma,{\rm dv}_g)$ with associated eigenvalues $\la_{N_j}\geq 0$; we set $\la_{N_0}=0$ and $\varphi_{N_0}=({\rm v}_g(\Sigma))^{-1/2}$.  The Green function then admits the following Mercer's representation in $L^2(\Sigma\times\Sigma, {\rm dv}_g \otimes {\rm dv}_g)$
\begin{equation}
G_{g,N}(x,x')=2\pi \sum_{j\geq 1}\frac{1}{\lambda_{N_j}}\varphi_{N_j}(x)\varphi_{N_j}(x').
\end{equation}
Similarly, we will consider the Green function with Dirichlet boundary conditions $G_{g,D}$ associated to the Laplacian $\Delta_{g,D}$ and mixed boundary condition $G_{g,m}$ associated to the Laplacian $\Delta_{g,m}$. In these case, the associated resolvent operators
$ R_{g,D}f(x)=\int_{\Sigma} G_{g,D}(x,x')f(x'){\rm dv}_g(x')$
solves $\Delta_{g,D}R_{g,D}=2\pi {\rm Id}$ and
$ R_{g,m}f(x)=\int_{\Sigma} G_{g,m}(x,x')f(x'){\rm dv}_g(x')$ 
solves $\Delta_{g,m}R_{g,m}=2\pi {\rm Id}$.

\subsection{Gaussian Free Fields} 
In the case of a surface $\Sigma$ with Neumann boundary, the Gaussian Free Field (GFF in short) is defined as follows. Let $(a_j)_j$ be a sequence of i.i.d. real Gaussians   $\mc{N}(0,1)$ with mean $0$ and variance $1$, defined on some probability space   $(\Omega,\mc{F},\mathbb{P})$,  and define  the Gaussian Free Field with vanishing mean in the metric $g$ by the random functions
\begin{equation}\label{GFFong}
X_{g,N}:= \sqrt{2\pi}\sum_{j\geq 1}a_j\frac{\varphi_{N_j}}{\sqrt{\la_{N_j}}} 
\end{equation}
 where  the sum converges almost surely in the Sobolev space  $H^{s}(\Sigma)$ for $s<0$ defined by
\[ H_N^{s}(\Sigma):=\{f=\sum_{j\geq 0}f_j\varphi_{N_j}\,|\, \|f\|_{s}^2:=|f_0|^2+\sum_{j\geq 1}\lambda_{N_j}^{s}|f_j|^2<+\infty\}.\] 
This Hilbert space is independent of $g$, only its norm depends on a choice of $g$.
The covariance is then the Green function when viewed as a distribution, which we will write with a slight abuse of notation
\[\mathbb{E}[X_{g,N}(x)X_{g,N}(x')]= \,G_{g,N}(x,x').\]
In the case of a surface $\Sigma$ with only Neumann boundary, we will denote the Liouville field by $\phi_g:=c+X_{g,N}$ where $c\in\R$ is a constant that stands for the constant mode of the field.

In the case of a surface with Dirichlet boundary condition or mixed boundary condition on $\Sigma$, we define the Dirichlet Gaussian free field (with covariance $G_{g,D}$) will be denoted $X_{g,D}$ and the mixed boundary Gaussian free field (with covariance $G_{g,m}$) will be denoted $X_{g,m}$ similarly. They are convergent almost surely  in the Sobolev space  $H^{s}(\Sigma)$ (for all $s\in (-1/2,0)$) defined by
\[ H_D^{s}(\Sigma):=\{f=\sum_{j\geq 0}f_j\varphi_{D_j}\,|\, \|f\|_{s}^2:=\sum_{j\geq 0}\lambda_{D_j}^{s}|f_j|^2<+\infty\}.\] and
\[ H_m^{s}(\Sigma):=\{f=\sum_{j\geq 0}f_j\varphi_{m_j}\,|\, \|f\|_{s}^2:=\sum_{j\geq 0}\lambda_{m_j}^{s}|f_j|^2<+\infty\}.\] 
In this context, we  will always consider the harmonic extension $P \boldsymbol{\tilde \varphi}$ of a boundary field $ \boldsymbol{\tilde \varphi}$ on Dirichlet boundary and with Neumann boundary condition: $\partial_n^gP \boldsymbol{\tilde \varphi}=0$ on the Neumann boundary. When the surface exists Dirichlet boundary, the Liouville field will be denoted by  $\phi_g:= X_{g,D}+P \boldsymbol{\tilde \varphi}$ or $\phi_g:= X_{g,m}+P \boldsymbol{\tilde \varphi}$. Hence $\phi_g$ will depend on boundary data in this case. \\
As Gaussian Free Fields rather badly behave (they are distributions), we will need to consider their regularizations. We define {\bf g-regularization} as their geodesic circle average with distance $\epsilon$, we refer reads to \cite[Section 3.2]{GRV19} and \cite{Wu1} for details.
We will denote $X_{g,N,\epsilon}$, $X_{g,D,\epsilon}$  and $X_{g,m,\epsilon}$ their respective $g$-regularizations
and $\phi_{g,\epsilon}$ the $g$-regularization of the Liouville field.

\subsection{Construction of Liouville Conformal Field Theory} 
For $\gamma\in\R$ and $h$ a random distribution admitting a  $g$-regularization $(h_\epsilon)_\epsilon$, we define the bulk GMC measure and the boundary GMC measure as
\begin{equation}\label{GMCg1}
M_{\gamma,g,\eps}(h,\dd x):= \eps^{\frac{\gamma^2}{2}}e^{\gamma h_{ \eps}(x)}{\rm dv}_g(x).
\end{equation}
\begin{equation}\label{GMCg2}
M^{\partial}_{\gamma,g,\eps}(h,\dd x):= \eps^{\frac{\gamma^2}{4}}e^{\frac{\gamma}{2} h_{ \eps}(x)}{\rm d\lambda}_g(x).
\end{equation}
Of particular interest for us is the case when $h=X_{g,N}$ or $h=X_{g,m}$ (and consequently $h=\phi_g$ too). In that case, for $\gamma\in (0,2)$,  the random measures above converge  as $\eps\to 0$ in probability and weakly in the space of Radon measures towards   non trivial random measures respectively denoted by $M_{\gamma,g}(h,\dd x)$, $M^g_\gamma(X_{g,D},\dd x)$ and $M^{\partial}_{\gamma,g}(h,\dd x)$.

Here we also mention a simple fact, when $\Sigma$ has some Dirichlet boundaries, $X_{g,m}$ is invariant in a conformal class. And if $g'=e^{\omega}g$, then due to the renormalization with respect to different metric,
\begin{align*}\label{scalingmeasure}
& M_{\gamma,g'}(X_{g',m},\dd x)= e^{\frac{\gamma}{2}Q \omega(x)} M_{\gamma,g}(X_{g,m},\dd x).\\
& M^{\partial}_{\gamma,g'}(X_{g',m},\dd x)= e^{\frac{\gamma}{4}Q \omega(x)} M^{\partial}_{\gamma,g}(X_{g,m},\dd x)
\end{align*}
Now we define the {\bf vertex operators},
For $z\in\Sigma^{\circ}$, we define the bulk vertex operator $(\alpha,z)$ (or insertion) as
\begin{equation*}
V_{\alpha,g,\eps}(x)=\eps^{\frac{\alpha^2}{2}}  e^{\alpha  \phi_{g,\epsilon}(x) } .
\end{equation*}
For $s\in\partial\Sigma$, we define the boundary vertex operator $(\beta,s)$ (or insertion) as 
\begin{equation*}
V_{\frac{\beta}{2},g,\eps}(s)=\eps^{\frac{\beta^2}{4}}  e^{\frac{\beta}{2}  \phi_{g,\epsilon}(s) } .
\end{equation*}
Notice that in the case of a surface with Dirichlet boundary and if $g'=e^{\omega}g$, then we have
\begin{align}\label{scalingvertex}
& V_{\alpha,g',\eps}(x)=(1+o(1))e^{\frac{\alpha^2}{4}\omega(x)}V_{\alpha,g,\eps}(x)  \\
& V_{\frac{\beta}{2},g',\eps}(s)=(1+o(1))e^{\frac{\beta^2}{8}\omega(x)}V_{\frac{\beta}{2},g,\eps}(s)
\end{align}
when $\eps$ goes to $0$. 

{\bf Liouville path integral for Riemann surface with Neumann boundary}

Suppose $(\Sigma,g)$ is a compact Riemann surface with Neumann boundaries, for each  bounded continuous functional $F:  H^{-s}(\Sigma)\to\R$ (with $s>0$, one can introduce the following $\eps$-regularized functional
\begin{align}\label{actioninsertion}
 \Pi_{\gamma, \mu,\mu_\partial}^{{\bf z},{\boldsymbol  \alpha},{\bf s},{\boldsymbol  \beta}}  (g,F,\eps):=&Z_{GFF,N}(\Sigma,g)
  \int_\R  \E\Big[ F(\phi ) \big(\prod_{i=1}^n V_{\alpha_i,g,\eps}(z_i)\prod_{j=1}^m V_{\frac{\beta_j}{2},g,\epsilon}(s_j)\big) \times  \\  
  &\exp\Big( -\frac{Q}{4\pi}\int_{\Sigma}K_{g}\phi \,{\rm dv}_{g} -\frac{Q}{2\pi}\int_{\partial \Sigma}k_{g}\phi{\rm d\lambda}_g- \mu  M_{\gamma, g}(\phi,\Sigma)-\mu_{\partial} M^{\partial}_{\gamma, g}(\phi,\partial \Sigma) \Big) \Big]\,dc .
 \nonumber
\end{align}
This limit exists and is nontrivial when the following so-called Seiberg bound satisfies
\begin{align}
 &\sum_{i}\alpha_i +\sum_j\frac{\beta_j}{2} >Q\chi(\Sigma) \label{seiberg1}\\
&\forall i,\quad \alpha_i<Q \label{seiberg2}\\ &\forall j,\quad \beta_j<Q.\label{seiberg3}
 \end{align}
Here $\chi(\Sigma)=2-2\mathrm{g}-k$, $k$ is the number of boundary components. The main properties of this functional are the following, which show the Liouville quantum field theory is actually a conformal field theory.
 \begin{proposition}\label{CA1}{\bf (Conformal anomaly)}
Let 
$g_0$ and $g$ be two metrics which are in the same conformal class, i.e. $ g=e^{\omega}g_0$ for some $\omega\in C^\infty(\Sigma)$. Then we have
\begin{align}\label{confan} 
\log\frac{\Pi_{\gamma, \mu,\mu_\partial}^{{\bf z},{\boldsymbol  \alpha},{\bf s},{\boldsymbol  \beta}}  (g,F)}{\Pi_{\gamma, \mu,\mu_\partial}^{{\bf z},{\boldsymbol  \alpha},{\bf s},{\boldsymbol  \beta}}  (g_0,F(\cdot-\tfrac{Q}{2}\omega))}&= 
\frac{1+6Q^2}{96\pi}\Big(\int_{\Sigma}(|d\omega|^2_{g_0}+2R_{g_0}\omega) {\rm dv}_{g_0}+4 \int_{\partial \Sigma} k_{g_0}\omega d\lambda_{g_0}\Big)\nonumber\\
&-\sum_{i=1}^n\Delta_{\alpha_i}\omega(z_i)-\sum_{j=1}^m\frac{1}{2}\Delta_{\beta_j}\omega(s_j)
\end{align}
\end{proposition}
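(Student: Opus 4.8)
The plan is to work throughout at the level of the $\eps$-regularized functional $\Pi_{\gamma,\mu,\mu_\partial}^{\mathbf z,\boldsymbol\alpha,\mathbf s,\boldsymbol\beta}(g,F,\eps)$ of \eqref{actioninsertion}, to compare each of its building blocks for the two metrics $g=e^{\omega}g_0$ and $g_0$, and only at the very end to let $\eps\to0$. The guiding principle, which forces the field shift $-\tfrac{Q}{2}\omega$ appearing on the right of \eqref{confan}, is that the cosmological (GMC) interaction terms must be invariant under the combined operation ``Weyl-rescale the metric and shift the field''. Indeed, for a fixed field $\phi$ the regularized bulk measure carries Weyl weight $\tfrac{\gamma^2}{4}+1=\tfrac{\gamma Q}{2}$ (the $+1$ from ${\rm dv}_g=e^{\omega}{\rm dv}_{g_0}$, the $\tfrac{\gamma^2}{4}$ from the change of circle-average scale), so that $M_{\gamma,e^{\omega}g_0}(\phi-\tfrac{Q}{2}\omega,\cdot)=M_{\gamma,g_0}(\phi,\cdot)$; likewise the boundary measure has weight $\tfrac{\gamma Q}{4}$ and is invariant under the same shift. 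Hence $\mu M_{\gamma,g}(\phi,\Sigma)$ and $\mu_\partial M^{\partial}_{\gamma,g}(\phi,\partial\Sigma)$ contribute nothing to the anomaly, and the task reduces to tracking the Gaussian prefactor, the curvature couplings, and the vertex operators.

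For the Gaussian part I would first couple the fields $X_{g,N}$ and $X_{g_0,N}$: their laws differ only through the metric-dependence of the Neumann Green function and of the mean-zero normalization, and because $\phi_g=c+X_{g,N}$ is integrated against the translation-invariant $dc$, the difference of the two mean-zero conventions is absorbed into a shift of $c$ with no Jacobian. After this identification the expectation over $X_{g,N}$ becomes one over $X_{g_0,N}$, and the ratio $Z_{GFF,N}(\Sigma,g)/Z_{GFF,N}(\Sigma,g_0)$ survives as an explicit factor. This factor is governed by the Polyakov--Alvarez anomaly formula for ${\det}'(\Delta_{g,N})$ with Neumann boundary conditions, producing exactly $\tfrac{1}{96\pi}\big(\int_\Sigma(\|d\omega\|^2_{g_0}+2R_{g_0}\omega)\,{\rm dv}_{g_0}+4\int_{\partial\Sigma}k_{g_0}\omega\,{\rm d\lambda}_{g_0}\big)$, i.e. the ``$1$'' part of the central charge $c_L=1+6Q^2$.

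The remaining ``$6Q^2$'' comes from the background-charge coupling $-\tfrac{Q}{4\pi}\int_\Sigma K_g\phi\,{\rm dv}_g-\tfrac{Q}{2\pi}\int_{\partial\Sigma}k_g\phi\,{\rm d\lambda}_g$. Here I would insert the two-dimensional Weyl laws $K_g\,{\rm dv}_g=(K_{g_0}+\Delta_{g_0}\omega)\,{\rm dv}_{g_0}$ and $k_g\,{\rm d\lambda}_g=(k_{g_0}+\tfrac12\partial^{g_0}_n\omega)\,{\rm d\lambda}_{g_0}$, then perform the substitution $\phi\mapsto\phi-\tfrac Q2\omega$; the cross term between the curvature and the shift, after one integration by parts using $\int\omega\,\Delta_{g_0}\omega\,{\rm dv}_{g_0}=\int\|d\omega\|^2_{g_0}\,{\rm dv}_{g_0}$ together with the matching boundary identity, assembles into $\tfrac{6Q^2}{96\pi}$ times the same bulk-plus-boundary integral, while the linear-in-$\phi$ remainder recombines into the curvature coupling in the metric $g_0$. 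Finally the vertex operators are handled by \eqref{scalingvertex}, giving factors $e^{\frac{\alpha_i^2}{4}\omega(z_i)}$ and $e^{\frac{\beta_j^2}{8}\omega(s_j)}$, combined with the exponential shift of each insertion under $\phi\mapsto\phi-\tfrac Q2\omega$, giving $e^{-\frac{Q\alpha_i}{2}\omega(z_i)}$ and $e^{-\frac{Q\beta_j}{4}\omega(s_j)}$. Since $\tfrac{\alpha_i^2}{4}-\tfrac{Q\alpha_i}{2}=-\Delta_{\alpha_i}$ and $\tfrac{\beta_j^2}{8}-\tfrac{Q\beta_j}{4}=-\tfrac12\Delta_{\beta_j}$, these reproduce exactly the local terms $-\Delta_{\alpha_i}\omega(z_i)-\tfrac12\Delta_{\beta_j}\omega(s_j)$, and collecting all factors yields \eqref{confan}.

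The main obstacle is the analytic heart of the Gaussian step: establishing the Polyakov--Alvarez anomaly with the correct Neumann \emph{boundary} contribution $\int_{\partial\Sigma}k_{g_0}\omega\,{\rm d\lambda}_{g_0}$ (the closed-surface Polyakov formula does not suffice), and controlling the $\eps\to0$ limit of the geodesic circle-average regularization uniformly as the metric varies. It is precisely this change of regularization scale under $g\mapsto e^{\omega}g_0$ that is responsible for the $o(1)$ errors in \eqref{scalingvertex} and for the $\tfrac{\gamma^2}{4}$ part of the GMC Weyl weight, so one must verify that these errors genuinely vanish and that the coupling of $X_{g,N}$ with $X_{g_0,N}$, together with the constant-mode integration, is legitimate before the limit is taken.
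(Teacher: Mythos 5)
Your proposal is correct and follows essentially the same route as the paper: Proposition \eqref{CA1} itself is stated here without proof (deferred to \cite{Wu1}), but the paper's proof of the analogous amplitude anomaly, Proposition \eqref{Weyl2}, uses exactly your ingredients --- the Weyl laws for $K_g$ and $k_g$, Green's identity to cancel the $\partial_n^{g_0}\omega$ boundary terms, Girsanov for the $\Delta_{g_0}\omega$ coupling, the vertex scaling \eqref{scalingvertex}, and the determinant (Polyakov--Alvarez) anomaly supplying the ``$1$'' in $1+6Q^2$. The only caveat is presentational: your ``substitution $\phi\mapsto\phi-\tfrac{Q}{2}\omega$'' is really the Cameron--Martin/Girsanov shift induced by the linear coupling $e^{-\frac{Q}{4\pi}\int_\Sigma \Delta_{g_0}\omega\, X\,{\rm dv}_{g_0}}$ (a deterministic shift of the GFF is not measure-preserving), and it is the variance term of that shift --- which your bookkeeping does correctly account for --- that produces the $\frac{6Q^2}{96\pi}\int_\Sigma\|d\omega\|^2_{g_0}\,{\rm dv}_{g_0}$ contribution.
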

\begin{proposition}{\bf (Diffeomorphism invariance)}\label{DF1}
Let $\psi:\Sigma\to \Sigma$ be an orientation preserving diffeomorphism, $\psi(\partial\Sigma)=\partial\Sigma$. Then  
\[ \Pi_{\gamma, \mu,\mu_\partial}^{{\bf z},{\boldsymbol  \alpha},{\bf s},{\boldsymbol  \beta}}     (\psi^*g ,F)= \Pi_{\gamma, \mu,\mu_\partial}^{{\bf \psi(z)},{\boldsymbol  \alpha},{\bf \psi(s)},{\boldsymbol  \beta}}   (g,F(\cdot \circ \psi)) .\]
\end{proposition}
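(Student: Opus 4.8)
The plan is to exploit the fact that an orientation-preserving diffeomorphism $\psi$ with $\psi(\partial\Sigma)=\partial\Sigma$ is precisely an isometry from $(\Sigma,\psi^*g)$ onto $(\Sigma,g)$, so that diffeomorphism invariance should hold \emph{without} any anomalous factor, in contrast with the conformal anomaly of Proposition \ref{CA1}. Every object entering the definition \eqref{actioninsertion} of $\Pi$ is built spectrally or geometrically from the metric, so the strategy is to track how each piece is transported by $\psi$ and then to insert the resulting law identities termwise into the expectation.

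First I would establish the covariance of the Neumann Green function. Since $\psi:(\Sigma,\psi^*g)\to(\Sigma,g)$ is an isometry, one has $\Delta_{\psi^*g}(f\circ\psi)=(\Delta_g f)\circ\psi$, and $\psi$ carries the inward normal of $\psi^*g$ along $\partial\Sigma$ to that of $g$; hence pulling back the Neumann eigenbasis $(\varphi_{N_j})$ of $\Delta_{g,N}$ by $\psi$ yields a Neumann eigenbasis of $\Delta_{\psi^*g,N}$ with the same eigenvalues $(\lambda_{N_j})$, the $L^2$-normalization being preserved because $\psi^*({\rm dv}_g)={\rm dv}_{\psi^*g}$. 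Feeding this into Mercer's representation gives $G_{\psi^*g,N}(x,x')=G_{g,N}(\psi(x),\psi(x'))$. Consequently the centered Gaussian fields $X_{\psi^*g,N}$ and $X_{g,N}\circ\psi$ share the same covariance, so they are equal in law as random distributions, and therefore $\phi_{\psi^*g}\stackrel{\rm law}{=}\phi_g\circ\psi$ (the constant mode $c$ being untouched).

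The second and most delicate step is the covariance of the $g$-regularization. Because $\psi$ is a $(\psi^*g,g)$-isometry it maps the $\psi^*g$-geodesic circle of radius $\eps$ around $x$ isometrically onto the $g$-geodesic circle of radius $\eps$ around $\psi(x)$, together with their uniform measures; combined with the previous law identity this gives $X_{\psi^*g,N,\eps}(x)\stackrel{\rm law}{=}X_{g,N,\eps}(\psi(x))$. Since the radius $\eps$ is the common geodesic radius in both metrics, the renormalization constants $\eps^{\alpha^2/2}$ and $\eps^{\beta^2/4}$ are identical, so the regularized vertex operators satisfy $V_{\alpha,\psi^*g,\eps}(z_i)\stackrel{\rm law}{=}V_{\alpha,g,\eps}(\psi(z_i))$ and $V_{\beta_j/2,\psi^*g,\eps}(s_j)\stackrel{\rm law}{=}V_{\beta_j/2,g,\eps}(\psi(s_j))$; passing to the $\eps\to0$ limit (which exists in the Seiberg range, itself $\psi$-invariant since the $\alpha_i,\beta_j$ and $\chi(\Sigma)$ are unchanged) preserves these identities.

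The remaining ingredients transform by the ordinary change of variables for an isometry: $K_{\psi^*g}=K_g\circ\psi$, $k_{\psi^*g}=k_g\circ\psi$, $\psi^*({\rm dv}_g)={\rm dv}_{\psi^*g}$ and $\psi^*({\rm d\lambda}_g)={\rm d\lambda}_{\psi^*g}$, whence the curvature integrals and the chaos masses match,
\[\int_\Sigma K_{\psi^*g}\,\phi_{\psi^*g}\,{\rm dv}_{\psi^*g}\stackrel{\rm law}{=}\int_\Sigma K_g\,\phi_g\,{\rm dv}_g,\qquad M_{\gamma,\psi^*g}(\phi_{\psi^*g},\Sigma)\stackrel{\rm law}{=}M_{\gamma,g}(\phi_g,\Sigma),\]
and similarly for the boundary terms and $M^{\partial}_{\gamma,\psi^*g}$. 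The prefactor satisfies $Z_{GFF,N}(\Sigma,\psi^*g)=Z_{GFF,N}(\Sigma,g)$ since it is assembled from ${\det}'(\Delta_{g,N})$ and ${\rm v}_g(\Sigma)$, spectral and geometric invariants unchanged by the isometry. Finally, $\phi_{\psi^*g}\stackrel{\rm law}{=}\phi_g\circ\psi$ turns $F(\phi_{\psi^*g})$ into $F(\phi_g\circ\psi)=[F(\,\cdot\circ\psi)](\phi_g)$, while the insertions move from $(z_i,s_j)$ to $(\psi(z_i),\psi(s_j))$; assembling all these equalities inside the expectation and the $dc$-integral yields the claimed identity. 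The main obstacle is the regularization step: one must verify that the geodesic-average regularization genuinely intertwines with $\psi$ uniformly in $\eps$ and that this survives the $\eps\to0$ GMC limit, so that the law identities above can be inserted termwise into \eqref{actioninsertion}.
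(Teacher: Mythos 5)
Your argument is correct and is essentially the canonical proof of diffeomorphism invariance: the paper itself states this proposition without proof (deferring to \cite{Wu1}), and the argument given there and in the closed-surface references (\cite{DKRV}, \cite{GRV19}) is exactly the isometry-transport you describe — pulling back the Neumann eigenbasis to identify $G_{\psi^*g,N}(x,x')=G_{g,N}(\psi(x),\psi(x'))$, hence $X_{\psi^*g,N}\stackrel{\rm law}{=}X_{g,N}\circ\psi$, checking that geodesic-circle regularization, curvatures, volume and boundary measures, and the spectral prefactor are all carried over by the isometry, and inserting these identities termwise into \eqref{actioninsertion}. The one point worth stating explicitly is that all the individual law identities hold \emph{jointly}, since every term is a measurable functional of the single field $X_{g,N}$; your closing remark about inserting them termwise into the expectation implicitly uses this, and it is exactly what makes the argument close.
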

By taking $F=1$, this defines the  Liouville correlation function $\left\langle\prod_{i=1}^{n} V_{\alpha_{i}}\left(z_{i}\right)\prod_{j=1}^{m} V_{\frac{\beta_j}{2}}\left(s_{j}\right)\right\rangle^{\Sigma}_{\mu,\mu_\partial}$.

\subsection{The LCFT for Sphere, Disk and Annulus}\label{definition of disk}
In this section, we gather the precise LCFT on sphere, disk (half-upper plane) and annulus, which are used in this paper.

{\bf The LCFT on the sphere}\\
We  clarify the relation between 3-point correlation function on sphere and the DOZZ formula \eqref{theDOZZformula}. For $\omega:\hat\C\to \R$ defined by $e^\omega g= |z|_+^{-4}|dz|^2$ (we call $g_{dozz}=|z|_+^{-4}|dz|^2$ the DOZZ metric).\footnote{Here the factor $\frac{1}{2}$ before DOZZ factor is due to the renormalization of path integral, see \cite{KRV20} }
\begin{align}
\label{DOZZ3point}
 \langle  &V_{\alpha_1}(z_1)  V_{\alpha_2}(z_2) V_{\alpha_3}(z_3)  \rangle^{\hat\C}_{g,\mu}\\
  =&|z_1-z_3|^{2(\Delta_{\alpha_2}-\Delta_{\alpha_1}-\Delta_{\alpha_3})}|z_2-z_3|^{2(\Delta_{\alpha_1}-\Delta_{\alpha_2}-\Delta_{\alpha_3})}|z_1-z_2|^{2(\Delta_{\alpha_3}-\Delta_{\alpha_1}-\Delta_{\alpha_2})}
\Big(\prod_{i=1}^3g(z_i)^{-\Delta_{\alpha_i}}\Big)\nonumber \\
&\frac{1}{2}  C_{\gamma,\mu}^{{\rm DOZZ}} (\alpha_1,\alpha_2,\alpha_3 ) \big(\frac{{\rm v}_{g}(\hat\C)}{{\det}'(\Delta_{g})}\big)^{\frac{1}{2}} e^{-6Q^2 S_{\rm L}^0(\hat\C,g,\omega)}  \nonumber
 \end{align} 
 Here 
 \begin{equation}\label{SL0}
S_{\rm L}^0(\hat{\C},g,\omega):=\frac{1}{96\pi}\int_{\hat{\C}}(|d\omega|_g^2+2K_g\omega) {\rm dv}_g
\end{equation}
 We will also consider another metric, called the round metric on the sphere. We identify  the Riemann sphere with the extended complex plane $\hat\C$ by stereographic projection. On the sphere, every metric is (up to diffeomorphism) conformal to the round metric $g_0:=\frac{4}{(1+|z|^2)^2}|dz|^2$. 
 
{\bf The LCFT on the half upper plane}\\
In this paper, we need the bulk-boundary correlation function and bulk 1-point function, for $\omega:H\to \R$ defined by $e^\omega g= |z|_+^{-4}|dz|^2$ and $z\in\H$ and $s\in\R$ we have
\begin{align}\label{DOZZ2point}
 &\langle V_{\alpha}(z)  V_{\frac{\beta}{2}}(s)  \rangle^{\H}_{g,\mu,\mu_\partial}\\
 =&|z-\bar{z}|^{\Delta_{\beta}-2 \Delta_{\alpha}}|z-s|^{-2 \Delta_{\beta}}
g(z)^{-\Delta_{\alpha}}g(s)^{-\frac{\Delta_{\beta}}{2}}\nonumber G(\alpha,\beta) \big(\frac{{\rm v}_{g}(\H)}{{\det}'(\Delta_{g,N})}\big)^{\frac{1}{2}} e^{-6Q^2 S_{\rm L}^0(\H,g,\omega)}  \nonumber
 \end{align} 
\begin{equation}\label{SL0H}
S_{\rm L}^0(\H,g,\omega):=\frac{1}{96\pi}\Big(\int_{\H}(|d\omega|_g^2+2K_g\omega) {\rm dv}_g+4\int_{\R} k_{g}\omega d{\rm \lambda}_{g}\Big)
\end{equation}
 We will also consider another metric on $\H$, which is the restriction of $g_0$ from $\hat{\C}$. On the upper half-plane, every metric is (up to diffeomorphism) conformal to the round metric $g_0:=\frac{4}{(1+|z|^2)^2}|dz|^2$. The above formula $\eqref{DOZZ2point}$ gives the definition of bulk-boundary correlator $G(\alpha,\beta)$ (this function doesn't depend on the position of the insertions but only on their weights) in the Seiberg bound  $\{(\alpha,\beta)|\alpha+\frac{\beta}{2}>Q \text{ and }\alpha,\beta<Q\}.$
 
 It's easy to see the path integral \eqref{actioninsertion} is divergent for bulk 1-point function with weight $\alpha<Q$. So we need some analytic continuation and define it as 
 \begin{align*}
\langle V_{\alpha}(z) \rangle^{\H}_{g,\mu,\mu_\partial}:=Z_{GFF,N}(\H,g)
  \int_\R  \E\Big[  V_{\alpha,g,}(z)e^{ -\frac{Q}{4\pi}\int_{\Sigma}K_{g}\phi \,{\rm dv}_{g} -\frac{Q}{2\pi}\int_{\partial \Sigma}k_{g}\phi{\rm d\lambda}_g}\Big(e^{- \mu  M_{\gamma, g}(\phi,\Sigma)-\mu_{\partial} M^{\partial}_{\gamma, g}(\phi,\partial \Sigma)}-1\Big) \Big]\,dc .
 \end{align*}
 then \begin{align}\label{probfzz}
 \langle V_{\alpha}(z)   \rangle^{\H}_{g,\mu,\mu_\partial}
 =|z-\bar{z}|^{-2 \Delta_{\alpha}}
g(z)^{-\Delta_{\alpha}} U_{\rm FZZ}(\alpha) \big(\frac{{\rm v}_{g}(\H)}{{\det}'(\Delta_{g,N})}\big)^{\frac{1}{2}} e^{-6Q^2 S_{\rm L}^0(\H,g,\omega)}  
 \end{align} 
 \begin{remark}
When we use the notion of the Liouville correlation functions for complex values of $\alpha$ and $\beta$, we mean the RHS of above formulas \eqref{DOZZ3point}, \eqref{DOZZ2point} and \eqref{probfzz}.
 \end{remark}
{\bf The LCFT on the annulus}\\
For annulus $\mathbb{A}_q^p=\{z\in\C\mid q\leq|z|\leq p\}$ with flat metric $g_{\mathbb{A}}=\frac{dz^2}{|z|^2}$. In this metric, both Gaussian curvature and geodesic curvature on boundary are 0. The moduli space of annulus is parameterized by $\frac{q}{p}\in (0,1)$.\\
In the annulus case, we define two bounds of insertion weights:
\begin{itemize}
    \item Amplitude bound: for an annulus with one Dirichlet boundary $\partial^D\mathbb{A}$ and one Neumann boundary $\partial^N\mathbb{A}$. For $(\alpha_i,z_i)\in \mathbb{A}^{\circ}$ and $(\beta_j,s_j)\in \partial^N\mathbb{A}$.  
    \begin{equation}\label{annulusamplitudebound}
        \sum_i\alpha_i+\sum_j\frac{\beta_j}{2}>0 \text{   and    } \alpha_i,\beta_j<Q.
    \end{equation}
    \item Seiberg bound: for an annulus with two Neumann boundaries $\partial^N\mathbb{A}$. For $(\alpha_i,z_i)\in \mathbb{A}^{\circ}$ and $(\beta_j,s_j)\in \partial^N\mathbb{A}$.  
    \begin{equation}\label{annulusseibergbound}
         \sum_i\alpha_i+\sum_j\frac{\beta_j}{2}>0 \text{   and    } \alpha_i,\beta_j<Q.
    \end{equation}
\end{itemize}
\section{Amplitudes with Neumann boundary in the boundary LCFT}\label{section amplitude}
First, we recall the following decomposition of Neumann GFF $X^{\Sigma}_{g,N}$ from \cite[section 7]{Wu1}. We note $m_g(f)=\frac{1}{{\rm v}_g(\Sigma)}\int f d{\rm v}_g.$
\begin{proposition}\label{boundarymarkov}
Suppose $\Sigma$ is a compact Riemann surface with Neumann boundary $\partial^N\Sigma$. For the Neumann GFF $X^{\Sigma}_{g,N}$, the following decomposition holds in law
\begin{equation}\label{Marg}
X^{\Sigma}_{g,N}\stackrel{law}{=}X^{\Sigma_1}_{g,m}+X^{\Sigma_2}_{g,m}+P\varphi^{\Sigma}_{g}-c^{\Sigma}_{g}  
\end{equation}
where $\varphi^{\Sigma}_{g}$ is the restriction of $X^{\Sigma}_{g,N}$ on the circles $\mathcal{C}$ of $\Sigma$, which separate $\Sigma$ into two parts $\Sigma_1$ and $\Sigma_2$, and $X^{\Sigma_i}_{g,m}$ is the mixed boundary GFF on $\Sigma_i$, i.e. $X^{\Sigma_i}_{g,m}=0$ on $\mathcal{C}$ and $\partial^g_{n}X^{\Sigma_i}_{g,m}=0$ on $\partial^N \Sigma_i=\partial^N\Sigma\cap\Sigma_i$. $P\varphi^{\Sigma}_{g}$ is the harmonic extension of $\varphi^{\Sigma}_{g}$ with $\partial^g_{n}P\varphi^{\Sigma}_{g}=0$ on $\partial^N \Sigma_i$ and $c^{\Sigma}_{g}=m^{\Sigma}_{g}(X^{\Sigma_1}_{g,m}+X^{\Sigma_2}_{g,m}+P\varphi^{\Sigma}_{g})$. Here $X^{\Sigma_1}_{g,m}$, $X^{\Sigma_2}_{g,m}$ and $P\varphi^{\Sigma}_{g}$ are all independent.
\end{proposition}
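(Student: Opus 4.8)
The plan is to reduce the claimed equality in law to a covariance identity, exploiting that every field on both sides is a centered Gaussian field. Write the right-hand side as $Y := X^{\Sigma_1}_{g,m} + X^{\Sigma_2}_{g,m} + P\varphi^{\Sigma}_{g} - c^{\Sigma}_{g}$. Each summand is centered, so $Y$ is centered; moreover $Y$ has vanishing $g$-mean by the very definition $c^{\Sigma}_{g}=m^{\Sigma}_{g}(X^{\Sigma_1}_{g,m}+X^{\Sigma_2}_{g,m}+P\varphi^{\Sigma}_{g})$, which matches the defining normalization of the mean-zero field $X^{\Sigma}_{g,N}$. Thus it suffices to prove $\E[Y(x)Y(x')]=G_{g,N}(x,x')$, and all difficulty is concentrated in this covariance computation.

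The conceptual engine is the orthogonal decomposition of the Dirichlet energy space underlying the Neumann GFF. Equip $H^1(\Sigma)$ (modulo constants) with the Cameron--Martin inner product $\langle f,h\rangle=\frac{1}{2\pi}\int_\Sigma \nabla f\cdot\nabla h\,{\rm dv}_g$, which is the inner product dual to the covariance operator $R_{g,N}=2\pi({\rm Id}-\Pi_0)$. The claim is that
\[
H^1(\Sigma)=V_1\oplus V_2\oplus \mathcal{H}
\]
is orthogonal for this inner product, where $V_i$ is the space of functions supported in $\Sigma_i$ that vanish on $\mathcal{C}$ and satisfy $\partial^g_n=0$ on $\partial^N\Sigma_i$, and $\mathcal{H}$ is the space of functions harmonic in $\Sigma_1^\circ\cup\Sigma_2^\circ$ with $\partial^g_n=0$ on $\partial^N\Sigma$. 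Orthogonality of $V_1$ and $V_2$ is immediate from their essentially disjoint supports, and orthogonality against $\mathcal{H}$ follows from Green's identity $\int_{\Sigma_i}\nabla f\cdot\nabla h=\int_{\partial\Sigma_i} f\,\partial^g_n h-\int_{\Sigma_i} f\,\Delta_g h$, which vanishes because $f=0$ on $\mathcal{C}$, $\partial^g_n h=0$ on $\partial^N\Sigma$, and $\Delta_g h=0$ in the interior.

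This orthogonal splitting transfers to the Gaussian level: the Neumann GFF decomposes as a sum of independent Gaussian fields indexed by the three orthogonal projectors, with independence being exactly equivalent to the orthogonality just established. The two interior projectors reproduce the mixed-boundary GFFs $X^{\Sigma_i}_{g,m}$ with covariances $G^{\Sigma_i}_{g,m}$, while the projection onto $\mathcal{H}$ is the harmonic extension of the trace on $\mathcal{C}$; identifying this trace with $\varphi^{\Sigma}_{g}=X^{\Sigma}_{g,N}|_{\mathcal{C}}$ and the projection with the conditional expectation $\E[X^{\Sigma}_{g,N}\mid \varphi^{\Sigma}_{g}]=P\varphi^{\Sigma}_{g}$ yields the third summand. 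I would make this rigorous through the resolvent/Green's-function identity rather than pointwise, verifying that $G_{g,N}$ equals the sum of the two mixed Green's functions (each supported on the corresponding piece) plus the covariance of $P\varphi^{\Sigma}_{g}$, up to explicit constant corrections, by testing against smooth functions and integrating by parts.

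The \emph{main obstacle} is the zero-mode bookkeeping forced by the vanishing-mean normalization. The clean orthogonal decomposition is most transparent on $H^1$, but $X^{\Sigma}_{g,N}$ has its constant mode removed, whereas the three natural summands do not individually integrate to zero against ${\rm dv}_g$. The constant $c^{\Sigma}_{g}$ is precisely the compensating term that redistributes the removed zero mode, and the delicate point is to check that subtracting it restores the correct vanishing-mean covariance $G_{g,N}$. I would handle this by first proving the decomposition for the field augmented with an independent Gaussian constant mode, where the orthogonality argument applies verbatim, and then projecting out the $g$-mean on both sides, confirming that this projection commutes with the splitting up to the explicit correction $-c^{\Sigma}_{g}$.
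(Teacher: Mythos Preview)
Your proposal is correct and follows the standard route for establishing Markov decompositions of the GFF: orthogonally split the Cameron--Martin space $H^1(\Sigma)/\text{constants}$ under the Dirichlet form into the two zero-trace pieces $V_1,V_2$ and the harmonic subspace $\mathcal{H}$, transfer this to the Gaussian level via independence of orthogonal components, and then track the zero mode. The paper does not supply its own proof of this proposition; it simply recalls the result from \cite{Wu1}, so there is no in-paper argument to compare against, but your approach is exactly the one that reference would use.

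One small remark on your handling of the zero mode: rather than augmenting with an independent Gaussian constant and then projecting, it is slightly cleaner to work directly in $H^1(\Sigma)/\mathbb{R}$ throughout. The orthogonal decomposition $V_1\oplus V_2\oplus\mathcal{H}$ holds already at the level of equivalence classes modulo constants, and the representative of the sum $X^{\Sigma_1}_{g,m}+X^{\Sigma_2}_{g,m}+P\varphi^{\Sigma}_g$ with vanishing $g$-mean is obtained precisely by subtracting $c^{\Sigma}_g$; no auxiliary randomness is needed. Either way the bookkeeping is routine, and your identification of this as the only delicate point is accurate.
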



\subsection{Properties of amplitudes with Neumann boundary}
For every compact Riemann surface $\Sigma$ with boundaries $\partial \Sigma$. We separate the boundary components of $\Sigma$ into two kinds, Dirichlet boundaries $\partial_{i}^D \Sigma$, $1\leq i \leq \ell^\Sigma_D$ and Neumann boundaries $\partial_{j}^N \Sigma$, $1\leq j\leq \ell^\Sigma_N$. Note Dirichlet boundary parametrization as $\bm{\zeta}=(\zeta_1,...,\zeta_{\ell^\Sigma_D})$, where $\zeta_i:\T\to \partial_{i}^D \Sigma$ parametrizes the boundary $\partial_{i}^D \Sigma$. We define the mixed boundary condition Gaussian free field $X^\Sigma_{g,m}$ on $\Sigma$, with Dirichlet $0$ boundary condition on $\partial^D_i \Sigma$ and Neumann $0$ boundary condition on $\partial^N_j \Sigma$. 

For $\bm{\tilde{\varphi}}=(\tilde{\varphi}_1,\tilde{\varphi}_2,...,\tilde{\varphi}_{\ell^\Sigma_D})\in H^s(\T)^{\ell^\Sigma_D}$ $(s<0)$, we define $\bm{\tilde{\varphi}}\circ\bm{\zeta}^{-1}$ on $\partial^D \Sigma$ and $\bm{P\tilde{\varphi}}$ as its harmonic extension with Neumann $0$ condition on $\partial^N \Sigma$.
Now we define the {\bf amplitude with Neumann boundary} $\mathcal{A}_{g,\Sigma,\bm{z},\bm{s}, \bm{\alpha},\bm{\beta},\bm{\zeta}}(\bm{\tilde{\varphi}})$ as following:
\begin{definition}[{Amplitudes with Neumann boundary}]\label{typeAAmplitude}

Suppose the $\alpha_i$ and $\beta_j$ satisfy the Seiberg bounds \eqref{seiberg2} and \eqref{seiberg3}.\\
\noindent {\bf (1) }
When $\partial \Sigma^D=\emptyset$, we also suppose \eqref{seiberg1} holds, then for any $F$ continuous  nonnegative function on $H^{p}(\Sigma)$ for some $p<0$, we define $\phi_g=c+X_g^{\Sigma}$ and
\[\mathcal{A}^N_{g,\Sigma,\bm{z},\bm{s}, \bm{\alpha}, \bm{\beta}}(F):=\left\langle F(\phi_g)\prod_{i=1}^n V_{\alpha_i}(z_i)  \prod_{j=1}^mV_{\frac{\beta_j}{2}}(s_j)\right\rangle^\Sigma_{\gamma,\mu, \mu_\partial}\]

\noindent {\bf (2) }
If  $\partial^D\Sigma$ has a positive number ${\ell^\Sigma_D}=b$ boundary  components,
\begin{align}
    &\mathcal{A}^m_{g,\Sigma,\bm{z},\bm{s}, \bm{\alpha}, \bm{\beta},\zeta}(F,\bm{\tilde{\varphi}}) :=Z_{GFF,m}(\Sigma,g)\mathcal{A}^0_{\Sigma,g}(\bm{\tilde{\varphi}})\times\E \Big[F(\bm{P\tilde{\varphi}}+X^\Sigma_{g,m})\prod_{i=1}^n V_{\alpha_i}(z_i)  \prod_{j=1}^mV_{\frac{\beta_j}{2}}(s_j)\times\nonumber\\ &\exp\Big( -\frac{Q}{4\pi}\int_{\Sigma}K_{g}(\bm{P\tilde{\varphi}}+X^\Sigma_{g,m} )\,{\rm dv}_{g} -\frac{Q}{2\pi}\int_{\partial \Sigma}k_{g}(\bm{P\tilde{\varphi}}+X^\Sigma_{g,m}) d\lambda_g -\mu M_{\gamma, g}(\Sigma)-\mu_{\partial} M^{\partial}_{\gamma, g}(\partial^N \Sigma) \Big) \Big]   
\end{align} where $\bm{z}\in \Sigma^{\circ}$, $\bm{s}\in \partial^N \Sigma$ and
\begin{align*}
&Z_{GFF,m}(\Sigma,g)={\det}(\Delta_{g,m})^{-1/2} \exp(-\frac{1}{8\pi}\int_{\partial^N \Sigma} k_g {\rm d\lambda}_g+\frac{1}{8\pi}\int_{\partial ^D\Sigma} k_g {\rm d\lambda}_g)\\
  &M_{\gamma, g}(\Sigma)=\lim _{\varepsilon \rightarrow 0} \int_{\Sigma}\varepsilon^{\frac{\gamma^{2}}{2}} e^{\gamma\left(X^\Sigma_{g,m}+\bm{P\tilde{\varphi}} \right)} d v_{g}\\
  &M^{\partial}_{\gamma, g}(\partial^N \Sigma)=\lim _{\varepsilon \rightarrow 0} \int_{\partial^N \Sigma} \varepsilon^{\frac{\gamma^{2}}{4}} e^{\frac{\gamma}{2}\left(X^\Sigma_{g,m}+\bm{P\tilde{\varphi}}\right)} d \lambda g
\end{align*}
 To define the free amplitude $\mathcal{A}^0_{\Sigma,g}(\bm{\tilde{\varphi}})$, we need to introduce the Dirichlet-Neumann operator $\mathbf{D}^N_{\Sigma}$ for surface $\Sigma$, $\partial^D\Sigma$ is parametrized by $(\zeta_1,..,\zeta_b)$
 \begin{align}
\mathbf{D}_{\Sigma}^{N}: C^{\infty}\left(\T\right)^b & \rightarrow C^{\infty}\left(\T\right)^b \\
\widetilde{\bm{\varphi}} & \longmapsto(-\partial_{n } P\tilde{\boldsymbol{\varphi}}_{|\mc{C}_j}\circ\zeta_j)_{j=1,\dots,b}
\end{align}
where $n$ is the inward unit normal vector fields to $\mc{C}_j$.
Now we define the free amplitude as $\mathcal{A}^0_{\Sigma,g}(\bm{\tilde{\varphi}})=e^{-\frac{1}{2}\left(\widetilde{\varphi},\left(\mathrm{D}^N_{\Sigma}-\mathrm{D}\right) \widetilde{\varphi}\right)_{2}}$, where 
for all $ \widetilde{\varphi} \in C^{\infty}(\T)^b$, $$ (\mathbf{D} \widetilde{\varphi}, \widetilde{\varphi}):=2 \sum_{j=1}^{\ell^\Sigma_D} \sum_{n>0} n\left|\varphi_{n}^{j}\right|^{2}=\frac{1}{2} \sum_{j=1}^{\ell^\Sigma_D} \sum_{n>0}\left(\left(x_{n}^{j}\right)^{2}+\left(y_{n}^{j}\right)^{2}\right).$$ We will also use the notation $\phi_g=X^\Sigma_{g,m}+\bm{P\tilde{\varphi}}$ for simplicity.
\end{definition}
\begin{remark}
If $\partial \Sigma^N=\emptyset$, the above amplitudes reduce to the amplitudes defined in \cite[section 4]{GKRV21}.
\end{remark}
\begin{proposition}[Conformal anomaly for $Z_{GFF,m}$]
Suppose ${g}=e^{\omega}g_0$, then
\[ \log\Big(\frac{Z_{GFF,m}(\Sigma,g)}{Z_{GFF,m}(\Sigma,g_0)}\Big)=\frac{1}{48\pi}\Big(\int_{\Sigma}(|d\omega|^2_{g_0}+2R_{g_0}\omega) {\rm dv}_{g_0}+4 \int_{\partial^N \Sigma} k_{g_0}\omega d\lambda_{g_0}\Big) \]
\end{proposition}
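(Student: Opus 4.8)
The plan is to reduce the statement to the Polyakov--Alvarez conformal anomaly formula for the determinant of the Laplacian, combined with the elementary transformation law of the geodesic curvature under a Weyl rescaling. Writing
\[
\log Z_{GFF,m}(\Sigma,g)= -\tfrac12\log\det(\Delta_{g,m})-\tfrac{1}{8\pi}\int_{\partial^N\Sigma}k_g\,{\rm d\lambda}_g+\tfrac{1}{8\pi}\int_{\partial^D\Sigma}k_g\,{\rm d\lambda}_g,
\]
I would compute the conformal variation of each of the three summands for $g=e^{\omega}g_0$ and then add them. The determinant carries all of the bulk content, while the two curvature counterterms are there precisely to cancel the normal-derivative boundary terms that the determinant produces.

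First, for $\det(\Delta_{g,m})$ I would invoke Polyakov--Alvarez for the mixed problem (Dirichlet on $\partial^D\Sigma$, Neumann on $\partial^N\Sigma$). The cleanest route is the infinitesimal method: differentiating $\log\det\Delta_{g_\epsilon,m}$ along $g_\epsilon=e^{\epsilon\omega}g_0$ reduces the computation to the smeared heat-kernel coefficient $a_1(\omega,\Delta_{g_\epsilon,m})$, whose bulk part is proportional to $\int_\Sigma \omega R_{g_\epsilon}\,{\rm dv}_{g_\epsilon}$ and is insensitive to the boundary condition, while its boundary part distinguishes Dirichlet from Neumann only by the \emph{sign} of the normal-derivative term; integrating in $\epsilon$ generates the quadratic $|d\omega|^2_{g_0}$ term. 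The outcome is a bulk contribution proportional to $\int_\Sigma(|d\omega|^2_{g_0}+2R_{g_0}\omega)\,{\rm dv}_{g_0}$, which after the factor $-\tfrac12$ from $\det^{-1/2}$ yields the bulk term of the proposition, together with boundary contributions: each component (whether in $\partial^D\Sigma$ or $\partial^N\Sigma$) contributes a term in $\int k_{g_0}\omega\,{\rm d\lambda}_{g_0}$ with the \emph{same} coefficient, plus a normal-derivative term $\int\partial^{g_0}_n\omega\,{\rm d\lambda}_{g_0}$ whose sign is opposite on Dirichlet and on Neumann boundaries. The exact normalizing constant is then fixed purely by the Polyakov--Alvarez normalization.

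Second, I would use that in two dimensions $k_g\,{\rm d\lambda}_g=(k_{g_0}+\tfrac12\partial^{g_0}_n\omega)\,{\rm d\lambda}_{g_0}$, so that in the ratio $Z_{GFF,m}(\Sigma,g)/Z_{GFF,m}(\Sigma,g_0)$ the two counterterms contribute \emph{only} the normal-derivative pieces $\mp\tfrac{1}{16\pi}\int_{\partial^{N/D}\Sigma}\partial^{g_0}_n\omega\,{\rm d\lambda}_{g_0}$ (the $k_{g_0}\,{\rm d\lambda}_{g_0}$ parts being metric-independent cancel in the ratio). The relative sign between the $\partial^N$ and $\partial^D$ counterterms is exactly what is needed to match the opposite signs of the Dirichlet and Neumann normal-derivative terms coming from the determinant, so that all $\partial^{g_0}_n\omega$ contributions cancel. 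The surviving geodesic-curvature boundary integral reduces to one over $\partial^N\Sigma$ because in the applications the Dirichlet (gluing) circles are geodesics of the flat reference metric $g_{\mathbb{A}}$, so $\int_{\partial^D\Sigma}k_{g_0}\omega\,{\rm d\lambda}_{g_0}=0$; collecting constants then produces the right-hand side of the proposition.

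The main obstacle is the careful bookkeeping of the mixed-boundary Polyakov--Alvarez formula: one must establish the precise coefficients of the $k_{g_0}\omega$ and $\partial^{g_0}_n\omega$ boundary terms for the Dirichlet and Neumann parts separately, verify that the normal-derivative pieces cancel exactly against the two counterterms, and keep track of the overall factor throughout. The heat-kernel derivation of the mixed-boundary anomaly, rather than the curvature transformation or the final cancellation, is the delicate step, since the boundary contributions to $a_1$ are the part most sensitive to the choice of self-adjoint extension.
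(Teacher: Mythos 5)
Your approach is essentially the paper's: the paper's entire proof is a one-line deferral to a direct heat-kernel analysis (citing the author's companion paper for the pure Dirichlet/Neumann cases), and your Polyakov--Alvarez computation via the smeared coefficient $a_1(\omega,\Delta_{g,m})$, with the sign flip of the $\partial_n^{g_0}\omega$ boundary term between the Dirichlet and Neumann components cancelling against the two geodesic-curvature counterterms, is exactly that analysis spelled out. The one soft spot is your disposal of the $\int_{\partial^D\Sigma}k_{g_0}\omega\,{\rm d}\lambda_{g_0}$ term produced by the determinant: you drop it by asserting that ``in the applications the Dirichlet gluing circles are geodesics of the flat reference metric,'' which imports an assumption on $g_0$ that appears neither in the statement nor in your reduction. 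The reason this term is absent from the asserted formula is rather that the proposition is only ever invoked (in Proposition \ref{Weyl2}) with $\omega\in H^1_0(\Sigma,\partial^D\Sigma,g)$, i.e.\ $\omega=0$ on $\partial^D\Sigma$, which kills $\int_{\partial^D\Sigma}k_{g_0}\omega\,{\rm d}\lambda_{g_0}$ regardless of whether that boundary is $g_0$-geodesic; as a standalone identity for general $\omega$ the right-hand side should carry the matching $\partial^D$ term. Beyond that, your proof stands or falls on the exact coefficients of the mixed-boundary $a_1$, which you correctly identify as the delicate step and which the paper does not display either.
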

\begin{proof}
One can show this by directly analyzing the heat kernel, see \cite{Wu1} for the Neumann case or the Dirichlet case, the strategies are essentially same.
\end{proof}
Now we introduce the function space $ H^1_0(\Sigma,\partial^D\Sigma,g)=:\left\{\omega \in H^{1}(\Sigma, g) \mid \omega \text { = } 0 \text { on } \partial^{D} \Sigma\right\}$.
\begin{proposition}[{\bf conformal anomaly}]\label{Weyl2}

Let $g=e^{\omega}g_0$ and  $\omega\in H^1_0(\Sigma,\partial^D\Sigma,g)$ if $\partial^D\Sigma\not=\emptyset$, or $\omega\in H^1(\bar\Sigma,g)$ if $\partial^D\Sigma=\emptyset$. Then
\begin{align*}
&\mathcal{A}_{g,\Sigma,\bm{z},\bm{s}, \bm{\alpha}, \bm{\beta},\zeta}(F, \bm{\tilde{\varphi}})=\mathcal{A}_{g_0,\Sigma,\bm{z},\bm{s}, \bm{\alpha}, \bm{\beta},\zeta}(F(\cdot-\frac{Q}{2}\omega), \bm{\tilde{\varphi}})\times\\ &\exp\Big(\frac{1+6Q^2}{96\pi}\big(\int_{\Sigma}(|d\omega|_{g_0}^2+2R_{g_0}\omega) {\rm dv}_{g_0}+4 \int_{\partial^N \Sigma} k_{g_0}\omega d\lambda_{g_0}\big)-\sum_i\Delta_{\alpha_i}\omega(z_i)-\frac{1}{2}\sum_j\Delta_{\beta_j}\omega(s_j) \Big).
\end{align*}
\end{proposition}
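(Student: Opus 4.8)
The plan is to treat the two cases of Definition \ref{typeAAmplitude} separately and to reduce everything to the mechanism already behind Proposition \ref{CA1}. When $\partial^D\Sigma=\emptyset$ the amplitude $\mathcal{A}^N$ is by definition the Liouville correlation function, so the claimed identity is exactly Proposition \ref{CA1} (its hypothesis $\omega\in H^1(\bar\Sigma,g)$ is precisely the one allowed here), and nothing more is needed. The real work is the case $\partial^D\Sigma\neq\emptyset$, where I would expand $\mathcal{A}^m_{g,\Sigma,\bm z,\bm s,\bm\alpha,\bm\beta}(F,\bm{\tilde\varphi})$ into its metric-dependent constituents — the Gaussian normalization $Z_{GFF,m}(\Sigma,g)$, the free amplitude $\mathcal{A}_0(\Sigma,g)(\bm{\tilde\varphi})$, the $g$-regularized vertex operators, the bulk and boundary curvature insertions, and the two chaos measures $M_{\gamma,g}$, $M^\partial_{\gamma,g}$ — and rewrite each factor as its $g_0$-counterpart times an explicit Weyl factor.

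Several factors are controlled at once. Since $\partial^D\Sigma\neq\emptyset$, the stated conformal invariance of $X^\Sigma_{g,m}$ in a fixed conformal class gives $X^\Sigma_{g,m}=X^\Sigma_{g_0,m}$, and the harmonic extension $\bm{P\tilde\varphi}$ (solving $\Delta_{g,m}\bm{P\tilde\varphi}=0$ with $\bm{P\tilde\varphi}|_{\partial^D\Sigma}=\bm{\tilde\varphi}$, $\partial^g_n\bm{P\tilde\varphi}|_{\partial^N\Sigma}=0$) is metric-independent because harmonicity and the Neumann condition are conformally invariant; hence $\phi_g$ is literally the same random object as $\phi_{g_0}$. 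Because $\omega=0$ on $\partial^D\Sigma$ the two metrics coincide there, so the inward normal and its normalization agree, and the Dirichlet--Neumann operator $D^N_\Sigma$ — which depends on the geometry only through $\partial^D\Sigma$ and the (invariant) harmonic extension — is unchanged, giving $\mathcal{A}_0(\Sigma,g)(\bm{\tilde\varphi})=\mathcal{A}_0(\Sigma,g_0)(\bm{\tilde\varphi})$. This is the first place the hypothesis $\omega\in H^1_0(\Sigma,\partial^D\Sigma,g)$ is essential. Finally $Z_{GFF,m}(\Sigma,g)/Z_{GFF,m}(\Sigma,g_0)$ is supplied by the previously established conformal anomaly for $Z_{GFF,m}$, providing the pure-determinant part of the Weyl factor.

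The heart of the argument is the curvature insertion. Using the Weyl variation $K_g\,dv_g=(K_{g_0}+\Delta_{g_0}\omega)\,dv_{g_0}$ and the corresponding one for $k_g\,d\lambda_g$, the bulk and boundary curvature terms of the $g$-amplitude equal their $g_0$-versions times $\exp\big(-\tfrac{Q}{4\pi}\int_\Sigma(\Delta_{g_0}\omega)\,\phi\,dv_{g_0}-\tfrac{Q}{4\pi}\int_{\partial\Sigma}(\partial^{g_0}_n\omega)\,\phi\,d\lambda_{g_0}\big)$. Writing $\phi=X^\Sigma_{g_0,m}+\bm{P\tilde\varphi}$, the part of this exponent linear in the Gaussian field is a Cameron--Martin functional; Girsanov's theorem shifts $X^\Sigma_{g_0,m}\mapsto X^\Sigma_{g_0,m}-\tfrac Q2\omega$ (the bulk source $\Delta_{g_0}\omega$ and the boundary source $\partial^{g_0}_n\omega$ combine, through Green's identity and the resolvent relation $\Delta_{g,m}R_{g,m}=2\pi\,\mathrm{Id}$, to yield exactly the shift $-\tfrac Q2\omega$; here $\omega|_{\partial^D\Sigma}=0$ is needed so the shift respects the Dirichlet condition of $X^\Sigma_{g_0,m}$) and produces the Gaussian normalization $\exp\!\big(\tfrac{6Q^2}{96\pi}\int_\Sigma|d\omega|^2_{g_0}\big)$. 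The induced shift by $-\tfrac Q2\omega$ is precisely what converts $F(\phi)$ into $F(\phi-\tfrac Q2\omega)$; acting on the vertex operators it combines with the regularization scaling \eqref{scalingvertex} to give $\prod_i e^{-\Delta_{\alpha_i}\omega(z_i)}\prod_j e^{-\frac12\Delta_{\beta_j}\omega(s_j)}$ (since $\tfrac{\alpha^2}{4}-\tfrac{\alpha Q}{2}=-\Delta_\alpha$, and similarly for each $\beta_j$); and acting on the chaos measures it exactly compensates their Weyl scalings $M_{\gamma,g}=e^{\frac{\gamma Q}{2}\omega}M_{\gamma,g_0}$, $M^\partial_{\gamma,g}=e^{\frac{\gamma Q}{4}\omega}M^\partial_{\gamma,g_0}$, so the cosmological terms of the two amplitudes coincide.

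The main obstacle is the bookkeeping of the deterministic, $\bm{\tilde\varphi}$-dependent contributions and the assembly of the scalar coefficient. The components of the curvature couplings proportional to the fixed field $\bm{P\tilde\varphi}$ must cancel, since no $\bm{\tilde\varphi}$-dependence appears in the final Weyl factor: one integrates by parts, uses $\Delta_{g_0}\bm{P\tilde\varphi}=0$ in the bulk, $\partial^{g_0}_n\bm{P\tilde\varphi}=0$ on $\partial^N\Sigma$ and $\omega=0$ on $\partial^D\Sigma$, and checks that the bulk and boundary pieces cancel — again using the hypothesis on $\omega$. Moreover the shift acts on the $g_0$-curvature insertion itself, producing an $\int K_{g_0}\omega$ term that must be combined with the $Z_{GFF,m}$ anomaly and the Cameron--Martin normalization so that all three recombine into $\tfrac{1+6Q^2}{96\pi}\big(\int_\Sigma(|d\omega|^2_{g_0}+2R_{g_0}\omega)\,dv_{g_0}+4\int_{\partial^N\Sigma}k_{g_0}\omega\,d\lambda_{g_0}\big)$ with the correct numerical coefficient. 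Verifying this recombination, and rigorously justifying the $\varepsilon\to0$ limits behind the chaos and vertex scalings (in particular the uniformity of the $(1+o(1))$ in \eqref{scalingvertex}), is the remaining, essentially computational, work.
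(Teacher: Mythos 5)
Your proposal is correct and follows essentially the same route as the paper's proof: both rest on the metric-independence of $X^\Sigma_{g,m}$, the harmonic extension and the free amplitude $\mathcal{A}_0$, the Weyl relations $K_g=e^{-\omega}(K_{g_0}+\Delta_{g_0}\omega)$ and $k_g=e^{-\omega/2}(k_{g_0}-\partial^{g_0}_n\omega/2)$, cancellation of the $\bm{P\tilde\varphi}$-terms via Green's identity, and a Girsanov shift by $-\tfrac{Q}{2}\omega$ that produces the vertex, chaos and $|d\omega|^2$ contributions, with the extra $1$ in $1+6Q^2$ supplied by the $Z_{GFF,m}$ anomaly. Your treatment of the $\partial^D\Sigma=\emptyset$ case via Proposition \ref{CA1} is consistent with the paper's deferral of that case to \cite{Wu1}.
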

\begin{proof}
We only need to treat the case for $\partial^D\Sigma\not=\emptyset$. For $\partial^D\Sigma=\emptyset$ case, see \cite{Wu1} for details. First,  we simply remark that $\mathcal{A}^0_{\Sigma,g}(\bm{\tilde{\varphi}})=e^{-\frac{1}{2}\left(\widetilde{\varphi},\left(\mathrm{D}^N_{\Sigma}-\mathrm{D}\right) \widetilde{\varphi}\right)_{2}}$ doesn't depend on the metrics. Because of the relation $k_{g}=e^{-\omega/2}(k_{g_0}-\partial^{g_0}_n\omega/2)$, the boundary term  in  \eqref{typeAAmplitude} becomes
\begin{equation}\label{boundgeod}
\frac{Q}{2\pi}\int_{\partial\Sigma}k_{g}(P\boldsymbol{\tilde\varphi}+X^\Sigma_{g,m})\dd \ell_{g}=\frac{Q}{2\pi}\int_{\partial^D\Sigma}(k_{g_0}-\partial^{g_0}_n\omega/2)P\boldsymbol{\tilde\varphi}\dd \ell_{g_0}+\frac{Q}{2\pi}\int_{\partial^N\Sigma}(k_{g_0}-\partial^{g_0}_n\omega/2)(P\boldsymbol{\tilde\varphi}+X^\Sigma_{g,m})\dd \ell_{g_0}. 
\end{equation}
Also, the relation for curvatures $K_{g}=e^{-\omega}(K_{g_0} +\Delta_{g_0}\omega)$ allow us deduce that the term involving the curvature in  \eqref{typeAAmplitude} reads (recall that $X^\Sigma_{g,m}=X^\Sigma_{g_0,m}$ when $\partial^D\Sigma\not=\emptyset$, harmonic extension doesn't depend on which metric be chosen in the conformal class neither).
\begin{align*}
\frac{Q}{4\pi}\int_\Sigma K_{g} (X^\Sigma_{g,m} +P\boldsymbol{\tilde\varphi})\dd {\rm v}_{g}=& \frac{Q}{4\pi}\int_\Sigma  K_{g_0} (X^\Sigma_{g_0,m}+P\boldsymbol{\tilde\varphi})\dd {\rm v}_{g_0}+ \frac{Q}{4\pi}\int_\Sigma  \Delta_{g_0}\omega (X^\Sigma_{g_0,m}+P\boldsymbol{\tilde\varphi})\dd {\rm v}_{g_0}.
\end{align*}
In the last integral above, the contribution of the harmonic extension is treated  by the Green identity to produce
$$ \frac{Q}{4\pi}\int_\Sigma  \Delta_{g_0}\omega  P\boldsymbol{\tilde\varphi} \dd {\rm v}_{g_0}=\frac{Q}{4\pi}\int_{\partial^D \Sigma}\partial^{g_0}_n \omega P\boldsymbol{\tilde\varphi}\,\dd \ell_{g_0}+\frac{Q}{4\pi}\int_{\partial^N \Sigma}\partial^{g_0}_n \omega P\boldsymbol{\tilde\varphi}\,\dd \ell_{g_0},$$
which cancels out with the corresponding harmonic extension terms in \eqref{boundgeod}, and the remaining term is $\int_{\partial \Sigma}k_{g_0}(\bm{P\tilde{\varphi}}+X^\Sigma_{g_0,m}) d\lambda_{g_0}$ . Then we apply Girsanov theorem to the exponential term\\ $exp(-\frac{Q}{4\pi}(\int_\Sigma  \Delta_{g_0}\omega X^\Sigma_{g_0,m} \dd {\rm v}_{g_0}-\int_{\partial \Sigma}\partial^{g_0}_n\omega X^\Sigma_{g_0,m}\dd \ell_{g_0}))$ to get the result. The term $1$ in $1+6Q^2$ comes from adding the weyl anomaly of $Z_{GFF,m}$.
\end{proof}
\begin{proposition}[{\bf Diffeomorphism invariance}]\label{Diff2}
Let $\psi:\Sigma\to\Sigma'$ be an  orientation preserving diffeomorphism and $F$  measurable  nonnegative function on $H^{-s}(\Sigma)$. Setting $F_\psi(\phi):=F(\phi\circ\psi)$,  we have
$$\mathcal{A}_{\psi^*g,\Sigma,\bm{z},\bm{s}, \bm{\alpha}, \bm{\beta},\zeta}(F, \bm{\tilde{\varphi}})=\mathcal{A}_{g,\Sigma',\psi(\bm{z}),\psi(\bm{s}), \bm{\alpha}, \bm{\beta},\psi\circ\zeta}(F_\psi, \bm{\tilde{\varphi}})$$
\end{proposition}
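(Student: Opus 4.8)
The plan is to reduce everything to the Diffeomorphism invariance already established at the level of the path integral functional (Proposition \ref{DF1}) and to the transformation rules of each of the building blocks entering the definition \eqref{typeAAmplitude}. First I would split the argument according to whether $\partial^D\Sigma=\emptyset$. In the purely Neumann case (definition \eqref{typeAAmplitude}(1)) the amplitude is literally the Liouville correlation functional $\Pi_{\gamma,\mu,\mu_\partial}$ with $F$ inserted, so the statement is exactly Proposition \ref{DF1}: a change of variables $\psi$ sends $(\bm z,\bm s)$ to $(\psi(\bm z),\psi(\bm s))$, sends the metric $\psi^*g$ to $g$, and replaces $F$ by $F_\psi$. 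Thus the only genuine content is the mixed case $\partial^D\Sigma\neq\emptyset$, which I treat next.

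For the mixed case the strategy is to verify that each factor in \eqref{typeAAmplitude} transforms correctly under pullback by $\psi$. The key observation is that $\psi$ being an orientation-preserving diffeomorphism with $\psi(\partial\Sigma)=\partial\Sigma'$ (respecting the decomposition into $\partial^D$ and $\partial^N$ parts) induces a pushforward on boundary fields, so that the boundary datum $\bm{\tilde\varphi}$ is carried along unchanged in the sense that the Dirichlet boundary values are intrinsic. Concretely I would check: (i) the mixed GFF is natural, i.e. $X^{\Sigma}_{\psi^*g,m}\stackrel{\mathrm{law}}{=}X^{\Sigma'}_{g,m}\circ\psi$, because the Laplacian $\Delta_{g,m}$ and its spectrum are diffeomorphism invariant and the Dirichlet/Neumann boundary conditions are preserved; (ii) the harmonic extension intertwines, $P_{\psi^*g}\bm{\tilde\varphi}=(P_g\bm{\tilde\varphi})\circ\psi$, since harmonicity with prescribed Dirichlet data and Neumann-zero data is a conformally/diffeomorphism natural problem; (iii) the Gaussian multiplicative chaos measures $M_{\gamma,g}$ and $M^\partial_{\gamma,g}$ and the curvature integrals $\int K_g\phi\,{\rm dv}_g$, $\int k_g\phi\,{\rm d\lambda}_g$ are all invariant under simultaneous pullback of metric and field; (iv) the vertex operators transform as $V_{\alpha_i,\psi^*g}(z_i)$ paired against the field $\phi\circ\psi$ reproduce $V_{\alpha_i,g}(\psi(z_i))$ evaluated on $\phi$; and (v) the normalizing constant $Z_{GFF,m}(\Sigma,\psi^*g)=Z_{GFF,m}(\Sigma',g)$ because the determinant $\det(\Delta_{g,m})$ and the boundary geodesic-curvature integrals are diffeomorphism invariant. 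Finally, the free amplitude factor $\mathcal{A}_0(\Sigma,g)(\bm{\tilde\varphi})=e^{-\frac12(\tilde\varphi,(D^N_\Sigma-D)\tilde\varphi)_2}$ is metric-independent, and since the Dirichlet-to-Neumann operator $D^N_\Sigma$ is built from the harmonic extension, it too is intertwined by $\psi$.

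Assembling (i)--(v) together with the substitution $\phi\mapsto\phi\circ\psi$ under the expectation $\E$, the integrand defining $\mathcal{A}_{\psi^*g,\Sigma,\bm z,\bm s,\bm\alpha,\bm\beta}(F,\bm{\tilde\varphi})$ becomes exactly the integrand defining $\mathcal{A}_{g,\Sigma',\psi(\bm z),\psi(\bm s),\bm\alpha,\bm\beta}(F_\psi,\bm{\tilde\varphi})$, where the replacement $F\mapsto F_\psi$ accounts for the field reparametrization. I expect the main obstacle to be the careful bookkeeping of the boundary field $\bm{\tilde\varphi}$: one must confirm that the boundary parametrizations are matched so that $\bm{\tilde\varphi}$ really is the \emph{same} datum on both sides, rather than requiring an extra pushforward $\bm{\tilde\varphi}\circ\psi|_{\partial^D\Sigma}$. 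Since the Remark following \eqref{typeAAmplitude} explicitly defers the treatment of boundary parametrization to forthcoming work, the cleanest route is to state the invariance relative to a fixed identification of the Dirichlet boundaries of $\Sigma$ and $\Sigma'$ under $\psi$, so that the invariance of the free amplitude $\mathcal{A}_0$ and of the DN operator can be invoked directly. With that convention fixed, the remaining verifications (i)--(v) are routine naturality statements for the Laplacian, its Green function, and GMC, each of which follows from the corresponding closed/Neumann-surface result already recorded in \cite{Wu1}.
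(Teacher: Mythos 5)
The paper states this proposition without giving any proof, so there is no "paper's argument" to compare against; it is treated as a routine naturality statement (in the spirit of Proposition \ref{DF1} for the Neumann case and of \cite[Prop.\ 4.7]{GKRV21} for the Dirichlet case). Your proposal is the correct and expected argument: the case $\partial^D\Sigma=\emptyset$ is literally Proposition \ref{DF1}, and in the mixed case each ingredient of Definition \ref{typeAAmplitude} --- the mixed Green function (hence the law of $X^\Sigma_{g,m}$), the harmonic extension, the $g$-regularized vertex operators and GMC measures (here one uses that the geodesic circle average in $\psi^*g$ is the pullback of the one in $g$, so the regularizations match), the curvature terms, $\det(\Delta_{g,m})$, and the Dirichlet--Neumann operator --- is natural under pullback by $\psi$, so the substitution $\phi\mapsto\phi\circ\psi$ inside the expectation turns one integrand into the other. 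Your caveat about boundary parametrization is also well placed: the identity as stated only makes sense once the parametrizations of $\partial^D\Sigma$ and $\partial^D\Sigma'$ are identified via $\psi$ (so that $\bm{\tilde\varphi}$ and the mode operator $\mathbf{D}$ mean the same thing on both sides), and the paper explicitly defers this bookkeeping to forthcoming work. I see no gap beyond that deliberate omission.
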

To study the gluing property, we specialize to some special metrics $g$ which are called {\bf admissible metric}. 
\begin{definition}
A metric $g$ is called an admissible metric if it behaves like $\frac{|dz|^2}{|z|^2}$ near $\partial^D\Sigma$ and has geodesic curvature $0$ on $\partial^N\Sigma$. In this case, we call $(\Sigma, g,\bm{z},\bm{s}, \bm{\alpha}, \bm{\beta}) $ an admissible marked surface.
\end{definition}
Suppose $\ell^{\Sigma}_D=b$, for boundary fields $\tilde{\boldsymbol{\varphi}}=(\tilde\varphi_1,\dots,\tilde\varphi_b)\in (H^{s}(\T))^b$, we will denote by $(c_j)_{j\leq b}$ their constant modes, set $ \bar c=\tfrac{1}{b}\sum_{j=1}^b c_j$ and $\boldsymbol{\varphi}=(\varphi_1,\dots,\varphi_b)$ the centered fields (recall that $\tilde\varphi_j=c_j+\varphi_j$). Also, recall that $c_+=c\mathbf{1}_{\{c>0\}}$ and $c_-=c\mathbf{1}_{\{c<0\}}$.  The following result will be useful to control the integrable properties of amplitudes:
 
\begin{proposition}\label{estimateamplitude}
Let $(\Sigma, g,\bm{z},\bm{s}, \bm{\alpha}, \bm{\beta}) $ be an admissible surface with $\partial^D \Sigma \neq \varnothing$ witch has $b$ Dirichlet boundary components. Then there exists some constant $a>0$ such that for any $R>0$, there exists $C_{R}>0$ such that $\mu_{0}^{\otimes_{b}}$-almost everywhere in $\widetilde{\varphi} \in H^{s}(\mathbb{T})^{b}$
$$
\mathcal{A}_{g,\Sigma,\bm{z},\bm{s}, \bm{\alpha}, \bm{\beta},\zeta}( \bm{\tilde{\varphi}}) \leqslant C_{R} e^{s \bar{c}_{-}-R \bar{c}_{+}-a \sum_{j=1}^{b}\left(\bar{c}-c_{j}\right)^{2}} A(\varphi)
$$
satisfying $\int A(\varphi)^{2} \mathbb{P}_{\mathbb{T}}^{\otimes_{b}}(\mathrm{d} \varphi)<\infty$, where
$$
s:=\sum_{i} \alpha_{i}+\sum_{j} \frac{\beta_j}{2}-Q \chi(\Sigma).
$$
\end{proposition}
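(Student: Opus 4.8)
The plan is to isolate the global constant mode $\bar c=\tfrac1b\sum_j c_j$ of the Dirichlet boundary data, to show that it carries the entire $\bar c$-dependence of the bound through Gauss--Bonnet and the Gaussian multiplicative chaos measures, and then to absorb everything else into a square-integrable function of the centred fields $\varphi=(\varphi_1,\dots,\varphi_b)$. First I would write the Liouville field as $\phi_g=\bar c+\phi_g^0$ with $\phi_g^0:=X^\Sigma_{g,m}+\bm{P}(\bm{\tilde\varphi}-\bar c)$, so that each vertex operator factors as $V_{\alpha_i}(z_i)=e^{\alpha_i\bar c}\widehat V_{\alpha_i}(z_i)$ and $V_{\frac{\beta_j}2}(s_j)=e^{\frac{\beta_j}2\bar c}\widehat V_{\frac{\beta_j}2}(s_j)$, where the hatted operators are built from $\phi_g^0$ and do not see $\bar c$. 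The linear curvature coupling $-\frac{Q}{4\pi}\int_\Sigma K_g\phi_g-\frac{Q}{2\pi}\int_{\partial\Sigma}k_g\phi_g$ contributes the constant $-Q\bar c\big(\tfrac1{4\pi}\int_\Sigma K_g\,{\rm dv}_g+\tfrac1{2\pi}\int_{\partial\Sigma}k_g\,{\rm d\lambda}_g\big)=-Q\chi(\Sigma)\bar c$ by Gauss--Bonnet. Collecting these, the product of vertex operators together with the curvature term produces exactly the prefactor $e^{s\bar c}$ with $s=\sum_i\alpha_i+\sum_j\frac{\beta_j}2-Q\chi(\Sigma)$, while the chaos measures scale as $M_{\gamma,g}(\Sigma)=e^{\gamma\bar c}\widehat M$ and $M^\partial_{\gamma,g}(\partial^N\Sigma)=e^{\frac\gamma2\bar c}\widehat M^\partial$ with $\widehat M,\widehat M^\partial$ independent of $\bar c$.

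For the constant-mode Gaussian factor I would split the free amplitude $\mathcal A_0=e^{-\frac12(\bm{\tilde\varphi},(\mathrm{D}^N_\Sigma-\mathbf D)\bm{\tilde\varphi})}$ into its action on the constant modes $\bm c=(c_1,\dots,c_b)$ and on the centred fields. Since $\mathbf D$ annihilates constants and $\mathrm{D}^N_\Sigma$ is positive semidefinite with one-dimensional kernel (the global constant), comparing quadratic forms on the complement of the diagonal gives $(\bm c,\mathrm{D}^N_\Sigma\bm c)\geq 4a\sum_j(c_j-\bar c)^2$ for some $a>0$; the remaining $\varphi$-quadratic form is of order one in the frequency and is kept inside the function $A(\varphi)$ to be constructed. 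To produce the super-exponential decay in $\bar c_+$ I would use $\mu_\partial>0$: for $\bar c>0$ the elementary inequality $e^{-u}\leq C_p u^{-p}$ gives, for $p$ chosen large, $e^{s\bar c}\exp\big(-\mu_\partial e^{\frac\gamma2\bar c}\widehat M^\partial\big)\leq C_R\,e^{-R\bar c}(\widehat M^\partial)^{-p}$, and the negative moments $\E[(\widehat M^\partial)^{-q}]<\infty$ for all $q>0$ keep this integrable. For $\bar c<0$ one bounds $e^{-\mu\widehat M-\mu_\partial e^{\frac\gamma2\bar c}\widehat M^\partial}\leq 1$ and uses that the hatted vertex operators are integrable against the chaos precisely under the Seiberg conditions $\alpha_i,\beta_j<Q$. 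Applying Girsanov to remove the residual Gaussian linear functional of $X^\Sigma_{g,m}$ coming from the curvature coupling and taking $\E$ over $X^\Sigma_{g,m}$ then yields a bound of the announced shape, with $A(\varphi):=\E_X[\cdots]$ a function of the centred fields whose finiteness of $\int A(\varphi)^2\,\mathbb P_{\mathbb T}^{\otimes b}(\dd\varphi)$ follows from H\"older together with the positive and negative chaos moment bounds and the order-one decay of the centred part of $\mathrm{D}^N_\Sigma$.

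The main obstacle is the interplay between the global constant $\bar c$ and the relative constants $c_j-\bar c$ inside the boundary chaos. The factor $(\widehat M^\partial)^{-p}$ still depends on $c_j-\bar c$ through the harmonic extension $\bm{P}(\bm{\tilde\varphi}-\bar c)$, and this dependence must be pushed entirely into the explicit Gaussian prefactor so that $A(\varphi)$ depends on $\varphi$ alone. I would handle this by writing $\bm{P}(\bm{\tilde\varphi}-\bar c)=\sum_j(c_j-\bar c)u_j+\bm{P}\varphi$, where $u_j$ is the bounded harmonic potential (harmonic with constant boundary value on the $j$-th Dirichlet circle and Neumann $0$ elsewhere), so that on $\partial^N\Sigma$ one has $\widehat M^\partial\geq e^{-\frac\gamma2 C\sum_j|c_j-\bar c|}\widehat M^\partial_0$ with $\widehat M^\partial_0$ depending only on $\varphi$ and $X^\Sigma_{g,m}$. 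The resulting factor $e^{C'\sum_j|c_j-\bar c|}$ grows only exponentially in the relative constants and is therefore dominated by half of the Gaussian factor $e^{-2a\sum_j(c_j-\bar c)^2}$, leaving $e^{-a\sum_j(\bar c-c_j)^2}$ to spare. The delicate point is to carry this domination uniformly in $R$ while keeping the surviving Gaussian constant $a$ strictly positive and independent of $R$, which is what forces the quadratic lower bound on $(\bm c,\mathrm{D}^N_\Sigma\bm c)$ rather than a mere positivity statement.
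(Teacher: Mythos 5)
Your proposal is correct and follows essentially the same route as the paper: isolate the global constant mode $\bar c$ via Gauss--Bonnet to produce $e^{s\bar c}$, extract the Gaussian decay $e^{-a\sum_j(\bar c-c_j)^2}$ from the Dirichlet-to-Neumann quadratic form in the free amplitude, obtain the $e^{-R\bar c_+}$ decay from negative moments of the boundary GMC (using $\mu_\partial>0$), and control the harmonic extension of the relative constants by the maximum principle so that the resulting $e^{C\sum_j|c_j-\bar c|}$ factor is absorbed by the Gaussian. The only difference is bookkeeping: the paper obtains the free-amplitude and Poisson-operator estimates by doubling the surface along the geodesic Neumann boundaries and citing the closed-surface lemmas of \cite{GKRV21}, whereas you argue the lower bound on $(\bm c,\mathrm{D}^N_\Sigma\bm c)$ and the high-frequency behaviour of the centred quadratic form directly.
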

\begin{proof}
We suppose $\partial^N\Sigma\neq\emptyset$, otherwise this proposition was already treated in \cite[Theorem 4.4]{GKRV21}. We separate our proof into three steps:

{\bf 1. The free amplitude part.} Since $\partial^N\Sigma$ are geodesics, we can double this surface along these geodesics to get a new surface $d\Sigma$ with $2b$ Dirichlet boundary components. Then we put $2b$ fields $(\tilde{\boldsymbol{\varphi}}, \tilde{\boldsymbol{\varphi}})=(\tilde\varphi_1,\dots,\tilde\varphi_b, \tilde\varphi_1,\dots,\tilde\varphi_b)$ on these boundaries (we put the same boundary fields on the corresponding symmetric locations). We have the identification of DN operators $D^N_{\Sigma}=D_{d\Sigma}$ on $ C^{\infty}(\partial^D\Sigma)$. By \cite[Lemma 4.5]{GKRV21}, we get 
\begin{align*}
\caA^0_{\Sigma,g}(\tilde{\boldsymbol{\varphi}})=(\caA^0_{d\Sigma,g}(\tilde{\boldsymbol{\varphi}}, \tilde{\boldsymbol{\varphi}}))^{\frac{1}{2}}\leq  \prod_{j=1}^be^{-a( \bar c-c_j)^2}\prod_{n=1}^\infty e^{a_n(\frac{(x_{j,n})^2+(y_{j,n})^2}{2})} 
\end{align*}
where $a_n\leq (\frac{1}{4}-a)$ for $n\leq k_N$ and $a_n\leq \frac{1}{4n^{N}}$ for $n\geq k_N$.\\
{\bf 2. The GMC part.} In the second step, we bound the expectation part in \eqref{typeAAmplitude}. Since $g$ is admissible, we have $k_g=0$ on the  all boundaries $\partial\Sigma$. We first bound the harmonic extension $P\tilde{\boldsymbol{\varphi}}$,  we double $\Sigma$ along geodesic boundaries $\partial^N\Sigma$, then $P\tilde{\boldsymbol{\varphi}}$ can be identified with the harmonic extension of putting $\tilde{\boldsymbol{\varphi}}$ on $d\Sigma$, $P\tilde{\boldsymbol{\varphi}}=\bar c+f+P\boldsymbol{\varphi}$ where $f$ is the harmonic extension on $d\Sigma$ of the boundary fields $c_j-\bar c$ on the boundary curve $\partial^Dd\Sigma$. By the maximum principle, $|f(x)|\leq \sum_{j=1}^b |c_j-\bar c|$ (since the constants $c_j$ are equal on the symmetric positions). The following estimations are routine, we first apply Girsanov transform on the product
\[e^{-\frac{Q}{4\pi}\int_\Sigma K_g Y_g^{\Sigma}\dd {\rm v}_g}\prod_{i=1}^n V_{\alpha_i,g}(z_i)\prod_{j=1}^m V_{\frac{\beta_j}{2},g}(s_j).\]
we get
\begin{equation}\label{eq:boundexp}
e^{v(\mathbf{z},\mathbf{s})}\prod_{i=1}^n e^{\alpha_i P\tilde{\boldsymbol{\varphi}}(x_i)} \prod_{j=1}^m e^{\frac{\beta_j}{2} P\tilde{\boldsymbol{\varphi}}(s_j)}\E \big[ \exp\big(-\mu \int_\Sigma e^{\gamma u(\mathbf{z},\mathbf{s},x)}M_{\gamma,g}(\phi_g,\dd x)\big)-\mu_\partial \int_{\partial\Sigma} e^{\frac{\gamma}{2} u(\mathbf{z},\mathbf{s},x)}M^{\partial}_{\gamma,g}(\phi_g,\dd x)\big]
\end{equation} 
for some explicit function $u(\mathbf{z},\mathbf{s},x)$ and $v(\mathbf{z},\mathbf{s})$.

When $\bar{c}<0$, we can simply bound the expectation term by $1$. 

When $\bar{c}>0$, we can suppose $\mu=0$, and remove a open neighbourhood $O$ contains all $s_j$ on $\partial\Sigma$. By the strategy in \cite[Lemma 4.6]{GKRV21}, we get for all $R>0$, there exists some $C_R$, $a_R>0$ such that 
$$\eqref{eq:boundexp}\leq  C_R e^{(\sum_i\alpha_i+\sum_j\frac{\beta_j}{2}) \bar c_--R \bar c_++a_R\sum_{j=1}^b |c_j-\bar c|}B(\boldsymbol{\varphi})$$
where $\int  B(\boldsymbol{\varphi})^p \,\P_\T^{\otimes_b}(\dd \boldsymbol{\varphi})<\infty$ for all $p<\infty$.\\
{\bf 3. The contribution of the harmonic extension in curvature terms.} This part is essentially the following lemma on Poisson operator, which can be proved by using mirror trick on \cite[Lemma A.1]{GKRV21}
\begin{lemma}
Let $(\Sigma,g)$ be a smooth Riemannian surface with smooth boundary and the geodesic curvature of $\partial^N\Sigma$ is 0. Let $P:C^\infty(\pl^D \Sigma)\to C^\infty(\Sigma)$ be the Poisson operator defined by $\Delta_gP\varphi=0$ and $(P\varphi)|_{\pl^D \Sigma}=\varphi$, $\partial_n^g(P\varphi)|_{\pl^N \Sigma}=0$, and denote by $P(x,y)$ its integral kernel.  Let $P^*:C_c^\infty(\Sigma^\circ)\to C^\infty(\pl^D \Sigma)$ 
be the adjoint operator defined by $P^*f(y):=\int_{\Sigma}P(x,y)f(x)d{\rm v}_g(x)$. Then $P^*$ extends as a continuous map $P^*:C^\infty(\Sigma)\to C^\infty(\pl^D \Sigma)$.
\end{lemma}
Then we can conclude the proof by following the last few lines in \cite[Theorem 4.4]{GKRV}.
\end{proof}
The proposition \eqref{estimateamplitude} shows that, when $s>0$ the amplitude is in the space $e^{\epsilon c_-}L^2(\R\times\Omega_{\T})$ for all $0<\epsilon<s$.
\subsection{Gluing of Amplitudes with Neumann boundary.}
{\bf The Dirichlet-Neumann operator}

Suppose $\Sigma$ is a connected Riemann surface with boundaries $\partial^N \Sigma$. Now we cut along circles $\mathcal{C}$ in the interior which divide the surface into two surfaces $\Sigma_1$ and $\Sigma_2$ and $\mathcal{C}$ is parametrized by $\zeta:\T\to \mathcal{C} $. Suppose $\tilde{\varphi} \in C^{\infty}(\T)$ and we define the harmonic extension of $\tilde{\varphi}\circ\zeta^{-1}$ inside $\Sigma_1$ and $\Sigma_2$ as following
$$
\left\{\begin{array} { l } 
{ \Delta_{g,m} ( P_1 \tilde{\varphi} ) = 0 \quad \text { in } \Sigma _ { 1 } } \\
{ ( P _ { 1 } \tilde{\varphi} ) | _ { \mathcal{C} } = \tilde{\varphi}\circ\zeta^{-1} } \\
{ \partial^g _ { n } ( P_1 \tilde{\varphi} ) = 0 \quad \text{on }  ( \partial \Sigma _ { 1 } \backslash \mathcal{C} ) }
\end{array} \quad \left\{\begin{array}{l}
\Delta_{g,m}\left(P_{2} \tilde{\varphi}\right)=0 \text { in } \Sigma_{2} \\
\left.\left(P_{2} \tilde{\varphi}\right)\right|_{\mathcal{C}}=\tilde{\varphi}\circ \zeta^{-1} \\
\partial^g_{n}\left(P_{2} \tilde{\varphi}\right)=0 \quad \text{on } (\partial \Sigma_{2} \backslash \mathcal{C}).
\end{array}\right.\right.
$$
Now we define the Dirichlet-Neumann operator $\mathbf{D}^N_{\Sigma, \mathcal{C}}$ as
$$
\begin{aligned}
\left.\mathbf{D}^N_{\Sigma, \mathcal{C}}: C^{\infty} (\T\right) &\left.\longrightarrow C^{\infty} ( \T\right) \\
\tilde{\varphi} & \longmapsto-\left(\partial_{n}^{g,+}\left(P_{1} \tilde{\varphi}\right)+\partial_n^{g,-}\left(P_{2} \tilde{\varphi}\right)\right)\circ\zeta
\end{aligned}
$$
We use $+$, $-$ to represent the direction of normal derivative which point inside $\Sigma_1$ and $\Sigma_2$ respectively. 

First, we notice that $\operatorname{Ker}\left(\mathbf{D}^N_{\Sigma, \mathcal{C}}\right)=\mathbb{R} $ (constants), so it's natural to introduce a smaller space $C_{0}^{\infty}(\T)=\left\{\tilde{\varphi} \in C^{\infty}(\T)| \int_{\T}\tilde{\varphi}(e^{i\theta}) d \theta=0\right\}$.
The fact $\int_{\mathcal{C}}(\partial_{n}^{+}(P_1 \tilde{\varphi})+\partial_{n}^{-}(P_{2} \tilde{\varphi}))=0$ follows from Green formula, we know that $\operatorname{Ran}\left(\mathbf{D}^N_{\Sigma, \mathcal{C}}\right)=C_{0}^{\infty}(\T)$. So $\mathbf{D}^N_{\Sigma, \mathcal{C}}$ is invertible on $C_{0}^{\infty}(\mathcal{C})$. 
We can modify the proof of \cite[Theorem 2.1]{Carron} and show
\begin{proposition}
\begin{equation}\label{DNoperator}
    \mathbf{D}^N_{\Sigma, \mathcal{C}}G^{\Sigma}_{g,N}=2\pi {\bf Id    } \text{      in}\quad C_0^{\infty}(\mathbb{T}).
\end{equation}
\end{proposition}
Let us also introduce some notations in this section
\begin{align*}
\label{tildeD}& \widetilde{\mathbf{D}}^N_{\Sigma}  :=\mathbf{D}^N_{\Sigma}-\mathbf{D},  
&  \Pi_0(\tilde\varphi^1,\dots,\tilde\varphi^b):= (2\pi)^{-1/2} (( \tilde\varphi^1,1)_2,\dots,( \tilde\varphi^b,1)_2) \\
& \widetilde{\mathbf{D}}^N_{\Sigma,{\mc{C}'}} :=\mathbf{D}^N_{\Sigma,{\mc{C}'}}-2\mathbf{D}, 
& \Pi'_0(\tilde\varphi^1,\dots,\tilde\varphi^{b'}):=  (2\pi)^{-1/2} (( \tilde\varphi^1,1)_2,\dots,(\tilde\varphi^{b'},1 )_2).\\
&\mathbf{D}_0=\mathbf{D}+\Pi_0
\end{align*}
Given two admissible surfaces $(\Sigma_1,g_1, \bm{z_1}, \bm{s_1}, \bm{\alpha_1}, \bm{\beta_1})$ and $(\Sigma_2,g_2, \bm{z_2}, \bm{s_2}, \bm{\alpha_2}, \bm{\beta_2})$, we glue them together by identify $k$ boundaries in $\partial^D \Sigma_1$ and $k$ boundaries in $\partial^D \Sigma_2$, here $k\leq min\{\ell^{\Sigma_1}_D, \ell^{\Sigma_2}_D\}$. We put the field on the remaining boundary components of $\partial \Sigma_1^D$ and $\partial \Sigma_2^D$ as $\tilde{\bm{\varphi_1}}$ and $\tilde{\bm{\varphi_2}}$ respectively.

\begin{proposition}[{Gluing formula}]\label{gluingtypeI}
\begin{align}
\mathcal{A}&_{g,\Sigma,\bm{z},\bm{s}, \bm{\alpha}, \bm{\beta}}(F_1\otimes F_2,\tilde{\bm{\varphi_1}},\tilde{\bm{\varphi_2}}) =\nonumber\\&
C\int  \mathcal{A}_{g_1,\Sigma_1,\bm{z_1},\bm{s_1}, \bm{\alpha_1}, \bm{\beta_1}}(F_1,\tilde{\bm{\varphi_1}},\tilde{\bm{\varphi}})\times \mathcal{A}_{g_2,\Sigma_2,\bm{z_2},\bm{s_2}, \bm{\alpha_2}, \bm{\beta_2}}(F_2, \tilde{\bm{\varphi}},\tilde{\bm{\varphi_2}})(d\mu_0)^{\otimes k} (\tilde{\bm{\varphi}}).
\end{align}
where $C= \frac{1}{(\sqrt{2} \pi)^{k}}$ if $\partial^D\Sigma \not =\emptyset$ and $C=\frac{\sqrt{\pi}}{(\sqrt{2} \pi)^{k-1}} $ if $\partial^D\Sigma =\emptyset$, and 
 \[F_1\otimes F_2(\phi_{g_1\# g_2}):=F_1(\phi_{g_1\# g_2|\Sigma_1})F_2(\phi_{g_1\# g_2|\Sigma_2}).\]
\end{proposition}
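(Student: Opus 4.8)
The plan is to deduce the gluing formula from the probabilistic Markov decomposition of the field recorded in Proposition~\ref{boundarymarkov}, working conditionally on the value $\tilde{\bm{\varphi}}$ of the field along the $k$ gluing circles $\mathcal{C}$, and then to match the free-field (Gaussian) normalizations on the two sides through the Dirichlet--Neumann operator identity \eqref{DNoperator}. After identifying the $k$ Dirichlet boundaries, the curves $\mathcal{C}$ become interior circles of $\Sigma$ separating it into $\Sigma_1$ and $\Sigma_2$, and the field $\tilde{\bm{\varphi}}$ on $\mathcal{C}$ is exactly the integration variable appearing on the right-hand side.

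First I would condition on $\tilde{\bm{\varphi}}$ and apply \eqref{Marg}: given the restriction of the field to $\mathcal{C}$, the Liouville field on $\Sigma$ splits as $X^{\Sigma_1}_{g,m}+X^{\Sigma_2}_{g,m}+\bm{P\tilde{\varphi}}$ with the two mixed-boundary GFFs independent and independent of $\tilde{\bm{\varphi}}$, which is precisely the field configuration entering the definition \eqref{typeAAmplitude} of the two factor amplitudes. The interaction terms then factorize: the bulk and boundary GMC masses are additive over $\Sigma=\Sigma_1\cup\Sigma_2$, so $\exp(-\mu M_{\gamma,g}-\mu_\partial M^\partial_{\gamma,g})$ becomes a product of the $\Sigma_1$- and $\Sigma_2$-contributions; each vertex operator, being local, is assigned to the piece containing its insertion point; and for an admissible metric $K_g$ and $k_g$ are supported away from the gluing circle, so the curvature and geodesic-curvature terms split additively as well (the interior curve $\mathcal{C}$ carries no geodesic-curvature term). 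Hence, conditionally on $\tilde{\bm{\varphi}}$, the expectation defining $\mathcal{A}_{g,\Sigma}$ factorizes exactly into the product of the two integrands $\mathcal{A}_{g_1,\Sigma_1}(F_1,\tilde{\bm{\varphi_1}},\tilde{\bm{\varphi}})$ and $\mathcal{A}_{g_2,\Sigma_2}(F_2,\tilde{\bm{\varphi}},\tilde{\bm{\varphi_2}})$, up to the Gaussian prefactors treated next; here Girsanov's theorem is used, exactly as in Proposition~\ref{Weyl2}, to absorb the harmonic-extension curvature contributions.

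The free-field prefactors are matched as follows. The two free amplitudes contribute, in the variable $\tilde{\bm{\varphi}}$, the factor $e^{-\frac12(\tilde{\bm{\varphi}},(\mathbf{D}^N_{\Sigma_1}-\mathbf{D})\tilde{\bm{\varphi}})-\frac12(\tilde{\bm{\varphi}},(\mathbf{D}^N_{\Sigma_2}-\mathbf{D})\tilde{\bm{\varphi}})}$, whose exponent is $-\frac12(\tilde{\bm{\varphi}},\widetilde{\mathbf{D}}^N_{\Sigma,\mathcal{C}}\tilde{\bm{\varphi}})$ with $\widetilde{\mathbf{D}}^N_{\Sigma,\mathcal{C}}=\mathbf{D}^N_{\Sigma,\mathcal{C}}-2\mathbf{D}$, using the additivity $\mathbf{D}^N_{\Sigma,\mathcal{C}}=\mathbf{D}^N_{\Sigma_1}+\mathbf{D}^N_{\Sigma_2}$ built into the construction of the Dirichlet--Neumann operator. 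Against the reference density $e^{-(\mathbf{D}\tilde{\bm{\varphi}},\tilde{\bm{\varphi}})}$ of $\mu_0^{\otimes k}$ on the non-constant modes the $\mathbf{D}$-terms collapse and leave a centred Gaussian with quadratic form $\mathbf{D}^N_{\Sigma,\mathcal{C}}$; integrating out $\tilde{\bm{\varphi}}$ reconstructs, by a Schur-complement computation, the free amplitude $\mathcal{A}_0(\Sigma,g)(\tilde{\bm{\varphi_1}},\tilde{\bm{\varphi_2}})=e^{-\frac12(\cdot,\widetilde{\mathbf{D}}^N_{\Sigma}\,\cdot)}$ of the glued surface and produces a determinant of $\mathbf{D}^N_{\Sigma,\mathcal{C}}$ on $C_0^\infty(\mathcal{C})$. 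This determinant cancels against the gluing behaviour of $\det(\Delta_{g,m})$ inside the three $Z_{GFF,m}$ factors --- a BFK-type gluing formula for the mixed Laplacian determinant across $\mathcal{C}$ --- leaving only a numerical constant.

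The hard part will be the exact bookkeeping of this numerical constant and the dichotomy in its value. Each gluing circle contributes a universal factor $\tfrac{1}{\sqrt2\,\pi}$ from the Gaussian integration of its non-constant modes against $\P_\T$ together with the matching determinant, yielding $C=(\sqrt2\,\pi)^{-k}$ when $\partial^D\Sigma\neq\emptyset$, in which case every constant mode of $\tilde{\bm{\varphi}}$ is integrated as part of the field and no global zero mode survives. When $\partial^D\Sigma=\emptyset$ the glued amplitude is of Neumann type and carries the global zero-mode integral $\int_\R dc$ of case \textbf{(1)} of Definition~\ref{typeAAmplitude}: since $\mathbf{D}^N_{\Sigma,\mathcal{C}}$ annihilates constants, one of the $k$ constant-mode directions of $\tilde{\bm{\varphi}}$ becomes this flat global mode and must be reorganized into the $dc$ of $\mathcal{A}^N$, converting one factor $\tfrac{1}{\sqrt2\,\pi}$ into $\sqrt\pi$ and giving $C=\sqrt\pi\,(\sqrt2\,\pi)^{-(k-1)}$. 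Keeping precise track of these constant modes, of the $(\mathrm{Id}-\Pi_0)$ projections implicit in the Dirichlet--Neumann operators (which are invertible only on $C_0^\infty(\mathcal{C})$ by \eqref{DNoperator}), and of the constants in the BFK determinant identity is the delicate point; once these are pinned down, the remaining manipulations are routine applications of the conditional independence in \eqref{Marg}.
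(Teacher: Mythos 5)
Your proposal follows essentially the same route as the paper: the Markov decomposition of the Neumann GFF across the gluing circles (Proposition \ref{boundarymarkov}), the identification of the law of the restricted trace field via the Dirichlet--Neumann identity \eqref{DNoperator} together with the additivity $\mathbf{D}^N_{\Sigma,\mathcal{C}}=\mathbf{D}^N_{\Sigma_1}+\mathbf{D}^N_{\Sigma_2}$, and a BFK-type determinant gluing (the paper's Lemma \eqref{detformula}, proved by doubling and the theorems $[B]$, $[B^*]$ of \cite{BFK}) to fix the constant, with the $\sqrt{\pi}$ arising from the surviving global zero mode when $\partial^D\Sigma=\emptyset$. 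The argument is correct and matches the paper's proof in all essential steps.
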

\begin{figure}[H] 
\centering 
\includegraphics[width=0.5\textwidth]{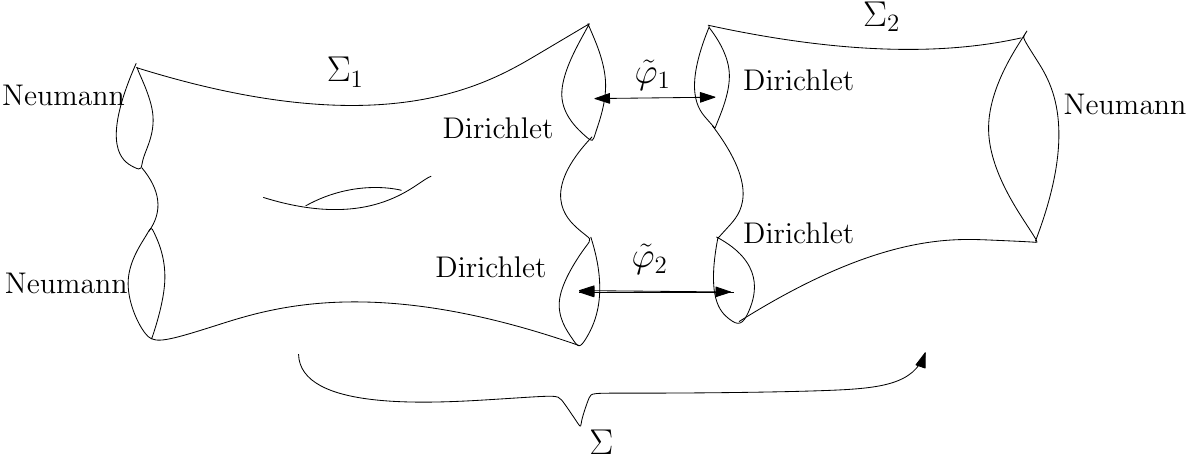} 
\caption{Gluing of two amplitudes with Neumann boundary. The red circles are the Dirichlet boundaries, where we put the fields $\tilde{\varphi}$ for gluing. The black circles are the Neumann boundaries.} 
\label{Fig.main10} 
\end{figure}
\begin{proof}
We only treat the case when $\ell^{\Sigma_1}_D=\ell^{\Sigma_2}_D=1$, $\partial^D\Sigma=\emptyset$ and $F_1=F_2=1$, the general cases are the similar. We glue $\partial^D\Sigma_1$ and $\partial^D\Sigma_2 $ together, since this is the only case we use in this paper. The resulting amplitude is the correlation function
\begin{equation}\label{bcorrelation}
    \mathcal{A}_{g,\Sigma,\bm{z},\bm{s}, \bm{\alpha}, \bm{\beta}}(\bm{\tilde{\varphi}})=\left\langle \prod_{i=1}^n V_{\alpha_i}(z_i)  \prod_{j=1}^mV_{\frac{\beta_j}{2}}(s_j)\right\rangle^\Sigma_{\gamma,\mu, \mu_\partial}
\end{equation}
Let now $Y_1$ and $Y_2$ two  mixed condition GFF respectively on $\Sigma_1$ and $\Sigma_2$, independent of each other. We assume that they are both defined on $\Sigma$ by setting $Y_i=0$ outside of $\Sigma_i$. Then we have the following decomposition in law \eqref{boundarymarkov} 
\begin{align*}
X^{\Sigma}_g\stackrel{\rm law}=Y_1+Y_2+P\boldsymbol{X}-c_g
\end{align*}
where $\boldsymbol{X}$ is the restriction of the GFF $X^{\Sigma}_g$ to the  glued boundary component  $\mathcal{C}=\partial^D\Sigma_1=\partial^D\Sigma_2$ expressed in parametrized coordinates, $P\boldsymbol{X}$ is its harmonic extension to $ \Sigma $ with Neumann 0 boundary condition on $\partial\Sigma$ and $c_g:=\frac{1}{{\rm v}_g(\Sigma)}\int_\Sigma (Y_1+Y_2+P\boldsymbol{X})\,\dd {\rm v}_g$. Therefore, plugging this relation into the amplitude  \eqref{bcorrelation},  and then shifting the $c$-integral by $c_g$, we get      
\begin{align*}
 \mathcal{A}_{\Sigma,g,{\bf z},\boldsymbol{\alpha},\boldsymbol{\beta},\boldsymbol{s}}=&\frac{({\det}'(\Delta_{g,N})/{\rm v}_{g}(\Sigma))^{-1/2}}{Z_{GFF,m}(\Sigma_1)Z_{GFF,m}(\Sigma_2)}\int \frac{\mathcal{A}_{\Sigma_1,g_1,{\bf z}_1,\boldsymbol{\alpha}_1,\boldsymbol{\beta}_1,\boldsymbol{s}_1}(c + \boldsymbol{  \varphi})\mathcal{A}_{\Sigma_2,g_2,{\bf z}_2,\boldsymbol{\alpha}_2,\boldsymbol{\beta}_2,\boldsymbol{s}_2} (c + \boldsymbol{  \varphi})}{\mathcal{A}^0_{\Sigma_1,g_1}(c + \boldsymbol{  \varphi})\mathcal{A}^0_{\Sigma_2,g_2}(c + \boldsymbol{  \varphi} ) }  \dd c \,  \dd\P_{\boldsymbol{X}}    ( \boldsymbol{  \varphi}).
\end{align*}
Now we make a further shift in the $c$-variable in the expression above to subtract the mean $m_{\mc{C}}(\boldsymbol{X}):=\frac{1}{2\pi}\int_0^{2\pi}\boldsymbol{X}(e^{i\theta})\,\dd \theta$ to the field $\boldsymbol{X}$. As a consequence we can replace the law $ \P_{\boldsymbol{X}} $ of $\boldsymbol{X}$ in the above expression by the law $  \P_{\boldsymbol{X}-m_{\mc{C}}(\boldsymbol{X})}$ of the recentered field $\boldsymbol{X}-m_{\mc{C}}(\boldsymbol{X})$.
 
Now we  claim, for measurable bounded functions $F$
\begin{equation}\label{densityDNbis}
\int F( \boldsymbol{ \varphi}) \P_{\boldsymbol{X}-m_{\mc{C}}(\boldsymbol{X})}(\dd \boldsymbol{ \varphi})=\frac{1}{  \det(\mathbf{D}^N_{\Sigma,{\mc{C}},0}(2\mathbf{D}_0)^{-1})^{-1/2} }\int F(    \boldsymbol{ \varphi})\exp(-\frac{1}{2}(\boldsymbol{ \varphi},\widetilde{\mathbf{D}}^N_{\Sigma,{\mc{C}}} \boldsymbol{ \varphi})_2)   \P_\T   (\dd \boldsymbol{ \varphi}).
\end{equation}
We note $\mathbf{D}^N_{\Sigma,{\mc{C}},0}$ as the restriction of $\mathbf{D}^N_{\Sigma,{\mc{C}}}$ in $C_0^{\infty}(\mathcal{C})$. Indeed, we use $  \mathbf{D}^N_{\Sigma, \mathcal{C}}G^{\Sigma}_{g,N}=2\pi \text{Id  in} \quad C_0^{\infty}(\mathcal{C})$ and the following estimate: there is $a>0$ such that for any $\epsilon>0$, there is $N_0>0$ such that (we notice that this kind of estimate can always be treated by the mirror trick)
$$(\boldsymbol{ \varphi},\widetilde{\mathbf{D}}_{\Sigma,{\mc{C}}} \boldsymbol{ \varphi})_2\geq  (a-1)(\pi_{N_0}\boldsymbol{ \varphi},2\mathbf{D}\pi_{N_0}\boldsymbol{ \varphi})_2-\epsilon(\boldsymbol{ \varphi},2\mathbf{D}\boldsymbol{ \varphi})_2.$$

Recall that in this case $\mathbf{D}^N_{\Sigma,{\mc{C}}}=\mathbf{D}^N_{\Sigma_1}+\mathbf{D}^N_{\Sigma_2}$ , we know $\exp(-\frac{1}{2}(\boldsymbol{ \varphi},\widetilde{\mathbf{D}}_{\Sigma,{\mc{C}}} \boldsymbol{ \varphi})_2)=\mathcal{A}^0_{\Sigma_1,g_1}(c + \boldsymbol{   \varphi})\mathcal{A}^0_{\Sigma_2,g_2}(c + \boldsymbol{  \varphi} ) $ (whatever the value of $c$) and the proof is finished by the following lemma.
\end{proof}
\begin{lemma}
\begin{equation}\label{detformula}
\frac{({\det}'(\Delta_{g,N})/{\rm v}_{g}(\Sigma))^{-1/2}}{Z_{GFF,m}(\Sigma_1)Z_{GFF,m}(\Sigma_2)} = \sqrt{\pi}  \det(\mathbf{D}^N_{\Sigma,{\mc{C}},0}(2\mathbf{D}_0)^{-1})^{-1/2},
\end{equation}
\end{lemma}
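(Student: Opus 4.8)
The plan is to recognize \eqref{detformula} as a Burghelea--Friedlander--Kappeler (BFK) gluing formula for zeta-regularized determinants, adapted to the mixed boundary condition here, and to reduce it to the closed-surface gluing lemma already established in \cite{GKRV21} by the doubling trick. First I would dispose of the $Z_{GFF,m}$ factors: since $g$ is admissible, the geodesic-curvature terms in $Z_{GFF,m}(\Sigma_i,g)={\det}(\Delta_{g_i,m})^{-1/2}\exp(\cdots)$ vanish, so $Z_{GFF,m}(\Sigma_i,g)={\det}(\Delta_{g_i,m})^{-1/2}$. Squaring and rearranging (and using $\mathbf{D}^N_{\Sigma,\mathcal{C}}=\mathbf{D}^N_{\Sigma_1}+\mathbf{D}^N_{\Sigma_2}$), \eqref{detformula} becomes the clean determinant identity
\begin{equation*}
{\det}'(\Delta_{g,N})=\frac{{\rm v}_g(\Sigma)}{\pi}\,{\det}(\Delta_{g_1,m})\,{\det}(\Delta_{g_2,m})\,{\det}\big(\mathbf{D}^N_{\Sigma,\mathcal{C},0}(2\mathbf{D}_0)^{-1}\big).
\end{equation*}

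Next I would verify that the right-hand determinant is well posed. Each $\mathbf{D}^N_{\Sigma_i}$ is a first-order elliptic pseudodifferential operator on $\mathcal{C}$ whose principal part is the model operator $\mathbf{D}$ (the Dirichlet-to-Neumann operator of a flat half-cylinder, acting as $|n|$ on the $n$-th Fourier mode), so $\widetilde{\mathbf{D}}^N_{\Sigma_i}=\mathbf{D}^N_{\Sigma_i}-\mathbf{D}$ is smoothing and trace class; hence $\mathbf{D}^N_{\Sigma,\mathcal{C}}-2\mathbf{D}$ is trace class and $\mathbf{D}^N_{\Sigma,\mathcal{C},0}(2\mathbf{D}_0)^{-1}=\mathrm{Id}+(\text{trace class})$ has a convergent Fredholm determinant. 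The subscript $0$ and the replacement $\mathbf{D}\to\mathbf{D}_0=\mathbf{D}+\Pi_0$ are precisely the regularizations of the one-dimensional kernels: $\mathbf{D}^N_{\Sigma,\mathcal{C}}$ annihilates constants on $\mathcal{C}$ (a constant boundary value extends harmonically to a constant with vanishing normal derivative), so $\Pi_0$ restores invertibility on the constant mode.

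The core of the argument is the gluing formula itself, which I would obtain by doubling $\Sigma$ across its Neumann boundary $\partial^N\Sigma$. After doubling, $\Sigma$ is the fundamental domain of a reflection acting on a closed surface $d\Sigma$ (a torus in the annulus case), the interior cut $\mathcal{C}$ together with its mirror image cuts $d\Sigma$ into the doubles of $\Sigma_1$ and $\Sigma_2$, the mixed Laplacians $\Delta_{g_i,m}$ become Dirichlet Laplacians on the doubled pieces, and — via the even/odd decomposition under the reflection — the Neumann spectrum on $\Sigma$ is the reflection-even spectrum on $d\Sigma$. In this form the statement is the reflection-equivariant version of the closed-surface determinant identity of \cite[Lemma 4.5]{GKRV21}, and I would transport their mode-by-mode computation, using $D^N_{\Sigma,\mathcal{C}}G^{\Sigma}_{g,N}=2\pi\,\mathrm{Id}$ from \eqref{DNoperator} to identify the Dirichlet-to-Neumann operator with the inverse of the Green's kernel restricted to $\mathcal{C}$.

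The hard part is the zero-mode bookkeeping, which is exactly what pins down the explicit constants $\tfrac{1}{\pi}$ and ${\rm v}_g(\Sigma)$. Three kernels must be tracked simultaneously against the model normalization $2\mathbf{D}_0$: the constant zero mode of $\Delta_{g,N}$ (removed by the prime, contributing the volume through the $L^2$-normalized eigenfunction ${\rm v}_g(\Sigma)^{-1/2}$), the triviality of the kernel for the Dirichlet pieces $\Delta_{g_i,m}$, and the constant kernel of $\mathbf{D}^N_{\Sigma,\mathcal{C}}$ on $\mathcal{C}$. Matching the factors of $2\pi$ from \eqref{DNoperator}, the Gaussian normalization of $\P_\T$, and the passage $\mathbf{D}\to\mathbf{D}_0$ is where the numerical constant $\sqrt{\pi}$ (equivalently $1/\pi$ after squaring) emerges. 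I would also confirm that the zeta-determinant of the Dirichlet-to-Neumann operator agrees with the relative Fredholm determinant $\det(\mathbf{D}^N_{\Sigma,\mathcal{C},0}(2\mathbf{D}_0)^{-1})$, i.e. that the multiplicative anomaly vanishes for this first-order operator, which holds because its symbol matches that of $2\mathbf{D}$ to all orders modulo smoothing.
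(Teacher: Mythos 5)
Your overall route --- double $\Sigma$ across its geodesic Neumann boundary, use the even/odd decomposition under the reflection, and invoke the Burghelea--Friedlander--Kappeler gluing formula on the resulting closed surface $d\Sigma$ --- is exactly the strategy of the paper's proof, and your preliminary reduction of the $Z_{GFF,m}$ factors to $\det(\Delta_{g_i,m})^{-1/2}$ via admissibility is correct. However, one step is wrong as written and hides the key missing ingredient: the mixed Laplacians $\Delta_{g_i,m}$ do \emph{not} ``become Dirichlet Laplacians on the doubled pieces''; they are only the reflection-even factors, i.e.\ ${\det}(\Delta^{d\Sigma_i}_{g,D})={\det}(\Delta^{\Sigma_i}_{g,m})\,{\det}(\Delta^{\Sigma_i}_{g,D})$, and likewise ${\det}'(\Delta^{d\Sigma}_{g})={\det}'(\Delta^{\Sigma}_{g,N})\,{\det}(\Delta^{\Sigma}_{g,D})$ while the Dirichlet-to-Neumann determinant on $\mc{C}\cup\sigma(\mc{C})$ factors into an even (Neumann-type) piece $\det(\mathbf{D}^N_{d\Sigma,\mc{C},0}(2\mathbf{D}_0)^{-1})$ and an odd (Dirichlet-type) piece $\det(\mathbf{D}^D_{\Sigma,\mc{C}}(2\mathbf{D}_0)^{-1})$. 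Applying BFK on $d\Sigma$ therefore leaves you with the unwanted odd-part factors ${\det}(\Delta^{\Sigma_i}_{g,D})$ and $\det(\mathbf{D}^D_{\Sigma,\mc{C}}(2\mathbf{D}_0)^{-1})$, and the missing idea is that these cancel because they satisfy a \emph{second}, independent BFK identity --- the Dirichlet gluing formula for $\Sigma$ cut along $\mc{C}$ (theorem $[B]$ of \cite{BFK}). The paper's proof is precisely the quotient of the doubled-surface formula $[B^*]$ by this Dirichlet formula $[B]$.

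This also changes where the explicit constant comes from: $\sqrt{\pi}$ arises from combining the $\sqrt{2}$ in $[B^*]$, the $\pi^{1/2}$ in $[B]$, and ${\rm v}_g(d\Sigma)=2\,{\rm v}_g(\Sigma)$, not from the zero-mode/Gaussian-normalization bookkeeping you point to (that bookkeeping is already packaged inside the cited BFK statements via the subscript $0$ and $\mathbf{D}_0$). A minor point: the closed-surface determinant identity in \cite{GKRV21} is not their Lemma~4.5 (which is the free-amplitude bound used elsewhere in this paper), so the reference you would ``transport'' should be the BFK theorems themselves. With the second BFK identity added, your plan closes the gap and reproduces the paper's argument.
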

\begin{proof}
Since we always suppose the boundaries of $\Sigma$ are geodesic, we double the surface and call the resulting surface $d\Sigma$. We gather the theorem $[B]$ and $[B*]$ in \cite{BFK} here\\
$$ {\det}(\Delta^{\Sigma_1}_{g,D})^{-\frac{1}{2}}{\det}(\Delta^{\Sigma_2}_{g,D})^{-\frac{1}{2}}  =  \pi^{1/2}{\det}(\Delta^D_{\Sigma})^{-\frac{1}{2}} {\rm det}_{\rm Fr}(\mathbf{D}^D_{\Sigma,{\mc{C}}}(2\mathbf{D}_0)^{-1})^{1/2} \quad [B]$$
$$ \frac{({\det}'(\Delta^{d\Sigma}_g)/{\rm v}_{g}(d\Sigma))^{-1/2}}{{\det}(\Delta^{d\Sigma_1}_{g,D})^{-\frac{1}{2}}{\det}(\Delta^{d\Sigma_2}_{g,D})^{-\frac{1}{2}}} = \sqrt{2}  \det(\mathbf{D}_{d\Sigma,\mc{C}\cup\sigma(\mc{C}),0}(2\mathbf{D}_0)^{-1})^{-1/2} \quad [B*]$$
where $D_{\Sigma,\mathcal{C}}^D$ is the Dirichlet to Neumann operator corresponds to harmonic extension with Dirichlet $0$ boundary condition on $\partial^N\Sigma$.
By separating odd and even eigenfunctions, we know ${\det}'(\Delta^{d\Sigma}_g)={\det}'(\Delta^{\Sigma}_{g,N}){\det}(\Delta^{\Sigma}_{g,D})$  and ${\det}(\Delta_{g,D}^{d\Sigma_i})={\det}(\Delta^{\Sigma_i}_{g,m}){\det}(\Delta^{\Sigma_i}_{g,D})$ for $i=1, 2$.  Further more, we have
$$\det(\mathbf{D}_{d\Sigma,\mc{C}\cup\sigma(\mc{C}),0}(2\mathbf{D}_0)^{-1})^{-1/2}=\det(\mathbf{D}^D_{\Sigma,\mc{C}}(2\mathbf{D}_0)^{-1})^{-1/2}\det(\mathbf{D}^N_{d\Sigma,\mc{C},0}(2\mathbf{D}_0)^{-1})^{-1/2}$$
Gather all above formulae, we finish the lemma's proof.
\end{proof}
\begin{remark}\label{specialcase}
We develop a general framework for amplitudes with Neumann boundaries in this section, but later we only focus on two special amplitudes:
\begin{itemize}
    \item Annulus amplitude with two Neumann boundaries: $\mathcal{A}^N_{g,\mathbb{A}_q, \bm{s}, \bm{\beta}};$
    \item Annulus amplitude with one Neumann boundary and one Dirichlet boundary: $\mathcal{A}^{m}_{g,\mathbb{A}_{1,\sqrt{q}}, \bm{s}, \bm{\beta},\zeta}(\tilde{\varphi}).$
\end{itemize}
\end{remark}

\section{The Spectrum resolution of the Liouville Hamiltonian }
\subsection{Hilbert space of LCFT}\label{sub:hilbert} 
The circle Gaussian free field is a random Fourier series defined on  the unit circle $\T=\{z\in \C\,|\, |z|=1\}$ 
\begin{equation}\label{GFFcircle0}
\forall \theta\in\R,\quad \varphi(\theta)=\sum_{n\not=0}\varphi_ne^{in\theta} 
\end{equation}
with $\varphi_{n}=\frac{1}{2\sqrt{n}}(x_n+iy_n)$ for $n>0$ where $x_n,y_n$ are i.i.d. standard real Gaussians. Convergence holds in the  Sobolev space  $H^{s}(\T)$ with $s<0$, where $H^s(\T)\subset \C^\Z$ is the set of sequences s.t.
\begin{equation}\label{outline:ws}
\|\varphi\|_{H^s(\T)}^2:=\sum_{n\in\Z}|\varphi_n|^2(|n|+1)^{2s} <\infty.
\end{equation}
One can easily show that $\varphi$ has the same distribution with the restriction of full plane GFF \cite{KRV} on the unit circle. Also, note that the series $ \varphi$ has no constant mode. The constant mode will play an important role in the Liouville theory:  we will view the random series $\tilde{\varphi}:=c+\varphi$ as the coordinate function of the space $\R\times    \Omega_\T$, where the  probability space 
\begin{align}\label{omegat}
  \Omega_\T=(\R^{2})^{\N^*}
\end{align}
  is equipped with the cylinder sigma-algebra $
 \Sigma_\T=\mathcal{B}^{\otimes \N^*}$ ($\mathcal{B}$  stands for the Borel sigma-algebra on $\R^2$) and the   product measure 
 \begin{align}\label{Pdefin}
 \P_\T:=\bigotimes_{n\geq 1}\frac{1}{2\pi}e^{-\frac{1}{2}(x_n^2+y_n^2)}\dd x_n\dd y_n.
\end{align}
Here $ \P_\T$ is supported on $H^s(\T)$ for any $s<0$ in the sense that $ \P_\T(\varphi\in H^s(\T))=1$. Our Liouville Hilbert space is $\mc{H}:=L^2(\R\times \Omega_{\T})$ with underlying measure 
\[\mu_0:=\dd c\otimes  \P_{\T}\] 
and Hermitian product denoted by $\langle\cdot\mid\cdot\rangle_2$.

Besides the Liouville Hilbert space, we need to introduce weighted $L^r$ space. We recall the notations $c_-=c\mathbf{1}_{c<0}$ and  $c_+=c\mathbf{1}_{c>0}$. For $\beta_-,\beta_+\in\R$ and $p\geq 1$ we introduce the weighted $L^r$-spaces $e^{-\beta_- c_- -\beta_+c_+}L^r(\R\times \Omega_{\T})$ as the space of functions with finite $\|\cdot\|_{\beta,r}$-norm, where
\begin{equation}
\|F\|_{\beta,r}^r:=\int e^{r\beta_- c_-+r\beta_+c_+}\E[|F|^r]\,\dd c.
\end{equation}

\subsection{Construction of the Virasoro descendant states}\label{sub:virasoro}
\subsubsection{The free field theory}\label{freetheory} First, we recall how to construct this basis in the case when $\mu=0$, i.e. for the free theory. Here we follow \cite[section 4.4]{GKRV}.  Let us denote by $\mathcal{S}$ the set of smooth functions depending on finitely many coordinates, i.e. of the form
$F(x_1,y_1, \dots,x_n,y_n)$  with $n\geq 1$ and $F\in C^\infty((\R^2)^n)$, with at most polynomial growth at infinity for $F$ and its derivatives. Obviously $\mathcal{S}$ is dense in $L^2(  \Omega_{\T})$. Let  
\begin{equation}\label{smoothexpgrowth}
 \mathcal{C}_\infty:=\mathrm{Span}\{ \psi(c)F\,|\,\psi\in C^\infty(\R)\text{ and }F\in\mathcal{S} \}.
 \end{equation}
We use the  complex coordinates \eqref{GFFcircle0}, i.e. we denote for $n>0$
\begin{align*}
\partial_n:=\frac{\partial}{\partial\varphi_{n}}= \sqrt{n} (\partial_{x_n}-i \partial_{y_n}) \quad \text{ and }\quad \partial_{-n}:=\frac{\partial}{\partial\varphi_{-n}}= \sqrt{n} (\partial_{x_n}+i \partial_{y_n}).
\end{align*}
Then we introduce on $ \mathcal{C}_\infty$  the following operators for $n>0$: 
\[
 \mathbf{A}_n= \tfrac{i}{2}\partial_{n},\ \ \  \mathbf{A}_{-n}=\tfrac{i}{2}(\partial_{-n}-2n\varphi_{n}) \ \ \ 
\widetilde{\mathbf{A}}_n= \tfrac{i}{2}\partial_{-n},\ \ \ \widetilde{\mathbf{A}}_{-n}=\tfrac{i}{2}(\partial_{n}-2n\varphi_{-n})\ \ \ 
\mathbf{A}_0=\widetilde{\mathbf{A}}_0=\tfrac{i}{2}(\partial_c+Q)
\]
and  the {\it normal ordered product} on $ \mathcal{C}_\infty$ by
 $:\!\mathbf{A}_n\mathbf{A}_m\!\!:\,=\mathbf{A}_n\mathbf{A}_m$ if $m>0$ and $\mathbf{A}_m\mathbf{A}_n$ if $n>0$ (i.e. annihilation operators are on the right). The free Virasoro generators are then   defined for all $n \in \Z$ by
\begin{align}
\mathbf{L}_n^0:=-i(n+1)Q\mathbf{A}_n+\sum_{m\in\Z}:\mathbf{A}_{n-m}\mathbf{A}_m: \label{virassoro}\\
\widetilde{\mathbf{L}}_n^0:=-i(n+1)Q\widetilde{\mathbf{A}}_n+\sum_{m\in\Z}:\widetilde{\mathbf{A}}_{n-m}\widetilde{\mathbf{A}}_m:\,\,.\label{virassorotilde}
\end{align}
They map  $ \mathcal{C}_\infty$ into itself. We also define the free Hamiltonian as $\mathbf{H}^0=\mathbf{L}_0^0+\tilde{\mathbf{L}}_0^0$.

For $\alpha\in \C$, we define 
\begin{align}\label{psialphadef}
\Psi^0_\alpha(c,\varphi):=e^{(\alpha-Q)c}\in \mathcal{C}_\infty.
\end{align}
For $\alpha\in \C$, these are generalized eigenstates of ${\bf H}^0$: they never belong to $L^2(\R \times \Omega_\T)$ but rather to some weighted spaces $e^{\beta |c|}L^2(\R\times \Omega_\T)$ for $\beta>|{\rm Re}(\alpha)-Q|$, hence their name ``generalized eigenstates". We have
\begin{equation}\label{L0psialpha}
\mathbf{L}_0^0\Psi^0_\alpha=\widetilde{\mathbf{L}}_0^0\Psi^0_\alpha=\Delta_{\alpha}
\Psi^0_\alpha,  \qquad 
\mathbf{L}_n^0\Psi^0_\alpha=\widetilde{\mathbf{L}}_n^0\Psi^0_\alpha=0,\ \ \ n>0,
\end{equation}
where  $\Delta_\alpha$ is the conformal weight; $\Psi^0_\alpha$ is called {\it highest weight state} with highest weight $\Delta_{\alpha}$.

Next,   let $\mc{T}$ be the set of Young diagrams, i.e. the set of  sequences $\nu$ of integers   with the further requirements that $\nu(k)\geq \nu(k+1)$ and $\nu(k)=0$ for $k$ large enough. For a Young diagram $\nu$ we denote its length as $|\nu|=\sum_k\nu(k)$ and its size as $s(\nu)=\max\{k\,|\,\nu(k)\not=0\}$. Given two Young diagrams $\nu= (\nu(i))_{i \in [1,k]}$ and $\tilde{\nu}= (\tilde{\nu}(i))_{i \in [1,j]}$, we define the operators
\begin{equation*}
\mathbf{L}_{-\nu}^0=\mathbf{L}_{-\nu(k)}^0 \cdots \, \mathbf{L}_{-\nu(1)}^0, \quad \quad \quad   \tilde{\mathbf{L}}_{-\tilde \nu}^0=\tilde{\mathbf{L}}_{-\tilde\nu(j)}^0 \cdots\, \tilde{\mathbf{L}}_{-\tilde\nu(1)}^0
\end{equation*}
and define
\begin{align}\label{psibasis}
\Psi^0_{\alpha,\nu, \tilde\nu}=\mathbf{L}_{-\nu}^0\tilde{\mathbf{L}}_{-\tilde\nu}^0  \: \Psi^0_\alpha,
\end{align}
with the convention that $\Psi^0_{\alpha,\emptyset, \emptyset}=\Psi^0_{\alpha}$.
The vectors $\Psi^0_{\alpha,\nu, \tilde\nu}$ are called the \emph{descendant} states of $\Psi^0_\alpha$.  They satisfy the following properties (see \cite[Prop 4.9]{GKRV})

\begin{proposition}\label{prop:mainvir0}  
{\bf (1) } For each pair of Young diagrams $\nu,\tilde{\nu}\in \mathcal{T}$,  $\Psi^0_{\alpha,\nu, \tilde\nu}$ can be written as   
\begin{align}\label{psibasis1}
\Psi^0_{\alpha,\nu, \tilde\nu}=\mathcal{Q}_{\alpha,\nu,\tilde \nu}\Psi^0_\alpha
\end{align}
where $\mathcal{Q}_{\alpha,\nu,\tilde\nu}  $   is a polynomial in the coefficients $(\varphi_n)_n$ and an eigenfunction of $\mathbf{P}$ with eigenvalue $|\nu|+|\tilde\nu|$.\\
{\bf (2) }  for all $\alpha \in \C$
\begin{equation*}
\mathbf{L}_0^0\Psi^0_{\alpha,\nu, \tilde\nu} = (\Delta_{\alpha}
+|\nu|)\Psi^0_{\alpha,\nu ,\tilde\nu},\ \ \  \tilde{\mathbf{L}}_0^0\Psi^0_{\alpha,\nu, \tilde\nu} = (\Delta_{\alpha}+|\tilde\nu|)\Psi^0_{\alpha,\nu ,\tilde\nu}
\end{equation*}
and
\begin{equation*}
\mathbf{H}^0\Psi^0_{\alpha,\nu,\tilde{\nu}} = (2 \Delta_\alpha+|\nu|+|\tilde{\nu}| )\Psi^0_{\alpha,\nu,\tilde{\nu}}  .
\end{equation*}
{\bf (3) } The inner products of the descendant states obey
\begin{equation}\label{scapo}  
 \langle \mathcal{Q}_{2Q-\bar\alpha,\nu,\tilde\nu} | \mathcal{Q}_{\alpha,\nu',\tilde\nu'} \rangle_{L^2(\Omega_\T)}=\delta_{|\nu| ,|\nu'|}\delta_{|\tilde\nu| ,|\tilde\nu'|}F_{\alpha}(\nu,\nu')F_{\alpha}(\tilde\nu,\tilde\nu')
\end{equation} 
where each coefficient   $F_{\alpha}(\nu,\nu')$  is a  polynomial in  ${\alpha}$, called the {\it Schapovalov form}.  The functions $(\mathcal{Q}_{\alpha,\nu,\tilde \nu})_{\nu,\tilde\nu\in\mathcal{T}}$ are linearly independent for $$\alpha\notin \{{\alpha_{r,s}} \mid \, r,s\in \N^\ast ,rs \leq \max(|\nu|, |\tilde \nu |)\}\quad \quad\text{with }\quad {\alpha_{r,s}}=Q-r\frac{\gamma}{2}-s\frac{2}{\gamma}.$$
{\bf (4) } {\bf Spectral decomposition}: If $u_1,u_2\in L^2(\R\times\Omega_\T)$ then
\begin{align}\label{fcomplete}
\langle u_1\, |\,  u_2\rangle_{2}=\frac{1}{2 \pi}\sum_{\nu,\tilde\nu,\nu',\tilde\nu'\in \mc{T}}\int_\R \langle u_1\,|\,
\Psi^0_{Q+iP,\nu',\tilde{\nu}'} \rangle_{2} \langle \Psi^0_{Q+iP,\nu,\tilde{\nu}}\, |\, u_2\rangle _{2}F^{-1}_{P+iQ}(\nu,\nu')F^{-1}_{P+iQ}(\tilde\nu,\tilde\nu')\, \dd P.
\end{align}
\end{proposition}

\subsection{Scattering construction of eigenstates of \texorpdfstring{${\bf H}$}{H} } 

One of  the main inputs of \cite{GKRV} is the scattering construction, of a complete family of generalized eigenstates for the Liouville Hamiltonian. Given complex number $q=e^{-t+i\theta}$, the Dirichlet annulus amplitude $\mathcal{A}^D_{\A_q^1,g_{\A}}$ forms a semigroup under amplitudes gluing 
\begin{align*}
    \frac{\mathcal{A}^D_{\A_q^1,g_{\A}}}{\sqrt{2}\pi}\times\frac{\mathcal{A}^D_{\A_{q'}^1,g_{\A}}}{\sqrt{2}\pi}=\frac{\mathcal{A}^D_{\A_{qq'}^1,g_{\A}}}{\sqrt{2}\pi}
\end{align*}
This semigroup was studied in \cite[Proposition 5.1]{GKRV} which defines two operator $\mathbf{H}$, $\mathbf{\Pi}$ by $ \mathcal{A}^D_{\A_q^1,g_{\A}}=\sqrt{2}\pi e^{c_{\rm L}t/12}e^{-t\mathbf{H}}e^{i\theta\mathbf{\Pi}}$. The Hamiltonian $\mathbf{H}$ corresponds to dilation and  $\mathbf{\Pi}$ corresponds to rotation.

For Young diagrams $\nu$, $\tilde\nu$, we consider the subset of $\C$ 
\begin{equation}\label{Inunu}
\mathcal{I}_{ \nu,\tilde\nu}:=\{\alpha\in\C \,|\,\exists \beta=\beta(\alpha)\in \R, \,\beta>{\rm Re} (Q-\alpha)\, \text{ and }\,{\rm Re} \big((Q-\alpha)^2\big)-2(|\nu|+|\tilde\nu|)>(\beta-\gamma/2)^2\}
\end{equation}
Now we gather the contents of \cite[Theorem 1.2]{BGKRV} and \cite[Prop 6.9]{GKRV}

\begin{proposition}\label{defprop:desc}
Fix  $\nu$, $\tilde\nu$ Young diagrams and $\ell=|\nu|+|\tilde\nu|$.  There is a holomorphic family in $\alpha\in \C$
$$\alpha\in \C\mapsto \Psi_{\alpha,\nu,\tilde{\nu}}\in e^{-\beta c_-}L^2(\R\times \Omega_{\T})$$ 
with 
  $\beta>|Q-{\rm Re}(\alpha)|$ satisfying ${\bf H}\Psi_{\alpha,\nu,\tilde{\nu}}=(2\Delta_\alpha+\ell) \Psi_{\alpha,\nu,\tilde{\nu}}$. This family is characterized by the intertwining property: for any $\chi\in C^\infty(\R)$   equal to $1$ near $-\infty$ and supported in $\R^-$
  \begin{equation}\label{intert:desc}
\Psi_{\alpha,\nu,\tilde{\nu}}= \lim_{t\to \infty}e^{t(2\Delta_\alpha+\ell)} e^{-t{\bf H}}( \chi(c)\Psi^0_{\alpha,\nu,\tilde{\nu}})\in e^{-\beta c_-}L^2(\R\times \Omega_{\T})
\end{equation}
 for any $\alpha\in \mathcal{I}_{ \nu,\tilde\nu}$ and any $\beta=\beta(\alpha)$ fulfilling the definition of $ \mathcal{I}_{ \nu,\tilde\nu}$. Furthermore, if $\alpha\in\R$ satisfies $\alpha<Q-\gamma$, we can take $\chi=1$ in the above statement.
\end{proposition}
\begin{remark}
The analytic region of $\alpha\to \Psi_{\alpha,\nu,\tilde{\nu}}$ is 
 $\C$. But only in the region $\mathcal{I}_{\nu,\tilde{\nu}}$, it has the intertwining representation (a scattering construction from the free field case).
\end{remark}

\begin{proposition}\label{holomorphicpsi2}
For all $u,v\in e^{\delta c_-}L^2(\R\times \Omega_\T)$ with $\delta>0$
\[\cjg u\mid v\cjd_{L^2}=\frac{1}{2\pi}\sum_{\nu_1,\tilde\nu_1,\nu_2,\tilde\nu_2}\int_0^\infty 
\cjg u\mid\Psi_{Q+ip,\nu_1,\tilde{\nu}_1}\cjd_{L^2} \cjg\Psi_{Q+ip,\nu_2,\tilde\nu_2}\mid v\cjd_{L^2} F^{-1}_{Q+iP}(\nu_1,\nu_2)F^{-1}_{Q+iP}(\tilde\nu_1,\tilde\nu_2) \dd P.\]
\end{proposition}
We can also extend the integration region from of $\R_+$ to the whole real line $\R$, i.e.
\begin{proposition}\label{realline}
For all $u,v\in e^{\delta c_-}L^2(\R\times \Omega_\T)$ with $\delta>0$
\[\cjg u\mid v\cjd_{L^2}=\frac{1}{4\pi}\sum_{\nu_1,\tilde\nu_1,\nu_2,\tilde\nu_2}\int_{-\infty}^\infty 
\cjg u\mid\Psi_{Q+ip,\nu_1,\tilde{\nu}_1}\cjd_{L^2} \cjg\Psi_{Q+ip,\nu_2,\tilde\nu_2}\mid v\cjd_{L^2} F^{-1}_{Q+iP}(\nu_1,\nu_2)F^{-1}_{Q+iP}(\tilde\nu_1,\tilde\nu_2) \dd P.\]
\end{proposition}
\begin{proof}
First, we know the Schapovalov form $F_{Q+iP}(\nu_1,\nu_2)$ and the Kac determinant depend on $P$ only through $\Delta_{Q+iP}=\Delta_{Q-iP}$. So $F^{-1}_{Q+iP}(\nu_1,\nu_2)=F^{-1}_{Q-iP}(\nu_1,\nu_2)$.

By \cite[Theorem 1.1]{BGKRV}, $\Psi_{Q+iP,\nu_1,\tilde{\nu_1}}=R^{\rm DOZZ}(Q+iP)\Psi_{Q-iP,\nu_1,\tilde{\nu_1}}$. So we have 
\begin{align*}
    \cjg u\mid\Psi_{Q+ip,\nu_1,\tilde{\nu}_1}\cjd_{L^2} \cjg\Psi_{Q+ip,\nu_2,\tilde\nu_2}\mid v\cjd_{L^2}&=\cjg u\mid R^{\rm DOZZ}(Q+iP)\Psi_{Q-ip,\nu_1,\tilde{\nu}_1}\cjd_{L^2} \cjg R^{\rm DOZZ}(Q+iP)\Psi_{Q-ip,\nu_2,\tilde\nu_2}\mid v\cjd_{L^2}\\&=\cjg u\mid \Psi_{Q-ip,\nu_1,\tilde{\nu}_1}\cjd_{L^2} \cjg \Psi_{Q-ip,\nu_2,\tilde\nu_2}\mid v\cjd_{L^2}
\end{align*}
Since $|R^{\rm DOZZ}(Q+iP)|=1$.
\end{proof}
\noindent{\bf Reverting orientation operators}

Let $o:\T\to\T$ be the map defined by $o(e^{i\theta})=e^{-i\theta}$. It will serve to analyze the effect of reverting orientation on the boundary of the amplitudes. We introduce the following operators
\begin{align}\label{defCandO}
\forall F\in e^{-\beta c_-}L^2(\R\times\Omega_\T), & & \mathbf{O}F(\varphi)=F(\varphi\circ o), & & \mathbf{C}F(\varphi)=\bbar F(\varphi ) 
\end{align}
defined on the weighted $L^2$-spaces with $\beta\in\R$. The operator $\mathbf{C}$ is nothing but complex conjugation. Recall that the semigroup $(e^{-t\mathbf{H}})_{t\geq 0}$ extends continuously to $e^{-\beta c_-}L^2(\R\times\Omega_\T)$ for $\beta\geq 0$ (see \cite[Lemma 6.5]{GKRV}). 
\begin{proposition}{\cite[Proposition 6.11.]{GKRV21}}\label{revert}
For $p\in\R^*$ and $\nu,\tilde\nu\in\mathcal{T}$ Young diagrams, we have the following relations
\begin{align*}
  &e^{-t\mathbf{H}} \mathbf{O}= \mathbf{O}e^{-t\mathbf{H}}, \quad \quad e^{-t\mathbf{H}} \mathbf{C}= \mathbf{C}e^{-t\mathbf{H}} .\\
&\mathbf{O} \mathbf{L}_n^0=\widetilde{\mathbf{L}}_n^0\mathbf{O},\quad \quad \mathbf{O}\widetilde{\mathbf{L}}_n^0= \mathbf{L}_n^0\mathbf{O},\quad\quad
\mathbf{C} \mathbf{L}_n^0=\widetilde{\mathbf{L}}_n^0\mathbf{C},\quad\quad \mathbf{C}\widetilde{\mathbf{L}}_n^0= \mathbf{L}_n^0\mathbf{C} .\\
&\mathbf{O} \mathbf{C}\Psi_{Q+ip,\nu,\tilde\nu}= \mathbf{C}\mathbf{O}\Psi_{Q+ip,\nu,\tilde\nu}=\Psi_{Q-ip,\nu,\tilde\nu}.
\end{align*}
\end{proposition}
\noindent{\bf Orientation on Riemann surface}.

 A compact Riemann surface $\Sigma$ with real analytic boundary $\partial\Sigma=\sqcup_{j=1}^{b}\caC_j$ is a 
compact oriented surface with smooth boundary with a family of charts 
 $\omega_j:V_j\to \omega_j(V_j)\subset \C$ for $j=1,\dots,j_0$ where $\cup_j V_j$ is an open covering of $\Sigma$ and $\omega_k \circ \omega_j^{-1}$ are holomorphic maps (where they are defined), and $\omega_j(V_j\cap\pl \Sigma)$ is a real analytic curve if $V_j\cap\pl\Sigma\not=\emptyset$. Let  $\T:=\{z\in \C\,|\, |z|=1\}$ be the standard unit circle, then for each $j\in [1,b]$, fix a point $p_j\in \mc{C}_j$  and an orientation $\sigma_j\in\{-1,+1\}$, and define the parametrization $\zeta_j:\T\to \mc{C}_j$ by 
$\zeta_j(e^{i\theta})=\omega_j^{-1}(e^{-i\sigma_j\theta}\omega_j(p_j))$ (in particular $\zeta_j(1)=p_j$). Observe that the parametrization of $\mc{C}_j$ is entirely described by $(\sigma_j,p_j)$ and the  complex coordinate chart $\omega_j$ near $\mc{C}_j$.  
We say that the boundary $\mc{C}_j$ is \emph{outgoing} if the orientation $(\zeta_j)_*(d\theta)$ is the orientation of $\mc{C}_j$ induced by that of $\Sigma$ as described above, i.e. if $\sigma_j=-1$, otherwise the parametrized boundary $\mc{C}_j$ is called \emph{incoming} if $\sigma_j=+1$. 

For example, in the {\bf annulus} topology, we call the outer circle is outgoing if the parametrization is counter-clockwise; and the inner circle is outgoing if it is clockwise.

\noindent{\bf Convention for pairing}

Suppose $\Sigma$ has $b$ Dirichlet boundary components, the amplitude  $\caA_{\Sigma,\boldsymbol{\zeta}}$ is a function $\caA_{\Sigma,\boldsymbol{\zeta}}:H^s(\T)^b\to \C$. Let us use the notation
 \begin{align}\label{ampf}
 \big\cjg \caA_{\Sigma,\boldsymbol{\zeta}},\otimes_{j=1}^{b}f_j\big\cjd_2:=\caA_{\Sigma,\boldsymbol{\zeta}}\big(\prod_{j=1}^b \bbar{f_j(\tilde\varphi_j)}\big):=\int \caA_{\Sigma,\boldsymbol{\zeta}}(\tilde{\boldsymbol{\varphi}})\big(\prod_{j=1}^b \bbar{f_j(\tilde\varphi_j)}\big) \dd \mu_0^{\otimes_b}(  \tilde{\boldsymbol{\varphi}}) .
\end{align}
Hence if we denote by $\tilde{\boldsymbol{\zeta}}$ the parametrization where all the boundary components are "in" then 
\begin{align*}
 \big\cjg \caA_{\Sigma,\boldsymbol{\zeta}}\mid\otimes_{j=1}^{b}f_j\big\cjd_2= \big\cjg \caA_{\Sigma,{\boldsymbol{\tilde\zeta}}}\mid\otimes_{j=1}^{b}{\bf O}^{(1-\sigma_j)/2}f_j\big\cjd_2.
\end{align*}
Recalling that ${\bf CO}\Psi_{Q+ip,\nu,\tilde\nu}=\Psi_{Q-ip,\nu,\tilde\nu}$ (Prop. \eqref{revert}) we obtain
\begin{align*}
 \big\cjg \caA_{\Sigma,\boldsymbol{\zeta}}\mid \otimes_{j=1}^b{\bf C}^{(1+\sigma_j)/2}\Psi_{Q+ip_j,\nu_j,\tilde\nu_j}\big\cjd_2=\caA_{\Sigma,\tilde{\boldsymbol{\zeta}}}(\otimes_{j=1}^b{\bf C}\Psi_{Q+i\sigma_jp_j,\nu_j,\tilde\nu_j})
\end{align*}

\section{The stress energy tensors in the upper half-plane}\label{SET}
Before introducing heavy notations, let us give a brief summary of the following two sections.
\begin{itemize}
    \item Step 1. We introduce the amplitude with one Dirichlet boundary and one Neumann boundary. On the Neumann boundary, there are insertions $(\beta,s)$ and $(\beta^{ar},s^{ar})$, we also cut a small hole with size $e^{-t}$ around $s$; 
    \item Step 2. We introduce the $\alpha$ insertion Dirichlet disk amplitude with a hole of size $e^{-t}$ and SET insertions. See Section \ref{diskamplitude};
    \item Step 3. We glue the above two objects along the Dirichlet boundary. Then we get an upper half-plane with one hole of size $e^{-t}$ in the interior and one hole of size $e^{-t}$ on the Neumann boundary. We denote the union of two holes by $U_t$. The SET insertions locate in the interior hole. See Proposition \ref{glluu}. ;
    \item Step 4. We compute Ward's identity in the object given by step 3. We get some differential operators to act on the ordinary correlation function with $U_t$ removed in the potential and some error terms. see Proposition \ref{beforeres};
    \item Step 5. We take the residue of the object in step 4 to get some new differential operators labeled by Young diagrams and new error terms. See Proposition \ref{withtbis};
    \item Step 6. Combining the regularity argument Proposition \ref{difflemma} and Proposition \ref{withtbis}, we can take $t\to\infty$ to make error terms vanish after choosing proper insertion weight. This gives Ward identity without holes. See Proposition \ref{abr}.
\end{itemize} 
We first introduce some basic materials on how to define the stress energy tensor on a subset of $\H$.  We equip $\H$  with conformal metric $g=e^{\omega(z)}|\dd z|^2$. The stress energy tensor (SET) does not make sense as a random field but can be given sense at the level of correlation functions as the limit of a regularized SET. Let us introduce the Liouville field
\begin{align*}
\Phi_g:=\phi_g+\tfrac{Q}{2}\omega
\end{align*}
and its $|dz|^2$-regularization $\Phi_{g,\epsilon}$ with the following caveat: we  make a special choice of regularization by choosing a  nonnegative smooth compactly supported function $\rho:\C\to \R$ such that $\int \rho(z)\,\dd z=1$ and thus $\int z\partial_z \rho(z)\,\dd z=-1$. Then, as usual, we set $\rho_\epsilon(z)=\epsilon^{-2}\rho(z/\epsilon)$ and denote by $h_\epsilon=h\star \rho_\epsilon$ the regularisation of a distribution $h$.  In particular $\Phi_{g,\epsilon}(z)$ is a.s. smooth.
Then the regularized SET is defined by for $z\in \H$
\begin{equation}\label{defSET}
\quad T^{\H}_{g,\epsilon}(z):=Q\partial^2_{z}\Phi_{g,\epsilon}(z)-(\partial_z \Phi_{g,\epsilon}(z))^2 +a_{\Sigma,g,\epsilon}(z)
\end{equation}
with the renormalization constant  given by $a_{\Sigma,g,\epsilon}:=\E[(\partial_z X^{\H}_{g,\epsilon}(z))^2]$.
We denote also by $\bar T_{g,\epsilon}(z)$  the complex conjugate of $T_{g,\epsilon}(z)$. 
Then we have the following proposition
\begin{proposition}\label{constanteSET} For a fixed $z\in \H$, we have $a_{\H,g,\epsilon}(z)=o(1)$.

\end{proposition}
\begin{proof}
\begin{lemma}
 For $g=e^{\omega(z)}|\dd z|^2$, the Green function is of the form $G^{\H}_g(z,z')=\ln\frac{1}{|z-z'||z-\bar{z'}|}+h(z,\bar z)+h(z',\bar z')$. 
\end{lemma}
\begin{proof}
First, we choose a metric $g_0$ s.t. the real line $\R$ is geodesic. Then we double $\H$ to get the full plane, using $G^{\H}_{g_0}(z,z')=G^{\hat\C}_{g_0}(z,z')+G^{\hat\C}_{g_0}(z,\bar{z'})$, we know the lemma holds for $(\C,g_0)$. For general case $( \H,g)$, the Green function can be written as $$G^{\H}_{g}(z,z')=G^{\H}_{g_0}(z,z')-m_g(G^{\H}_{g_0}(z,\cdot))-m_g(G^{\H}_{g_0}(\cdot,z'))+m_g(G^{\H}_{g_0}(\cdot,\cdot)).$$ since we can always find $\omega$ such that $g=e^{\omega}g_0$.
\end{proof}
The rest of proof is parallel to \cite[Lemma 10.1]{GKRV21}, the new thing here is that we need to treat the term $\ln\frac{1}{|(z+u\epsilon)-\overline{( z+v\epsilon)}|}$. We may suppose $\Im(z)>\delta>0$, we consider the smooth function $h_1(z,\bar z,z',\bar z')=\ln\frac{1}{z-\bar{z}'}+\ln\frac{1}{\bar{z}-z'}$, then we have
$$\epsilon^{-2}\int\int\partial_z\rho(u)\partial_z\rho(v)h(z+u\epsilon,\bar z+\epsilon\bar u,z+v\epsilon,\bar z+\epsilon\bar v)\dd u\dd v=\partial^2_{zz'} h(z,\bar z,z,\bar z)+o(1)\label{cst1}.$$
We extend $\psi$ to the double surface $d\Sigma$. Similarly, due to $|\psi(z)-\psi(\bar{z})|>\delta>0$, we know that $h_2(z,\bar z,z',\bar z')=\ln\frac{1}{|\psi(z)-\psi(\bar{z}')|}$ is a smooth function, and $\partial^2_{zz'} h_2(z,\bar z,z,\bar z)=0.$
\end{proof}
\begin{remark}
In this paper, we only need the case $\psi$ is a M\"obius transform which implies $S_{\psi}=0$.
\end{remark}

\subsection{Generalized disk amplitude with SET insertions.}\label{diskamplitude}
In this section, we restrict our attention to either $\Sigma=\C$ or  $\Sigma\subset\C$  a bounded region given by removing finite disks in the interior of $\C$ (notice $\Sigma$ has no Neumann boundaries).

We recall some results from \cite{GKRV} and then explain generalized disk amplitude can be understood as the probabilistic counterpart of generalized eigenfunction of Liouville Hamiltonian. First  we will use the following corollary of Proposition \ref{defprop:desc} 

\begin{proposition}\label{eigenana}
Let  $\nu,\tilde\nu$  be Young diagrams and $\ell=|\nu|+|\tilde\nu|$.  There exists a real number $\bf{a}$ $< Q-\gamma$ such that $ (-\infty ,\bf{a}]\subset \mathcal{I}_{\nu,\tilde{\nu}}$. Since $\alpha\to\Psi_{\alpha,\nu,\tilde{\nu}}(c,\varphi)$ is analytic in $\alpha\in\C$, so its analytic region covers both $Q+i\R_+$ and $ (-\infty ,\bf{a}]$.

\end{proposition}
We introduce some notation from \cite{GKRV}. 

Let  $f:\C^k\times\C^{\tilde k}\to\C$ and  ${\bf a} \in \C^k$,  ${\bf b} \in \C^{\tilde k}$.       We will denote multiple   nested contour integrals of $f$ as follows:
\begin{align*}
\oint_{|\mathbf{u}-\boldsymbol{a}|=\boldsymbol{\delta} }\oint_{|\mathbf{v} -\boldsymbol{\tilde a}|=\tilde{\boldsymbol{\delta}}}f ({\bf u},{\bf v})\dd {\bf \bar v}\dd {\bf u}:=
\oint_{|u_k-a_k|=\delta_k}\dots \oint_{|u_1-a_1|=\delta_1} \oint_{|v_{\tilde k}-\tilde a_{\tilde k}|=\tilde{\delta}_{\tilde k}}\dots  \oint_{|v_1-\tilde a_1|=\tilde{\delta}_1}  f({\bf u},{\bf v}) \dd \bar v_1\dots \dd \bar v_{j}  \dd u_1\dots \dd u_k\nonumber
\end{align*}
where $\boldsymbol{\delta} :=(\delta_1,\dots, \delta_k)$ with $0<\delta_1<\dots<\delta_k<1$ and similarly for $\tilde{\boldsymbol{\delta}}$.  We always suppose $\delta_i\neq \tilde\delta_j$ for all $i,j$. Given Young diagrams $\nu$, $\tilde\nu$  we denote
\begin{equation}\label{notation}
\mathbf{u}^{1-\nu}:=\prod u_i^{1-\nu(i)},\ \ \ \bar{\mathbf{v}}^{1-\tilde{\nu}}:=\prod \bar {v}_i^{1-\tilde{\nu}(i)}.
\end{equation}
More generally, we will often make use of the shorthand $f({\bf u}):=\prod_i f(u_i)$ if $f:\C\to \C$ is a function and ${\bf u}\in \C^k$.
Recall also   the definitions for Young diagrams in subsection \ref{sub:virasoro}, in particular $s(\nu)$ is the size of $\nu$.

We write $g_\D=|dz|^2$ for the flat metric on $\D$. Let $g=e^\omega g_\D$ where $\omega$ is smooth on $\D$. Recall that, on $\D$, $\Phi_g=X_{g,D}+P\tilde\varphi+ \tfrac{Q}{2}\omega$ and we denote $\Phi_{g,\epsilon}$ its $|dz|^2$-regularization as above. The SET (distinguish this SET with the one in the last subsection) is then given by
$$
T_{g,\epsilon}=Q\partial^2_{z}\Phi_{g,\epsilon}-\big(\partial_z \Phi_{g,\epsilon} \big)^2 +\E[(\partial_z X_{g,D,\epsilon}(z))^2]
$$
and $\bbar T_{g,\epsilon}$ is its complex conjugate. 

Let us consider now $\alpha\in\R$ with $\alpha<\bf{a}$. Then from Proposition \ref{defprop:desc} (taking $\chi=1$) we have
\begin{equation}\label{limtpsi}
 \lim_{t\to +\infty}e^{t (2\Delta_\alpha+|\nu|+|{\tilde{\nu}}|)}e^{-t\mathbf{H}}\Psi^0_{\alpha,\nu,{\tilde{\nu}}}=\Psi_{\alpha,\nu,{\tilde{\nu}}}
 \end{equation}
  in $e^{-\beta c_- }L^2(\R\times \Omega_\T)$ for any $\beta>(Q-\alpha)$.  
Then from \cite[Lemma 7.7]{GKRV} we have:

\begin{lemma}\label{TTLemma} Let $\alpha \in\R$, $\boldsymbol{ \delta},\boldsymbol{\tilde\delta}\in \R^k$. Then
\begin{align}\label{tlim}
 e^{t (2\Delta_\alpha+|\nu|+|{\tilde{\nu}}|)}e^{-t\mathbf{H}}\Psi^0_{\alpha,\nu,{\tilde{\nu}}} 
 =&
  \frac{1}{(2\pi i)^{s(\nu)+s(\tilde{\nu})}}  
 \oint_{|\mathbf{u}|=\boldsymbol{\delta}_t}   \oint_{|\mathbf{v}|=\boldsymbol{\tilde\delta}_t}  
\mathbf{u}^{1-\nu}\bar{\mathbf{v}}^{1-\tilde\nu}  
\Psi_{t, \alpha}(\mathbf{u},\mathbf{v})\dd   \bar{\mathbf{v}}\dd   \mathbf{u}
 \end{align}
where $\boldsymbol{\delta}_t:=e^{-t}\boldsymbol{\delta},\boldsymbol{\tilde \delta}_t:=e^{-t}\boldsymbol{\tilde\delta}$, and 
\begin{equation}\label{limphieps}
 \Psi_{t, \alpha}(\mathbf{u},\mathbf{v}) :=\lim_{{\boldsymbol{\epsilon}\to 0}}\Psi_{t,\epsilon,\alpha}(\mathbf{u},\mathbf{v})  
\end{equation} with
\begin{align}\label{psiepsi}
\Psi_{t,\epsilon,\alpha}(\mathbf{u},\mathbf{v}) =e^{-Q c}
\E\Big( T_{g_\D,\epsilon}(\mathbf{u})\bbar T_{g_\D,\epsilon}(\mathbf{v}) V_{\alpha,g_\D}(0) e^{-\mu e^{\gamma c}M_\gamma(\phi_{g_\D},\D\setminus\D_{e^{-t}})}\Big) \quad \in e^{-\beta c_- }L^2(\R\times \Omega_\T),
 \end{align}
the expectation is over the Dirichlet field $X_{g_\D,D}$  and we denoted $\D_{e^{-t}}=e^{-t}\D$. The limit \eqref{limphieps} holds in $ e^{-\beta c_- }L^2(\R\times \Omega_\T)$, for $\beta>(Q-\alpha)$, uniformly over the compact subsets of $\{(\mathbf{u},\mathbf{v})|u_i,v_j\in \D_{e^{-t}}\setminus\{0\}, \text{all distinct}\}$.
  \end{lemma}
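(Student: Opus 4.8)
The plan is to derive the identity by combining two ingredients: the Feynman–Kac representation of the Liouville semigroup $e^{-t\mathbf{H}}$ and the realization of the free Virasoro descendant operators as contour integrals of the free stress–energy tensor. I would first treat the highest–weight case $\nu=\tilde\nu=\emptyset$. The semigroup $e^{-t\mathbf{H}}$ admits a probabilistic representation: acting on a functional of the circle field, it is computed as an expectation of that functional against the Liouville field on the annulus $\D\setminus\D_{e^{-t}}$ (a cylinder of modulus $t$ after the exponential map), weighted by the GMC interaction $\exp(-\mu e^{\gamma c}M_\gamma(\phi_{g_\D},\D\setminus\D_{e^{-t}}))$. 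Applying this to $\Psi^0_\alpha=e^{(\alpha-Q)c}$ from \eqref{psialphadef}, and absorbing the insertion $e^{(\alpha-Q)c}$ into the vertex operator $V_{\alpha,g_\D}(0)$ at the center of $\D_{e^{-t}}$ via a Girsanov shift together with the normalization of the constant mode, produces exactly $\Psi_{t,\alpha}=e^{-Qc}\E(V_{\alpha,g_\D}(0)e^{-\mu e^{\gamma c}M_\gamma(\phi_{g_\D},\D\setminus\D_{e^{-t}})})$; the prefactor $e^{2t\Delta_\alpha}$ matches the conformal weight of $V_\alpha$ and the $e^{-Qc}$ tracks the background charge.

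Next I would recast the descendant prefactor. Starting from the mode expansion of the free SET associated with \eqref{defSET} (i.e. with the GMC turned off), the negative Virasoro modes in \eqref{virassoro} are recovered as Laurent coefficients, $\mathbf{L}^0_{-n}=\frac{1}{2\pi i}\oint u^{1-n}T^0(u)\,du$, and similarly $\tilde{\mathbf{L}}^0_{-n}$ from $\bar T^0$ and $d\bar v$. Composing these according to the Young diagrams $\nu,\tilde\nu$ reproduces the weights $\mathbf{u}^{1-\nu}$, $\bar{\mathbf{v}}^{1-\tilde\nu}$ and the normalization $(2\pi i)^{-(s(\nu)+s(\tilde\nu))}$ of the statement, while the nesting $0<\delta_1<\dots<\delta_k$ encodes the operator ordering in $\mathbf{L}^0_{-\nu}=\mathbf{L}^0_{-\nu(k)}\cdots\mathbf{L}^0_{-\nu(1)}$. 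The regularized tensor $T_{g_\D,\epsilon}$ with its counterterm gives these contour integrals a rigorous meaning, the limit $\epsilon\to0$ being postponed to the end.

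The heart of the argument is to commute these SET insertions through the semigroup. The key geometric observation is that the contours sit on circles of radius $\boldsymbol\delta_t=e^{-t}\boldsymbol\delta$, which lie strictly inside $\D_{e^{-t}}$, the disk carrying no GMC interaction; on this region the Liouville field is purely Gaussian, so the full tensor $T_{g_\D,\epsilon}$ entering $\Psi_{t,\epsilon,\alpha}$ agrees in law with the free SET that implements the Virasoro action. Since $\mathbf{H}$ generates the dilation flow $z\mapsto e^{-t}z$, conjugating the free modes by $e^{-t\mathbf{H}}$ rescales the contour radius from $\delta_i$ to $e^{-t}\delta_i$ and generates the factor $e^{t(|\nu|+|\tilde\nu|)}$, which combines with the base case to give the prefactor $e^{t(2\Delta_\alpha+|\nu|+|\tilde\nu|)}$. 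After interchanging the finitely many nested contour integrals with the expectation and the semigroup, this yields the displayed formula \eqref{tlim}.

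The main obstacle is the last analytic step: justifying the $\epsilon\to0$ convergence of $\Psi_{t,\epsilon,\alpha}$ to $\Psi_{t,\alpha}$ in $e^{-\beta c_-}L^2(\R\times\Omega_\T)$ for $\beta>Q-\alpha$, uniformly over compact subsets of $\{(\mathbf u,\mathbf v)\mid u_i,v_j\in\D_{e^{-t}}\setminus\{0\}\text{ distinct}\}$, and the legitimacy of exchanging this limit with the contour integrals and the semigroup. The decay in $c$ is controlled for $c\to+\infty$ by the GMC weight and for $c\to-\infty$ by $e^{-\beta c_-}$, exactly as in the construction of $\Psi_\alpha$ underlying Proposition \ref{defprop:desc}; the quantitative input is a bound on the regularized SET insertions in $L^2(\Omega_\T)$ that is uniform in $\epsilon$ and locally uniform in the insertion points, obtained from Wick's theorem and the explicit Dirichlet Green function on $\D$, taking care of the short–distance singularities between the insertion points and the vertex operator at the origin. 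This is precisely the technical content of \cite[Lemma 7.7]{GKRV}, whose argument I would follow.
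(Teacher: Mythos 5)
Your proposal is correct and follows essentially the same route as the paper, which gives no independent argument for this lemma but imports it directly as \cite{GKRV}, Lemma 7.7; your sketch (Feynman--Kac/annulus amplitude representation of $e^{-t\mathbf{H}}$, Girsanov shift producing $V_{\alpha,g_\D}(0)$, contour-integral realization of the free Virasoro modes on circles scaled by $e^{-t}$ inside the interaction-free hole, and deferral of the uniform $\epsilon\to 0$ estimates to the cited reference) is precisely the content of that proof. No discrepancy to report.
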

  
Note that the expression  \eqref{psiepsi} is the expectation involved in the definition of amplitudes \eqref{blockamplitude} with the function $F$ given by  
$$F=T_{g_\D,\epsilon}(\mathbf{u})\bbar T_{g_\D,\epsilon}(\mathbf{v}) e^{\mu e^{\gamma c}M_\gamma(\phi_{g_\D},\D_{e^{-t}})}.$$
It has  a hole in the potential, i.e. $\D_{e^{-t}}$, and further SET insertions.
Hence we need to extend the definition of amplitudes for the limiting object as $\epsilon\to 0$. 

The framework is the following. Consider an admissible surface $(\Sigma,g, {\bf z},\boldsymbol{\zeta})$ embedded in the Riemann sphere $\hat{\C}$ (viewed as the complex plane), with marked points ${\bf z}=(z_1,\dots,z_m)$ and associated weights  $\boldsymbol{\alpha}=(\alpha_1,\dots,\alpha_m)\in\R^m$ satisfying $\alpha_i<Q$ for all $i$. Furthermore we consider an open set $U\subset\Sigma$, which will stand for   holes in $\Sigma$ and which is not necessarily connected: this will typically be the case as we will put holes around many different vertex insertions. We also consider two vectors $\mathbf{u}=(u_1,\dots,u_k)\in U^k ,\mathbf{v}=(v_1,\dots,v_{\tilde k})\in U^{\tilde k}$. 
Let us introduce the sets
\begin{align}\label{caOt}
 \caO^{\bf z}_{\Sigma,U}=\{({\bf u},{\bf v})\in U^{k+\tilde k}\,|\, u_j,v_{j'} \text{ all distinct and } \forall j ,\quad \forall i, \:  u_j\neq z_i\; ,v_j\neq z_i  \},\\
  \caO^{\rm ext}_{\Sigma,U} =\{({\bf z},{\bf u},{\bf v})\in \Sigma^m\times U^{k+\tilde k}\,|\, z_i,u_j,v_{j'} \text{ all distinct} \}.
\end{align}

\begin{definition}[{\textbf{Generalized amplitudes}}]\label{def:ampholes}
Let $({\bf u},{\bf v})\in  \caO^{\bf z}_{\Sigma,U}$.
 
\noindent {\bf (1) }
If $\partial\Sigma=\emptyset$, i.e. $\Sigma=\hat\C$, then we assume that the Seiberg bound \eqref{seiberg1} (thus with ${\bf g}=0$)
is satisfied and   the generalized amplitude with holes and SET insertions  is   defined by 
\[
 \caA_{\Sigma,U, g, {\bf z},\boldsymbol{\alpha}, \boldsymbol{\zeta}} (T_g( \mathbf{u})\bbar T_g( \mathbf{v}))  :=   \langle T_g( \mathbf{u})\bbar T_g( \mathbf{v})\prod_{i=1}^mV_{\alpha,g}(z_i)\rangle_{\hat\C,U,g}.
\]

 \vskip 3mm
 
\noindent {\bf (2)}  If  $\partial\Sigma$ has $b>0$ boundary connected components then 
for boundary fields $\tilde{\boldsymbol{\varphi}}:=(\tilde\varphi_1,\dots,\tilde\varphi_b)\in (H^{s}(\T))^b$ with $s<0$ we set  
\begin{multline}
 \label{amplitudeSET}
 \caA_{\Sigma,U, g, {\bf z},\boldsymbol{\alpha}, \boldsymbol{\zeta}}( T_g( \mathbf{u})\bbar T_g( \mathbf{v}),\tilde{\boldsymbol{\varphi}}) \\
 :=\lim_{\epsilon\to 0}  Z^{\Sigma}_{GFF,g}\caA^0_{\Sigma,g}(\tilde{\boldsymbol{\varphi}})
 \E \big[T_{g,\epsilon}({\bf u})\bbar T_{g,\epsilon}({\bf v})\prod_{i=1}^m V_{\alpha_i,g }(z_i)e^{-\frac{Q}{4\pi}\int_\Sigma K_g\phi_g\dd {\rm v}_g-\frac{Q}{2\pi}\int_{\partial\Sigma}k_g\phi_g\dd \lambda_g -\mu M_\gamma^g (\phi_g,\Sigma\setminus U)}\big].
 \end{multline}
\end{definition}

The following statement is a straightforward adaptation from \cite[Prop 9.1]{GKRV}.

\begin{proposition}\label{ampSETholo}
The limit $\epsilon\to 0$ in Definition \ref{def:ampholes} is well defined and  defines a  continuous function of the variables $({\bf z},{\bf u},{\bf v})\in   \caO^{\rm ext}_{\Sigma,U} $, $\mu_0^{\otimes b}$ almost surely in $\tilde{\boldsymbol{\varphi}}\in (H^{s}(\T))^b$, with $s<0$.  

\end{proposition}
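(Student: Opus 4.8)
The plan is to follow \cite[Prop 9.1]{GKRV} closely and reduce the statement to a Gaussian computation, the essential simplification being that on $\caO^{\rm ext}_{\Sigma,U}$ the SET insertion points $\mathbf{u},\mathbf{v}\in U$ lie at positive distance both from the vertex insertions $\mathbf{z}$ and from the support $\Sigma\setminus U$ of the GMC potential. In case (1), where $\Sigma=\hat\C$, the statement is exactly the content of \cite[Prop 9.1]{GKRV}, so I focus on case (2). First I would expand the regularized tensor $T_{g,\epsilon}=Q\partial_z^2\Phi_{g,\epsilon}-(\partial_z\Phi_{g,\epsilon})^2+a_{\Sigma,g,\epsilon}$ and observe that each factor $T_{g,\epsilon}(u_j)\bbar T_{g,\epsilon}(v_{j'})$ is a polynomial of degree two in the Gaussian fields $\partial_z X_{g,D,\epsilon}$ and their conjugates. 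Since $a_{\Sigma,g,\epsilon}(u_j)=\E[(\partial_z X_{g,D,\epsilon}(u_j))^2]$ is precisely the internal self-contraction, replacing $(\partial_z\Phi_{g,\epsilon})^2-a_{\Sigma,g,\epsilon}$ by the Wick power $:\!(\partial_z\Phi_{g,\epsilon})^2\!:$ removes the only singular term inside a single SET factor, whose limit is furnished by Proposition \ref{constanteSET}.

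Second, I would use the Girsanov (Cameron--Martin) theorem to absorb the vertex operators $\prod_i V_{\alpha_i,g}(z_i)$ into a deterministic shift of $X_{g,D}$ by $\sum_i\alpha_i G_{g,D}(\cdot,z_i)$. After this shift each derivative $\partial_z\Phi_{g,\epsilon}(u_j)$ becomes a centered Gaussian plus the deterministic drift $\tfrac{Q}{2}\partial_z\omega(u_j)+\partial_z(P\tilde\varphi)(u_j)+\sum_i\alpha_i\,\partial_z (G_{g,D})_\epsilon(u_j,z_i)$, while the GMC potential acquires the weight $\prod_i e^{\gamma\alpha_i G_{g,D}(\cdot,z_i)}$. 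Because $u_j\neq z_i$ on $\caO^{\rm ext}_{\Sigma,U}$ and $P\tilde\varphi$ is harmonic, hence smooth, in the interior for $\mu_0^{\otimes b}$-a.e. boundary field $\tilde\varphi$, every drift term is finite and converges, as $\epsilon\to0$, to the smooth functions $\partial_z G_{g,D}(u_j,z_i)$ and $\partial_z(P\tilde\varphi)(u_j)$.

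Third, I would evaluate the remaining Gaussian expectation by repeated integration by parts (Wick's theorem): each surviving factor $\partial_z\Phi_{g,\epsilon}(u_j)$ is either contracted against another such factor, producing an off-diagonal second derivative $\partial\bar\partial G_{g,D}$ which is smooth for distinct arguments, or brought down from the GMC exponential, producing a factor $\int_{\Sigma\setminus U}\partial_u G_{g,D}(u_j,x)\,M_\gamma^g(\mathrm{d}x)$. Since $u_j\in U$ lies at positive distance from $\Sigma\setminus U$, the kernel $x\mapsto\partial_u G_{g,D}(u_j,x)$ is bounded and smooth on the domain of integration, so these GMC integrals possess moments of all orders and converge in $L^2$ as $\epsilon\to0$ by the standard convergence of the Gaussian multiplicative chaos. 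Summing the finitely many terms generated by Wick's theorem yields the existence of the $\epsilon\to0$ limit.

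Finally, continuity on $\caO^{\rm ext}_{\Sigma,U}$ follows from dominated convergence: all deterministic kernels $\partial G_{g,D}$ and $\partial\bar\partial G_{g,D}$ appearing above depend smoothly on the positions as long as the marked points remain distinct, and on any compact subset of $\caO^{\rm ext}_{\Sigma,U}$ one obtains $L^2(\mu_0^{\otimes b})$ bounds on the GMC integrals that are uniform in the positions, which promotes the pointwise $\epsilon\to0$ convergence to locally uniform convergence and yields the $\mu_0^{\otimes b}$-almost sure continuity in $\tilde{\boldsymbol\varphi}$. I expect this last uniformity to be the only genuine obstacle: one must show that the moment bounds on the GMC-derivative integrals stay controlled as the insertion points approach $\partial U$ or each other, which is achieved by combining the separation $u_j\in U$, $x\in\Sigma\setminus U$ with locally uniform Green-function estimates. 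Once the configuration is confined to a compact subset of $\caO^{\rm ext}_{\Sigma,U}$ this is routine, and the adaptation from the closed-surface computation of \cite[Prop 9.1]{GKRV} goes through with $G_{g,D}$ in place of the sphere Green function.
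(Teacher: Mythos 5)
Your proposal is correct and follows essentially the same route the paper intends: the paper offers no written proof here, only the remark that the statement is a straightforward adaptation of \cite[Prop 9.1]{GKRV}, and your reconstruction (Wick-ordering the self-contraction via $a_{\Sigma,g,\epsilon}$, Girsanov to absorb the vertex operators, Gaussian integration by parts producing Green-function derivatives that are smooth at non-coinciding points, and the separation of $\mathbf{u},\mathbf{v}\in U$ from the GMC support $\Sigma\setminus U$ to control the resulting chaos integrals) is precisely that adaptation, consistent with the one-line argument the author gives for the analogous half-plane statement. The only point worth making explicit is that when some $z_i$ lies in $\Sigma\setminus U$ the Girsanov weight $e^{\gamma\alpha_i G_{g,D}(\cdot,z_i)}$ introduces a singularity in the GMC integrand, which is handled by the usual Seiberg-bound integrability rather than by the separation argument, but this is standard and does not affect the validity of your proof.
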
 

Next, we reformulate the Weyl covariance in terms of generalized amplitudes:
\begin{proposition}\label{WeylSET2}{\bf  (Conformal Anomaly)}  Let   $\omega\in H^1_0(\Sigma)$ if $\partial\Sigma\not=\emptyset$ or $\omega\in H^1(\Sigma)$ if $\partial\Sigma=\emptyset$. Then 
\begin{align*}
\caA_{\Sigma,U,e^\omega g,{\bf z},\boldsymbol{\alpha}, \boldsymbol{\zeta}}(T_{e^\omega g}( \mathbf{u})\bbar T_{e^\omega g}( \mathbf{v}),\tilde{\boldsymbol{\varphi}})= e^{c_LS_{\rm L}^0(\Sigma,g,\omega)-\sum_i\Delta_{\alpha_i}\omega(z_i) }\caA_{\Sigma, U,g,{\bf z},\boldsymbol{\alpha},\boldsymbol{\zeta} }\big( T_g( \mathbf{u})\bbar T_g( \mathbf{v}), \tilde{\boldsymbol{\varphi}}\big).
\end{align*}
\end{proposition}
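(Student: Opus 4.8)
The plan is to reduce the statement to the already-established conformal anomaly for amplitudes (Proposition \ref{Weyl2}, and the corresponding closed-surface identity of \cite{GKRV} when $\Sigma=\hat\C$) by regarding the two strings of stress-energy insertions $T_{g,\epsilon}(\mathbf{u})\bbar T_{g,\epsilon}(\mathbf{v})$ as part of the test functional in the regularized expression \eqref{amplitudeSET}, and then passing to the limit $\epsilon\to0$ via Proposition \ref{ampSETholo}. The whole point is that the combination $\Phi_g=\phi_g+\tfrac Q2\omega_g$ (where $g=e^{\omega_g}|dz|^2$) is engineered to be Weyl invariant, so that the regularized SET carries no anomaly of its own under $g\mapsto e^\omega g$; the only anomaly produced is the one coming from the ``body'' of the amplitude, namely $e^{c_LS_{\rm L}^0(\Sigma,g,\omega)-\sum_i\Delta_{\alpha_i}\omega(z_i)}$.

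First I would record the Weyl covariance of the regularized SET. Writing $g'=e^\omega g$, so that $\omega_{g'}=\omega_g+\omega$, the anomaly of Proposition \ref{Weyl2} is implemented at the level of the path integral by the deterministic field shift $\phi_{g'}\leftrightarrow\phi_g-\tfrac Q2\omega$ inside the functional. Viewing $T_{g',\epsilon}$ as a \emph{fixed} functional of $\phi$ (its coefficients $\omega_{g'}$ and $a_{\Sigma,g',\epsilon}$ being frozen smooth data), this shift carries $\Phi_{g'}$ to
\[
\Phi_{g'}=\phi_{g'}+\tfrac Q2\omega_{g'}\ \longleftrightarrow\ \bigl(\phi_g-\tfrac Q2\omega\bigr)+\tfrac Q2(\omega_g+\omega)=\phi_g+\tfrac Q2\omega_g=\Phi_g .
\]
Since the $|dz|^2$-regularization commutes with the smooth shift, the derivative part $Q\partial_z^2\Phi_{g',\epsilon}-(\partial_z\Phi_{g',\epsilon})^2$ of \eqref{defSET} is carried \emph{exactly} onto the corresponding quantity for $\Phi_{g,\epsilon}$, so that under the shift $T_{g',\epsilon}(\mathbf{u})$ becomes $T_{g,\epsilon}(\mathbf{u})+\bigl(a_{\Sigma,g',\epsilon}-a_{\Sigma,g,\epsilon}\bigr)$, and likewise for $\bbar T$.

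The next step is to dispose of the difference of renormalization constants. On $\Sigma\subset\C$ the boundary is purely Dirichlet, hence the mixed field is conformally invariant, $X^\Sigma_{g',m}\stackrel{law}{=}X^\Sigma_{g,m}$, whence $a_{\Sigma,g',\epsilon}=a_{\Sigma,g,\epsilon}$ for every $\epsilon$; alternatively one invokes the Weyl invariance $a_{\Sigma,e^\omega g}=a_{\Sigma,g}$ of Proposition \ref{constanteSET}(2), making the difference $o(1)$ as $\epsilon\to0$. Either way the SET insertions produce no extra factor. I then run the conformal-anomaly computation (the Girsanov shift together with the curvature and boundary manipulations in the proof of Proposition \ref{Weyl2}) on the remaining body $\prod_i V_{\alpha_i}(z_i)\,e^{-\frac Q{4\pi}\int K_g\phi_g-\frac Q{2\pi}\int k_g\phi_g-\mu M_\gamma(\phi_g,\Sigma\setminus U)}$, noting that this derivation is insensitive to the (complex) sign of the functional, that the hole $\Sigma\setminus U$ in the GMC potential and the prefactor $\caA^0_{\Sigma,g}(\tilde{\boldsymbol{\varphi}})$ (metric independent since $\omega$ vanishes on $\partial^D\Sigma$) play no role in the local Weyl computation, and that there are no boundary insertions here so only the bulk weights $\Delta_{\alpha_i}$ and $S_{\rm L}^0$ enter. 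This yields the claimed prefactor for each fixed $\epsilon$, and Proposition \ref{ampSETholo} lets me pass it through the limit $\epsilon\to0$.

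The main obstacle is the bookkeeping around the regularization and the interchange of the $\epsilon\to0$ limit with the Weyl transformation: one must verify that the smooth shift $-\tfrac Q2\omega$ commutes cleanly with the mollification so that no finite remainder survives in $Q\partial_z^2\Phi_\epsilon-(\partial_z\Phi_\epsilon)^2$, and that the convergence of Proposition \ref{ampSETholo} (locally on $\caO^{\rm ext}_{\Sigma,U}$ and $\mu_0^{\otimes b}$-a.s. in $\tilde{\boldsymbol{\varphi}}$) is strong enough to carry the prefactor to the limit. Conceptually the point worth emphasizing is that, in contrast with the biholomorphism rule \eqref{atransf} where the Schwarzian $-\tfrac1{12}S_\psi$ appears, a \emph{pure} Weyl rescaling generates no Schwarzian and hence no SET-anomaly; all the central-charge dependence is already absorbed into the $c_L S_{\rm L}^0$ term of the bare amplitude.
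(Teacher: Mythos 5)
Your proposal is correct and takes essentially the approach the paper intends: the paper states this proposition without proof, treating it as a reformulation of the conformal anomaly of Proposition \ref{Weyl2} (compare the companion Proposition \ref{WeylSET}, whose proof is declared to be ``essentially the same as'' that of Proposition \ref{Weyl2}). Your argument --- the Weyl invariance of $\Phi_g=\phi_g+\tfrac{Q}{2}\omega_g$ and of the renormalization constant $a_{\Sigma,g}$ from Proposition \ref{constanteSET}, so that the SET insertions ride along the field shift $\phi_{e^{\omega}g}\leftrightarrow\phi_g-\tfrac{Q}{2}\omega$ with no extra anomaly and no Schwarzian, followed by the passage to the $\epsilon\to0$ limit via Proposition \ref{ampSETholo} --- is precisely the omitted adaptation.
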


A direct consequence of the definition is that we can rewrite \eqref{tlim} as a generalized amplitude on $\D$ with boundary  $\partial\D$ equipped with the  parametrization $\zeta_\D(e^{i\theta})=e^{i\theta}$: 
\begin{proposition}
    
\label{ampdiskSET}
 Let $\alpha\in\R$ with $\alpha<Q$ and $\nu,\tilde{\nu}\in \mc{T}$. Then 
\begin{align*}
  e^{(2\Delta_{\alpha}+|\nu|+|\tilde\nu|)t}   e^{-t\mathbf{H}}\Psi^0_{\alpha,\nu,\tilde\nu}(\tilde\varphi)= &\frac{Z_{\D,g_\D} ^{-1}}{(2\pi i)^{s(\nu)+s(\tilde \nu)}} 
 \oint_{|\mathbf{u}|=\boldsymbol{\delta}_t}   \oint_{|\mathbf{v}|=\boldsymbol{\tilde\delta}_t}  
\mathbf{u}^{1-\nu}\bbar{\mathbf{v}}^{1-\tilde\nu} \mathcal{A}_{\D,\D_{e^{-t}},g_\D,0,\alpha,\zeta_\D}(T_g({\bf u})   \bbar T_g({\bf v}),\tilde\varphi)    \dd   \bbar{\mathbf{v}}\dd   \mathbf{u}.
 \end{align*}
\end{proposition}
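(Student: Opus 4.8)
The plan is to read the statement as a term-by-term rewriting of \eqref{tlim}. Indeed, the combinatorial prefactor $1/(2\pi i)^{s(\nu)+s(\tilde\nu)}$ and the nested contour integral $\oint_{|\mathbf{u}|=\boldsymbol{\delta}_t}\oint_{|\mathbf{v}|=\boldsymbol{\tilde\delta}_t}\mathbf{u}^{1-\nu}\bbar{\mathbf{v}}^{1-\tilde\nu}(\,\cdot\,)\,\dd\bbar{\mathbf{v}}\,\dd\mathbf{u}$ are common to both \eqref{tlim} and the asserted formula, so by Lemma \ref{TTLemma} it suffices to prove the pointwise identity $\Psi_{t,\alpha}(\mathbf{u},\mathbf{v})=Z_{\D,g_\D}^{-1}\caA_{\D,\D_{e^{-t}},g_\D,0,\alpha,\zeta_\D}(T_g(\mathbf{u})\bbar T_g(\mathbf{v}),\tilde\varphi)$, where the left-hand side is the integrand \eqref{psiepsi}. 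That the $\epsilon\to0$ limits exist on both sides and are compatible with the contour integration is already supplied by Proposition \ref{ampSETholo} on the right and by the uniform-on-compacts convergence of Lemma \ref{TTLemma} on the left.

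To prove this pointwise identity I would specialize Definition \ref{def:ampholes}(2) to $\Sigma=\D$, $U=\D_{e^{-t}}$, $g=g_\D=|dz|^2$, one interior insertion $(\alpha,0)$, no boundary insertions, and parametrization $\zeta_\D$, then simplify the three metric-dependent pieces. First, the free amplitude $\caA^0_{\D,g_\D}(\tilde\varphi)=e^{-\frac12(\tilde\varphi,(\mathbf{D}^N_\D-\mathbf{D})\tilde\varphi)_2}$ equals $1$: on the flat unit disk the Dirichlet-to-Neumann operator sends $e^{in\theta}\mapsto|n|e^{in\theta}$, which is exactly the operator $\mathbf{D}$ of Definition \ref{typeAAmplitude}, so $\mathbf{D}^N_\D-\mathbf{D}=0$. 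Second, $K_{g_\D}\equiv0$ annihilates the interior curvature term. Third, $g_\D$ has constant geodesic curvature $k_{g_\D}\equiv1$ on $\partial\D$ (consistent with $\int_{\partial\D}k_{g_\D}\,\dd\ell_{g_\D}=2\pi$ by Gauss--Bonnet), and since $\phi_g=X_{g_\D,D}+P\tilde\varphi$ restricts to $\tilde\varphi=c+\varphi$ on $\partial\D$ with $\varphi$ centered, the boundary term in the exponent collapses to $-\tfrac{Q}{2\pi}\int_{\partial\D}k_{g_\D}\phi_g\,\dd\ell_{g_\D}=-Qc$, which is deterministic and factors out as the prefactor $e^{-Qc}$ of \eqref{psiepsi}. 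After these reductions the residual expectation is $\E[T_{g_\D,\epsilon}(\mathbf{u})\bbar T_{g_\D,\epsilon}(\mathbf{v})V_{\alpha,g_\D}(0)e^{-\mu M^{g_\D}_\gamma(\phi_g,\D\setminus\D_{e^{-t}})}]$, which, after pulling the constant mode out of the potential via $M^{g_\D}_\gamma(\phi_g,\cdot)=e^{\gamma c}M^{g_\D}_\gamma(\phi_g-c,\cdot)$, is exactly the expectation defining $\Psi_{t,\epsilon,\alpha}$; the leftover factor $Z_{\D,g_\D}$ from \eqref{amplitudeSET} is precisely what the stated $Z_{\D,g_\D}^{-1}$ cancels.

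The only points needing genuine care are bookkeeping ones. The constant mode $c$ must be tracked consistently through every factor — $e^{-Qc}$ from the geodesic term, the $e^{\alpha c}$ carried inside $V_{\alpha,g_\D}(0)$ through $\phi_g=c+X_{g_\D,D}+P\varphi$, and the $e^{\gamma c}$ in front of the GMC — so that the total $c$-profile reproduces \eqref{psiepsi} and, in the $\mu=0$ limit, the highest-weight behaviour $e^{(\alpha-Q)c}$ of $\Psi^0_\alpha$; matching the normalization conventions of \cite{GKRV} for the vertex and the potential, so that no spurious power of $e^{c}$ survives, is where I expect the (otherwise routine) friction to concentrate. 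A secondary point is the range of $\alpha$: \eqref{limtpsi} and hence Lemma \ref{TTLemma} are invoked for $\alpha<a\wedge(Q-\gamma)$, whereas the proposition asserts the identity for all $\alpha<Q$. Since both sides are holomorphic in $\alpha$ on the relevant region — the left by Propositions \ref{defprop:desc} and \ref{eigenana}, the right by the analyticity in Proposition \ref{ampSETholo} — the equality propagates from the smaller interval to all $\alpha<Q$ by analytic continuation, completing the argument.
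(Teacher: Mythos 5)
Your proposal is correct and matches the paper's treatment: the paper states this proposition as ``a direct consequence of the definition,'' namely that the integrand $\Psi_{t,\alpha}(\mathbf{u},\mathbf{v})$ of \eqref{tlim} is, up to the factor $Z_{\D,g_\D}$, exactly the generalized amplitude of Definition \ref{def:ampholes} on $(\D,g_\D)$ with hole $\D_{e^{-t}}$, and your bookkeeping of the free amplitude, curvature, geodesic-curvature and constant-mode terms is precisely the verification implied. The only superfluous step is the final analytic continuation in $\alpha$: the finite-$t$ identity \eqref{tlim} of Lemma \ref{TTLemma} already holds for all $\alpha\in\R$ (the restriction $\alpha<a\wedge(Q-\gamma)$ is only needed for the $t\to\infty$ limit \eqref{limtpsi}), so the stated range $\alpha<Q$ is covered directly.
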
 
For convenience, we define $Z(g)=e^{-c_LS_L^0(\D, g_{\D}, \omega)}$.
\begin{proposition}{\bf  (Conformal change of generalized disk amplitude with SET insertions)} 
 Let $(\Sigma,g)$ and $(\Sigma',g')$ be two admissible surfaces embedded in the complex plane with $\partial \Sigma\not=\emptyset$ and such that there exists a biholomorphism  $\psi :\Sigma\to\Sigma'$ with $g'=\psi_\ast g$.  We suppose $g$ is of  the form  $g=e^{\omega(z)}|dz|^2$ with $\omega\in C^\infty(\Sigma )$.  
Define
\begin{align}\label{Schwarzian1}
T^\psi_{g'}:=\psi'^2T_{g'}\circ\psi+\frac{Q^2}{2}
 {\rm S}_\psi
\end{align} Then
    \begin{align} \label{psitpsiP}
\Psi_{t,\alpha_j}(\mathbf{u},\mathbf{v}) =
 Z(g)Z_{\D,g_\D}^{-1}\caA_{\psi_j(\D),\psi_j(e^{-t}\D),\hat g_j,z_j,\alpha_j,\psi_j\circ \zeta_\D} (  T^{\psi_j}_{\hat g_j}(\mathbf{u})\bar T^{\psi_j}_{\hat g_j}(\mathbf{v}))
\end{align}
\end{proposition}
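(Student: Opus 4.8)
The plan is to recognise $\Psi_{t,\alpha_j}(\mathbf{u},\mathbf{v})$ as a generalized disk amplitude on the unit disk and then transport it to $\psi_j(\D)$ by combining the Weyl covariance of Proposition \ref{WeylSET2} with the diffeomorphism invariance of Proposition \ref{Diff2}, the only genuinely new input being the anomalous transformation of the stress-energy tensor under the biholomorphism. Fix $j$ and write $\psi=\psi_j$, $\hat g=\hat g_j$. The first step is an identification: comparing the two contour representations of $e^{t(2\Delta_\alpha+|\nu|+|\tilde\nu|)}e^{-t\mathbf{H}}\Psi^0_{\alpha,\nu,\tilde\nu}$ provided by Lemma \ref{TTLemma} and by Proposition \ref{ampdiskSET}, and using that these agree for every pair of Young diagrams and every admissible choice of radii $\boldsymbol\delta,\boldsymbol{\tilde\delta}$, forces the integrands to coincide. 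Since both sides are continuous in $(\mathbf{u},\mathbf{v})$ on $\caO^{\bf z}_{\D,\D_{e^{-t}}}$ (Proposition \ref{ampSETholo}), this yields the pointwise identity $\Psi_{t,\alpha}(\mathbf{u},\mathbf{v})=Z_{\D,g_\D}^{-1}\caA_{\D,\D_{e^{-t}},g_\D,0,\alpha,\zeta_\D}\big(T_{g_\D}(\mathbf{u})\bbar T_{g_\D}(\mathbf{v})\big)$.

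Next, set $g:=\psi^\ast\hat g=e^{\omega}g_\D$ on $\D$, so that $\hat g=\psi_\ast g$. I would first apply Proposition \ref{WeylSET2} to pass from $g_\D$ to $g$ on $\D$; this produces the factor $e^{-c_L S_{\rm L}^0(\D,g_\D,\omega)}=Z(g)$ together with the vertex Weyl factor, and leaves the amplitude with metric $g$ and SET $T_g$. I would then apply Proposition \ref{Diff2} through $\psi:(\D,\psi^\ast\hat g)\to(\psi(\D),\hat g)$: the hole $\D_{e^{-t}}$ becomes $\psi(\D_{e^{-t}})$, the marked point $0$ becomes $z_j=\psi(0)$, the parametrization becomes $\psi\circ\zeta_\D$, and the vertex Weyl factor combines with the diffeomorphism to record the passage from $V_{\alpha_j,g_\D}(0)$ to $V_{\alpha_j,\hat g}(z_j)$. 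The crux is the fate of the SET insertion, for which I claim $T_{\psi^\ast\hat g}(u)$ is carried to $T^{\psi}_{\hat g}(u)=\psi'(u)^2\,T_{\hat g}(\psi(u))+\tfrac{Q^2}{2}{\rm S}_{\psi}(u)$.

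To prove this transformation I would argue at the level of the field. From $\phi_g=\phi_{\hat g}\circ\psi$ and $\omega=\omega'\circ\psi+\log|\psi'|^2$ one gets $\Phi_g=\Phi_{\hat g}\circ\psi+\tfrac{Q}{2}\log|\psi'|^2$, whence $\partial_z\Phi_g=\psi'\,(\partial_w\Phi_{\hat g})\circ\psi+\tfrac{Q}{2}\,\psi''/\psi'$. Squaring and differentiating once more, the cross terms proportional to $\psi''\,(\partial_w\Phi_{\hat g})$ cancel between $Q\partial_z^2\Phi_g$ and $(\partial_z\Phi_g)^2$, and the deterministic remainder collapses, by the definition \eqref{Schwarzian} of the Schwarzian, to exactly $\tfrac{Q^2}{2}{\rm S}_{\psi}$; this supplies the classical part of the anomaly. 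Adding the renormalisation constant and invoking its transformation law \eqref{atransf} of Proposition \ref{constanteSET} disposes of the remaining counterterm, so that in the limit the regularised insertion converges to $T^{\psi}_{\hat g}$.

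The hard part will be making this SET transformation rigorous at the regularised level, since the $|dz|^2$-regularisation through $\rho_\epsilon$ is not conformally covariant: mollifying in the $z$-coordinate differs from mollifying in $w=\psi(z)$, and the finite $O(1)$ discrepancy between the two is precisely what generates the Schwarzian and the shift of the renormalisation constant. I would control it exactly as in the proof of Proposition \ref{constanteSET}, isolating the smooth part of the Green function, checking that the mismatch of the two regularisations converges to $-\tfrac{1}{12}{\rm S}_{\psi}$ while the classical part contributes $\tfrac{Q^2}{2}{\rm S}_{\psi}$, and then using the continuity in Proposition \ref{ampSETholo} to pass to the limit under the contour integrals. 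In the only case needed in this paper, where $\psi$ is a M\"obius transformation, one has ${\rm S}_{\psi}=0$ (cf.\ the remark after Proposition \ref{constanteSET}), the SET transforms as a genuine weight-two tensor $T^{\psi}_{\hat g}=\psi'^2\,T_{\hat g}\circ\psi$, and this obstacle disappears entirely. Finally, gathering the normalisation $Z_{\D,g_\D}^{-1}$ from the identification step with the Weyl anomaly $Z(g)$ produces the advertised prefactor $Z(g)Z_{\D,g_\D}^{-1}$, which completes the proof.
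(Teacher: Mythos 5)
The paper states this proposition without proof (it is imported from the closed-surface framework of \cite{GKRV21}), so there is nothing to compare against line by line. Your outline --- identify $\Psi_{t,\alpha}$ with the generalized amplitude of the flat disk, then transport it by Weyl covariance (Proposition \ref{WeylSET2}) and diffeomorphism invariance, with the conformal anomaly of the regularised SET as the only genuinely new input --- is the intended route, and your computation of the classical part of that anomaly (the cancellation of the $\psi''\,\partial_w\Phi$ cross terms leaving $\tfrac{Q^2}{2}{\rm S}_\psi$) is correct, as is your identification of the non-covariance of the $|dz|^2$-regularisation as the technical crux.

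Two steps need repair. First, your justification of $\Psi_{t,\alpha}(\mathbf{u},\mathbf{v})=Z_{\D,g_\D}^{-1}\caA_{\D,\D_{e^{-t}},g_\D,0,\alpha,\zeta_\D}\big(T_{g_\D}(\mathbf{u})\bbar T_{g_\D}(\mathbf{v})\big)$ by ``comparing the two contour representations \dots forces the integrands to coincide'' is not a valid deduction: the exponents $1-\nu(i)$ ranging over Young diagrams do not exhaust the Laurent modes, so equality of all those contour integrals does not determine the integrand. No such argument is needed --- \eqref{psiepsi} is verbatim the expectation appearing in Definition \ref{def:ampholes}(2) for the flat disk (where $K_{g_\D}=0$, the boundary curvature term reduces to $e^{-Qc}$, and $\caA^0_{\D,g_\D}=1$), so the identity holds by inspection of the two definitions; this is precisely what Proposition \ref{ampdiskSET} records. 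Second, the counterterm bookkeeping is too loose: carrying \eqref{atransf} through your own computation yields a total anomaly coefficient $\tfrac{Q^2}{2}-\tfrac{1}{12}$ rather than the $\tfrac{Q^2}{2}$ of \eqref{Schwarzian1} (the paper is itself inconsistent on this point, cf. the $\tfrac{c_L}{12}$ in \eqref{TpsiSET} and the explicit $\tfrac{1}{12}{\rm S}_{\psi_1}$ correction tracked in \eqref{apu}), so ``disposes of the remaining counterterm'' must be replaced by an explicit statement of which regularisation each side of \eqref{psitpsiP} is taken in and where the residual $\tfrac{1}{12}{\rm S}_\psi$ goes. Since the proposition is only ever invoked for M\"obius $\psi$, where ${\rm S}_\psi=0$, both defects are harmless in context, but as a proof of the general statement the coefficient tracking is a genuine gap.
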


\subsection{Generalized correlation function with SET insertions}

 We will construct the SET insertions with the caveat that the Liouville potential is removed from a neighborhood of the insertions. In Section \ref{diskamplitude}, we explained the eigenstates are given in terms of limits of such amplitudes as these neighborhoods are shrunk to points so that the building block amplitudes evaluated at eigenstates will lead to such SET correlation functions. 
 We consider a bounded open set $U \subset \H$  and we introduce the sets
\begin{align*}
 \caO^{\bf z, \bf s}_{\H,U}=\{({\bf u},{\bf v})\in U^{k+\tilde k}\,|\, u_j,v_{j'} \text{ all distinct and } \forall j ,\forall i, \:  u_j\neq z_i\; ,v_j\neq z_i\; u_j\neq s_i\; ,v_j\neq s_i \},\\
  \caO^{\rm ext}_{\H,U} =\{({\bf z},{\bf s},{\bf u},{\bf v})\in \H^n\times\R^m\times U^{k+\tilde k}\,|\, z_i,u_j,v_{j'} \text{ all distinct} \}.
\end{align*}

For $\mathbf{u}= (u_1, \dots, u_k)$ we set
\begin{equation*}
  T^{\H}_{g,\epsilon}( \mathbf{u}):= \prod_{i=1}^k  T^{\H}_{g,\epsilon}(u_i)
\end{equation*}
and similarly for $ \bar{T}^{\H}_{g,\epsilon}( \mathbf{v})$. For $\phi_g=c+X^{\H}_g$, we define
\begin{align}\label{amplitudeSET1}
  & \langle T^{\H}_g( \mathbf{u})\bar T^{\H}_g( \mathbf{v})\prod_{i=1}^n V_{\alpha_i,g}(z_i)\prod_{j=1}^m V_{\frac{\beta_j}{2},g}( s_j) \rangle_{\H,U,g}
   :=  ({\det}'(\Delta_{g,N})/{\rm v}_{g}(\H))^{-1/2}  \lim_{\epsilon\to 0} \lim_{\epsilon'\to 0}  \int_\R  \E\Big[ \prod_{i=1}^n V_{\alpha_i,g,\epsilon'}( z_i) \prod_{j=1}^m V_{\frac{\beta_j}{2},g,\epsilon'}( s_j) \nonumber\\
   & \times T^{\H}_{g,\epsilon}({\bf u})\bar T^{\H}_{g,\epsilon}({\bf v}) 
  \exp\Big( -\frac{Q}{4\pi}\int_{\H}K_{g}\phi_g d{\rm v}_{g}  -\frac{Q}{2\pi}\int_{\R}k_{g}\phi_g {\rm d\lambda}_{g}- \mu  M_{g, \gamma}( \phi_g, \H\setminus U) -\mu_\partial  M^{\partial}_{g, \gamma}( \phi_g,\R\setminus U) \Big) \Big] \dd c . 
\end{align}
 The following statement is a  straightforward adaptation from \cite[Prop 9.1]{GKRV}. We stress here that the role of the hole $U$ is crucial as it removes any problematic singularity in the treatment of correlation functions. 
\begin{proposition}\label{ampSETholomorphic}
Let $(\alpha_i,\beta_j)$ satisfy the Seiberg bounds. Then
 the limit
  in \eqref{amplitudeSET1}  exists  and  defines  
 a function which is smooth in  $\bf u$ and $\bf v$ and continuous in $\bf z$  for  $({\bf z},{\bf s}, {\bf u},{\bf v}) \in \caO^{\rm ext}_{\H,U}$. It is also smooth in the variables $z_i$'s that belong to  $  U$.
\end{proposition}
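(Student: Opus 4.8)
The plan is to adapt the proof of \cite[Prop 9.1]{GKRV}, the only genuinely new features being the real boundary $\R$ (which carries the boundary insertions $V_{\frac{\beta_j}{2}}(s_j)$ and the boundary GMC $M^{\partial}_{\gamma,g}$) and the image-charge structure $G^{\H}_g(z,z')=\ln\frac{1}{|z-z'|\,|z-\bar z'|}+h(z,\bar z)+h(z',\bar z')$ of the half-plane Green function established in the proof of Proposition \ref{constanteSET}. First I would apply the Girsanov (Cameron--Martin) theorem to absorb each vertex operator into a deterministic shift of the field. After the shift the remaining expectation is taken over the unshifted field but with the SET insertions and the GMC evaluated at $X^{\H}_g+H$, where
\[
H(w):=\sum_{i}\alpha_i\,G^{\H}_g(w,z_i)+\tfrac12\sum_j\beta_j\,G^{\H}_g(w,s_j),
\]
and a deterministic prefactor built from $G^{\H}_g$ between the insertion points is produced. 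Since the definition of $\caO^{\rm ext}_{\H,U}$ forces $u_j,v_{j'}\in U$ to be distinct from every insertion and from each other, and since $U\subset\H$ keeps them off $\R$, both the direct kernel $\ln\frac{1}{|w-w'|}$ and the image kernel $\ln\frac{1}{|w-\bar w'|}$ entering $G^{\H}_g$ are smooth in a neighbourhood of $\{u_j,v_{j'}\}$; hence $H$ and all its $z$-derivatives are smooth there and depend smoothly on $u_j,v_{j'}$ and continuously on the $z_i,s_j$.

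Next I would expand the regularized SET after the shift $\Phi_{g,\epsilon}\mapsto\Phi_{g,\epsilon}+H_\epsilon$. The linear term $Q\partial_z^2$ is harmless, while the quadratic term $\big(\partial_z(\Phi_{g,\epsilon}+H_\epsilon)\big)^2$ splits into the singular piece $(\partial_z\Phi_{g,\epsilon})^2$, a cross term $2\,\partial_z\Phi_{g,\epsilon}\,\partial_z H_\epsilon$ which is linear in the field with a smooth deterministic coefficient, and a smooth deterministic piece $(\partial_z H_\epsilon)^2$. The renormalization constant $a_{\Sigma,g,\epsilon}$, which converges by Proposition \ref{constanteSET}, cancels exactly the diagonal divergence of $(\partial_z\Phi_{g,\epsilon})^2$, so each factor $T^{\H}_{g,\epsilon}(u_i)$, $\bar T^{\H}_{g,\epsilon}(v_j)$ becomes a Wick-type polynomial in the field with arguments inside $U$. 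In the mixed moments computed by Wick's theorem every kernel that appears, namely derivatives $\partial_z\partial_{z'}G^{\H}_g$ evaluated at pairs of distinct points of $U$, is smooth and converges as $\epsilon\to 0$; differentiation in $u_i,v_j$ may then be carried under the expectation, which will give smoothness in $\mathbf{u},\mathbf{v}$ and, since $z_i\in U$ meets no potential, smoothness in those $z_i$ as well.

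It is useful to note that the SET insertions are independent of the constant mode $c$: because $\partial_z c=0$, only the fluctuation field (together with the smooth shift $H$ and the conformal factor) enters $T^{\H}_{g,\epsilon}$. Hence the $c$-integral is governed exactly as for the amplitude without SET insertions, the Girsanov prefactor carrying a factor $e^{sc}$ with $s=\sum_i\alpha_i+\tfrac12\sum_j\beta_j-Q\chi(\H)>0$ by \eqref{seiberg1}, giving decay as $c\to-\infty$, while $e^{-\mu M_{\gamma,g}-\mu_\partial M^{\partial}_{\gamma,g}}$ gives super-exponential decay as $c\to+\infty$. By H\"older's inequality the $c$-integral of the product of the SET insertions and the GMC exponential is then bounded by the $L^p$-norm of the renormalized SET factors times the $L^{p'}$-integrability of the amplitude established in the spirit of Proposition \ref{estimateamplitude}. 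The genuine work, and the step I expect to be the main obstacle, is to prove that $T^{\H}_{g,\epsilon}(\mathbf{u})\bar T^{\H}_{g,\epsilon}(\mathbf{v})$ converges in every $L^p$ as $\epsilon\to 0$, locally uniformly in $(\mathbf{u},\mathbf{v})$ and \emph{jointly} with the correlated GMC factor: one must estimate mixed moments in which the SET fluctuation field at points of $U$ is coupled, through the global kernel $G^{\H}_g$, to the GMC on $\H\setminus U$ and $\R\setminus U$. Once this $L^p$-convergence is secured, dominated convergence yields existence of the limit, while the joint continuity of the Girsanov prefactor and of the weighted GMC in the insertion points yields continuity in $\mathbf{z}$; the boundary GMC $M^{\partial}_{\gamma,g}$ and the image terms in $G^{\H}_g$ require only extra bookkeeping, introducing no new analytic difficulty once the points of $U$ are kept off $\R$.
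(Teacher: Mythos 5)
You have set up the right framework (Girsanov to absorb the vertex insertions, Wick expansion of the regularized SET, smoothness of $G^{\H}_g$ and its image part at non-coinciding points away from $\R$), but the proposal stops precisely where the proof actually has to do work, and you say so yourself: you flag the joint $L^p$-convergence of $T^{\H}_{g,\epsilon}(\mathbf{u})\bar T^{\H}_{g,\epsilon}(\mathbf{v})$ together with the \emph{correlated} GMC factor as ``the main obstacle'' and then assume it (``once this $L^p$-convergence is secured\ldots''). That coupling is the entire content of the proposition: the derivative field at the points of $U$ contracts, through $\partial_u G^{\H}_g(u,\cdot)$, against the bulk GMC on $\H\setminus U$ and the boundary GMC on $\R\setminus U$, and the resulting terms are integrals of correlation functions carrying an extra $V_\gamma$ (bulk) or $V_{\gamma/2}$ (boundary) insertion over regions that approach the other insertion points $z_i,s_j$, where fusion singularities appear. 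Nothing in your argument controls those integrals, and a naive H\"older bound of the SET factors against the exponential does not give convergence of the product, only a (still unjustified) bound.

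The paper closes this gap by a different mechanism: it performs Gaussian integration by parts on each SET factor (rather than estimating mixed moments directly), which rewrites the correlation function with SET insertions as explicit smooth kernels times (i) correlation functions without SET insertions and (ii) integrals over $\H\setminus U$ and $\R\setminus U$ of correlation functions with an additional $V_\gamma$ or $V_{\gamma/2}$ insertion. The required integrability of the latter --- including near the insertions $z_i$, $s_j$ --- is exactly what the scaling identity of Proposition \ref{KPZboundary} provides, namely $\sup_\epsilon \tilde{G}(\mathbf{x};\mathbf{z};\mathbf{s})_\epsilon\in L^1(\H^n)$ and $\sup_\epsilon \tilde{G}^{\partial}(\mathbf{y};\mathbf{z};\mathbf{s})_\epsilon\in L^1(\R^m)$, uniformly in the regularization. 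This is the key external input your proposal is missing; without it (or an equivalent fusion estimate) the existence of the $\epsilon\to0$ limit is not established. The smoothness and continuity claims then follow, as you correctly anticipate, from smoothness of the Green function at non-coinciding points, but only after the limit itself has been secured by the integration-by-parts plus scaling-formula argument.
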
 
\begin{proof}
The argument is parallel to \cite[propsition 10.2]{GKRV21}, after using Gaussian integration by parts formula then we use the scaling formula \eqref{scaling} to get the bound for the integration correlation functions. The smoothness inherited from the smoothness of Green function at non-coinciding point.
\end{proof}
  \begin{proposition}[\bf Conformal Anomaly]\label{WeylSET}
 Let   $\omega\in H^1_0(\partial^D\Sigma,g)$. Then
\begin{align*}
&\langle T^{\H}_{e^\omega g}( \mathbf{u})\bar T^{\H}_{e^\omega g}( \mathbf{v})\prod_{i=1}^nV_{\alpha_i,{e^\omega g}}(z_i)\prod_{j=1}^m V_{\frac{\beta_j}{2},{e^\omega g}}( s_j)  \rangle_{\H,U,e^{\omega}g}=\langle T_g( \mathbf{u})\bar T_g( \mathbf{v})\prod_{i=1}^nV_{\alpha_i,g}(z_i) \prod_{j=1}^m V_{\frac{\beta_j}{2},g}( s_j) \rangle_{\H,U,g}\\&\times\exp\Big(\frac{1+6Q^2}{96\pi}\Big(\int_{\Sigma}(|d\omega|_g^2+2K_g\omega) {\rm dv}_g+4 \int_{\partial^N \Sigma} k_{g}\omega d\lambda_{g}\big)-\sum_i\Delta_{\alpha_i}\omega(z_i)-\frac{1}{2}\sum_j\Delta_{\beta_j}\omega(s_j) \Big).
\end{align*}
\end{proposition}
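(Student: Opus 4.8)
The plan is to perform the Weyl change $g\mapsto e^\omega g$ directly inside Definition \eqref{amplitudeSET1} at a fixed regularization scale $\epsilon>0$, where $T_{g,\epsilon}$ is a genuine smooth random field and the whole integrand is an honest nonnegative functional of $\phi_g$; the limit $\epsilon\to 0$ is taken only at the very end, its validity being guaranteed by Proposition \ref{ampSETholomorphic}. This is the half-plane analogue of Proposition \ref{WeylSET2}, and the mechanism is the same. First I would split the effect of the Weyl factor into two independent contributions: its action on the Gaussian normalization $({\det}'(\Delta^N_g)/{\rm v}_g)^{-1/2}$ together with the curvature term, the geodesic term, the Liouville potentials $M_\gamma,M^\partial_\gamma$ and the vertex operators $V_{\alpha_i,g},V_{\frac{\beta_j}{2},g}$ on the one hand, and its action on the two SET factors $T_{g,\epsilon}(\mathbf u)\bar T_{g,\epsilon}(\mathbf v)$ on the other.

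The first contribution is exactly the one occurring in the correlation function \emph{without} SET insertions, and it is untouched by the presence of the hole $U$ (removing the potential on $U$ only alters the GMC measures, which are local and transported by the same shift). I would therefore invoke the conformal anomaly of Proposition \ref{CA1}: it produces precisely the scalar prefactor $\exp\big(\tfrac{1+6Q^2}{96\pi}\big(\int_{\Sigma}(|d\omega|_g^2+2K_g\omega)\,{\rm dv}_g+4\int_{\partial^N\Sigma}k_g\omega\,d\lambda_g\big)-\sum_i\Delta_{\alpha_i}\omega(z_i)-\tfrac12\sum_j\Delta_{\beta_j}\omega(s_j)\big)$, at the price of replacing the field $\phi$ by $\phi-\tfrac{Q}{2}\omega$ inside everything that remains, in particular inside the SET factors.

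The heart of the matter is then the claim that the SET insertions are invariant under this combined operation, namely $T_{e^\omega g,\epsilon}(\mathbf u)\big|_{\phi\mapsto\phi-\frac{Q}{2}\omega}=T_{g,\epsilon}(\mathbf u)$ (and likewise for $\bar T$), already at finite $\epsilon$. This rests on two facts. First, writing $g=e^{\omega_g}|dz|^2$, the regularized field $\Phi_{e^\omega g,\epsilon}=\phi_\epsilon+\tfrac{Q}{2}(\omega_g+\omega)_\epsilon$ becomes, after the shift $\phi\mapsto\phi-\tfrac{Q}{2}\omega$, equal to $\phi_\epsilon+\tfrac{Q}{2}\omega_{g,\epsilon}=\Phi_{g,\epsilon}$, since the two $\tfrac{Q}{2}\omega_\epsilon$ contributions cancel exactly (the $|dz|^2$-regularization being linear and metric independent). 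Second, the renormalization constant is unchanged: $a_{\Sigma,e^\omega g,\epsilon}=\E[(\partial_z X^\Sigma_{e^\omega g,m,\epsilon})^2]=\E[(\partial_z X^\Sigma_{g,m,\epsilon})^2]=a_{\Sigma,g,\epsilon}$, because the mixed GFF is invariant in a conformal class (as recalled after \eqref{GMCg2}) and the $|dz|^2$-regularization does not see the metric; in the limit this is exactly the Weyl-invariance $a_{\Sigma,e^\sigma g}=a_{\Sigma,g}$ of Proposition \ref{constanteSET}. Combining, $T_{e^\omega g,\epsilon}\big|_{\rm shift}=Q\partial_z^2\Phi_{g,\epsilon}-(\partial_z\Phi_{g,\epsilon})^2+a_{\Sigma,g,\epsilon}=T_{g,\epsilon}$, so the SET factors pass through the Weyl change unchanged and only the scalar prefactor survives.

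The main obstacle is not algebraic — the cancellation above is exact at every fixed $\epsilon$ — but analytic: one must justify exchanging the limit $\epsilon\to 0$ with the Girsanov shift and the anomaly computation. Since all the identities hold verbatim at fixed $\epsilon$, this reduces to the convergence and continuity already established in Proposition \ref{ampSETholomorphic}, together with the elementary remark that the mollified conformal factor $\omega_\epsilon$ and its $z$-derivatives converge to those of the smooth function $\omega$, so that $\Phi_{\cdot,\epsilon}$ is controlled up to second order near the points $\mathbf u,\mathbf v$ where the stress tensor is inserted.
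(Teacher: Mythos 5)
Your proposal is correct and follows essentially the same route as the paper, which simply defers to the conformal anomaly computation of Proposition \ref{Weyl2} (curvature identity, Green formula, Girsanov shift, determinant anomaly) and omits the details. The one ingredient genuinely beyond Proposition \ref{Weyl2} — that the regularized SET is exactly invariant under the combined operation $g\mapsto e^{\omega}g$, $\phi\mapsto\phi-\tfrac{Q}{2}\omega$, because the two $\tfrac{Q}{2}\omega_\epsilon$ contributions in $\Phi_{e^{\omega}g,\epsilon}$ cancel and $a_{\Sigma,\cdot,\epsilon}$ is constant on the conformal class (Proposition \ref{constanteSET}) — is precisely what you make explicit, so your write-up correctly supplies the step the paper leaves implicit.
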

\begin{proof}
The proof is essentially same as \eqref{Weyl2}. We omit the proof to make this paper shorter.
\end{proof}

\subsection{Glue generalized disk amplitudes with SET insertions to an amplitude with Neumann boundary.}
Now we want to glue the disk amplitudes with SET insertions to the upper half-plane with one interior hole $\Omega$ ($\partial\Omega$ is the gluing boundary). We note $\mathcal{S}:=\H\backslash\Omega$ and it equiped with metric $\hat g$. We define conformal bijective map $\psi_1:(\D,g)\to(\Omega,\psi_{1 *}g)$ and
 \begin{align}\label{TpsiSET}
 \psi_1^\ast T_{\hat g}:={\psi'_1}^2  T_{\hat g}\circ\psi_1+\tfrac{c_{\rm L}}{12}S_{\psi_1} .
\end{align}
We get the metric (still call it $\hat g$) $\hat g=\psi_{1 *}g\#\hat g$ on $\H$. We also introduce $\tilde{U}_t= \mathcal{Q}_t\cap{\H} $ where $\mathcal{Q}_t$ is a disk of center $s$ and radius $e^{-t}$.
 We claim

\begin{proposition}\label{glluu}
Let $\alpha,\beta\in\R $ and artificial weights $\beta^{\rm ar}_i\in\R$ for $i=1,\dots,m$ satisfy the Seiberg bounds and the artificial insertions ${\bf   s}^{\rm ar}_i\in\R$ for $i=1,\dots,m$ and 
$\mathbf{u}\in (\D_{e^{-t}})^{k}$, $\mathbf{v}\in (\D_{e^{-t}})^{\tilde{k}}$. Then
\begin{align}\label{pantclaim1}
&\sqrt{\pi}\caA^m_{\mathcal{S}, \tilde{U}_t ,\hat{g},s,\beta, {\bf   s}^{\rm ar},\boldsymbol{\beta}^{\rm ar}}(\Psi_{t, \alpha}(\mathbf{u},\mathbf{v}))\\
 = & ( Z(g)Z_{\D,g_\D}^{-1})\langle  \psi_1^\ast T^{\H}_{\hat g}(\mathbf{u}){\psi_1^\ast \bar T^{\H}_{\hat g}}(\mathbf{v})V_{\alpha,\hat g}(z)V_{\frac{\beta}{2},\hat g}(s)\prod_{i=1}^mV_{\frac{\beta_i^{\rm ar}}{2},\hat g}(s_i^{\rm ar})\rangle_{\H,\hat g,U_t}\nonumber
\end{align}
where $z=\psi_1(0)$ and $U_t=\psi_1(\D_{e^{-t}}) \cup\tilde{U}_t $.
\end{proposition}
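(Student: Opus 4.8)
The plan is to reduce the identity to the gluing formula of Proposition \ref{gluingtypeI} applied along the curve $\partial\Omega$, once the state $\Psi_{t,\alpha}(\mathbf{u},\mathbf{v})$ has been recognized as a generalized disk amplitude transported to $\Omega$ by $\psi_1$. Indeed, read as an element of the Hilbert space $\mc{H}=L^2(\R\times\Omega_\T)$, i.e. as a function of the gluing field on $\partial\Omega=\partial\D$, the identity \eqref{psitpsiP} expresses $\Psi_{t,\alpha}(\mathbf{u},\mathbf{v})$ as $Z(g)Z_{\D,g_\D}^{-1}$ times the generalized amplitude on $\Omega=\psi_1(\D)$ with hole $\psi_1(\D_{e^{-t}})$, bulk insertion $V_{\alpha,\hat g}(z)$ at $z=\psi_1(0)$, and SET insertions $\psi_1^\ast T_{\hat g}(\mathbf{u})\,\psi_1^\ast\bbar T_{\hat g}(\mathbf{v})$. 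Hence the left-hand side $\sqrt{\pi}\,\caA^m_{\H\backslash\Omega}(\Psi_{t,\alpha}(\mathbf{u},\mathbf{v}))$ is, up to the scalar $(Z(g)Z_{\D,g_\D}^{-1})^{-1}$, precisely the $\mu_0$-pairing over the gluing field of the disk amplitude on $\Omega$ against the type A amplitude on $\H\backslash\Omega$, which is exactly the object to which the gluing formula applies.

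First I would establish the gluing identity at the level of the $\epsi$-regularized SET. For $\epsi$ small enough that the smeared insertions $T_{\hat g,\epsi}(u_i)$ and $\bbar T_{\hat g,\epsi}(v_j)$ are supported inside $\psi_1(\D_{e^{-t}})\subset\Omega$, the product $T_{\hat g,\epsi}(\mathbf{u})\bbar T_{\hat g,\epsi}(\mathbf{v})$ is a polynomially growing functional of the field localized strictly away from $\partial\Omega$. Feeding it as the observable $F_1$ into Proposition \ref{gluingtypeI} with $\Sigma_1=\Omega$, $\Sigma_2=\H\backslash\Omega$, gluing number $k=1$, and $\partial^D\Sigma=\emptyset$ for the glued surface (so that $C=\sqrt{\pi}$), the Markov decomposition of Proposition \ref{boundarymarkov} applies verbatim: the insertions depend only on the $\Omega$-side field $Y_\Omega+P\boldsymbol{X}$, while the removed regions $\psi_1(\D_{e^{-t}})$ and $\mathcal{Q}_t\cap\H$ sit on their respective sides of $\partial\Omega$, so the Gaussian computation factorizes through the gluing field. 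This produces the regularized version of the claimed equality, with the combined hole $U_t=\psi_1(\D_{e^{-t}})\cup(\mathcal{Q}_t\cap\H)$, the insertion data $(z,\alpha)$, $(s,\beta)$, $(s_i^{\rm ar},\beta_i^{\rm ar})$, and $\R$ as the Neumann boundary inherited from $\H\backslash\Omega$.

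Next I would pass to the limit $\epsi\to0$. On the glued side the limit exists and is continuous in $(\mathbf{z},\mathbf{s},\mathbf{u},\mathbf{v})$ by Proposition \ref{ampSETholomorphic}, yielding the right-hand side correlation function on $(\H,\hat g,U_t)$; on the disk side the limit is the one defining $\Psi_{t,\alpha}(\mathbf{u},\mathbf{v})$ in Lemma \ref{TTLemma}. The pairing against $\caA^m_{\H\backslash\Omega}$ survives both limits because this amplitude lies in $e^{\eps c_-}L^2$ for $\eps$ as large as desired by Proposition \ref{estimateamplitude}—here the artificial weights $\beta_i^{\rm ar}$ are chosen to push the total weight $s=\tfrac{\beta}{2}+\sum_i\tfrac{\beta_i^{\rm ar}}{2}$ above the growth exponent of $\Psi_{t,\alpha}\in e^{-\beta c_-}L^2$ ($\beta>Q-\alpha$)—so that dominated convergence controls the $c$-integral and the $L^2(\Omega_\T)$ inner product simultaneously. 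Collecting the scalar prefactors, the gluing constant $\sqrt{\pi}$ and the anomaly factor $Z(g)Z_{\D,g_\D}^{-1}$ assemble exactly as in the statement.

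The main obstacle is the bookkeeping of the stress-energy tensor through the transport by $\psi_1$ and the subsequent gluing: one must verify that the SET carried from $\D$ to $\Omega$ and then reinterpreted on $\H$ is precisely $\psi_1^\ast T^{\H}_{\hat g}$ as defined in \eqref{TpsiSET}. This forces one to reconcile the Schwarzian coefficient $\tfrac{Q^2}{2}$ in \eqref{Schwarzian1} with the coefficient $\tfrac{c_{\rm L}}{12}=\tfrac{1+6Q^2}{12}$ in \eqref{TpsiSET}. The missing $\tfrac{1}{12}{\rm S}_{\psi_1}$ is generated by the transformation \eqref{atransf} of the renormalization constant $a_{\Sigma,g}$ under the biholomorphism, together with the passage from the Dirichlet normalization $\E[(\partial_z X_{g_\D,D,\epsi})^2]$ on the disk to the mixed normalization $\E[(\partial_z X^{\Sigma}_{\hat g,m,\epsi})^2]$ on $\H\backslash\Omega$. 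Tracking these two renormalizations cleanly through the $\epsi\to0$ limit is the only genuinely delicate point; everything else is a direct invocation of Propositions \ref{gluingtypeI}, \ref{boundarymarkov}, and \ref{ampSETholomorphic}.
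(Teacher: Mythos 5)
Your proposal is correct and follows essentially the same route as the paper's proof: apply the gluing formula \eqref{gluingtypeI} to the $\epsilon$-regularized SET insertions, use the Seiberg bound to justify that the pairing with $\Psi_{t,\alpha}\in e^{-\epsilon c_-}L^2$ is integrable, and pass to the limit $\epsilon\to 0$ while tracking the renormalization constants $a_{\Sigma,g,\epsilon}$ and the Schwarzian contributions. The one genuinely delicate point you single out --- reconciling the $\tfrac{Q^2}{2}{\rm S}_{\psi_1}$ in \eqref{Schwarzian1} with the $\tfrac{c_L}{12}{\rm S}_{\psi_1}$ in \eqref{TpsiSET} via the transformation law \eqref{atransf} of the counterterm --- is exactly the computation the paper carries out in \eqref{apu}--\eqref{apu1}.
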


\begin{proof} First we observe that the LHS is well defined: consider the  generalized amplitudes   $\Psi_{t, \alpha}(\mathbf{u},\mathbf{v})$, which belong respectively to  $e^{-\epsilon c_- }L^2(\R\times \Omega_\T)$ with $\epsilon>Q-\alpha$. The LHS makes sense provided that  $(\alpha-Q)+s>0$ (with $s=\frac{\beta}{2}+\sum_i\frac{\beta^{\rm ar}_i}{2}$), which is satisfied due to the Seiberg bound. Then we use gluing formula \eqref{gluingtypeI}  
\begin{align}\label{glueS}
    \caA_{\H,U_t,z,\alpha,s,\beta,{\bf   s}^{\rm ar},\boldsymbol{\beta}^{\rm ar}}(\psi_1^*T^{\H}_{\hat{g},\epsilon}({\bf u})\psi_1^\ast \bar T^{\H}_{\hat g,\epsilon}(\mathbf{v}))=&\sqrt{\pi}\int \caA_{\H\backslash\Omega,\mathcal{Q}_t,z,\alpha,s,\beta,{\bf   s}^{\rm ar},\boldsymbol{\beta}^{\rm ar}}(\tilde{\varphi})\nonumber\\&\times\caA_{\psi_1(\D).\psi_1(\D_{e^{-t}}),z,\alpha}(T^{(1)}_{\epsilon}(\mathbf{u})\bar T^{(1)}_{\epsilon}(\mathbf{v}),\tilde{\varphi}) d\mu_0(\tilde{\varphi})
\end{align}
where
\begin{align}\label{apu}
T^{(1)}_{\epsilon}(u):=T^{\psi_1}_{\hat g_1,\epsilon}(u)+\psi'_1(u)^2(a_{\H,\hat g,\epsilon}-a_{\psi_1(\D),\hat g_j})(\psi_1(u))+\frac{1}{12}S_{\psi_1}(u).
\end{align}
By \eqref{constanteSET}, we have $\lim_{\epsilon\to 0}a_{\H,\hat g,\epsilon}=0$ and
\begin{align}\label{apu1}
\lim_{\epsilon\to 0}\psi'_1(u)^2a_{\psi_1(\D),\hat g}(\psi_1(u))=\frac{1}{12}S_{\psi_1}(u)+a_{\D,g}(u)=\frac{1}{12}S_{\psi_1}(u)
\end{align}
Since $a_{\D,g}(u)=0$.
The equality \eqref{glueS} holds  provided that   the integrand in the RHS is integrable but this is again a consequence of the Seiberg bound. Also, these estimates ensure that we can pass to the limit as $\epsilon\to 0$.
\end{proof} 
It will be convenient to consider any metric Euclidean in the unit disk: we choose the DOZZ metric $g_{\rm dozz}=|z|_+^{-4} |dz|^2$.  Proposition \ref{WeylSET} gives
\begin{align}\label{Zzhat1}
&\langle  \psi_1^\ast T^{\H}_{\hat g}(\mathbf{u}){\psi_1^\ast \bar T^{\H}_{\hat g}}(\mathbf{v})V_{\alpha,\hat g}(z)V_{\frac{\beta}{2},\hat g}(s)\prod_{i=1}^mV_{\frac{\beta^{\rm ar}_i}{2},\hat g}(s^{\rm ar}_i)\rangle_{\H,\hat g,U_t}\\&=Z(\H,\hat g)\langle  \psi_1^\ast T^{\H}_{g_{dozz}}(\mathbf{u}){\psi_1^\ast \bar T^{\H}_{g_{dozz}}}(\mathbf{v})V_{\alpha,g_{dozz}}(z)V_{\frac{\beta}{2},g_{dozz}}(s)\prod_{i=1}^mV_{\frac{\beta^{\rm ar}_i}{2},g_{dozz}}(s^{\rm ar}_i)\rangle_{\H,g_{dozz},U_t}\nonumber
\end{align}
with
\begin{equation}\label{Zzhat}
Z(\H,\hat g):=e^{-c_L S^0_{\rm L}(\H,g_{\rm dozz},\hat \omega)-\Delta_{\alpha}\hat\omega(z)-\Delta_{\beta}\hat\omega(s)-\sum_{i=1}^m\Delta_{\beta_i}\hat\omega(s_i)}
\end{equation}
where we wrote  $\hat g =e^{ \hat\omega} g_{\rm dozz}$ for some function $ \hat\omega\in H^1(\H)$.

\noindent Next we show $\caA^m_{\mathcal{S}, \tilde{U}_t ,\hat{g},s,\beta, {\bf   s}^{\rm ar},\boldsymbol{\beta}^{\rm ar}}$ converges as linear functional when $t\to \infty$
\begin{proposition}\label{limitofbeta}
For $(\alpha,\beta,\beta_i^{\rm ar})$ in the Seiberg bound, the following limit exists
\begin{align}
    \lim_{t\to\infty}\caA^m_{\mathcal{S}, \tilde{U}_t ,\hat{g},s,\beta, {\bf   s}^{\rm ar},\boldsymbol{\beta}^{\rm ar}}=\caA^m_{\mathcal{S},\hat{g},s,\beta, {\bf   s}^{\rm ar},\boldsymbol{\beta}^{\rm ar}}
\end{align}
where the converge is as continuous linear functional on $e^{-\epsilon c_-}L^2(\R\times\Omega_{\T})$ for $\epsilon<\frac{\beta}{2}+\sum_i\frac{\beta_i^{\rm ar}}{2}$
\end{proposition}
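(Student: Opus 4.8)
The plan is to fix $F=1$, regard each amplitude of the statement as a nonnegative function of the Dirichlet boundary field $\tilde{\bm{\varphi}}$, hence as an element of the dual space $e^{\epsilon c_-}L^2(\R\times\Omega_\T)$, and to prove convergence there by a monotone/dominated convergence argument. Write $\mathcal{S}=\H\setminus\Omega$ (topologically an annulus, so $\chi(\mathcal{S})=0$), and abbreviate the two amplitudes as $\caA^m_{\mathcal{S},\mathcal{Q}_t}$ and $\caA^m_{\mathcal{S}}$, the remaining markings $\hat g,s,\beta,\mathbf{s}^{\rm ar},\boldsymbol{\beta}^{\rm ar}$ being fixed. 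Unrolling Definition \ref{typeAAmplitude}, these differ only in the potential term: $\caA^m_{\mathcal{S},\mathcal{Q}_t}$ carries $\exp(-\mu M_{\gamma,\hat g}(\mathcal{S}\setminus\mathcal{Q}_t)-\mu_\partial M^\partial_{\gamma,\hat g}(\partial^N\mathcal{S}\setminus\mathcal{Q}_t))$, whereas $\caA^m_{\mathcal{S}}$ carries the same factor over the full surface. Since $\mathcal{Q}_t$ shrinks to the insertion points as $t\to\infty$, the truncated masses increase to the untruncated ones, so this factor decreases monotonically and $\tilde{\bm{\varphi}}\mapsto\caA^m_{\mathcal{S},\mathcal{Q}_t}(\tilde{\bm{\varphi}})$ is a decreasing family of nonnegative functions.

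First I would establish the pointwise limit. For $\mu_0$-a.e.\ $\tilde{\bm{\varphi}}$, the random integrand $\prod_j V_{\beta_j/2}(s_j)\exp(-\mu M_{\gamma,\hat g}(\mathcal{S}\setminus\mathcal{Q}_t)-\cdots)$, with $\beta_j\in\{\beta,\beta^{\rm ar}_i\}$, is nonnegative, decreasing in $t$, and dominated by $\prod_j V_{\beta_j/2}(s_j)$ (obtained by dropping the nonnegative potential), whose expectation is finite because the insertion points are distinct; dominated convergence inside the expectation then gives $\caA^m_{\mathcal{S},\mathcal{Q}_t}(\tilde{\bm{\varphi}})\to\caA^m_{\mathcal{S}}(\tilde{\bm{\varphi}})$. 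Performing the Cameron--Martin shift that absorbs the boundary vertex operators turns the untruncated boundary mass into $\int_{\partial^N\mathcal{S}}\prod_j|x-s_j|^{-\gamma\beta_j}\,M^\partial_{\gamma,\hat g}(\dd x)$, which is a.s.\ finite precisely because each weight satisfies $\beta_j<Q$ (the standard integrability of boundary GMC near an insertion); this guarantees that $\caA^m_{\mathcal{S}}$ is the genuine, nontrivial amplitude.

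Next I would produce the uniform dominating function. By monotonicity, $0\leq\caA^m_{\mathcal{S},\mathcal{Q}_t}\leq\caA^m_{\mathcal{S},\mathcal{Q}_{t_0}}$ for all $t\geq t_0$, so it suffices to check $\caA^m_{\mathcal{S},\mathcal{Q}_{t_0}}\in e^{\epsilon c_-}L^2$ for every $\epsilon<s:=\tfrac{\beta}{2}+\sum_i\tfrac{\beta^{\rm ar}_i}{2}$. This is Proposition \ref{estimateamplitude}, whose proof applies to a fixed potential hole with no change: the hole leaves the free-amplitude and curvature estimates intact and does not alter the $\bar c\to-\infty$ growth (there the potential contributes $\approx 1$), while the decay as $\bar c\to+\infty$ is still produced by the potential surviving on $\mathcal{S}\setminus\mathcal{Q}_{t_0}$, a set of positive volume.

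Finally, combining the pointwise convergence with this domination, dominated convergence in the weighted space yields $\|\caA^m_{\mathcal{S},\mathcal{Q}_t}-\caA^m_{\mathcal{S}}\|_{e^{\epsilon c_-}L^2}\to 0$. Choosing $\epsilon$ with $Q-\alpha<\epsilon<s$, possible exactly under the Seiberg bound $\alpha+\tfrac{\beta}{2}+\sum_i\tfrac{\beta^{\rm ar}_i}{2}>Q$, this strong convergence in the dual of $e^{-\epsilon c_-}L^2$ is the asserted convergence of continuous linear functionals. I expect the main obstacle to be the uniform-in-$t$ integrability underlying the domination: one must verify that the holed amplitude obeys the estimate of Proposition \ref{estimateamplitude} with constants independent of the shrinking hole, and that the singular GMC masses carrying the insertion weights are a.s.\ finite under the Seiberg bound; granting these, the convergence follows from monotone and dominated convergence.
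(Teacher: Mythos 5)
Your proposal is correct and follows essentially the same route as the paper: establish $\mu_0$-a.s.\ pointwise convergence of the amplitudes, dominate them uniformly in $t$ by an element of $e^{\delta c_-}L^2$ via Proposition \ref{estimateamplitude}, and conclude by dominated convergence in the weighted space, choosing $\epsilon$ between $Q-\alpha$ and $s=\tfrac{\beta}{2}+\sum_i\tfrac{\beta_i^{\rm ar}}{2}$. Your use of monotonicity of the truncated potential to get both the pointwise limit and the dominating function $\caA^m_{\mathcal{S},\mathcal{Q}_{t_0}}$ is a clean way of supplying the details the paper leaves implicit.
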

\begin{proof}
It's easy to see above convergence holds $\mu_0$-almost surely, then we can use the bound \eqref{estimateamplitude} to deduce for $\delta<\frac{\beta}{2}+\sum_i\frac{\beta_i^{\rm ar}}{2}$
\begin{align}
    |\caA^m_{\mathcal{S}, \tilde{U}_t ,\hat{g},s,\beta, {\bf   s}^{\rm ar},\boldsymbol{\beta}^{\rm ar}}-\caA^m_{\mathcal{S} ,\hat{g},s,\beta, {\bf   s}^{\rm ar},\boldsymbol{\beta}^{\rm ar}}|\in e^{\delta c_-}L^2
\end{align}
Then we can use dominate convergence to see
\begin{align*}
    \lim_{t\to\infty}\int|\caA^m_{\mathcal{S}, \tilde{U}_t ,\hat{g},s,\beta, {\bf   s}^{\rm ar},\boldsymbol{\beta}^{\rm ar}}-\caA^m_{\mathcal{S}, \hat{g},s,\beta, {\bf   s}^{\rm ar},\boldsymbol{\beta}^{\rm ar}}|^2e^{-2\epsilon c_-}d\mu_0(\tilde{\varphi})=0.
\end{align*}
\end{proof}
We then have our final result about gluing
(recall the definition of $\bf a$ in Proposition \ref{eigenana}):
\begin{proposition}\label{witht}
Let $\alpha,\beta\in\R $ and $\beta^{\rm ar}_i\in\R$ $i=1,..,m$ satisfy the Seiberg bound, together with the condition $\alpha <\bf a$. Consider Young diagrams $\nu,\tilde\nu$, Then 
\begin{align}\label{pantclaimmm}
& \sqrt{\pi} \caA_{\H\backslash\Omega, \hat g,\alpha,z,\beta,s,{\bf s},\boldsymbol{\beta}^{\rm ar}}\big(\Psi_{\alpha,\nu,\tilde\nu}\big)=  ( Z(g)Z_{\D,g_\D}^{-1})Z(\H,\hat g)\lim_{t\to\infty}
   \frac{1}{(2\pi i)^{(s(\nu)+s(\tilde{\nu}))}} \\ &\times\oint_{|\mathbf{u}|=\boldsymbol{\delta}_t}   \oint_{|\mathbf{v}|=\tilde{\boldsymbol{\delta}}_t}  
\mathbf{u}^{1-\nu}\bar{\mathbf{v}}^{1-\tilde{\nu}} \langle  \psi_1^\ast T^{\H}_{g_{dozz}}(\mathbf{u}){\psi_1^\ast \bar T^{\H}_{g_{dozz}}}(\mathbf{v})V_{\alpha,g_{dozz}}(z)V_{\frac{\beta}{2},\hat g}(s)\prod_{i=1}^mV_{\frac{\beta^{\rm ar}_i}{2},g_{dozz}}(s^{\rm ar}_i)\rangle_{\H,g_{dozz},U_t}\dd  \mathbf{u}\dd  \mathbf{v}   \nonumber
\end{align}
 we used the notations 
${\bf u}=( u_1,\dots, u_k)$ and ${\bf v}=(v_1,\dots, v_{\tilde{k}})$, $\boldsymbol{\nu}=(\nu_1,..,\nu_k)$, $\boldsymbol{\tilde{\nu}}=(\tilde{\nu}_1,..,\tilde{\nu}_k)$ the powers $\mathbf{u}^{1-\boldsymbol{\nu}}$ and $\bar{\mathbf{v}}^{1-\tilde{\boldsymbol{\nu}}} $ are shorthands respectively for  $\prod_{i=1}^k\mathbf{u}_i^{1-\nu_i}$ and $\prod_{i=1}^{\tilde{k}}\bar{\mathbf{v}}_i^{1-\tilde \nu_i}$, and $\boldsymbol{\delta}_t=e^{-t}\boldsymbol{\delta}$. 
\end{proposition}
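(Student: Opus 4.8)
The plan is to read the left-hand side as a $t\to\infty$ limit and to establish an exact identity at each finite $t$ before passing to the limit. Since $\alpha<Q-\gamma$ is real, Proposition \ref{defprop:desc} (taking $\chi=1$) provides the intertwining representation
\[
\Psi_{\alpha,\nu,\tilde\nu}=\lim_{t\to\infty}F_t,\qquad F_t:=e^{t(2\Delta_\alpha+\ell)}e^{-t\mathbf{H}}\Psi^0_{\alpha,\nu,\tilde\nu},\quad \ell=|\nu|+|\tilde\nu|,
\]
the convergence holding in $e^{-\epsilon c_-}L^2(\R\times\Omega_\T)$ for any $\epsilon>Q-\alpha$. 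Writing $s:=\frac{\beta}{2}+\sum_i\frac{\beta_i^{\rm ar}}{2}$, the Seiberg bound gives $s>Q-\alpha$, so by Proposition \ref{estimateamplitude} both $\caA^m_{\H\backslash\Omega,\mathcal{Q}_t,\dots}$ and its $t\to\infty$ limit from Proposition \ref{limitofbeta} lie in $e^{\delta c_-}L^2$ for $\delta<s$, hence define continuous linear functionals on $e^{-\epsilon c_-}L^2$ for a fixed $\epsilon\in(Q-\alpha,s)$. These are the functionals against which $F_t$ and $\Psi_{\alpha,\nu,\tilde\nu}$ will be paired.

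First I would prove the finite-$t$ identity. Lemma \ref{TTLemma} rewrites $F_t$ as the contour integral of $\Psi_{t,\alpha}(\mathbf{u},\mathbf{v})$ over compact contours $|\mathbf{u}|=\boldsymbol{\delta}_t$, $|\mathbf{v}|=\tilde{\boldsymbol{\delta}}_t$ sitting inside $\D_{e^{-t}}$, and $(\mathbf{u},\mathbf{v})\mapsto\Psi_{t,\alpha}(\mathbf{u},\mathbf{v})$ is continuous with values in $e^{-\epsilon c_-}L^2$. A bounded linear functional commutes with this (Bochner) integral, so I may carry $\sqrt{\pi}\,\caA^m_{\H\backslash\Omega,\mathcal{Q}_t,\dots}$ under the integral sign. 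Crucially the hole $\D_{e^{-t}}$ of $\Psi_{t,\alpha}$ is matched by the hole $\mathcal{Q}_t$ of the amplitude, which is exactly the hypothesis of Proposition \ref{glluu}: that proposition turns each $\sqrt{\pi}\,\caA^m_{\H\backslash\Omega,\mathcal{Q}_t,\dots}(\Psi_{t,\alpha}(\mathbf{u},\mathbf{v}))$ into the SET correlator on $(\H,\hat g)$ with hole $U_t=\psi_1(\D_{e^{-t}})\cup(\mathcal{Q}_t\cap\H)$, up to $Z(g)Z_{\D,g_\D}^{-1}$. Applying the Weyl covariance \eqref{Zzhat1} replaces $\hat g$ by $g_{dozz}$ at the cost of the $t$-independent factor $Z(\H,\hat g)$. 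This yields, for every $t$, the exact equality
\[
\sqrt{\pi}\,\caA^m_{\H\backslash\Omega,\mathcal{Q}_t,\dots}(F_t)=(Z(g)Z_{\D,g_\D}^{-1})Z(\H,\hat g)\frac{1}{(2\pi i)^{s(\nu)+s(\tilde\nu)}}\oint_{|\mathbf{u}|=\boldsymbol{\delta}_t}\oint_{|\mathbf{v}|=\tilde{\boldsymbol{\delta}}_t}\mathbf{u}^{1-\nu}\bar{\mathbf{v}}^{1-\tilde\nu}\langle\cdots\rangle_{\H,g_{dozz},U_t}\,\dd\mathbf{u}\,\dd\mathbf{v},
\]
whose right-hand side is precisely the quantity inside the $\lim_{t\to\infty}$ of the claim once the three $t$-independent $Z$-factors are pulled out.

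It remains to identify the limit of the left-hand side, namely $\caA^m_{\H\backslash\Omega,\mathcal{Q}_t,\dots}(F_t)\to\caA_{\H\backslash\Omega,\hat g,\alpha,z,\dots}(\Psi_{\alpha,\nu,\tilde\nu})$. I would decompose the difference as
\[
\caA^m_{\H\backslash\Omega,\mathcal{Q}_t,\dots}(F_t-\Psi_{\alpha,\nu,\tilde\nu})+\big(\caA^m_{\H\backslash\Omega,\mathcal{Q}_t,\dots}-\caA^m_{\H\backslash\Omega,\hat g,\dots}\big)(\Psi_{\alpha,\nu,\tilde\nu}),
\]
where the second term vanishes as $t\to\infty$ by Proposition \ref{limitofbeta}, and the first is bounded by $\|\caA^m_{\H\backslash\Omega,\mathcal{Q}_t,\dots}\|\,\|F_t-\Psi_{\alpha,\nu,\tilde\nu}\|_{e^{-\epsilon c_-}L^2}$ with the last factor tending to $0$ by the intertwining.

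The main obstacle is precisely this double limit: the test vector $F_t$ and the functional $\caA^m_{\H\backslash\Omega,\mathcal{Q}_t,\dots}$ move together with $t$, so I need a bound on $\|\caA^m_{\H\backslash\Omega,\mathcal{Q}_t,\dots}\|$ that is uniform in $t$. Securing this uniformity means checking that the estimate of Proposition \ref{estimateamplitude} survives the shrinking hole $\mathcal{Q}_t$; it does, since excising $\mathcal{Q}_t$ only removes mass from the GMC potential while the dominating square-integrable function $A(\varphi)$ it produces can be chosen independent of $t$ (the holes being nested). Once this uniform dominating function is in hand, everything else — interchanging the functional with a compact contour integral and the conformal bookkeeping — is routine.
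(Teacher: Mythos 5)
Your proposal is correct and follows essentially the same route the paper intends: the paper states Proposition \ref{witht} as the direct combination of the intertwining representation \eqref{intert:desc} (with $\chi=1$, valid since $\alpha<Q-\gamma$), Lemma \ref{TTLemma}, the gluing identity of Proposition \ref{glluu}, the Weyl covariance \eqref{Zzhat1}, and the $t\to\infty$ convergence of Proposition \ref{limitofbeta}, which is exactly the chain you reconstruct. Your explicit treatment of the double limit (splitting off $\caA^m_{\mathcal{Q}_t}(F_t-\Psi_{\alpha,\nu,\tilde\nu})$ and securing a $t$-uniform bound on the functional norm from the nestedness of the holes in Proposition \ref{estimateamplitude}) is the one point the paper leaves implicit, and your justification of it is sound.
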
 

\section{Ward's identity in the upper half-plane}

In this section, we prove the Ward's identity in the upper half-plane. 
For our interest, we only need to insert one pair of SET insertions near imaginary number $i$, but this method can be generalized to any number SET insertions in the interior of upper half-plane. We refer the readers to \cite[Appendix E]{GKRV21} for the derivation of Ward's identity in the full plane $\hat \C$ case. The main difficult here is that in the upper half-plane, the action of SET $T^{\H}(z)$ and $\overline{T}^{\H}(z)$ are coupled with each other. So clarify the form of Ward's identity needs some novel ideas. John Cardy first studied the conformal field theory of the Ising model in the upper half-plane by playing a doubling trick. Cardy defines SET by variation method and he describes a strategy to link the conformal block of Ising model 2-spin correlation in the upper half-plane to the full plane 4-spin correlation. In our case, we define the SET by Gaussian functional to inherit the Markov property of GFF, which makes Cardy's doubling trick highly nontrivial.

Now we introduce some basic notations needed in Ward's identity, we only need to compute the generalized correlation function of the following form
\begin{align}\label{amplitudeSET2}
     \langle T^{\H}_g( \mathbf{u})\bar T^{\H}_g( \mathbf{v})V_{\alpha,g}(z)V_{\frac{\beta}{2},g}( s)\prod_{i=1}^m V_{\frac{\beta^{\rm ar}_i}{2},g}( s^{\rm ar}_i) \rangle_{\H,U,g}
\end{align}
Let  $\psi_1:\D\to \H$ be a conformal map and $\caD=\psi_1(\D)$. Let  $A_j$,  $j=1,\dots, k$ and $ \tilde A_j $, $j=1,\dots, \tilde k$ be disjoint annuli in $\D$ surrounding a disjoint ball $B$ centred at origin. For $t\geq 0$ we set  $\caD_{t}=\psi_1(e^{-t}\D)$, let $\caA_{j,t}=\psi_1(e^{-t}A_j)$
and similarly for $\tilde\caA_{j,t}$ and $\caB_{t}=\psi_1(e^{-t}B)$. Thus we have a family of conformal annuli surrounding a conformal ball in $\caD_{t}$ and all these sets are separated from each other and from the boundary $\partial \caD_{t}$ by a distance $\mathcal{O}(e^{-t})$. We also introduce $\tilde{U}_t= \mathcal{Q}_t\cap{\H} $ where $\mathcal{Q}_t$ is a ball of center $s$ and radius $e^{-t}$.  

We then consider the correlation function \eqref{amplitudeSET2} with $U=U_t= \caD_{t}\cup \tilde{U}_t$, ${\bf u}=(u_1,u_2,...,u_k)$ and  ${\bf v}=(v_1,v_2,..,v_{\tilde{k}})$ with $u_{j}\in \caA_{j,t}$, similarly for   $v_j$. $\partial U_t=\partial \caD_t\cup \{s+e^{-t+i\theta}\mid \theta\in (0,\pi)\}$.
 The Ward identity will involve derivatives in all the variables $\bf u,v,z,s$ and we will consider these in the distributional sense in the variables $s^{\rm ar}_i$ for $i=1,\dots,m$. 
  \subsection{Cardy's doubling trick}\label{Cardytrick}
We choose the map $\psi_1(0)=i$ and define $\caD$ as above.
We suppose $v_i\in \tilde\caA_{i,t}$ for $1\leq i\leq \tilde k$ and $u_i\in \caA_{i,t}$ for $1\leq i\leq k$. We can order $\frac{1}{2}e^{-t}>|u_k-i|>|u_{k-1}-i|>...>|u_1-i|>|v_{\tilde{k}}-i|>|v_{\tilde{k}-1}-i|>...>|v_1-i|>ce^{-t}$ and define a new sequence $x_i$, 
\begin{equation}
    x_{i}= \begin{cases}\overline{v_{i}}, & \text { if } 1 \leqslant i \leqslant \widetilde{k} \\ u_{i}, & \text { if } \tilde{k}+1 \leqslant i \leqslant \tilde{k}+k\end{cases}
\end{equation}
To simplify our notation, we define 
$$\mathcal{T}(\mathbf{x})=\prod_{i=\tilde{k}+1}^{k+\tilde{k}}T^{\H}_g(u_i)\prod_{i=1}^{\tilde{k}}\bar{T}^{\H}_g(v_i)=T^{\H}_g(\mathbf{u})\bar{T}^{\H}_g(\mathbf{v})$$ and $\mathcal{T}(\mathbf{x}^{(\ell)})$ by removing the $\ell_{th}$ coordinate in $u_i$ or $v_i$, depending on the number  $\ell$. And similarly for $\mathcal{T}(\mathbf{x}^{(\ell,\ell')})$. In the following, the $x_\ell$ in the notation $X(x_\ell)$ or $\partial_{x_\ell}X(x_\ell)$ should be understood as $u_\ell$ or $v_\ell$ depending on the number $\ell$.\\
To make following formulae shorter, we also define $\mathcal{V}_{\mathbf{\alpha}}(\mathbf{z}):=\prod_{i=1}^nV_{\alpha_i }(z_i)$ and $\mathcal{V}_{\frac{\mathbf{\beta}}{2}}(\mathbf{s})=\prod_{j=1}^mV_{\frac{\beta_j}{2} }(s_j)$.
We define $U_t$ as the union of a disk $\caD_t$ centered at $z_1$ and a half disk $\mathcal{Q}_t\cap \H$ centered at $s_1$. Denote $\H_t=\H\backslash U_t$ and $\R_t=\R\backslash U_t$.
\begin{proposition}\label{WardTH}
 Recall that $c_L=1+6Q^2$, when the metric is $g=e^{\omega}|dz|^2$ we have
 \begin{align} \label{WardT}
 \langle  \mathcal{T}(\mathbf{x})\prod_{i=1}^n V_{\alpha_i }(z_i)\prod_{j=1}^m V_{\frac{\beta_j}{2} }(s_j) \rangle_{\H,U_t,g} =& \frac{1}{2}\sum_{\ell=1}^{\tilde{k}+k-1}\frac{c_L}{(x_{\tilde{k}+k}-x_\ell)^4} \langle \mathcal{T}(\mathbf{x}^{(k+\tilde{k},\ell)}) \mathcal{V}_{\mathbf{\alpha}}(\mathbf{z})\mathcal{V}_{\frac{\mathbf{\beta}}{2} }(\mathbf{s}) \rangle_{\H,U_t,g} \nonumber
\\
 &+\sum_{\ell=1}^{\tilde{k}+k-1}\Big(\frac{2}{(x_{\tilde{k}+k}-x_\ell)^2} +\frac{1}{(x_{\tilde{k}+k}-x_\ell) }\partial_{x_\ell} \Big)\langle \mathcal{T}(\mathbf{x}^{(k+\tilde{k})})\mathcal{V}_{\mathbf{\alpha}}(\mathbf{z})\mathcal{V}_{\frac{\mathbf{\beta}}{2} }(\mathbf{s})\rangle_{\H,U_t,g}\nonumber
  \\
  &+\sum_{p=1}^{n}\Big(\frac{\Delta_{\alpha_p}}{(x_{\tilde{k}+k}-z_p)^2} +\frac{\partial_{z_p}+\Delta_{\alpha_p}\partial_{z_p}\omega}{(x_{\tilde{k}+k}-z_p)  } \Big)\langle  \mathcal{T}(\mathbf{x}^{(k+\tilde{k})})\mathcal{V}_{\mathbf{\alpha}}(\mathbf{z})\mathcal{V}_{\frac{\mathbf{\beta}}{2} }(\mathbf{s}) \rangle_{\H,U_t,g} \nonumber
  \\
   &+\sum_{p=1}^{n}\Big(\frac{\Delta_{\alpha_p}}{(x_{\tilde{k}+k}-\bar{z_p})^2} +\frac{\partial_{\bar z_p}+\Delta_{\alpha_p}\partial_{\bar{z}_p}\omega}{(x_{\tilde{k}+k}-\bar{z_p})  } \Big)\langle  \mathcal{T}(\mathbf{x}^{(k+\tilde{k})})\mathcal{V}_{\mathbf{\alpha}}(\mathbf{z})\mathcal{V}_{\frac{\mathbf{\beta}}{2} }(\mathbf{s}) \rangle_{\H,U_t,g} \nonumber
   \\
    &+\sum_{q=1}^{m}\Big(\frac{\Delta_{\beta_q}}{(x_{\tilde{k}+k}-s_q)^2} +\frac{\partial_{s_q}+\frac{1}{2}\Delta_{\beta_q}\partial_{s_q}\omega}{(x_{\tilde{k}+k}-s_q)  } \Big)\langle  \mathcal{T}(\mathbf{x}^{(k+\tilde{k})})\mathcal{V}_{\mathbf{\alpha}}(\mathbf{z})\mathcal{V}_{\frac{\mathbf{\beta}}{2} }(\mathbf{s}) \rangle_{\H,U_t,g} \nonumber
    \\
  &-\frac{i}{2}\oint_{\partial U_t}\frac{\mu}{  x_{\tilde{k}+k}-  x}\langle   \mathcal{T}(\mathbf{x}^{(k+\tilde{k})}) V_{\gamma }(x)\mathcal{V}_{\mathbf{\alpha}}(\mathbf{z})\mathcal{V}_{\frac{\mathbf{\beta}}{2} }(\mathbf{s}) \rangle_{\H,U_t,g} \dd\bar x\\
   &+\frac{i}{2}\oint_{\partial U_t}\frac{\mu}{  x_{\tilde{k}+k}-  \bar{x}}\langle   \mathcal{T}(\mathbf{x}^{(k+\tilde{k})}) V_{\gamma }(x)\mathcal{V}_{\mathbf{\alpha}}(\mathbf{z})\mathcal{V}_{\frac{\mathbf{\beta}}{2} }(\mathbf{s}) \rangle_{\H,U_t,g} \dd x\\
   &+\frac{\mu_\partial}{x_{\tilde{k}+k}-(s_1+e^{-t})}\langle   \mathcal{T}(\mathbf{x}^{(k+\tilde{k})})  V_{\frac{\gamma}{2}}(s_1+e^{-t})\mathcal{V}_{\mathbf{\alpha}}(\mathbf{z})\mathcal{V}_{\frac{\mathbf{\beta}}{2} }(\mathbf{s})\rangle_{\H,U_t,g}\nonumber\\
   &-\frac{\mu_\partial}{x_{\tilde{k}+k}-(s_1-e^{-t})}\langle   \mathcal{T}(\mathbf{x}^{(k+\tilde{k})})  V_{\frac{\gamma}{2}}(s_1-e^{-t})\mathcal{V}_{\mathbf{\alpha}}(\mathbf{z})\mathcal{V}_{\frac{\mathbf{\beta}}{2} }(\mathbf{s})\rangle_{\H,U_t,g}\nonumber
 \end{align}
 \end{proposition}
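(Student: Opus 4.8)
The plan is to establish \eqref{WardT} by Gaussian integration by parts applied to the outermost stress-energy insertion, carried out on the doubled geometry. The structural reason the full-plane Ward identity of \cite[Appendix E]{GKRV21} transfers to $\H$ is Cardy's doubling trick: by the Green function computation in the proof of Proposition \ref{constanteSET}, the mixed/Neumann GFF on $\H$ has covariance $G^{\H}_g(z,z')=\ln\frac{1}{|z-z'|\,|z-\bar z'|}+h(z,\bar z)+h(z',\bar z')$, i.e. it is the restriction to $\H$ of a full-plane field symmetric under $z\mapsto\bar z$. Consequently every antiholomorphic insertion $\bar T^{\H}_g(v_i)$ behaves like a holomorphic insertion $T^{\H}_g$ evaluated at the mirror point $\bar v_i$; this is exactly the merging of $\{\bar v_i\}$ and $\{u_i\}$ into the single tuple $\mathbf x$, and it is what forces each bulk insertion to appear twice, once at $z_p$ and once at $\bar z_p$.

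First I would work at the regularized level with $T^{\H}_{g,\epsilon}$ as in \eqref{defSET}, single out the factor $\partial_z\Phi_{g,\epsilon}(x_{\tilde k+k})$ and apply the Gaussian IBP (Girsanov) formula inside the expectation defining $\langle\cdots\rangle_{\H,U_t,g}$. Each contraction of $\partial_z\Phi(x_{\tilde k+k})$ against another field produces a factor $\partial_z G^{\H}_g(x_{\tilde k+k},\cdot)$, whose singular part supplies simple and double poles in $(x_{\tilde k+k}-\cdot)$ after reading off residues. I would then match the resulting terms against the right-hand side of \eqref{WardT}: contractions with the remaining insertions in $\mathcal T(\mathbf x^{(\tilde k+k)})$ give the $2/(x_{\tilde k+k}-x_\ell)^2+\partial_{x_\ell}/(x_{\tilde k+k}-x_\ell)$ operators, and combining the $Q\partial_z^2\Phi$ piece with a second contraction produces the quartic pole with coefficient $c_L=1+6Q^2$, the $6Q^2$ coming from the $Q\partial^2 G$ cross terms and the $+1$ from the double self-contraction of the quadratic part $(\partial_z\Phi)^2$, regularized by $a_\epsilon$. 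Contraction with $V_{\alpha_p}(z_p)$ yields both the $z_p$ and the $\bar z_p$ blocks because $G^{\H}_g$ is singular at both points, while contraction with a boundary vertex $V_{\beta_q/2}(s_q)$ has its two mirror contributions coincide on $\R$, producing the single boundary block with weight $\Delta_{\beta_q}$ and the $\tfrac12\Delta_{\beta_q}\partial_{s_q}\omega$ correction from the conformal factor.

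The remaining, and most delicate, contributions come from differentiating the interaction $\exp(-\mu M_{\gamma,g}(\phi_g,\H\setminus U_t)-\mu_\partial M^\partial_{\gamma,g}(\phi_g,\R\setminus U_t))$. The IBP brings down $\gamma\,\partial_z G^{\H}_g(x_{\tilde k+k},w)$ integrated against $dM_{\gamma,g}$, i.e. a $V_\gamma(w)$ insertion over $\H\setminus U_t$, and similarly a boundary $V_{\gamma/2}$ insertion over $\R\setminus U_t$. Using $\bar\partial_w\frac{1}{x_{\tilde k+k}-w}=-2\pi\delta(x_{\tilde k+k}-w)$ together with Stokes' theorem on $\H\setminus U_t$ — whose boundary is $\partial U_t$ together with $\R\setminus U_t$ — converts the bulk integral into a contour integral over $\partial(\H\setminus U_t)$. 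The real-axis portion of this contour integral is designed to combine with the boundary GMC term, the net effect being that the two pieces cancel except at the endpoints of the removed interval $\mathcal Q_t\cap\R$, producing the boundary point terms $V_{\gamma/2}(s_1\pm e^{-t})$; what survives of the interior is exactly the two $\partial U_t$ contour integrals, the $d\bar x$ and $dx$ versions reflecting the holomorphic/antiholomorphic doubling.

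The hard part will be this last step: unlike the full plane, the domain $\H\setminus U_t$ carries both an interior circle boundary and the real-axis boundary, and there are two GMC measures (bulk $\mu$ and boundary $\mu_\partial$) whose contributions must be organized simultaneously for $T(z)$ and its reflected image so that the real-line terms telescope into the clean endpoint insertions. Justifying the exchange of the $\bar\partial$-Stokes argument with the GMC expectation, and passing to the limit $\epsilon\to0$ uniformly, will rely on the finiteness of negative GMC moments as encoded in the Fusion estimate of the surrounding proof; making these manipulations rigorous, rather than the pole bookkeeping, is the technical crux.
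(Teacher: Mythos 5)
Your proposal follows essentially the same route as the paper: Gaussian integration by parts on the outermost SET factor, with the image-charge structure of the half-plane covariance $C(u,v)=-\tfrac12\bigl(\tfrac{1}{u-v}+\tfrac{1}{u-\bar v}\bigr)$ producing the doubled $z_p/\bar z_p$ blocks, Green's formula on the bulk potential term yielding the $\partial U_t$ contour integrals and the real-axis pieces, the boundary GMC integration by parts producing the endpoint insertions $V_{\gamma/2}(s_1\pm e^{-t})$, and the remaining terms closed off by partial-fraction identities against derivatives of the lower correlation function. The only cosmetic difference is in how the real-line contributions are grouped (the paper cancels the two mirror bulk contour pieces against each other and treats the boundary GMC term separately), but the outcome and the identified technical crux are the same.
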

\begin{proof}
The proof is tedious and similar to Ward's identity in \cite[Appendix E]{GKRV21}. The difficulties come from the presence of the boundary cosmology constant and the coupling relation of $T^{\H}(\mathbf{u})$ and  $T^{\H}(\mathbf{v})$. We postpone its proof in the Appendix \ref{proof cardy}.
\end{proof}
\begin{remark}
    Here we explain why the contour integral \eqref{WardT} surround $\beta$ insertion makes sense. Notice here the $\beta$ is very negative, so when the $V_{\gamma}(x)$ approaches real line, the singularity behaves like $|x-\bar{x}|^{-\frac{\gamma^2}{2}}$ when $\gamma^2<\frac{4}{3}$ and $ |x-\bar{x}|^{\frac{Q^2}{4}-2}$ when $\gamma^2\geq\frac{4}{3}$. Using the polar coordinates, we get the integral $\int_{0}^{\pi}\frac{1}{|sin(\theta)|^{a}}d\theta$ with $a<1$, which is integrable. For this fusion type estimate, see \cite{Wu1}.
\end{remark}
The key observation from Cardy's doubling trick is the following.
\begin{proposition}\label{cardydoublingproperty}
Iterate the above recursive relation, we can get the following differential operator $D^{\H}(z_i,s_j,\mathbf{u},\mathbf{v})$ and error term $\epsilon_t(z_i,s_j,\mathbf{u},\mathbf{v})$
\begin{align*}
&\Big(\prod^n_{i=1} g(z_i)^{\Delta_{\alpha_i}} \prod_{j=1}^m g(s_j)^{\frac{\Delta_{\beta_j}}{2}}\langle  T^{\H}(\mathbf{u})\bar T^{\H}({ \mathbf{v}})\prod_{i=1}^nV_{\alpha_i }(z_i)\prod_{j=1}^mV_{\frac{\beta_j}{2} }(s_j) \rangle_{\H,U_t,g}\Big)\\ =&D^{\H}(z_i,s_j,\mathbf{u},\mathbf{v})\Big(\prod^n_{i=1} g(z_i)^{\Delta_{\alpha_i}} \prod_{j=1}^m g(s_j)^{\frac{\Delta_{\beta_j}}{2}}\langle  \prod_{i=1}^nV_{\alpha_i }(z_i)\prod_{j=1}^mV_{\frac{\beta_j}{2} }(s_j) \rangle_{\H,U_t,g}\Big)+\epsilon_t(z_i,s_j,\mathbf{u},\mathbf{v})
\end{align*}
and recall the full plane Ward's identity in \cite[Proposition 10.4]{GKRV21}
\begin{align*}
    &\Big(\prod^n_{i=1} g(z_i)^{\Delta_{\alpha_i}} \prod^n_{i=1}g(z'_i)^{\Delta_{\alpha_i}} \prod_{j=1}^m g(s_j)^{\Delta_{\beta_j}}\langle T^{\hat\C}(\mathbf{u})T^{\hat\C}({ \bar{\mathbf{v}}})\prod_{i=1}^nV_{\alpha_i }(z_i)\prod_{i=1}^nV_{\alpha_i}(z'_i)\prod_{j=1}^mV_{\beta_j }(s_j) \rangle_{\hat\C,U_t\cup\bar U_t,g}\Big)\\
    =&D^{\hat\C}(z_i,z'_i,s_j,\mathbf{u},\mathbf{\bar{v}})\Big(\prod^n_{i=1} g(z_i)^{\Delta_{\alpha_i}} \prod^n_{i=1}g(z'_i)^{\Delta_{\alpha_i}} \prod_{j=1}^m g(s_j)^{\Delta_{\beta_j}}\langle \prod_{i=1}^nV_{\alpha_i }(z_i)\prod_{i=1}^nV_{\alpha_i}(z'_i)\prod_{j=1}^mV_{\beta_j }(s_j) \rangle_{\hat\C,U_t\cup\bar U_t,g}\Big)\\
    &+\epsilon_t(z_i,z'_i,s_j,\mathbf{u},\mathbf{\bar{v}})
\end{align*}
Here $\bar{U}_t$ is the reflection of $U_t$ along the real line, then we have 
 $$ {D}^{\hat\C}(z_i,\bar{z}_i,s_j,\mathbf{u},\mathbf{\bar{v}})={D}^{\H}(z_i,s_j,\mathbf{u},\mathbf{v})$$
\end{proposition} 
\begin{proof}
We directly compare the first five terms in RHS of \eqref{WardTH}(the half plane case) with Ward's identity in \cite[Theorem 10.3]{GKRV21} (the full plane case) except for the last term. Then we get to the conclusion.
\end{proof}
  
\subsection{Estimation of error terms}
\begin{proposition}[\bf{Ward's identity in $\H$}]\label{beforeres}
For $\alpha,\beta,{\bf \beta}^{\rm ar}$ in the Seiberg bound and $g=e^{\omega}|dz|^2$. The following identity holds
 \begin{align}
 &e^{\Delta_{\alpha}\omega(z)} e^{\frac{\Delta_{\beta}}{2}\omega(s)}\prod_{i=1}^m e^{\frac{\Delta_{\beta^{\rm ar}_i}}{2} \omega (s^{\rm ar}_i)} \langle T^{\H}_g( \mathbf{u})\bar  T^{\H}_g( \mathbf{v})V_{\alpha,g}(z)V_{\frac{\beta}{2},g}( s)\prod_{i=1}^m V_{\frac{\beta^{\rm ar}_j}{2},g}( s^{\rm ar}_i) \rangle_{\H,U_t,g}\\
=  &D^{\H}({\bf u},{\bf  v},z,s,{\bf s}^{\rm ar}) \left (   e^{\Delta_{\alpha}\omega(z)} e^{\frac{\Delta_{\beta}}{2}\omega(s)}\prod_{i=1}^m e^{\frac{\Delta_{\beta^{\rm ar}_i}}{2} \omega (s^{\rm ar}_i)}\langle T^{\H}_g( \mathbf{u})\bar  T^{\H}_g( \mathbf{v})V_{\alpha,g}(z)V_{\frac{\beta}{2},g}( s)\prod_{i=1}^m V_{\frac{\beta^{\rm ar}_i}{2},g}( s^{\rm ar}_i) \rangle_{\H,U_t,g}\right )\nonumber\\
+&\epsilon_t{({\bf u},{\bf  v},z,s,{\bf s}^{\rm ar})}
\end{align}
 where  
\begin{align}\label{reminder}
|\epsilon_t{({\bf u},{\bf v},z,s,{\bf s}^{\rm ar})}|\leq Ce^{(k+\tilde{k}-1+\gamma \max\{\alpha,\beta,\frac{\beta}{2}\})t}
\end{align}
and the differential operator $D^{\H}({\bf u},{\bf v},z,s,{\bf s}^{\rm ar})$ doesn't involve the metric terms. 
\begin{proof}
We iterate \eqref{WardTH}, and gather all terms contain the contour integral on $\partial U_t$ and $\overline{\partial U_t}\cap \R=\{s_1+e^{-t}\}\cup\{s_1-e^{-t}\}$ (the $Q_t$ terms defined in \eqref{Q_tterm}). To estimate these contour integrals, we need the following lemma to control $x$ close to $z$ or $x$ close to $s$.
\begin{lemma}[\bf Fusion estimate]\label{fusion}
For $x\in \partial U_t$, we have
\begin{align}
   &\langle V_{\gamma}(x) V_{\alpha,g}(z)V_{\frac{\beta}{2},g}( s)\prod_{i=1}^m V_{\frac{\beta^{\rm ar}_i}{2},g}( s^{\rm ar}_i) \rangle_{\H,U_t,g}\leq |x-z|^{-\gamma\alpha}|x-s|^{-\gamma\beta}|x-\bar{x}|^{-\frac{\gamma^2}{2}}\quad \text{when }\gamma^2<\frac{4}{3}\\
   &\langle V_{\gamma}(x) V_{\alpha,g}(z)V_{\frac{\beta}{2},g}( s)\prod_{i=1}^m V_{\frac{\beta^{\rm ar}_i}{2},g}( s^{\rm ar}_i) \rangle_{\H,U_t,g}\leq |x-z|^{-\gamma\alpha}|x-s|^{-\gamma\beta}|x-\bar{x}|^{\frac{Q^2}{4}-2}\quad \text{when }\gamma^2\geq\frac{4}{3}\\
   &\langle V_{\frac{\gamma}{2}}(s+e^{-t}) V_{\alpha,g}(z)V_{\frac{\beta}{2},g}( s)\prod_{i=1}^m V_{\frac{\beta^{\rm ar}_i}{2},g}( s^{\rm ar}_i) \rangle_{\H,U_t,g}\leq e^{-\frac{\gamma}{2}\beta t}\nonumber
\end{align}
\end{lemma}
For proof of this lemma, see [Proposition 6.4]\cite{Wu1}.
\end{proof}
As in \cite[Lemma 10.6]{GKRV21}, we can show
\begin{proposition}\label{difflemma}
 Let  $L<2-\gamma\max\{\alpha,\beta,\frac{\beta}{2}\}$ and fix $t_0 \geq 0$. For all $t \geq 0$, the correlation functions $$\langle V_{\alpha,g}(z)V_{\frac{\beta}{2},g}( s)\prod_{i=1}^m V_{\frac{\beta{\rm ar}_i}{2},g}( s^{\rm ar}_i) \rangle_{\H,U_t,g}$$  are  $C^L$ in $(z,s)\in U_{t_0}=\caD_{t_0}\times(\mathcal{Q}_{t_0}\cap\R)$ and $C^0$  in $(s^{\rm ar}_{1},\dots, s^{\rm ar}_n)$ in the region $s^{\rm ar}_i\in \R\backslash \overline{U}_{t_0}$ with $s^{\rm ar}_i\neq s^{\rm ar}_j$ and they converge uniformly on compact subsets of the aforementioned region together with all these derivatives as $t\to \infty$ to $\langle V_{\alpha,g}(z)V_{\frac{\beta}{2},g}( s)\prod_{i=1}^m V_{\frac{\beta^{\rm ar}_i}{2},g}( s^{\rm ar}_i) \rangle_{\H,g}$.
\end{proposition}
\begin{proof}
    Here we only point out the main difference. Define $(\mathcal{Q}_t\cap \H)\subset(\mathcal{Q}_{t_0}\cap \H)$, we come to estimate the following integral
    \begin{align*}
        \int_{(\mathcal{Q}_{t_0}\cap \H)\setminus(\mathcal{Q}_t\cap \H)}  (|s-\omega_{\pi(1)}|+\epsilon)^{-L} \langle V_{\gamma}(\omega_{\pi(1)}) V_{\alpha,g}(z)V_{\frac{\beta}{2},g}( s)\prod_{i=1}^m V_{\frac{\beta^{\rm ar}_i}{2},g}( s^{\rm ar}_i) \rangle_{\H,U_t,g} d\omega_{\pi(1)}
    \end{align*}
    by Lemma \ref{fusion}, we know when $L<2-\gamma\max\{\alpha,\beta,\frac{\beta}{2}\} $, this proposition holds.
\end{proof}
Computing RHS of \eqref{witht} by Ward's identity, we get
\begin{proposition}\label{withtbis}
 Given $\boldsymbol{\nu}=(\nu_1,\dots,\nu_k)\in \mc{T}$, $\tilde{\boldsymbol{\nu}}=(\tilde{\nu}_1,\dots,\tilde{\nu}_{\tilde{k}})\in \mc{T}$, $\max\{\alpha,\beta,\frac{\beta}{2}\}<\bf{a}$ and  $\beta^{\rm ar}_i<Q$ for $i=1,..,m$ satisfying $\alpha+\frac{\beta}{2}+\sum_{i=1}^m\frac{\beta^{\rm ar}_i}{2}>Q$
  \begin{align*}
 & \oint_{|\mathbf{u}|=\boldsymbol{\delta}_t}   \oint_{|\mathbf{v}|=\boldsymbol{\tilde\delta}_t}  
\mathbf{u}^{1-\boldsymbol{\nu}}\bbar{\mathbf{v}}^{1-\tilde{\boldsymbol{\nu}}}
\langle\psi_1^\ast T^{\H}_{g_{dozz}}(\mathbf{u}){\psi_1^\ast \bar T^{\H}_{g_{dozz}}}(\mathbf{v})V_{\alpha,g_{dozz}}(z)V_{\frac{\beta}{2},g_{dozz}}(s)\prod_{i=1}^mV_{\frac{\beta^{\rm ar}_i}{2},g_{dozz}}(s^{\rm ar}_i)\rangle_{\H,g_{dozz},U_t}
\dd{\bf u}\dd{\bf v}\\
  &= (2\pi i)^{(s(\nu)+s(\tilde{\nu}))}\caD^{\H}_{\nu,\tilde{\nu},\alpha,z,\beta,s,\boldsymbol{\beta}^{\rm ar},{\bf s}^{\rm ar}}
  \langle V_{\alpha,g}(z)V_{\frac{\beta}{2},g}( s)\prod_{i=1}^m V_{\frac{\beta^{\rm ar}_i}{2},g}( s^{\rm ar}_i) \rangle_{\H,U_t,g_{dozz}}+\epsilon'_t{(\nu,\tilde{\nu},\alpha,z,\beta,s,\boldsymbol{\beta}^{\rm ar},{\bf s}^{\rm ar})}
 \end{align*}
where $\epsilon'_t{(\nu,\tilde{\nu},\alpha,z,\beta,s,\boldsymbol{\beta}^{\rm ar},{\bf s}^{\rm ar})}\to 0$ as $t\to\infty$.
\end{proposition} 
\begin{proof}
Here we only convince readers why we add the condition $\beta, \frac{\beta}{2}<\bf a$.
\begin{align*}
|\epsilon'_t{(\nu,\tilde{\nu},\alpha,z,\beta,s,\boldsymbol{\beta}^{\rm ar},{\bf s}^{\rm ar})}|&\leq\oint_{|\mathbf{u}|=\boldsymbol{\delta}_t}   \oint_{|\mathbf{v}|=\boldsymbol{\tilde\delta}_t}  
|\mathbf{u}^{1-\boldsymbol{\nu}}\bar{\mathbf{v}}^{1-\tilde{\boldsymbol{\nu}}} \epsilon_t{({\bf u},{\bf \bar v},z,s,{\bf s}^{\rm ar})}|\dd{\bf u}\dd{\bf v} \\&\leq C e^{(|\boldsymbol{\nu}|+|\tilde{\boldsymbol{\nu}}|-2k-2\tilde k)t}e^{(2(k+\tilde k-1)+\gamma \max\{\alpha,\beta,\frac{\beta}{2}\})t}
\end{align*}

and when $\max\{\alpha,\beta,\frac{\beta}{2}\}<\bf a$, $|\epsilon'_t{(\nu,\tilde{\nu},\alpha,z,\beta,s,\boldsymbol{\beta}^{\rm ar},{\bf s}^{\rm ar})}|\to 0$ as $t\to\infty$.
\end{proof}
This estimate shows when we shrink the radius of circle to 0, we can ignore the $\partial U_t$ terms and $Q_t$ terms in Ward's identity.
By choosing number $\bf a$ sufficiently small, one can apply Lemma \ref{difflemma} to get the following convergence
$$
\caD^{\H}_{\nu,\tilde{\nu},\alpha,z,\beta,s,\boldsymbol{\beta}^{\rm ar},{\bf s}^{\rm ar}}
  \langle V_{\alpha,g}(z)V_{\frac{\beta}{2},g}( s)\prod_{i=1}^m V_{\frac{\beta_j}{2},g}( s_i) \rangle_{\H,U_t,g_{dozz}}\to \caD^{\H}_{\nu,\tilde{\nu},\alpha,z,\beta,s,\boldsymbol{\beta}^{\rm ar},{\bf s}^{\rm ar}}
  \langle V_{\alpha,g}(z)V_{\frac{\beta}{2},g}( s)\prod_{i=1}^m V_{\frac{\beta_j}{2},g}( s_i) \rangle_{\H,g_{dozz}}
$$
as $t\to \infty$ in the strong sense in the variables $z,s$ and in the distributional sense for the variables ${\bf s}^{\rm ar}$. 

Now we prove the following theorem to illustrate the amplitude can be analytic continued to some analytic neighbourhood of  $(\beta,\boldsymbol{\beta}^{\rm ar})\in (\infty,Q)^{m+1}$.

\begin{proposition}[Weights analyticity of amplitudes]\label{holomorphic}
\label{anala} There exists a complex neighborhood $ \mc{U}$ of $\{  (\beta,\boldsymbol{\beta}^{\rm ar})  \in \R\times\R^{m} \, |\, \, (\beta,\boldsymbol{\beta}^{\rm ar})\in (\infty,Q)^{m+1}\}$  s.t. the function   
\begin{align*}
(\beta,\boldsymbol{\beta}^{\rm ar}, s,{\bf  s}^{\rm ar})\to \caA^m_{\H\backslash\Omega,\hat g,s,\beta,{\bf s}^{\rm ar},\boldsymbol{\beta}^{\rm ar}}
\end{align*}
extends holomorphically in the variables $(\beta,\boldsymbol{\beta}^{\rm ar})\in \mc{U}$. This extension is continuous in 
  $(\beta,\boldsymbol{\beta}^{\rm ar}, s,{\bf  s}^{\rm ar})\in  \mc{U}\times \{(s,{\bf s}^{ar})\in (\H\backslash\Omega)^{m+1}| \text{all s and $s^{ar}$ are not same}\}$   with values in 
$$e^{\epsilon\bar c_--R\bar c_+-a\sum_{j=1}^b(c_j-\bar c)^2} L^2(H^{-s}(\T)^{b},\mu_0^{\otimes_b})$$ for some $a>0$, arbitrary $\epsilon<s$ with $s:= {\rm Re}(\frac{\beta}{2}) +\sum_{i=1}^m{\rm Re}(\frac{\beta_i}{2})$ and arbitrary $R>0$. 
\end{proposition}
\begin{proof}
we choose a metric $g$ s.t. the Ricci curvature in $\H\backslash\Omega$ is 0, and the geodesic curvature on boundary $\R$ and $\partial\Omega$ are $0$. For simplicity, we consider the case $\boldsymbol{\beta}^{\rm ar}=0$. We define $\mathbb{R}_{k}:=\mathbb{R} \backslash \left(s-e^{-k}, s+e^{-k}\right)$ and
\begin{align}\label{estimateextension}
\caA^{m,k}_{\H\backslash\Omega,g,{\bf s},\boldsymbol{\beta}}(\tilde{\varphi})=&\E_{\tilde{\varphi}} \Big[  \epsilon_k^{\frac{\beta^2}{4}} e^{\frac{\beta}{2}(c+\phi^{\circ}_{g,k}(s))}\exp\Big(-\mu  e^{\gamma c}M_{\gamma, g}(\phi^{\circ}_g,\H)-\mu_{\partial} e^{\frac{\gamma}{2}c}M^{\partial}_{\gamma, g}(\phi^{\circ}_g,\R_k )\Big) \Big] 
\end{align}
where $\phi^{\circ}_{g,k}=P\varphi+X_{g,m,k}^{\H\backslash\Omega}$ and $X^{\H\backslash\Omega}_{g,m,k}(s)$ is regularized by taking average of $X^{\H\backslash\Omega}_{g,m}(s)$ with respect to the upper half-plane circle $B(s,e^{-k})^+$.

The Markov property of the mixed condition GFF $X^{\H\backslash\Omega}_{g,m}(s)$ implies that $X^{\H\backslash\Omega}_{g,m,k+t}(s)-X^{\H\backslash\Omega}_{g,m}(s)$ is a standard Brownian motion independent of $F(X_{g,m}(z))$, $z\in \R_k$. Note $\beta=a+ib$, then 
by Markov property
\begin{align*}
    |A^{m,k+1}_{\H\backslash\Omega,g,{\bf s},\boldsymbol{\beta}}(\tilde{\varphi})-& A^{m,k}_{\H\backslash\Omega,g,{\bf s},\boldsymbol{\beta}}(\tilde{\varphi})|\leq e^{\frac{a}{2}c}|\E_{\tilde{\varphi}} \Big[\epsilon_{k+1}^{\frac{(a+ib)^2}{4}}e^{\frac{a+ib}{2}X^{\H\backslash\Omega}_{g,m,k+1}(s)}\Big(e^{-\mu_{\partial} e^{\frac{\gamma}{2}c}M^{\partial}_{\gamma, g}(\phi^{\circ}_g,\R_{k+1})} -e^{-\mu_{\partial} e^{\frac{\gamma}{2}c}M^{\partial}_{\gamma, g}(\phi^{\circ}_g,\R_k)}\Big)\Big] |\\
    \leq&e^{\frac{a}{2}c}2^\frac{(k+1)b^2}{4}|\E_{\tilde{\varphi}}[\Big(e^{-\mu_\partial e^{\frac{\gamma}{2}c}(Y_k+\delta Y_k)}-e^{-\mu_\partial e^{\frac{\gamma}{2}c}Y_k}\Big)e^{-\mu  e^{\gamma c}M_{\gamma, g}(\phi_g+G_{m,k+1}(x,s),\H)}]|
\end{align*}
where the regularized Green function $G_{m,k}(x,s)=\E[X^{\H\backslash\Omega}_{g,m,k}(s)X^{\H\backslash\Omega}_{g,m}(u)]$ and 
\begin{align}
     &Y_k=M^{\partial}_{\gamma, g}(\phi^{\circ}_g+G_{m,k+1}(x,s),\R_k)\\
     &\delta Y_k=M^{\partial}_{\gamma, g}(\phi^{\circ}_g+G_{m,k+1}(x,s),\R_k\backslash\R_{k+1})
\end{align}
 Now we use that for any fixed $R\in (0,1)$, there exists a constant $C_R$ such that $1-e^{-x}<C_R x^R$ holds for all $x>0$. Then we take $x=\mu_\partial e^{\frac{\gamma}{2}c}\delta Y_k$. 
 Then we use multi-fractal property for GMC and choose suitable $R$ and $b$ to conclude $A^{m,k}_{\H\backslash\Omega,g,{\bf s},\boldsymbol{\beta}}(\tilde{\varphi})$ converges locally uniform. This part follows the same lines in \cite[Proposition C.1.]{GKRV}.
\end{proof}
We may now combine this result with Proposition \ref{witht}, Proposition \ref{holomorphic} and 
Lemma  \ref{TTLemma} to get
\begin{proposition}[{Ward's identity for $\mathbb{H}$}]\label{abr}
Given $\boldsymbol{\nu},\tilde{\boldsymbol{\nu}}$ Young diagrams, $\max\{\alpha,\beta,\frac{\beta}{2}\}< \bf a$ for  and  $\beta^{\rm ar}_i<Q$ for $i=1,..,m$ satisfying $\alpha+\frac{\beta}{2}+\sum_i\frac{\beta^{\rm ar}_i}{2}>2Q$ 
 \begin{align}\label{final1}
 & \sqrt{\pi} \caA_{\H\backslash\Omega,\hat g,z,\alpha,s,\beta,{\bf s}^{\rm ar},\boldsymbol{\beta}^{\rm ar}}\big(\Psi_{\alpha,\nu,\tilde{\nu}}\big)\\
  =& ( Z(g)Z_{\D,g_\D}^{-1})Z(\H,\hat g)
\caD^{\H}_{\nu,\tilde{\nu},\alpha,z,\beta,s,\boldsymbol{\beta}^{\rm ar},{\bf s}^{\rm ar}}
  \langle V_{\alpha,g}(z)V_{\frac{\beta}{2},g}( s)\prod_{i=1}^m V_{\frac{\beta_i^{\rm ar}}{2},g}( s^{\rm ar}_i) \rangle_{\H,g_{dozz}}\nonumber
 \end{align}
 The LHS in  \eqref{final1} is continuous in $(z,s,{\bf s}^{\rm ar})$ and holomorphic in $(\alpha,\beta,\boldsymbol{\beta}^{\rm ar})$ in a complex neighborhood of Seiberg bounds and the 
 equality \eqref{final1} holds in the distributional sense. 
 \end{proposition}

\end{proposition}

\section{Properties of bulk-boundary correlator \texorpdfstring{$G(Q+iP,\beta)$}{g0}}\label{gluingdefinition of G}
In this paper, we define the bulk boundary correlator $G(\alpha,\beta)$ as following:
\begin{definition}
We define the Liouville bulk-boundary correlator as
\begin{equation}\label{eigenstatedefineof G}
G(\alpha,\beta):=\frac{\mathcal{A}^{m}_{g_{\mathbb{A}},\mathbb{A}^1_{\sqrt{q}},b,\beta,\sqrt{q}e^{i\theta}}(\Psi_{\alpha})}{C_{\D}|q |^{-{c}_{\rm L}/24+\Delta_{\alpha}}}
\end{equation}
\end{definition}
We will show $(\alpha,\beta)\to G(\alpha,\beta)$ is analytic in the region \eqref{analyticregion}. When we restrict it to the Seiberg bound, due to the gluing formula 
\eqref{gluingtypeI}, it fits into to the Liouville correlation function:
\begin{align*}
 &\langle V_{\alpha}(z)  V_{\frac{\beta}{2}}(s)  \rangle^{\H}_{g,\mu,\mu_\partial}\\
 =&|z-\bar{z}|^{\Delta_{\beta}-2 \Delta_{\alpha}}|z-s|^{-2 \Delta_{\beta}}
g(z)^{-\Delta_{\alpha}}g(s)^{-\frac{\Delta_{\beta}}{2}}\nonumber \frac{1}{2}G(\alpha,\beta) \big(\frac{{\rm v}_{g}(\H)}{{\det}'(\Delta_{g,N})}\big)^{\frac{1}{2}} e^{-6Q^2 S_{\rm L}^0(\H,g,\omega)}  \nonumber
 \end{align*} 

\begin{proposition}[Analytic region of $G(\alpha,\beta)$]\label{analyticregion}
Define the map $(\alpha,\beta)\to G(\alpha,\beta)$, then it is analytic in the following region, recall \eqref{holomorphic} $\mathcal{U}$ is a complex neighbourhood of $(0,Q)$
\begin{align}
    \{(\alpha,\beta)\in \C\times \mathcal{U}\mid |\Re(\alpha)- Q|+ \Re(\frac{\beta}{2})>0\}
\end{align}
\end{proposition}
\begin{proof}
We know $\beta\to\mathcal{A}^{m}_{g_{\mathbb{A}},\mathbb{A}^1_{\sqrt{q}},b,\beta}(\tilde{\varphi})$ is analytic in $\beta\in \mathcal{U}$ in  $e^{\delta c_{-}}L^2(\R\times\Omega_{\T})$ space for $0<\delta<\frac{\Re(\beta)}{2}$. And we also have $\alpha\to\Psi_{\alpha}$ is analytic in $\alpha\in C$ in $e^{-\epsilon c_{-}}L^2(\R\times\Omega_{\T})$ space for $\epsilon>|\Re(\alpha)-Q|$. Pairing these two, we know  $(\alpha,\beta)\to \langle \mathcal{A}^{m}_{g_{\mathbb{A}},\mathbb{A}^1_{\sqrt{q}},b,\beta}\mid\mathbf{C} \Psi_{\alpha}\rangle_{L^2(\R\times\Omega_{\T})}$ is analytic in above region.
\end{proof}
\begin{proposition}\label{bbestimate1}
    For $\mu>0$ and fixed $\beta>0$, the $\lim_{P\to 0}\frac{1}{P}G(Q+iP,\beta)$ exists.
\end{proposition}
\begin{proof}
For $\mu>0$, by \cite[Theorem 4.4]{BGKRV}, we have $\Psi_{Q}=0$. Then we deduce $G(Q,\beta)=0$. To prove the order of $0$ is at least one, we show $G(\alpha,\beta)$ is analytic near $\alpha=Q$. By previous proposition, we see $\alpha\to G(\alpha,\beta)$ is analytic in a region which contains a complex neighbourhood of $\alpha=Q$.
\end{proof}
We remark that the previous proposition is not true in the $\mu=0$ case.

\begin{proposition}
For $\beta\in (0,Q)$, we have $\overline{G(Q+iP,\beta)}=G(Q-iP,\beta)$.
\end{proposition}
\begin{proof}
W only need to prove $\overline{\mathcal{A}_{g_{\A},\A^1_{\sqrt{q}},\beta,b}(\Psi_{Q+iP})}=\mathcal{A}_{g_{\A},\A^1_{\sqrt{q}},\beta,b}(\Psi_{Q-iP})$. we know $\overline{\mathcal{A}_{g_{\A},\A^1_{\sqrt{q}},\beta,b}(\Psi_{Q+iP})}=\mathcal{A}_{g_{\A},\A^1_{\sqrt{q}},\beta,b}(\mathbf{C}\Psi_{Q+iP})$.

By the notation in \eqref{defprop:desc}, we have for $\alpha\in \mathcal{I}_{0,0}$, 
\begin{equation}
\mathbf{C}\Psi_{\alpha}= \lim_{t\to \infty}e^{t(2\Delta_{\bar{\alpha}})} e^{-t{\bf H}}( \chi(c)\mathbf{C}\Psi^0_{\alpha})=\lim_{t\to \infty}e^{t(2\Delta_{\bar{\alpha}})} e^{-t{\bf H}}( \chi(c)\mathbf{C}\Psi^0_{\bar{\alpha}})=\Psi_{\bar{\alpha}}
\end{equation}
Then for $\alpha=Q+iP$, we use analytic continuation.

\end{proof}


\section{The Conformal bootstrap formula on annulus with two boundary insertions}\label{two point proof}

In this section, we prove $\mathcal{A}^{m}_{g_{\mathbb{A}},\mathbb{A}^1_{\sqrt{q}},b,\beta}(\Psi_{Q+iP,\nu,\tilde{\nu}})$ can be factorized to structure constant $G(Q+iP,\beta)$ and some polynomial. Our strategy is the following:

\vskip 1mm

\noindent (1) As we saw $\alpha\to\Psi_{\alpha,\nu,\tilde\nu}$ is analytic in $\C$. If  $ \alpha\in\R$ is negative enough (depending on $\nu,\tilde\nu$), 
  it can be expressed as a limit of a generalized disk amplitude with SET insertion (Lemma \ref{ampdiskSET}).

\vskip 1mm

\noindent (2) The states $\Psi_{\alpha,\nu,\tilde\nu}$ for $\alpha\in\R$ small  are in general not in the domain of the amplitude $\caA_{\H\backslash\Omega, \hat g,\beta,s}$ due to violation of the Seiberg bounds. To remedy this we regularize this  amplitude by inserting several additional vertex operators  $V_{\frac{\beta^{\rm ar}_i}{2},\hat g}(s_i^{\rm ar})$ which locate at $s_i^{\rm ar}\in \R$ for $i=1,\dots,m$ with $\beta^{\rm ar}_i<Q$ and $\sum_i\beta^{\rm ar}_i$ are large enough to meet the Seiberg bound. Then we can use Ward's identity to get proposition \eqref{abr}.

\vskip 1mm

\noindent (3) 
Then we will use analyticity of eigenstate and amplitudes to take the $\alpha=Q+iP$ and all $\beta^{\rm ar}_i\to 0$ for $i=1,..,m$, it results to the LCFT bulk-boundary correlator and a real valued polynomial in $\Delta_{Q+iP}$ and $\Delta_\beta$. 

First we prove two lemmas about conformal mapping,
\begin{lemma}
We define the map \begin{align}
\psi: \hat{\mathbb{C}} & \longrightarrow \hat{\mathbb{C}} \\
z & \mapsto i\frac{1+z}{1-z}
\end{align}
Then $\psi(0)=i$, $\psi(\D)=\H$. We define $\Gamma=\psi(\{|z|=\sqrt{q}\})$, $\Gamma'=\psi(\{|z|=\frac{1}{\sqrt{q}}\})$, $\Omega=\psi(\{|z|\leq\sqrt{q}\})$, $\Omega'=\psi(\{|z|\geq\frac{1}{\sqrt{q}}\})$ then $\Gamma$ and $\Gamma'$ are Euclidean round circles and the complex conjugation image of $\Gamma$ is $\Gamma'$.
\end{lemma}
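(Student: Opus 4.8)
The plan is to treat the entire statement as an exercise in the elementary geometry of Möbius transformations, since $\psi(z)=i\frac{1+z}{1-z}$ is a fractional linear map whose only pole is at $z=1$. The value $\psi(0)=i$ is immediate from substitution. To see $\psi(\D)=\H$, I would compute the imaginary part explicitly: writing $\frac{1+z}{1-z}=\frac{(1+z)(1-\bar z)}{|1-z|^2}$ gives
\begin{equation}
\psi(z)=-\frac{2\,\Im(z)}{|1-z|^2}+i\,\frac{1-|z|^2}{|1-z|^2},
\end{equation}
so that $\Im(\psi(z))=\frac{1-|z|^2}{|1-z|^2}$, which is strictly positive precisely when $|z|<1$ and vanishes on $\{|z|=1\}$. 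Hence $\psi$ carries the open disk onto $\H$ and the unit circle onto $\R\cup\{\infty\}$.

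For the two circles $\Gamma$ and $\Gamma'$ I would invoke the standard fact that a Möbius map sends generalized circles to generalized circles, and that the image of a circle is a genuine bounded round circle unless the circle passes through the pole $z=1$. Since $q\in(0,1)$ we have $\sqrt q<1<1/\sqrt q$, so neither $\{|z|=\sqrt q\}$ nor $\{|z|=1/\sqrt q\}$ contains $z=1$; therefore $\Gamma$ and $\Gamma'$ are both Euclidean round circles. To identify the enclosed point in each case I would use that $\psi$ is a homeomorphism of $\hat\C$, hence maps each of the two Jordan domains bounded by a circle onto one of the two Jordan domains bounded by its image. The closed disk $\{|z|\le\sqrt q\}$ avoids the pole, so its image $\Omega$ is compact, i.e. the bounded component cut out by $\Gamma$; since $0\in\{|z|\le\sqrt q\}$ and $\psi(0)=i$, this bounded region contains $i$, so $\Gamma$ surrounds $i$. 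Likewise $\{|z|\ge 1/\sqrt q\}$ is a closed spherical disk containing $\infty$ but not the pole, so $\Omega'$ is compact and contains $\psi(\infty)=-i$ (using $\frac{1+z}{1-z}\to-1$ as $z\to\infty$), whence $\Gamma'$ surrounds $-i$.

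The one computation requiring a little care — and the only place where anything beyond bookkeeping occurs — is the conjugation symmetry. Here I would establish the pointwise identity
\begin{equation}
\overline{\psi(z)}=\psi(1/\bar z),
\end{equation}
which follows by direct manipulation: $\overline{\psi(z)}=-i\frac{1+\bar z}{1-\bar z}$, while $\psi(1/\bar z)=i\frac{\bar z+1}{\bar z-1}=-i\frac{1+\bar z}{1-\bar z}$. Since the inversion $z\mapsto 1/\bar z$ maps the circle $\{|z|=\sqrt q\}$ bijectively onto $\{|z|=1/\sqrt q\}$, conjugating the identity shows $\overline{\Gamma}=\psi(\{|z|=1/\sqrt q\})=\Gamma'$, exactly the asserted fact that complex conjugation exchanges $\Gamma$ and $\Gamma'$. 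I expect no genuine obstacle in this lemma; the only points to get right are tracking that the pole $z=1$ is avoided, pinning down which component of $\hat\C\setminus\Gamma$ (resp.\ $\hat\C\setminus\Gamma'$) is the bounded one, and the clean identity above that makes the conjugation symmetry transparent.
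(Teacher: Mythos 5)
Your proof is correct and follows essentially the same route as the paper: both rely on Möbius maps sending circles avoiding the pole to round circles, and your identity $\overline{\psi(z)}=\psi(1/\bar z)$ is exactly the paper's observation phrased for $\psi^{-1}(u)=\frac{u-i}{u+i}$, namely that $\bigl|\frac{u_0-i}{u_0+i}\bigr|=\sqrt{q}$ forces $\bigl|\frac{\bar u_0-i}{\bar u_0+i}\bigr|=1/\sqrt{q}$. You simply supply more detail (the explicit $\Im\psi$ computation and the topological identification of the enclosed points) than the paper's two-line argument.
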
 
\begin{proof}
Since the M\"obius map $\psi$ maps origin-avoid circles to circles. The inverse map of $\psi$ is $\psi^{-1}(u)=\frac{u-i}{u+i}$, if $|\frac{u_0-i}{u_0+i}|=\sqrt{q}$, then $|\frac{\bar{u}_0-i}{\bar{u}_0+i}|=\frac{1}{\sqrt{q}}$.
\end{proof}
\begin{lemma}
We also two maps on $\D$, such that $\psi_1(u)=\psi(\sqrt{q}u)$ and $\psi_2(u)=\psi(\frac{1}{\sqrt{q}u})$, then $S_{\psi_1}=S_{\psi_2}=0$, $\psi_2(u)=\overline{\psi_1(\bar{u})}$ and $\psi_2'(u)^2=\overline{\psi_1'(\bar{u})^2}$.
\end{lemma}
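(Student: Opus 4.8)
The final statement concerns two Möbius-type maps on the disk: $\psi_1(u) = \psi(\sqrt{q}\,u)$ and $\psi_2(u) = \psi(\frac{1}{\sqrt{q}\,u})$, where $\psi(z) = i\frac{1+z}{1-z}$. I must verify four assertions: $S_{\psi_1} = S_{\psi_2} = 0$, that $\psi_2(u) = \overline{\psi_1(\bar u)}$, and that $\psi_2'(u)^2 = \overline{\psi_1'(\bar u)^2}$.

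**Plan for the Schwarzian vanishing.** The cleanest route is to observe that both $\psi_1$ and $\psi_2$ are compositions of Möbius transformations. Indeed $\psi$ itself is Möbius (it is $z \mapsto i\frac{1+z}{1-z}$), the scaling $u \mapsto \sqrt{q}\,u$ is Möbius, and the map $u \mapsto \frac{1}{\sqrt{q}\,u}$ is Möbius (an inversion composed with a scaling). Since the Schwarzian derivative vanishes identically on any Möbius transformation, and since the cocycle (chain) rule $S_{f\circ g} = (S_f\circ g)\,(g')^2 + S_g$ shows that a composition of maps with vanishing Schwarzian again has vanishing Schwarzian, I immediately get $S_{\psi_1} = S_{\psi_2} = 0$. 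This is the structural argument; I would state the cocycle identity and the fact that $S \equiv 0$ for fractional linear maps, then conclude. No grinding is needed — this is essentially a one-line invocation of the defining property of the Schwarzian, matching the remark after Proposition~\ref{constanteSET} that $S_\psi = 0$ for Möbius $\psi$.

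**Plan for the conjugation symmetry.** For the identity $\psi_2(u) = \overline{\psi_1(\bar u)}$, I would compute directly. First note the key real-coefficient structure: writing $\psi_1(u) = \psi(\sqrt{q}\,u) = i\frac{1+\sqrt{q}\,u}{1-\sqrt{q}\,u}$, and since $\sqrt q \in (0,1)$ is real, we have $\overline{\psi_1(\bar u)} = \overline{i\frac{1+\sqrt{q}\,\bar u}{1-\sqrt{q}\,\bar u}} = -i\frac{1+\sqrt{q}\,u}{1-\sqrt{q}\,u}$. On the other side, $\psi_2(u) = \psi(\frac{1}{\sqrt q\, u}) = i\frac{1 + 1/(\sqrt q\, u)}{1 - 1/(\sqrt q\, u)} = i\frac{\sqrt q\, u + 1}{\sqrt q\, u - 1} = -i\frac{1+\sqrt q\, u}{1 - \sqrt q\, u}$. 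The two expressions agree, giving $\psi_2(u) = \overline{\psi_1(\bar u)}$. The derivative identity then follows by differentiating this functional relation: applying $\frac{d}{du}$ to $\psi_2(u) = \overline{\psi_1(\bar u)}$ and using the chain rule on the antiholomorphic side yields $\psi_2'(u) = \overline{\psi_1'(\bar u)}$, whence squaring gives $\psi_2'(u)^2 = \overline{\psi_1'(\bar u)^2}$. This matches the geometric picture from the previous lemma, where complex conjugation swaps the circle $\{|z| = \sqrt q\}$ with $\{|z| = 1/\sqrt q\}$ and hence $\Gamma$ with $\Gamma'$.

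**Where the difficulty lies.** There is essentially no deep obstacle here; the lemma is a bookkeeping verification that the two gluing maps are conjugate under reflection, which is exactly what the doubling trick in Section~\ref{Cardytrick} needs. The only point requiring minor care is tracking the factor of $i$ through complex conjugation — conjugating the explicit fractional-linear form flips $i \to -i$, and one must check this flip is consistent with the inversion $\frac{1}{\sqrt q\, u}$ appearing in $\psi_2$, which it is, as the computation above shows. I would present the verification as two short displayed computations (one for the map identity, one noting the derivative consequence) together with the one-line Möbius argument for the Schwarzians.
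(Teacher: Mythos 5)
Your proposal is correct and follows essentially the same route as the paper: the paper likewise notes that $\psi_1,\psi_2$ are M\"obius maps (hence have vanishing Schwarzian) and disposes of the conjugation identities by the same direct computation $\psi_2(u)=i\frac{\sqrt{q}u+1}{\sqrt{q}u-1}=-i\frac{1+\sqrt{q}u}{1-\sqrt{q}u}=\overline{\psi_1(\bar u)}$. Your extra detail (the Schwarzian cocycle rule and the explicit differentiation of the antiholomorphic relation) only fills in steps the paper leaves as ``direct computation.''
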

\begin{proof}
By definition, $\psi_2(u)=i\frac{\sqrt{q}u+1}{\sqrt{q}u-1}$, since $\psi_1$ and $\psi_2$ are M\"obius map, we have $S_{\psi_1}=S_{\psi_2}=0$. The next claim can be shown by direct computation.
\end{proof}
We also note $\H\setminus\Omega=\mathcal{S}$, the reflection of $\mathcal{S}$ along the real line is  denoted by $\mathcal{S}'$, see figure \eqref{Fig.main4}.


Now we deal with mixed boundary condition annulus amplitude $\mathcal{A}^{m}_{g_{\mathbb{A}},\mathbb{A}_{\sqrt{q}},\beta, \psi^{-1}(s),\sqrt{q}e^{i\theta}}(\tilde{\varphi})$. We use the notation ${\mathbb{A}}^p_q$ to represent annulus $\{q<|z|<p\}$ and equip it with flat metric $g_{\mathbb{A}}:=\frac{|dz|^2}{|z|^2}$. For $z=\psi^{-1}(s)\in\partial^{out}\mathbb{A}_q$,  we hope to evaluate $\mathcal{A}^{m}_{g_{\mathbb{A}},\mathbb{A}_{\sqrt{q}},\beta, \psi^{-1}(s),\sqrt{q}e^{i\theta}}(\Psi_{Q+iP,\nu,\tilde{\nu}})$. To work within the probabilistic representation, we need to add several extra boundary insertions $(\psi^{-1}({\bf   s}^{\rm ar}), \boldsymbol{\beta}^{\rm ar})$ to meet the Seiberg bound $\alpha+\frac{\beta}{2}+\sum_i\frac{\beta_i^{\rm ar}}{2}>Q$. We note ${\bf   s}=(s, {\bf   s}^{\rm ar})$ and $\boldsymbol{\beta}=(\beta,\boldsymbol{\beta}^{\rm ar})$ and choose a admissible metric $g=e^{\omega}|dz|^2$ on $\D$ and $\omega$ is $0$ near the origin. Note $\hat{g}=\psi_*g_{\A}\#\psi_{1*}g$ and $\zeta_\D=e^{i\theta}$
\begin{align*}
   &\quad\quad\sqrt{\pi}\mathcal{A}^{m}_{g_{\mathbb{A}},\mathbb{A}^1_{\sqrt{q}},\psi^{-1}(s),\beta, \psi^{-1}({\bf   s}^{\rm ar}), \boldsymbol{\beta}^{\rm ar},\sqrt{q}\zeta_\D}(\Psi_{\alpha,\nu,\tilde{\nu}})
   =\sqrt{\pi}\mathcal{A}^{m}_{\psi_*g_{\mathbb{A}},\mathcal{S},s,\beta, {\bf   s}^{\rm ar}, \boldsymbol{\beta}^{\rm ar},\psi_1(\zeta_\D)}(\Psi_{\alpha,\nu,\tilde{\nu}})\\
   &=( Z(g)Z_{\D,g_\D}^{-1})Z(\H,\hat g)\lim_{t\to\infty} 
  \frac{1}{(2\pi i)^{s(\nu)+s(\tilde{\nu})}}  \\
  &\quad\times\oint_{|\mathbf{u}|=\boldsymbol{\delta}_t}   \oint_{|\mathbf{v}|=\tilde{\boldsymbol{\delta}}_t}  
\mathbf{u}^{1-\boldsymbol{\nu}}\bar{\mathbf{v}}^{1-\tilde{\boldsymbol{\nu}}} \langle  \psi_1^\ast T^{\H}_{g_{\rm dozz}}(\mathbf{u})\overline{\psi_1^\ast T^{\H}_{g_{\rm dozz}}}(\mathbf{v})V_{\alpha,g_{\rm dozz}}(z)V_{\frac{\beta}{2},g_{\rm dozz}}(s)\prod_{j=1}^mV_{\frac{\beta^{\rm ar}_j}{2},g_{\rm dozz}}(s^{\rm ar}_j)  
\rangle_{\H,g_{\rm dozz},U_t}\dd  \mathbf{u}\dd  \bar{\mathbf{v} }  \\
  &= ( Z(g)Z_{\D,g_\D}^{-1})Z(\H,\hat g)g_{dozz}(z)^{-\Delta_{\alpha}}g_{dozz}(s)^{-\frac{\Delta_{\beta}}{2}}\prod_{j=1}^m g_{dozz}(s_j^{\rm ar})^{-\frac{\Delta_{\beta_j^{\rm ar}}}{2}}\\
  &\quad\times \Big(\caD^{\H}_{\nu,\tilde{\nu},\alpha,z,\beta,s,\boldsymbol{\beta}^{\rm ar},{\bf s}^{\rm ar}} \langle V_{\alpha,g_{\rm dozz}}(z)V_{\frac{\beta}{2},g_{\rm dozz}}(s) \prod_{j=1}^mV_{\frac{\beta^{\rm ar}_j}{2},g_{\rm dozz}}(s^{\rm ar}_j)   \rangle_{ \H,{g_{\rm dozz}}}\Big)
\end{align*}
here
\begin{equation}
    Z(\H,\hat g):=e^{-c_LS_L^0(\H,g_{dozz},\hat{\omega})-\Delta_{\alpha}\hat{\omega}(z)-\frac{1}{2}\Delta_{\beta}\hat{\omega}(s)-\frac{1}{2}\sum_j\Delta_{\beta^{\rm ar}_j}\hat{\omega}(s^{\rm ar}_j)}
\end{equation}

On the other hand side, we deal with Dirichlet boundary amplitude $\mathcal{A}^{D}_{g_{\mathbb{A}},\mathbb{A}^{\frac{1}{\sqrt{q}}}_{\sqrt{q}},\beta, \psi^{-1}(s),{\boldsymbol{\tilde\zeta}}}(\Psi_{Q+iP,\nu,\emptyset}, \Psi_{Q+iP,\tilde{\nu},\emptyset})$, here ${\boldsymbol{\tilde\zeta}}=(\sqrt{q}e^{i\theta},\frac{1}{\sqrt{q}}e^{-i\theta})$ means both boundaries of the annulus are incoming. When $\psi^{-1}(s)$ on the boundary, we should understand the notation "weight $\beta$" as an boundary insertion $V_{\frac{\beta}{2}}(\psi^{-1}(s))$, and  when $\psi^{-1}(s)$ in the bulk, we should understand the notation "weight $\beta$" as an bulk insertion $V_{\beta}(\psi^{-1}(s))$.
\begin{align*}
    &\quad\quad\frac{1}{\pi}\mathcal{A}^{D}_{g_{\mathbb{A}},\mathbb{A}^{\frac{1}{\sqrt{q}}}_{\sqrt{q}},\psi^{-1}(s),\beta, \psi^{-1}({\bf   s}^{\rm ar}), \boldsymbol{\beta}^{\rm ar},{\boldsymbol{\tilde\zeta}}}(\Psi_{\alpha,\nu,\emptyset}, \Psi_{\alpha,\tilde{\nu},\emptyset})=\frac{1}{\pi}\mathcal{A}^{D}_{\psi_*g_{\mathbb{A}},D\mathcal{S}, s,\beta, {\bf   s}^{\rm ar}, \boldsymbol{\beta}^{\rm ar},\psi_1(\zeta_\D),\psi_2(\zeta_\D)}(\Psi_{\alpha,\nu,\emptyset}, \Psi_{\alpha,\tilde{\nu},\emptyset})\\
     &=( Z(g)Z_{\D,g_\D}^{-1})^2 Z(\hat{\C},\hat g)\lim_{t\to\infty} 
  \frac{1}{(2\pi i)^{s(\nu)+s(\tilde{\nu})}}  \\
  &\quad\times\oint_{|\mathbf{u}|=\boldsymbol{\delta}_t}\oint_{|\mathbf{v}|=\tilde{\boldsymbol{\delta}}_t}  \mathbf{u}^{1-\boldsymbol{\nu}}\mathbf{v}^{1-\tilde{\boldsymbol{\nu}}} \langle  \psi_1^\ast T^{\hat{\C}}_{g_{\rm dozz}}(\mathbf{u}){\psi_2^\ast  T^{\hat{\C}}_{g_{\rm dozz}}}(\mathbf{v})V_{\alpha,g_{\rm dozz}}(z)V_{\alpha,g_{\rm dozz}}(\bar{z})V_{\beta,g_{\rm dozz}}(s)\prod_{j=1}^mV_{\beta^{\rm ar}_j,g_{\rm dozz}}(s^{\rm ar}_j)  
\rangle_{\hat \C,g_{\rm dozz},U_t}\dd  \mathbf{u}\dd  \mathbf{v}   \\
  &=( Z(g)Z_{\D,g_\D}^{-1})^2Z(\hat{\C},\hat g)g_{dozz}(z)^{-2\Delta_{\alpha}}g_{dozz}(s)^{-\Delta_{\beta}}\prod_{j=1}^m g_{dozz}(s_j^{\rm ar})^{-\Delta_{\beta_j^{\rm ar}}} \\
  &\quad\times \Big(\caD^{\hat{\C}}_{\nu,\tilde{\nu},\alpha,z,\alpha,\bar{z},\beta,s,\boldsymbol{\beta}^{\rm ar},{\bf s}^{\rm ar}} \langle V_{\alpha,g_{\rm dozz}}(z)V_{\alpha,g_{\rm dozz}}(\bar{z})V_{\beta,g_{\rm dozz}}(s)\prod_{j=1}^mV_{\beta^{\rm ar}_j,g_{\rm dozz}}(s^{\rm ar}_j)   \rangle_{ \hat \C,{g_{\rm dozz}}}\Big)
\end{align*}
here $\hat{g}=\psi_*g_{\A}\#\psi_{1*}g\#\psi_{2*}g$ and
\begin{equation}
    Z(\hat{\C},\hat g):=e^{-c_LS_L^0(\hat{\C},g_{dozz},\hat{\omega})-2\Delta_{\alpha}\hat{\omega}(z)-\Delta_{\beta}\hat{\omega}(s)-\sum_j\Delta_{\beta^{\rm ar}_j}\hat{\omega}(s^{\rm ar}_j)}
\end{equation}
we will take $z=i$ in the end of the proof. 
\begin{proposition}
The above identities holds for $\beta^{\rm ar}=0$, $\alpha=Q+iP$ and $\beta\in (0,Q)$.
\end{proposition}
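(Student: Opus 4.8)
The plan is to reach the stated identities at $\alpha=Q+iP$ with a single artificial insertion by analytic continuation in $\alpha$ followed by a limiting argument in the artificial boundary weights, starting from the regime in which the two displayed equalities have already been derived. That regime is $\alpha\in\R$ with $\alpha<a\wedge(Q-\gamma)$ (needed for the probabilistic representation of the eigenstates, Lemma \ref{TTLemma} and Proposition \ref{witht}) together with a number $m$ of artificial insertions $(\boldsymbol\beta^{\rm ar},{\bf s}^{\rm ar})$ large enough that the Seiberg bounds hold at small real $\alpha$. The decisive observation is that these bounds improve by $Q$ on the half-plane (respectively by $2Q$ in the doubled sphere picture, where the bulk weight appears at both $z$ and $\bar z$) once $\mathrm{Re}(\alpha)=Q$: on the spectral line the conditions read $\tfrac\beta2+\sum_i\tfrac{\beta^{\rm ar}_i}2>0$ and $\beta+\sum_i\beta^{\rm ar}_i>0$, which are met already with $m=1$ and $\beta,\beta^{\rm ar}_1\in(0,Q)$. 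Thus one cannot set $m=1$ at small real $\alpha$, but one can after transporting the formula to $\alpha=Q+iP$.

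First I would fix the Young diagrams $\nu,\tilde\nu$, set $\ell=|\nu|+|\tilde\nu|$, keep $m$ large, and view each side of the two identities as a function of $\alpha$. The left-hand sides are the pairings of the fixed amplitudes $\mathcal{A}^m_{g_\A,\A^1_{\sqrt q}}$ and $\mathcal{A}^D_{g_\A,\A^{1/\sqrt q}_{\sqrt q}}$ against $\Psi_{\alpha,\nu,\tilde\nu}$ (respectively the pair $\Psi_{\alpha,\nu,\emptyset},\mathcal{O}\Psi_{\alpha,\tilde\nu,\emptyset}$); by Proposition \ref{defprop:desc} the map $\alpha\mapsto\Psi_{\alpha,\nu,\tilde\nu}$ is holomorphic with values in $e^{-\beta c_-}L^2$ on $W_\ell$, and by Proposition \ref{estimateamplitude} the amplitudes act as continuous linear functionals on the relevant weighted space, so these pairings are holomorphic in $\alpha\in W_\ell$. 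The right-hand sides are the differential operators $\caD^{\H}$ and $\caD^{\hat\C}$ applied to the half-plane and sphere correlators, times the Weyl prefactors $Z(\H,\hat g)$, $Z(\hat\C,\hat g)$; these are holomorphic in $\alpha$ because the correlators admit analytic continuation in the bulk weight and because $\caD$ depends on $\alpha$ only polynomially through $\Delta_\alpha$. Proposition \ref{eigenana} places the half-line $(-\infty,a]$ and the line $Q+i\R$ in one connected component of $W_\ell$, so the identity theorem upgrades the equalities from the real interval to that whole component, in particular to $\alpha=Q+iP$. Proposition \ref{cardydoublingproperty} is what guarantees that the full-plane identity is governed by the same differential operators as the half-plane one, so the continuation is consistent across both.

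Next I would collapse the artificial insertions at the spectral line. Using the holomorphic extension and continuity in $(\beta,\boldsymbol\beta^{\rm ar})$ supplied by Proposition \ref{holomorphic}, both sides extend continuously as $(\beta^{\rm ar}_2,\dots,\beta^{\rm ar}_m)\to 0$, which suppresses those vertex operators and, after relabelling, leaves the single artificial insertion of the $m=1$ statement. Throughout this limit the amplitude must remain pairable with $\Psi_{Q+iP,\nu,\tilde\nu}\in\bigcap_{\epsilon>0}e^{-\epsilon c_-}L^2$, i.e.\ it must lie in $e^{\epsilon c_-}L^2$ for some $\epsilon>0$; by Proposition \ref{estimateamplitude} this is exactly the condition $s=\tfrac\beta2+\sum_i\tfrac{\beta^{\rm ar}_i}2>0$, which persists with $\beta,\beta^{\rm ar}_1>0$ as the remaining weights vanish. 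Finiteness of the sphere and half-plane correlators in the $\boldsymbol\beta^{\rm ar}\to0$ limit is precisely what the $\mathcal{E}_\mu$ and $\mathcal{E}_{\mu_\partial}$ estimates in the proof of Proposition \ref{holomorphic} provide, so the right-hand sides converge as well.

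The main obstacle is the interchange of limits. The representations above hold only after the $t\to\infty$ limit in the nested contour integrals of the Ward representation (Propositions \ref{witht} and \ref{withtbis}), and one must commute this $t$-limit both with the analytic continuation in $\alpha$ and with the $\boldsymbol\beta^{\rm ar}\to0$ limit. This requires the remainder bounds $\epsilon_t,\epsilon'_t\to0$ to be locally uniform in $(\alpha,\boldsymbol\beta^{\rm ar})$ along the continuation path, and simultaneously the dominated-convergence control of Proposition \ref{estimateamplitude} to hold uniformly near the borderline $\mathrm{Re}(\alpha)=Q$, where the integrability exponent $\epsilon$ degenerates to $0$. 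Assembling these uniform estimates, rather than the Ward algebra itself, is where the real difficulty lies; once they are in place, holomorphy in $\alpha$ and continuity in $(\beta,\boldsymbol\beta^{\rm ar})$ combine to deliver the identities at $m=1$, $\alpha=Q+iP$.
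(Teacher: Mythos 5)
Your strategy is essentially the paper's: establish the identity for small real $\alpha$ with enough artificial insertions so that the Seiberg bound holds, then exploit holomorphy of $\alpha\mapsto\Psi_{\alpha,\nu,\tilde\nu}$ on $W_\ell$, holomorphy of the amplitude in the boundary weights (Proposition \ref{holomorphic}), and the key fact that the pairing condition relaxes to $\tfrac{\beta}{2}+\sum_i\tfrac{\beta_i^{\rm ar}}{2}>0$ once $\Re(\alpha)=Q$, to continue to the spectrum line and kill the artificial weights. Two remarks. First, the paper performs a single joint analytic continuation in $(\alpha,\beta,\boldsymbol{\beta}^{\rm ar})$ over the connected domain $\mathcal{G}$, which contains both the real starting region and a full neighbourhood of $\mathcal{S}_b$ (where $\alpha=Q+iP$ and the artificial weights vanish); your sequential version (continue in $\alpha$ first, then remove the artificial weights by continuity) amounts to the same thing, but the paper additionally tests \eqref{final1} against $f\in C^\infty_0(\mathcal{E})$ in the position variables, since the Ward operators act only distributionally in ${\bf s}^{\rm ar}$ — you should make that step explicit rather than treating the identity pointwise. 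Second, the ``main obstacle'' you single out — commuting the $t\to\infty$ limit of the nested contour integrals with the continuation and with $\boldsymbol{\beta}^{\rm ar}\to0$ — is not actually present in the argument: \eqref{final1} is an equality between two $t$-independent quantities, each of which is shown to be holomorphic on the larger domain by independent means (Propositions \ref{defprop:desc} and \ref{holomorphic} for the amplitude pairing, analyticity of the correlation functions for the right-hand side), so the identity theorem is applied directly to the limit objects and no locally uniform-in-$t$ control of the remainders $\epsilon_t,\epsilon'_t$ along the continuation path is needed.
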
 
\begin{proof}
First we can decrease the value of $\bf a$ to replace the condition $\max\{\alpha,\beta,\frac{\beta}{2}\}<\bf a$ by  $\max\{\alpha,\beta\}<\bf a$. Note $\mathcal{E}=\{(z,s,s^{\rm ar})\in\C\times\R^{m+1})|\text{ all points are different}\}$ and test function $f(z,s,s^{\rm ar})\in \mathcal{C}^{\infty}_0(\mathcal{E})$
\begin{align}\label{final2}
 & \int_{\mathcal{E} }\sqrt{\pi} \caA_{\H\backslash\Omega,\hat g,z,\alpha,s,\beta,{\bf s}^{\rm ar},\boldsymbol{\beta}^{\rm ar}}\big(\Psi_{\alpha,\nu,\tilde{\nu}}\big) f(z,s,s^{\rm ar}) \\
 & = ( Z(g)Z_{\D,g_\D}^{-1})\int_{\mathcal{E}}\langle V_{\alpha,g}(z)V_{\frac{\beta}{2},g}( s)\prod_{i=1}^m V_{\frac{\beta^{\rm ar}_i}{2},g}( s^{\rm ar}_i) \rangle_{\H,g_{dozz}} \caD^{\H}_{\nu,\tilde{\nu},\alpha,z,\beta,s,\boldsymbol{\beta}^{\rm ar},{\bf s}^{\rm ar}} \Big( Z(\H,\hat g)
f(z,s,s^{\rm ar}) \Big)\nonumber
 \end{align}
 Since for $(\alpha,\beta)$ in the Seiberg bound, we have
  \begin{align}
 & \sqrt{\pi} \caA_{\H\backslash\Omega,\hat g,z,\alpha,s,\beta,{\bf s}^{\rm ar},\boldsymbol{\beta}^{\rm ar}}\big(\Psi_{\alpha}\big)\\
 =&  ( Z(g)Z_{\D,g_\D}^{-1})Z(\H,\hat g)
  \langle V_{\alpha,g}(z)V_{\frac{\beta}{2},g}( s)\prod_{i=1}^m V_{\frac{\beta_i^{\rm ar}}{2},g}( s^{\rm ar}_i) \rangle_{\H,g_{dozz}}\nonumber
 \end{align}
 
Then we use both $\alpha\to\Psi_{\alpha,\nu,\tilde{\nu}}$ and $\alpha\to\Psi_{\alpha,\emptyset,\emptyset}$ are holomorphic in $\alpha$ in a connected region ($\C$) containing
 $\alpha<\bf a$ and the spectrum line $Q+i\R$. By \eqref{holomorphic}, the mappings $(\alpha,\beta,\beta^{\rm ar})\to \textbf{RHS}/\textbf{LHS}$ are holomorphic in
 \begin{align}
     \mathcal{G}:=\{(\alpha,\beta,\beta^{\rm ar})\in \C^{m+2}|\alpha\in \C,\beta\in \mathcal{U},|\Re(\alpha)-Q|+\Re(\frac{\beta}{2})+\sum_{i=1}^m \Re(\frac{\beta_i^{\rm ar}}{2})>0\}
 \end{align}
 we can choose $m$ large enough so that this set contains a nonempty connected components which has a nonempty intersection with $(-\infty,a)^2\times\R^m$ and contains a neighbourhood of
 \begin{align}
    \mathcal{S}_b:=\{(\alpha,\beta,0,..,0)\in \C^{m+2}|\alpha=Q+iP,0<\beta<Q \} 
 \end{align}
So we finish the proof.
\end{proof}
Recall \eqref{DOZZ3point} and \eqref{DOZZ2point}, we have (here we abuse the notation in the LHS, the rigorous notation should be given by amplitude gluing)
\begin{align}
    &g_{dozz}(w_1)^{\Delta_{Q+iP}}g_{dozz}(w_2)^{\Delta_{\beta}}g_{dozz}(w_3)^{\Delta_{Q+iP}}\left\langle V_{Q+iP}(w_1)V_{\beta}(w_3) V_{Q-iP}(w_2)\right\rangle_{\hat{\C},g_{dozz}}\nonumber\\=&|w_1-w_2|^{2\Delta_{\beta}-4 \Delta_{Q+iP}}|w_1-w_3|^{-2 \Delta_{\beta}}|w_2-w_3|^{-2 \Delta_{\beta}}\Big(\frac{{\rm v}_{g_{dozz}}(\hat\C)}{{\det}'(\Delta^{\hat{C}}_{g_{dozz}})}\big)^{\frac{1}{2}}\frac{1}{2}C^{\rm DOZZ}_{\gamma,\mu}(Q+iP,\beta,Q-iP)\\
    &g_{dozz}(z)^{\Delta_{Q+iP}}g_{dozz}(s)^{\frac{\Delta_{\beta}}{2}}\left\langle V_{Q+iP}(z)V_{\frac{\beta}{2}}(s) \right\rangle_{\H,g_{dozz}}\nonumber\\=&(-i)^{\Delta_{\beta}-2 \Delta_{Q+iP}}(z-\bar{z})^{\Delta_{\beta}-2 \Delta_{Q+iP}}(z-s)^{- \Delta_{\beta}}(\bar z-s)^{- \Delta_{\beta}}\Big(\frac{{\rm v}_{g_{dozz}}(\H)}{{\det}'(\Delta^{\H}_{g_{dozz},N})}\big)^{\frac{1}{2}}G(Q+iP,\beta)
    \end{align}
Now we use \eqref{cardydoublingproperty}, the holomorphic part in the sphere case is  $(w_1-w_2)^{\Delta_{\beta}-2 \Delta_{Q+iP}}(w_1-w_3)^{- \Delta_{\beta}}(w_2-w_3)^{- \Delta_{\beta}}$, when we set $w_1=w_2=z=i$ and $w_3=s$, these two differential operators produce the same polynomial.
By symmetry, we have $Z(\hat{\C},\hat g)=(Z(\H,\hat g))^2$ and ${\rm v}_{g_{dozz}}(\hat{\C})=2{\rm v}_{g_{dozz}}(\H)=2\pi$.
From the diffeomorphism invariance of Dirichlet boundary amplitude \cite[Proposition 4.7]{GKRV21}, we know $$\mathcal{A}^{D}_{ g_{\mathbb{A}},\mathbb{A}^{\frac{1}{\sqrt{q}}}_{\sqrt{q}},\psi^{-1}(s),\beta, \psi^{-1}({\bf   s}^{\rm ar}), \boldsymbol{\beta}^{\rm ar},(\sqrt{q}e^{i\theta},\frac{1}{\sqrt{q}}e^{-i\theta})}(\tilde{\varphi})=\mathcal{A}^{D}_{g_{\mathbb{A}},\mathbb{A}^1_{q},\sqrt{q}\psi^{-1}(s),\beta, \sqrt{q}\psi^{-1}({\bf   s}^{\rm ar}), \boldsymbol{\beta}^{\rm ar},(qe^{i\theta},e^{-i\theta})}(\tilde{\varphi}),$$ then by \cite[Corollary 11.10]{GKRV21}, for plumbed annulus there exists a polynomial $w_\A$
\begin{align*}
    \mathcal{A}^{D}_{g_{\mathbb{A}},\mathbb{A}^{\frac{1}{\sqrt{q}}}_{\sqrt{q}},\psi^{-1}(s),\beta,{\boldsymbol{\tilde\zeta}}}(\Psi_{Q+iP,\nu,\emptyset}, \Psi_{Q+iP,\tilde{\nu},\emptyset})=&C_{\A}C_{\gamma,\mu}^{{\rm DOZZ}} (Q+iP,\beta,Q+iP) w_{\mathbb{A}}(\Delta_{\beta},\Delta_{Q+iP},\Delta_{Q+iP},
 \nu,\tilde{\nu} )\\
   \times&|q |^{-{c}_{\rm L}/12+2\Delta_{Q+ip}}  (\frac{\sqrt{q}}{\psi^{-1}(s)})^{|\nu|} (\sqrt{q}\psi^{-1}(s))^{|\tilde{\nu}|}\nonumber 
\end{align*}
for some universal constant $C_\A$ and $w_\A(\Delta_\beta,\emptyset,\emptyset)=1$,
and then we conclude there exists a constant $C_{\D}$ such that
$$\mathcal{A}^{m}_{g_{\mathbb{A}},\mathbb{A}^1_{\sqrt{q}},b,\beta,\sqrt{q}e^{i\theta}}(\Psi_{Q+iP,\nu,\tilde{\nu}})=C_\D G(Q+iP,\beta)w_{\mathbb{A}}(\Delta_{\beta},\Delta_{Q+iP},
 \nu,\tilde{\nu} )|q |^{-{c}_{\rm L}/24+\Delta_{Q+ip}}   (\frac{\sqrt{q}}{b})^{|\nu|} (\sqrt{q}b)^{|\tilde{\nu}|}$$
 more precisely, we have
 \begin{equation}\label{definition CD}
     C_\D=\sqrt{C_\A}\times\sqrt{\frac{{\det}'(\Delta^{\hat\C}_{g_{dozz}})^{\frac{1}{2}}}{{\det}'(\Delta^{\H}_{g_{dozz},N})}}\times\frac{2^{\frac{1}{4}}}{\pi^{\frac{3}{4}}}
 \end{equation}

\begin{proposition}{\bf  (Disk descendants formula)} \\
For $b\in\partial^{out}\mathbb{A}_q$ and above constant $C_{\D}$, we have the following disk descendants formula,
\begin{align}\label{diskdescendant}
    \quad\quad\mathcal{A}^{m}_{g_{\mathbb{A}},\mathbb{A}^1_{\sqrt{q}},b,\beta,\sqrt{q}e^{i\theta}}(\Psi_{Q+iP,\nu,\tilde{\nu}})=C_{\D}G(Q+iP,\beta)w_{\mathbb{A}}(\Delta_{\beta},\Delta_{Q+iP},
 \nu,\tilde{\nu} )|q |^{-{c}_{\rm L}/24+\Delta_{Q+ip}}  (\frac{\sqrt{q}}{b})^{|\nu|} (\sqrt{q}b)^{|\tilde{\nu}|}
\end{align}
 For convenient usage, we use the orthogonal eigenfunctions 
    \begin{align*}
        H_{Q+i P, \nu, \tilde{\nu}}=\sum_{|\nu^{\prime}|=|\nu|,|\tilde{\nu}'|=|\tilde{\nu}|} F_{Q+i P}^{-1 / 2}\left(\nu, \nu^{\prime}\right) F_{Q+i P}^{-1 / 2}\left(\tilde{\nu}, \tilde{\nu}'\right)\Psi_{Q+i p, \nu^{\prime}, \tilde{\nu}'}  
    \end{align*}
and define the coefficient $\mathcal{W}_{\mathbb{A}}(\Delta_{\beta},\Delta_{Q+iP},
 \nu,\tilde{\nu} )$ by absorbing the Schapovalov forms within $w_{\mathbb{A}}$, then
\begin{align*}
    \quad\quad\mathcal{A}^{m}_{g_{\mathbb{A}},\mathbb{A}^1_{\sqrt{q}},b,\beta}(H_{Q+iP,\nu,\tilde{\nu}})=C_{\D}G(Q+iP,\beta)\mathcal{W}_{\mathbb{A}}(\Delta_{\beta},\Delta_{Q+iP},
 \nu,\tilde{\nu} )|q |^{-{c}_{\rm L}/24+\Delta_{Q+ip}}  (\frac{\sqrt{q}}{b})^{|\nu|} (\sqrt{q}b)^{|\tilde{\nu}|}
\end{align*}
\end{proposition}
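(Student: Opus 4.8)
The plan is to extract the formula from the squared identity already assembled above,
\[
\Big(\frac{\mathcal{A}^{m}_{g_{\mathbb{A}},\mathbb{A}^1_{\sqrt{q}},\psi^{-1}(s),\beta}(\Psi_{Q+iP,\nu,\tilde{\nu}})}{G(Q+iP,\beta)\,{\det}'(\Delta^{\H}_{g_{dozz},N})^{-1/2}}\Big)^2
=\frac{\mathcal{A}^{D}_{g_{\mathbb{A}},\mathbb{A}^{1/\sqrt{q}}_{\sqrt{q}},\psi^{-1}(s),\beta}(\Psi_{Q+iP,\nu,\nu},\mathcal{O}\Psi_{Q+iP,\tilde{\nu},\tilde{\nu}})}{C^{\rm DOZZ}(Q+iP,\beta,Q-iP)\,{\det}'(\Delta^{\hat\C}_{g_{dozz}})^{-1/2}},
\]
by evaluating the Dirichlet amplitude on the right. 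First I would recall why this identity is legitimate at $\alpha=Q+iP$, $\beta\in(0,Q)$: it is obtained for small real $\alpha$ and auxiliary weights $\boldsymbol\beta^{\rm ar}$ — where $\Psi_{\alpha,\nu,\tilde\nu}$ enjoys the probabilistic representation of Lemma \ref{TTLemma} and the Seiberg bound holds — then matched on the two sides by Cardy's doubling (Proposition \ref{cardydoublingproperty}, which makes $\caD^{\H}$ and $\caD^{\hat\C}$ act by the same differential operator on the common holomorphic position factor), and finally continued analytically in $(\alpha,\beta,\boldsymbol\beta^{\rm ar})$ and specialized to $\boldsymbol\beta^{\rm ar}=0$, $\alpha=Q+iP$ through Proposition \ref{holomorphic}.

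The decisive step is to read off $\mathcal{A}^{D}(\Psi_{Q+iP,\nu,\nu},\mathcal{O}\Psi_{Q+iP,\tilde\nu,\tilde\nu})$. On the doubled (sphere) picture the two bulk insertions sit at $i=\psi_1(0)$ and $-i=\psi_2(0)$, with the holomorphic descendant data $\nu$ attached to $i$ and $\tilde\nu$ to $-i$; because the sphere three-point function factorizes as $C^{\rm DOZZ}$ times the modulus square of a single chiral block, the amplitude carrying the diagonal states $\Psi_{\nu,\nu}$ and $\Psi_{\tilde\nu,\tilde\nu}$ is a perfect square. Concretely, each chiral factor is exactly the plumbed-annulus block of \cite[Cor.~11.10]{GKRV21} (after the diffeomorphism $\mathbb{A}^{1/\sqrt q}_{\sqrt q}\to\mathbb{A}_q$ of \cite{GKRV21} and writing $b=\psi^{-1}(s)$), so that
\[
\mathcal{A}^{D}(\Psi_{Q+iP,\nu,\nu},\mathcal{O}\Psi_{Q+iP,\tilde\nu,\tilde\nu})
=\frac{\pi}{2^{1/2}e}\,C^{\rm DOZZ}(Q+iP,\beta,Q-iP)\,w_{\mathbb{A}}(\Delta_\beta,\Delta_{Q+iP},\nu,\tilde\nu)^2\,|q|^{-c_{\rm L}/12+2\Delta_{Q+iP}}\Big(\tfrac{\sqrt q}{b}\Big)^{2|\nu|}(\sqrt q\,b)^{2|\tilde\nu|}.
\]
Substituting this into the squared identity, the factor $C^{\rm DOZZ}$ cancels, and the determinants combine through ${\det}'(\Delta^{\H}_{g_{dozz},N})^2=4\,{\det}'(\Delta^{\hat\C}_{g_{dozz}})$ — the constant $4$ being the regularized determinant of the Dirichlet–Neumann operator by \cite[Thm.~1.2]{GG} — so that ${\det}'(\Delta^{\H}_{N})^{-1/2}{\det}'(\Delta^{\hat\C})^{1/4}=2^{-1/2}$. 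Taking the square root then returns single descendant powers, halves the exponent to $|q|^{-c_{\rm L}/24+\Delta_{Q+iP}}$, and recombines the scalar constants into $2^{-1/2}\sqrt{\pi/(2^{1/2}e)}=\sqrt{\pi/(2^{3/2}e)}$, giving the first displayed formula of Proposition \ref{diskdescendant}. The branch of the square root is fixed by continuity from the real-$\alpha$ regime (equivalently by matching the leading $q\to0$, $\nu=\tilde\nu=\emptyset$ term, evaluated directly).

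For the $H$-reformulation I would use linearity of $\mathcal{A}^{m}_{\ldots}$ together with $H_{Q+iP,\nu,\tilde\nu}=\sum_{|\nu'|=|\nu|,\,|\tilde\nu'|=|\tilde\nu|}F^{-1/2}_{Q+iP}(\nu,\nu')F^{-1/2}_{Q+iP}(\tilde\nu,\tilde\nu')\,\Psi_{Q+iP,\nu',\tilde\nu'}$: since all $\nu'$ in the sum have $|\nu'|=|\nu|$ and all $\tilde\nu'$ have $|\tilde\nu'|=|\tilde\nu|$, the common $q$- and $b$-powers factor out, and setting $\mathcal{W}_{\mathbb{A}}(\Delta_\beta,\Delta_{Q+iP},\nu,\tilde\nu):=\sum F^{-1/2}_{Q+iP}(\nu,\nu')F^{-1/2}_{Q+iP}(\tilde\nu,\tilde\nu')\,w_{\mathbb{A}}(\Delta_\beta,\Delta_{Q+iP},\nu',\tilde\nu')$ absorbs the Schapovalov form and yields the second formula. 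I expect the main obstacle to be precisely the perfect-square (holomorphic factorization) step: one must verify, through the bookkeeping of the Virasoro descendant labels under doubling, that the diagonal Dirichlet amplitude genuinely equals $w_{\mathbb{A}}(\nu,\tilde\nu)^2$ with doubled powers — so that the term-by-term square root is meaningful and recovers $w_{\mathbb{A}}(\nu,\tilde\nu)$ rather than an ambiguous $\sqrt{w_{\mathbb{A}}}$ — with the determinant factor $4$ and the emergence of $2^{3/2}$ a secondary but essential consistency check.
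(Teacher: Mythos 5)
Your proposal follows the paper's own route essentially verbatim: the squared identity obtained from Ward's identity, Cardy's doubling trick and analytic continuation in $(\alpha,\beta,\boldsymbol\beta^{\rm ar})$, then evaluation of the diagonal Dirichlet amplitude via the plumbed-annulus formula of \cite[Corollary 11.10]{GKRV21}, the determinant relation ${\det}'(\Delta^{\H}_{g_{dozz},N})^2=4\,{\det}'(\Delta^{\hat\C}_{g_{dozz}})$ from \cite{GG}, a square root, and finally linearity for the $H$-basis. Your constant bookkeeping ($2^{-1/2}\sqrt{\pi/(2^{1/2}e)}=\sqrt{\pi/(2^{3/2}e)}$) and your reading of the diagonal Dirichlet amplitude as a perfect square, with $w_{\mathbb{A}}^2$ and doubled powers of $\sqrt q/b$ and $\sqrt q\,b$, are precisely what is required for the square root to return the stated single-$w_{\mathbb{A}}$ formula, and you correctly identify the branch choice and the holomorphic factorization of the doubled amplitude as the points needing care.
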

Now we show symmetric property of $w_\A$
\begin{proposition}\label{symmetric property}
For $\beta\in (0,Q)$, we have
$$w_{\mathbb{A}}(\Delta_{\beta},\Delta_{Q+iP},
 \nu,\tilde{\nu} )=\overline{w_{\mathbb{A}}(\Delta_{\beta},\Delta_{Q+iP},
 \tilde{\nu},\nu )}=w_{\mathbb{A}}(\Delta_{\beta},\Delta_{Q+iP},
 \tilde{\nu},\nu )$$
\end{proposition}
\begin{proof}
The first equality is easy, we take $b=1$ and take conjugation in both sides of \eqref{diskdescendant}. Then we use $C\Psi_{Q+iP,\nu,\tilde{\nu}}=\Psi_{Q-iP,\tilde{\nu},\nu}$. The second inequality follows from $w_\A$ is real-valued when $\beta\in (0,Q)$, see proposition \eqref{realvalue}.
\end{proof}
\subsection{Proof of theorem \texorpdfstring{\eqref{two point bootstrap}}{2point}}
Now we define the {\bf block amplitudes} and rewrite above proposition, here we only define two kinds of block amplitudes, for the general case, see \cite[Lemma 8.2]{GKRV21}
\begin{proposition}\label{blockamplitude}
    For $|b|=1$ and ${\boldsymbol{\zeta}}=(\sqrt{q}e^{i\theta},\frac{1}{\sqrt{q}}e^{i\theta})$, we define $\mathcal{B}^D_{g_{\mathbb{A}},\mathbb{A}^{\frac{1}{\sqrt{q}}}_{\sqrt{q}},b,\beta}(P_1,P_2,\nu,\tilde{\nu})$ by 
\begin{align}
    \frac{{A}^{D}_{g_{\mathbb{A}},\mathbb{A}^{\frac{1}{\sqrt{q}}}_{\sqrt{q}},b,\beta,{\boldsymbol{\zeta}}}(H_{Q+iP_1,\nu,\nu'},\mathbf{C} H_{Q+iP_2,\tilde{\nu},\tilde{\nu}'})}{C_{\A}\times C^{\rm DOZZ}(Q+iP_1,\beta,Q-iP_2)}=\mathcal{B}^D_{g_{\mathbb{A}},\mathbb{A}^{\frac{1}{\sqrt{q}}}_{\sqrt{q}},b,\beta}(P_1,P_2,\nu,\tilde{\nu})\overline{\mathcal{B}^D_{g_{\mathbb{A}},\mathbb{A}^{\frac{1}{\sqrt{q}}}_{\sqrt{q}},b,\beta}(P_1,P_2,\nu',\tilde{\nu}')}
\end{align}
and fix the rotation by

$\mathcal{B}^D_{g_{\mathbb{A}},\mathbb{A}^{\frac{1}{\sqrt{q}}}_{\sqrt{q}},b,\beta}(P_1,P_2,\nu,\tilde{\nu}):=|q|^{-c_{\mathrm{L}} / 24+ \Delta_{Q+i p_{1}}}|\sqrt{q}|^{ \Delta_{Q+i p_{2}}- \Delta_{Q+i p_{1}}}(\sqrt{q} / b)^{\left|\nu\right|}(\sqrt{q}b)^{\left|\tilde{\nu}\right|} \mathcal{W}_\A(\Delta_\beta,\Delta_{Q+iP_1},\Delta_{Q+iP_2},\nu,\tilde{\nu})$.

\noindent We also define $\mathcal{B}^{m}_{g_{\mathbb{A}},\mathbb{A}^1_{\sqrt{q}},b,\beta}(P,\nu,\tilde{\nu})$ by
\begin{align}
    \frac{\mathcal{A}^{m}_{g_{\mathbb{A}},\mathbb{A}_{\sqrt{q}},b,\beta}(H_{Q+iP,\nu,\tilde{\nu}})}{C_{\D}\times G(Q+iP,\beta)}=\mathcal{B}^{m}_{g_{\mathbb{A}},\mathbb{A}^1_{\sqrt{q}},b,\beta}(P,\nu,\tilde{\nu})
\end{align}
Then we have the following identity
\begin{align}
    \mathcal{B}^{m}_{g_{\mathbb{A}},\mathbb{A}^1_{\sqrt{q}},b,\beta}(P,\nu,\tilde{\nu})=\mathcal{B}^D_{g_{\mathbb{A}},\mathbb{A}^{\frac{1}{\sqrt{q}}}_{\sqrt{q}},b,\beta}(P,P,\nu,\tilde{\nu})
\end{align}
\end{proposition}


We cut along the middle circle $\{|z|=\sqrt{q}\}$ to separate this annulus into two annuli ${\mathbb{A}}^{\sqrt{q}}_q$ and ${\mathbb{A}}^1_{\sqrt{q}}$. When we insert $b_1$ with weight $\beta_1$ on $\partial^{out}\mathbb{A}_q=\{|z|=1\}$ and  $\frac{q}{b_2}$ with weight $\beta_2$ on $\partial^{in}\mathbb{A}_q=\{|z|=q\}$, both two boundaries are geodesic and we put the Neumann boundary condition on them. Then we have
\[\mathcal{A}^N_{g_{\mathbb{A}},\mathbb{A}^1_{q},b_1,\frac{q}{b_2},\beta_1,\beta_2}=\left\langle V_{\frac{\beta_1}{2}}(b_1)V_{\frac{\beta_2}{2}}(\frac{q}{b_2})\right\rangle^{\mathbb{A}^1_{q}}_{\gamma,\mu, \mu_\partial}\]
By the gluing formula for the amplitudes with Neumann boundary \eqref{gluingtypeI}
$$\mathcal{A}^N_{g_{\mathbb{A}},\mathbb{A}^1_{q},b_1,\frac{q}{b_2},\beta_1,\beta_2}=\sqrt{\pi}\int  \mathcal{A}^m_{g_{\mathbb{A}},{\mathbb{A}}^1_{\sqrt{q}},b_1, \beta_1,\sqrt{q}e^{i\theta}}(\tilde{\bm{\varphi}})\times \mathcal{A}^m_{g_{\mathbb{A}},{\mathbb{A}}^{\sqrt{q}}_q,\frac{q}{b_2}, \beta_2,\sqrt{q}e^{i\theta} }(\tilde{\bm{\varphi}})d\mu_0(\tilde{\bm{\varphi}})$$
Now we apply the spectrum resolution \eqref{holomorphicpsi2}, the right side equals
$$\frac{1}{2\sqrt{\pi}}\sum_{\nu, \tilde\nu}\int_{\R_+} \mathcal{A}^m_{g_{\mathbb{A}},{\mathbb{A}}^1_{\sqrt{q}},b_2, \beta_2,\sqrt{q}e^{i\theta}}(\mathbf{O}\mathbf{C}H_{Q+ip,\nu,\tilde\nu})\times \mathcal{A}^m_{g_{\mathbb{A}},{\mathbb{A}}^1_{\sqrt{q}},b_1, \beta_1,\sqrt{q}e^{i\theta} }(H_{Q+iP,\nu,\tilde\nu}) \dd{P} $$\\
where $\mathbf{O}$ is the orientation reserving operator and $\mathbf{C}$ is the conjugation operator, $\mathbf{O} \mathbf{C}H_{Q+iP,\nu,\tilde\nu}=H_{Q-iP,\nu,\tilde\nu}$. 
By proposition \ref{diskdescendant}, we get the following formula
\begin{align}
    \frac{C_{\D}^2}{2\sqrt{\pi}} q^{-\frac{1}{12}}\int_{\R^{+}}G(Q+iP,\beta_1)G(Q-iP,\beta_2)q^{\frac{P^2}{2}}\mathcal{F}^{\A}(\Delta_{\beta_1},\Delta_{\beta_2},\Delta_{Q+iP},q,b_1,b_2)dP
\end{align}
where 
\begin{align}\label{2pointblock}
    \mathcal{F}^{\A}(\Delta_{\beta_1},\Delta_{\beta_2},\Delta_{Q+iP},q,b_1,b_2)=\sum^{\infty}_{n,m=0}\Big(\sum_{|\nu|=n,|\tilde{\nu}|=m}\mathcal{W}_{\mathbb{A}}(\Delta_{\beta_1},\Delta_{Q+iP},
 \nu,\tilde{\nu} )\mathcal{W}_{\mathbb{A}}(\Delta_{\beta_2},\Delta_{Q+iP},
\nu, \tilde{\nu} )\Big)(\frac{q}{b_1b_2})^n(qb_1b_2)^m
\end{align}
To finish the proof, we need the following statement about convergence of conformal block. Actually, we prove it is absolutely convergent
\begin{proposition}\label{as convergent}
For $\beta_1\in(0,Q)$ and $\beta_2\in(0,Q)$, we have
    for almost all $P\in\R^{+}$, the conformal block $\mathcal{F}^{\A}(\Delta_{\beta_1},\Delta_{\beta_2},\Delta_{Q+iP},q,b_1,b_2)$ is convergent in $q\in (0,1)$.
\end{proposition}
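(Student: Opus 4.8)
The plan is to establish \emph{absolute} convergence of the double series \eqref{2pointblock} for every fixed $q\in(0,1)$, with $|b_1|=|b_2|=1$, outside a single $P$-null set. Since $|b_1|=|b_2|=1$ forces $|q/(b_1b_2)|=|qb_1b_2|=q$, this amounts to the finiteness of
\[
\sum_{n,m\ge0}\Big|\sum_{|\nu|=n,\,|\tilde\nu|=m}\mathcal{W}_{\mathbb{A}}(\Delta_{\beta_1},\Delta_{Q+iP},\nu,\tilde\nu)\,\mathcal{W}_{\mathbb{A}}(\Delta_{\beta_2},\Delta_{Q+iP},\nu,\tilde\nu)\Big|\,q^{n+m}.
\]
First I would apply Cauchy--Schwarz in the pair $(\nu,\tilde\nu)$ at each level $(n,m)$, followed by the elementary bound $\sqrt{ab}\le\tfrac12(a+b)$, to reduce the problem to showing, separately for $i=1,2$, that the \emph{diagonal one-insertion sum}
\[
S_i(P,q):=\sum_{\nu,\tilde\nu}\big|\mathcal{W}_{\mathbb{A}}(\Delta_{\beta_i},\Delta_{Q+iP},\nu,\tilde\nu)\big|^2\,q^{|\nu|+|\tilde\nu|}
\]
is finite for almost every $P$. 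This $S_i(P,q)$ is precisely the object controlled by the $L^2$-theory of a \emph{single} annulus amplitude.

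The second step is to identify $S_i(P,q)$ with the squared norm of the pairing of one amplitude against the eigenstates. By the disk descendants formula (Proposition~\ref{diskdescendant}), using $\Delta_{Q+iP}=\tfrac{Q^2+P^2}{4}\in\R$ and $|b|=1$, taking moduli squared and summing gives
\[
\sum_{\nu,\tilde\nu}\big|\mathcal{A}^{m}_{g_{\mathbb{A}},\mathbb{A}^1_{\sqrt{q}},b,\beta_i}(H_{Q+iP,\nu,\tilde\nu})\big|^2
=\tfrac{\pi}{2^{3/2}e}\,|G(Q+iP,\beta_i)|^2\,q^{-c_L/12+2\Delta_{Q+iP}}\,S_i(P,q),
\]
so that, provided $G(Q+iP,\beta_i)$ is finite and nonzero, finiteness of the left-hand side is equivalent to $S_i(P,q)<\infty$. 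The third step produces finiteness of that left-hand side for almost every $P$ from Plancherel: by Proposition~\ref{estimateamplitude} with $s=\beta_i/2>0$ (the annulus has $\chi=0$), the amplitude $u:=\mathcal{A}^{m}_{g_{\mathbb{A}},\mathbb{A}^1_{\sqrt{q}},b,\beta_i}$ lies in $e^{\epsilon c_-}L^2(\R\times\Omega_\T)$ for $0<\epsilon<\beta_i/2$, so I may feed $u=v$ into the spectral resolution of Proposition~\ref{holomorphicpsi2}. Rewritten in the orthonormal basis $H_{Q+ip,\nu,\tilde\nu}$ it reads
\[
\|u\|_{L^2}^2=\frac{1}{2\pi}\sum_{\nu,\tilde\nu}\int_0^\infty\big|\mathcal{A}^{m}_{g_{\mathbb{A}},\mathbb{A}^1_{\sqrt{q}},b,\beta_i}(H_{Q+ip,\nu,\tilde\nu})\big|^2\,dp<\infty.
\]
As the integrand is nonnegative, Tonelli forces $\sum_{\nu,\tilde\nu}|\mathcal{A}^{m}(H_{Q+ip,\nu,\tilde\nu})|^2<\infty$ for a.e.\ $p$; combined with the identity above (and with the fact that $G(Q+ip,\beta_i)$ is finite and nonvanishing off a $p$-null set) this yields $S_i(P,q)<\infty$ for a.e.\ $P$.

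The final step removes the $q$-dependence of the exceptional set. The null set obtained above depends on the modulus $\sqrt q$, but the coefficients $\mathcal{W}_{\mathbb{A}}$ are $q$-independent and $S_i(P,q)$ is a power series in $q$ with nonnegative coefficients, so convergence at one value $q_0$ implies convergence for all $q\le q_0$. I would therefore fix a sequence $q_j\uparrow 1$, take the union over $j$ and over $i\in\{1,2\}$ of the corresponding null sets (still null), and obtain a single $P$-null set off which $S_i(P,q_j)<\infty$ for all $j$, hence $S_i(P,q)<\infty$ for every $q\in(0,1)$. Tracing back through the two Cauchy--Schwarz reductions gives absolute convergence of \eqref{2pointblock} on $q\in(0,1)$ for a.e.\ $P$ (and, via Corollary~\ref{w coefficient}, this simultaneously controls the torus coefficients on the diagonal). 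The step I expect to be the genuine obstacle is the control of $G(Q+iP,\beta_i)$ along the spectral line --- showing it is finite and nonvanishing for almost every $P$ --- for which I would invoke the robust bulk--boundary estimates developed elsewhere in this paper; by contrast, the applicability of Plancherel to the weighted amplitude and the Tonelli interchange are routine.
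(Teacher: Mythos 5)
Your proposal is correct and follows essentially the same route as the paper: both arguments rest on Proposition \ref{estimateamplitude} to place the single-insertion amplitude in a weighted $L^2$ space, then use the spectral resolution (the paper cites the Plancherel-type statement from \cite{GKRV21} where you derive it directly from Proposition \ref{holomorphicpsi2} with $u=v$ plus Tonelli) to get square-summability of the coefficient family for a.e.\ $P$, and finally Cauchy--Schwarz to combine the two boundary insertions. Your additional care about the nonvanishing of $G(Q+iP,\beta_i)$ and the $q$-independence of the exceptional set (via monotonicity of the nonnegative power series) fills in details the paper leaves implicit but does not change the argument.
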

\begin{proof}
By proposition \ref{estimateamplitude}, we set $u_1=\mathcal{A}^{m}_{g_{\mathbb{A}},\mathbb{A}^1_{\sqrt{q}},b_1,\beta_1}\in e^{\eps c_-}L^2(\R\times\Omega_{T}) $ and $u_2=\mathcal{A}^{m}_{g_{\mathbb{A}},\mathbb{A}^1_{\sqrt{q}},b_2,\beta_2}\in e^{\eps c_-}L^2(\R\times\Omega_{T}) $ for  $\eps\in(0,\min\{\beta_1,\beta_2\})$. Then we pair it with $H_{Q+iP,\nu,\tilde{\nu}}$, by \cite[(6.28)]{GKRV21}, we know
\begin{align}
    \langle u_1\mid u_2\rangle_{L^2}=\frac{1}{2\pi}\int_{\R_+} \sum_{\nu, \tilde\nu} \langle u_1\mid H_{Q+iP,\nu,\tilde{\nu}}\rangle_{L^2} \langle H_{Q+iP,\nu,\tilde{\nu}}\mid u_2\rangle_{L^2} \dd P
\end{align}
Now we claim $ \sum_{\nu, \tilde\nu} |\langle u_1\mid H_{Q+iP,\nu,\tilde{\nu}}\rangle_{L^2} \langle H_{Q+iP,\nu,\tilde{\nu}}\mid u_2\rangle_{L^2}|\in L^1(\R_+,dP)$ by Cauchy-Schwartz inequality,
\begin{align}
    |\langle u_1\mid {H_{Q+iP,\nu,\tilde{\nu}}}\rangle_{L^2} \langle H_{Q+iP,\nu,\tilde{\nu}}\mid u_2\rangle_{L^2}|\leq \frac{1}{2}\Big(|\langle u_1\mid {H_{Q+iP,\nu,\tilde{\nu}}}\rangle_{L^2}|^2+| \langle H_{Q+iP,\nu,\tilde{\nu}}\mid u_2\rangle_{L^2}|^2\Big)
\end{align}
when take the sum over Young diagrams $\nu,\tilde{\nu}$ and take the integral over $P$, the RHS would be dominated by $||u_1||_{L^2}+||u_2||_{L^2}\leq C$.  we have for almost all $P\in \R_+$, $ \sum_{\nu, \tilde\nu} \langle u_1\mid H_{Q+iP,\nu,\tilde{\nu}}\rangle_{L^2} \langle H_{Q+iP,\nu,\tilde{\nu}}\mid u_2\rangle_{L^2}$ is absolutely convergent.

Now we already proved $G(Q+iP,\beta_1)G(Q-iP,\beta_2)q^{\frac{P^2}{2}}\mathcal{F}^{\A}(\Delta_{\beta_1},\Delta_{\beta_2},\Delta_{Q+iP},q,b_1,b_2)$ is almost surely absolutely convergent. To derive the absolute convergence of $\mathcal{F}^{\A}(\Delta_{\beta_1},\Delta_{\beta_2},\Delta_{Q+iP},q,b_1,b_2)$, we use the zero set of $P\to G(Q+iP,\beta)$ has measure $0$ for fixed $\beta$. This is a general fact for real analytic  function and we know $G(Q+iP,\beta)$ is a real analytic function of $P$, see \cite[Theorem 6.3.3]{KP}.
We will prove this conformal block is actually convergent for all $P\in\R_+$ by proving it's continuous in $P$, see proposition \ref{continuous2point}.
\end{proof}
Now we can identify the annulus 2-point block with the special case of torus 2-point block in the regime $\beta_1, \beta_2\in (0,Q)$, i.e. \eqref{annulus=torus}, by using formula \eqref{symmetric property}.
\section{Continuity of two-point conformal block  }
\subsection{Traces, integral kernels and Hilbert-Schmidt norms} 
We recall in this section basic facts on Hilbert-Schmidt operators and traces, we refer to \cite[Chapter 4]{Gohberg-Goldberg-Krupnik}.
Let $(M_1,\mu_1)$ and $(M_2,\mu_2)$ be two separable measured spaces and let $H_i:=L^2(M_i,\mu_i)$.
Let $ \mathbf{K}: H_1\to H_2$ be a bounded operator. We say that $ \mathbf{K}$ is Hilbert-Schmidt if $\mathbf{K}^*\mathbf{K}:H_1\to H_1$ is compact and the eigenvalues $(\lambda_j^2)_{j\in\nn}$ of $\mathbf{K}^*\mathbf{K}$ (with $\la_j\geq 0$) are in $\ell^1(\nn)$. Denote by $|{\bf K}|:=\sqrt{\mathbf{K}^*\mathbf{K}}$.
 If $H_1=H_2$, we say that ${\bf K}:H_1\to H_1$ is trace class if $\sum_j\la_j<\infty$, and its trace is then defined by the converging series 
\[{\rm Tr}({\bf K})=\sum_{j}\cjg {\bf K}e_j,e_j\cjd_{L^2}\]
where $(e_j)_j$ is an orthonormal basis of $H_1$.
The space $\mc{L}_2(H_1,H_2)$ of Hilbert-Schmidt operators (which will be denoted $\mc{L}_2(H_1)$ when $M_1=M_2$) is a Hilbert space and the space $\mc{L}_1(H_1)$ of trace class operators is a Banach space, when equipped with the norms
\[\|\mathbf{K}\|_{{\rm HS}}^2:=\sum_{j} \lambda_j^2={\rm Tr}({\bf K}^*{\bf K})=\sum_{j}\|\mathbf{K}e_j\|^2_{L^2}, \quad \|\mathbf{K}\|_{{\rm Tr}}:=\sum_{j=1}^\infty \la_j={\rm Tr}(|{\bf K}|).\] 
Notice that, if $(e'_i)_i$ is an orthonormal basis of $H_2$, then  $\|\mathbf{K}\|_{{\rm HS}}^2=\sum_{i,j}|\cjg {\bf K}e_j,e_i'\cjd|^2$, thus ${\bf K}^*\in \mc{L}_2(H_2,H_1)$ 
and $\|\mathbf{K}\|_{{\rm HS}}=\|\mathbf{K}^*\|_{{\rm HS}}$. We also have $\mc{L}_1(H_1)\subset \mc{L}_2(H_1)\subset \mc{L}(H_1)$ (with  $\mc{L}(H_1)$ the space of continuous operators from $H_1$ into itself) and 
\begin{equation}\label{orderschatten}
\|\,  |{\bf K}|\, \|_{\mc{L}(H_1)}\leq \|{\bf K}\|_{\rm HS}\leq \|{\bf K}\|_{\rm Tr}.
\end{equation}
The trace of $K$ is controlled by its trace class norm: 
\begin{equation}\label{boundtrace1}
|{\rm Tr}({\bf K})| \leq \|{\bf K}\|_{{\rm Tr}}.
\end{equation}
Here we also gether some basic notation about Young diagrams which will be used in the next subsection. The set $\mc{T}$ of Young diagrams  is   countable set  and can be partitioned as 
\begin{equation}\label{decofT} 
\mc{T}= \bigcup_{n=0}^\infty \mc{T}_n,\quad \mc{T}_n:=\{\nu \in \mc{T}\,| \, |\nu|=n\} \textrm{ if }n>0, \quad \mc{T}_0=\{0\}.
\end{equation}
Set $d_{n}:=|\mc{T}_n|$ its cardinality, we let $\cjg\cdot,\cdot\cjd_{d_n}$ be the canonical Hermitian product on $\C^{d_n}$ and we define 
\begin{equation}\label{defofHT} 
H_\mc{T}:=\bigoplus_{n=0}^\infty \C^{d_n} \textrm{ with Hermitian product }\cjg v,v'\cjd_{\mc{T}}:= \sum_{n=0}^\infty \cjg v_n,v'_n\cjd_{d_n}
\end{equation}
where $v=\sum_n v_n$ with $v_n\in \C^{d_n}$ and $v'=\sum_n v'_n$. 
The Hilbert space 
$(H_\mc{T},\cjg \cdot,\cdot\cjd_{\mc{T}})$ is isomorphic to space $L^2(\mc{T},\mu_{\mc{T}})$ where the measure $\mu_{\mc{T}}$ is the counting measure on the set $\mc{T}$, namely
\[  \int_{\mc{T}} f(\nu)d\mu_{\mc{T}}= \sum_{\nu\in \mc{T}}f(\nu), \quad    \forall f:\mc{T}\to \R^+.\]
\subsection{Proof of the theorem \texorpdfstring{\eqref{relation of one two} }{continuity}}
Before proof, we introduce some necessary terminology.
\noindent We define $ \mathcal{B}^D_{g_{\mathbb{A}},\mathbb{A}^{\frac{1}{\sqrt{q}}}_{\sqrt{q}},b,\beta}(P,P,\cdot,\cdot)$ or similarly $ \mathcal{B}^m_{g_{\mathbb{A}},\mathbb{A}^{1}_{\sqrt{q}},b,\beta}(P,\cdot,\cdot)$ as an operator on $L^2(\caT,\mu_{\caT})$ given by (first we define this for almost $P\in\R_+$ and later it becomes clear that this is true for all $\R_+$.) $$ (\mathcal{B}^D_{g_{\mathbb{A}},\mathbb{A}^{\frac{1}{\sqrt{q}}}_{\sqrt{q}},b,\beta}(P,P,\cdot,\cdot)f)(\nu)=\sum_{\nu'}\mathcal{B}^D_{g_{\mathbb{A}},\mathbb{A}^{\frac{1}{\sqrt{q}}}_{\sqrt{q}},b,\beta}(P,P,\nu,\nu')f(\nu').$$ An orthogonal basis of $L^2(\caT,\mu_{\caT})$ is $\{{\rm 1}_{\nu}(\cdot),\nu\in\caT\}.$
First we show 
\begin{proposition}
For $q\in (0,1)$ and $\beta\in [0,Q)$, 
$\mathcal{B}^D_{g_{\mathbb{A}},\mathbb{A}^{\frac{1}{\sqrt{q}}}_{\sqrt{q}},b,\beta}(P,P,\cdot,\cdot)$ is a positive, self-adjoint operator on $L^2(\caT,\mu_{\caT})$.
\end{proposition}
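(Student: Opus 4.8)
The plan is to extract both properties from the reflection (doubling) structure that is already built into $\mathcal{B}^D$. Recall from Proposition \ref{amplitude} that $\mathcal{B}^D_{g_{\mathbb{A}},\mathbb{A}^{\frac{1}{\sqrt q}}_{\sqrt q},b,\beta}(P,P,\cdot,\cdot)=\mathcal{B}^m_{g_{\mathbb{A}},\mathbb{A}^{1}_{\sqrt q},b,\beta}(P,\cdot,\cdot)$, and that the Dirichlet annulus $\mathbb{A}^{\frac{1}{\sqrt q}}_{\sqrt q}$ is precisely the reflection double of the mixed half-annulus $\mathbb{A}^{1}_{\sqrt q}$ across the Neumann circle $\{|z|=1\}$ carrying the insertion (the same doubling used in Section \ref{Cardytrick}). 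By the disk descendant formula (Proposition \ref{diskdescendant}) the kernel $K(\nu,\nu'):=\mathcal{B}^D(P,P,\nu,\nu')$ has the explicit form $K(\nu,\nu')=c(P)\,\mathcal{W}_{\mathbb{A}}(\Delta_\beta,\Delta_{Q+iP},\nu,\nu')\,q^{(|\nu|+|\nu'|)/2}\,b^{-|\nu|}b^{|\nu'|}$, with $c(P)=\sqrt{\pi/(2^{3/2}e)}\,q^{-c_L/24+\Delta_{Q+iP}}>0$ since $\Delta_{Q+iP}=\tfrac{Q^2+P^2}{4}$ is real and $q\in(0,1)$. Because $|b|=1$, the phase $b^{-|\nu|}b^{|\nu'|}$ is removed by conjugating with the diagonal unitary $\nu\mapsto b^{|\nu|}$, the scalar $c(P)$ is dropped, and the positive diagonal congruence by $q^{|\cdot|/2}$ preserves both self-adjointness and positivity; these are the \emph{trivial factors}. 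The $q^{(|\nu|+|\nu'|)/2}$ decay moreover makes $K$ Hilbert--Schmidt on $L^2(\mathcal{T},\mu_{\mathcal{T}})$ for $q\in(0,1)$, so boundedness is not an issue and it remains to treat the symmetric core $\mathcal{W}_{\mathbb{A}}=\mathcal{W}_{\mathbb{T}}$ (Corollary \ref{w coefficient}).

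For self-adjointness I would use the antiholomorphic reflection $\sigma(z)=1/\bar z$, an orientation-reversing isometry of $(\mathbb{A}^{\frac{1}{\sqrt q}}_{\sqrt q},g_{\mathbb{A}})$ that exchanges the two Dirichlet boundaries and fixes $\{|z|=1\}$, with $\sigma(b)=b$ because $|b|=1$. Combining the diffeomorphism and Weyl covariance of the amplitude (Propositions \ref{Diff2} and \ref{Weyl2}) with the action of the reverting-orientation and conjugation operators on the eigenstates, $\mathbf{O}\mathbf{C}\,\Psi_{Q+iP,\nu,\nu'}=\Psi_{Q-iP,\nu,\nu'}$ (Proposition \ref{revert}), the symmetry $\sigma$ interchanges the inner and outer boundary data, hence swaps the two index slots $\nu\leftrightarrow\nu'$ while complex-conjugating. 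This yields $\mathcal{B}^D(P,P,\nu,\nu')=\overline{\mathcal{B}^D(P,P,\nu',\nu)}$, i.e.\ $K$ is a Hermitian kernel, and a bounded (Hilbert--Schmidt) Hermitian operator is self-adjoint.

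For positivity I would exploit that the kernel is a Gram-type matrix of the half-annulus amplitude: using $\mathcal{B}^m(P,\nu,\nu')=G(Q+iP,\beta)^{-1}\mathcal{A}^m_{g_{\mathbb{A}},\mathbb{A}^1_{\sqrt q},b,\beta}(H_{Q+iP,\nu,\nu'})$, for finitely supported $f:\mathcal{T}\to\mathbb{C}$ one has
\[
\langle Kf,f\rangle=\frac{1}{G(Q+iP,\beta)}\,\mathcal{A}^m_{g_{\mathbb{A}},\mathbb{A}^1_{\sqrt q},b,\beta}\Big(\textstyle\sum_{\nu,\nu'}\overline{f(\nu)}\,f(\nu')\,H_{Q+iP,\nu,\nu'}\Big).
\]
Because each descendant state factorizes into a holomorphic part built from $\mathbf{L}_{-\nu}$ (the variables $\varphi_n$) and an antiholomorphic part built from $\tilde{\mathbf{L}}_{-\nu'}$ (the $\bar\varphi_n$), cf.\ \eqref{psibasis}, the bracketed combination is of modulus-square type $|\Xi_f|^2$ for a fixed polynomial $\Xi_f$ in the $\varphi_n$ determined by $f$ and the holomorphic Virasoro action. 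Thus $\langle Kf,f\rangle$ is, equivalently, the reflection-positivity pairing $\mathcal{A}^D(\Phi_f,\Theta\Phi_f)$ of the doubled amplitude, an honest square of the half-annulus amplitude evaluated on $\Phi_f$. Matching this with the factorization formula of Proposition \ref{amplitude} and the positivity of $C^{\rm DOZZ}(Q+iP,\beta,Q-iP)$ for real $P$ and $\beta$ shows the leftover scalars are positive, whence $\langle Kf,f\rangle\ge 0$; density of finitely supported $f$ in $L^2(\mathcal{T},\mu_{\mathcal{T}})$ then gives $K\ge0$.

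I expect the main obstacle to be the rigorous bookkeeping in the positivity step: specifying the reflection operator $\Theta$ at the level of the probabilistic definition (the Weyl, Schwarzian and $\det'$ factors carried along $\sigma$, together with the normalizations by $C^{\rm DOZZ}$ and $\det'(\Delta_{g_{\mathrm{dozz}}})$ appearing in Section \ref{two point proof}) and verifying that the composite conjugation $\mathbf{O}\mathbf{C}$, which sends $Q+iP$ to $Q-iP$, turns $\mathcal{A}^D(\Phi_f,\Theta\Phi_f)$ into a genuine nonnegative square rather than merely a Hermitian form. By contrast, self-adjointness and the Hilbert--Schmidt bound are routine once the decay from Proposition \ref{diskdescendant} and the symmetry $\sigma$ are in hand.
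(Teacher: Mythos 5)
Your self-adjointness argument (reflection of the annulus combined with $\mathbf{O}\mathbf{C}$) is a plausible alternative to what the paper does, though note that Proposition \ref{Diff2} only covers orientation-preserving diffeomorphisms, so the antiholomorphic map $\sigma(z)=1/\bar z$ cannot be invoked directly and must be paired with a conjugation of the field. The real problem is the positivity step. Your reduction via Proposition \ref{diskdescendant} to the matrix $\mathcal{W}_{\mathbb{A}}=\mathcal{W}_{\mathbb{T}}$ buys nothing, because positive semi-definiteness of that matrix is exactly what has to be proved and is not visible from its Virasoro-algebraic definition. Your "modulus-square" claim is where the argument breaks: the chiral factorization $\Psi^0_{\alpha,\nu,\tilde\nu}=\mathbf{L}^0_{-\nu}\tilde{\mathbf{L}}^0_{-\tilde\nu}\Psi^0_\alpha$ holds only for the \emph{free} descendants; the interacting states $H_{Q+iP,\nu,\nu'}$ are defined through the intertwining limit \eqref{intert:desc} and do not factor into a holomorphic times an antiholomorphic polynomial, so $\sum_{\nu,\nu'}\overline{f(\nu)}f(\nu')H_{Q+iP,\nu,\nu'}$ is not of the form $|\Xi_f|^2\cdot(\text{something positive})$. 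Even granting such a factorization, the pairing $\mathcal{A}^m(|\Xi_f|^2\Psi_{Q+iP})$ is not manifestly nonnegative, since $\Psi_{Q+iP}\sim e^{iPc}$ is oscillatory and you then divide by the complex number $G(Q+iP,\beta)$.

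The paper's proof avoids all of this by cutting the Dirichlet annulus $\mathbb{A}^{1/\sqrt q}_{\sqrt q}$ along the unit circle \emph{carrying the insertion} and using the Markov property (after a perturbative extension of the gluing formula \eqref{gluingtypeI} to allow $b$ on the gluing circle):
\begin{align*}
\mathcal{A}^D_{g_{\mathbb{A}},\mathbb{A}^{\frac{1}{\sqrt{q}}}_{\sqrt{q}},b,\beta}(F,\mathcal{O}F)
= q^{\frac{c_L}{12}}\big\langle e^{\beta\tilde{\varphi}(b)}\,e^{\frac{1}{2}\ln(q)\mathbf{H}}F \,\big|\, e^{\frac{1}{2}\ln(q)\mathbf{H}}F\big\rangle_{L^2(\R\times\Omega_\T)}.
\end{align*}
This exhibits the operator as $A^*MA$ with $A=e^{\frac12\ln(q)\mathbf{H}}$ and $M$ multiplication by the positive function $e^{\beta\tilde\varphi(b)}$, so the quadratic form is nonnegative for \emph{arbitrary} $F$ (in particular for complex linear combinations of the $H_{Q+iP,\nu,\emptyset}$, after extending by density to the weighted spaces $e^{-\epsilon c_-}L^2$), and the same identity gives the symmetry $\mathcal{A}^D(F,\mathcal{O}G)=\mathcal{A}^D(G,\mathcal{O}F)$ used for self-adjointness. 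That single gluing-at-the-insertion identity is the missing idea in your proposal.
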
 
\begin{proof}
First we decompose the amplitude $\mathcal{A}^D_{g_{\mathbb{A}},\mathbb{A}^{\frac{1}{\sqrt{q}}}_{\sqrt{q}},b,\beta,\boldsymbol{\zeta}}(\tilde{\varphi}_1,\tilde{\varphi}_2)$ along the circle $\{|z|=1\}$,
Then by perturbation argument, we can extend the gluing formula \eqref{gluingtypeI} to $b\in\{|z|=1\}$ case (there is an insertion on the gluing circle) 
\begin{align}
    \mathcal{A}^D_{g_{\mathbb{A}},\mathbb{A}^{\frac{1}{\sqrt{q}}}_{\sqrt{q}},b,\beta,\boldsymbol{\zeta}}(\tilde{\varphi}_1,\tilde{\varphi}_2)_{\epsilon}=\frac{\epsilon^{\frac{\beta^2}{2}}}{\sqrt{2}\pi}\int \mathcal{A}^D_{g_{\mathbb{A}},\mathbb{A}^{\frac{1}{\sqrt{q}}}_{1}}(\tilde{\varphi}_1,\tilde{\varphi})e^{\beta\tilde{\varphi}_\epsilon(b)}\mathcal{A}^D_{g_{\mathbb{A}},\mathbb{A}^{1}_{\sqrt{q}}}(\tilde{\varphi},\tilde{\varphi}_2)d\mu_0(\tilde{\varphi})
\end{align}
Here $\tilde{\varphi}_\epsilon$ is the first $[\frac{1}{\epsilon}]$ terms of its Fourier expansion. Then it's easy to see $\mathcal{A}^D_{g_{\mathbb{A}},\mathbb{A}^{\frac{1}{\sqrt{q}}}_{\sqrt{q}},b,\beta}(\tilde{\varphi}_1,\tilde{\varphi}_2)$ converges to $\mathcal{A}^D_{g_{\mathbb{A}},\mathbb{A}^{\frac{1}{\sqrt{q}}}_{\sqrt{q}},b,\beta}(\tilde{\varphi}_1,\tilde{\varphi}_2)$ as a linear functional on $\big(e^{-\delta c_-}L^2(\R\times\Omega)\Big)^{{\otimes}^2}$ for $\delta\in (0,\beta)$.
by \cite[Proposition 6.1]{GKRV21}, we have for any $F\in L^2(\R\times\Omega_{\T})$
\begin{align}\label{symmetry}
    \mathcal{A}^D_{g_{\mathbb{A}},\mathbb{A}^{\frac{1}{\sqrt{q}}}_{\sqrt{q}},b,\beta}(F,\mathbf{C}F)=q^{\frac{c_L}{12}}\langle e^{\beta\tilde{\varphi}(b)}e^{\frac{1}{2}\ln(q) \mathbf{H}}F\mid e^{\frac{1}{2}\ln(q) \mathbf{H}}F\rangle_{\rm L^2(\R\times\Omega_{\T})}\geq 0
\end{align}
Recall that the semigroup $(e^{-t\mathbf{H}})_{t\geq 0}$ extends continuously to $e^{-\delta c_-}L^2(\R\times\Omega_\T)$ for $\delta\geq 0$ (see \cite[Lemma 6.5]{GKRV}). By density in weighted $L^2$-spaces this relation also holds for $F\in e^{-\epsilon c_-}L^2(\R\times\Omega_{\T})$ when $\epsilon\in [0,\beta)$. Then we take $F=H_{Q+iP,\nu,\emptyset}$ and deduce $ \mathcal{A}^D_{g_{\mathbb{A}},\mathbb{A}^{\frac{1}{\sqrt{q}}}_{\sqrt{q}},b,\beta}(H_{Q+iP,\nu,\emptyset},\mathbf{C}H_{Q+iP,\nu,\emptyset})\geq 0$. By the same reason, we know $\mathcal{A}^D_{g_{\mathbb{A}},\mathbb{A}^{\frac{1}{\sqrt{q}}}_{\sqrt{q}},b,\beta}(H_{Q+iP,\emptyset,\emptyset},\mathbf{C}H_{Q+iP,\emptyset,\emptyset})=\frac{\pi}{\sqrt{2}\pi}q^{-\frac{c_L}{12}+2\Delta_{Q+iP}}C^{\rm DOZZ}(Q+iP,\beta,Q-iP)\geq 0$. So we can deduce $C^{\rm DOZZ}(Q+iP,\beta,Q-iP)>0$ and $\mathcal{B}^D_{g_{\mathbb{A}},\mathbb{A}^{\frac{1}{\sqrt{q}}}_{\sqrt{q}},b,\beta}(P,P,\nu,\nu)\geq 0$.

By \eqref{symmetry}, we see $\mathcal{A}^D_{g_{\mathbb{A}},\mathbb{A}^{\frac{1}{\sqrt{q}}}_{\sqrt{q}},b,\beta}(F,\mathbf{C}G)=\overline{\mathcal{A}^D_{g_{\mathbb{A}},\mathbb{A}^{\frac{1}{\sqrt{q}}}_{\sqrt{q}},b,\beta}(G,\mathbf{C}F)}$, so we have $\mathcal{B}^D_{g_{\mathbb{A}},\mathbb{A}^{\frac{1}{\sqrt{q}}}_{\sqrt{q}},b,\beta}(P,P,\nu,\tilde{\nu})=\overline{\mathcal{B}^D_{g_{\mathbb{A}},\mathbb{A}^{\frac{1}{\sqrt{q}}}_{\sqrt{q}},b,\beta}(P,P,\tilde{\nu},\nu)}.$
\end{proof}
Now we calculate the operator $\caB^D_{g_{\mathbb{A}},\mathbb{A}^{\frac{1}{\sqrt{q}}}_{\sqrt{q}},b,\beta}(P,P,\cdot,\cdot)$ in the following norms:
\begin{align}
   &||\mathcal{B}^D_{g_{\mathbb{A}},\mathbb{A}^{\frac{1}{\sqrt{q}}}_{\sqrt{q}},b,\beta}(P,P,\cdot,\cdot)||_{\rm Tr}=\sum_{\nu}|\mathcal{B}^D_{g_{\mathbb{A}},\mathbb{A}^{\frac{1}{\sqrt{q}}}_{\sqrt{q}},b,\beta}(P,P,\nu,\nu)|=\sum_{\nu}\mathcal{B}^D_{g_{\mathbb{A}},\mathbb{A}^{\frac{1}{\sqrt{q}}}_{\sqrt{q}},b,\beta}(P,P,\nu,\nu)\label{hsnorm}\\
   &||\mathcal{B}^m_{g_{\mathbb{A}},\mathbb{A}^{\frac{1}{\sqrt{q}}}_{\sqrt{q}},b,\beta}(P,\cdot,\cdot)||^2_{\rm HS}=\sum_{\nu,\tilde{\nu}}|\mathcal{B}^m_{g_{\mathbb{A}},\mathbb{A}^{1}_{\sqrt{q}},b,\beta}(P,\nu,\tilde{\nu})|^2\\
   &\langle \mathcal{B}^m_{g_{\mathbb{A}},\mathbb{A}^{1}_{\sqrt{q}},b_1,\beta_1}(P,\cdot,\cdot)\mid  \overline{\mathcal{B}^m_{g_{\mathbb{A}},\mathbb{A}^{1}_{\sqrt{q}},b_2,\beta_2}}(P,\cdot,\cdot)\rangle_{\rm HS}=\sum_{\nu,\tilde{\nu}}\mathcal{B}^m_{g_{\mathbb{A}},\mathbb{A}^{1}_{\sqrt{q}},b_1,\beta_1}(P,\nu,\tilde{\nu})\mathcal{B}^m_{g_{\mathbb{A}},\mathbb{A}^{1}_{\sqrt{q}},b_2,\beta_2}(P,\nu,\tilde{\nu})\label{hsinner}
\end{align}
\begin{remark}
Since the trace norm can be identified with torus 1-point conformal block for $\beta\in [0,Q)$. By \cite{GRSS}, we see when $q\in(0,1)$ the trace norm (hence the Hilbert-Schmidt norm) is bounded. So we can extend above construction to all $P\in\R$.
\end{remark}
Now we can deduce the conformal block $\mathcal{F}^{\A}(\Delta_{\beta_1},\Delta_{\beta_2},\Delta_{Q+iP},q,b_1,b_2)$ is continuous on $P$.
\begin{proposition}\label{continuous2point}
 For $P$ and $P_0\in [0,\infty)$,
 we have $$\lim_{P\to P_0}\mathcal{F}^{\A}(\Delta_{\beta_1},\Delta_{\beta_2},\Delta_{Q+iP},q,b_1,b_2)=\mathcal{F}^{\A}(\Delta_{\beta_1},\Delta_{\beta_2},\Delta_{Q+iP_0},q,b_1,b_2).$$
 Similarly, it's also separately continuous in $\beta_i\in [0,Q)$. 
\end{proposition}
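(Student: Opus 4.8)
The plan is to prove continuity in $P$ by transferring the problem to the Hilbert–Schmidt framework developed in Section \ref{onepointproof}. Recall that by Proposition \ref{amplitude} and the identity \eqref{hsinner}, the conformal block is realized as an inner product of block amplitude operators,
\begin{align*}
\mathcal{F}^{\A}(\Delta_{\beta_1},\Delta_{\beta_2},\Delta_{Q+iP},q,b_1,b_2)
= (\text{explicit } q,b\text{-factors})\cdot
\langle \mathcal{B}^{m}_{g_{\mathbb{A}},\mathbb{A}^{1}_{\sqrt{q}},b_1,\beta_1}(P,\cdot,\cdot)\mid \mathcal{B}^{m}_{g_{\mathbb{A}},\mathbb{A}^{1}_{\sqrt{q}},b_2,\beta_2}(P,\cdot,\cdot)\rangle_{\rm HS}.
\end{align*}
So continuity in $P$ follows once I show that the map $P\mapsto \mathcal{B}^{m}_{g_{\mathbb{A}},\mathbb{A}^{1}_{\sqrt{q}},b,\beta}(P,\cdot,\cdot)$ is continuous from $[0,\infty)$ into the Hilbert space $\mathcal{L}_2(H_{\mc{T}})$ of Hilbert–Schmidt operators, since the inner product is continuous in each slot. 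The main tool is the exponential decay bound \eqref{expdecay}: by inserting the decomposition $\mathbb{A}^1_{\sqrt{q}}=\mathbb{A}^{\sqrt{q_1}}_{\sqrt{q}}\circ \mathbb{A}^1_{\sqrt{q_1}}$ and using the eigenfunction relation \eqref{eigen}, one gets a Gaussian factor $e^{-t_0 P^2/2}$ in front of a block associated to a fixed modulus $q_1$, which is uniformly bounded. This gives the domination needed to upgrade pointwise statements to genuine convergence of the HS-norms.

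First I would establish continuity of each matrix element $P\mapsto \mathcal{B}^{m}_{g_{\mathbb{A}},\mathbb{A}^{1}_{\sqrt{q}},b,\beta}(P,\nu,\tilde\nu)$. By Proposition \ref{amplitude} this equals $\mathcal{A}^{m}_{g_{\mathbb{A}},\mathbb{A}_{\sqrt{q}},b,\beta}(H_{Q+iP,\nu,\tilde\nu})/G(Q+iP,\beta)$, and the numerator is continuous in $P$ because the generalized eigenstates $\Psi_{Q+iP,\nu,\tilde\nu}$ depend continuously on $P$ in the weighted space $e^{-\epsilon c_-}L^2(\R\times\Omega_\T)$ (Proposition \ref{holomorphicpsi}, together with the continuity of $H_{Q+iP,\nu,\tilde\nu}$ as a linear combination of $\Psi_{Q+iP,\nu',\tilde\nu'}$ with coefficients $F^{-1/2}_{Q+iP}$ that are continuous in $P$ away from the exceptional set), while the amplitude $\mathcal{A}^m$ is a fixed continuous linear functional on this space by Proposition \ref{estimateamplitude}. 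The denominator $G(Q+iP,\beta)$ is continuous and nonvanishing on the spectrum line by the estimates of Section \ref{gluingdefinition of G}.

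Next I would prove the full HS-norm convergence. Fix $P_0\in[0,\infty)$ and a sequence $P\to P_0$. Each summand $|\mathcal{B}^{m}_{g_{\mathbb{A}},\mathbb{A}^{1}_{\sqrt{q}},b,\beta}(P,\nu,\tilde\nu)|^2$ converges by the previous step. To exchange the limit with the sum over Young diagrams $(\nu,\tilde\nu)$, I apply dominated convergence on $(\mc{T}\times\mc{T},\mu_{\mc{T}}\otimes\mu_{\mc{T}})$: by \eqref{expdecay} and the fact that the fixed-$q_1$ block lies in $\mathcal{L}_2(H_{\mc{T}})$, the tail of the series is controlled uniformly for $P$ in a compact neighbourhood of $P_0$. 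Concretely, $\|\mathcal{B}^{m}_{g_{\mathbb{A}},\mathbb{A}^{1}_{\sqrt{q}},b,\beta}(P,\cdot,\cdot)\|^2_{\rm HS}\leq e^{-t_0P^2}\|\mathcal{B}^{m}_{g_{\mathbb{A}},\mathbb{A}^{1}_{\sqrt{q_1}},b,\beta}(P,\cdot,\cdot)\|^2_{\rm HS}$, and since the matrix entries at modulus $q_1$ carry an extra factor $q_1^{|\nu|+|\tilde\nu|}$ through \eqref{eigen}, the geometric decay in $|\nu|+|\tilde\nu|$ provides a summable majorant uniform in $P$. Hence $\mathcal{B}^{m}(P,\cdot,\cdot)\to \mathcal{B}^{m}(P_0,\cdot,\cdot)$ strongly in $\mathcal{L}_2$, and continuity of the inner product then yields the claimed convergence of $\mathcal{F}^{\A}$.

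The main obstacle I anticipate is the behaviour near $P_0=0$, the edge of the spectrum line. There the Schapovalov factors $F^{-1/2}_{Q+iP}$ and the structure constants may degenerate, and the reflection relation underlying \eqref{symmetry} becomes delicate, so the continuity of individual matrix elements needs care precisely at $P=0$; I would handle this by working with the symmetrized combinations that appear in the self-adjoint positive operator $\mathcal{B}^D_{g_{\mathbb{A}},\mathbb{A}^{1/\sqrt{q}}_{\sqrt{q}},b,\beta}(P,P,\cdot,\cdot)$, whose trace-norm bounds from Lemma \ref{blockbound} and Lemma \ref{expdecay} remain valid down to $P=0$, and then invoke the continuity established in \cite{GRSS} for the torus one-point block near the origin to pin down the limit. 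Finally, continuity in $\beta_i\in(0,Q)$ follows by the identical argument, replacing the continuity of $P\mapsto \Psi_{Q+iP,\nu,\tilde\nu}$ with the holomorphic (hence continuous) dependence of the amplitude and of $G(Q+iP,\beta_i)$ on $\beta_i$ from Proposition \ref{holomorphic} and Section \ref{gluingdefinition of G}.
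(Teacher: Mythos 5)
Your reduction to continuity of the Hilbert--Schmidt pairing \eqref{hsinner} is the same first step as the paper's, but from there the routes diverge: you propose pointwise continuity of matrix elements plus dominated convergence with the exponential-decay majorant from Lemma \eqref{expdecay}, whereas the paper combines weak (entrywise) convergence with the \cite{GRSS} continuity of the trace norm and invokes \cite[Proposition 2]{Za} (weak convergence together with convergence of trace norms of positive operators implies trace-norm convergence). Your route is workable in principle, but as written it has two concrete gaps.

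First, your claim that the denominator $G(Q+iP,\beta)$ is \emph{nonvanishing} on the spectrum line contradicts Proposition \eqref{bbestimate1}, which shows $G(Q,\beta)=0$ with a simple zero at $P=0$; the numerator $\mathcal{A}^{m}_{g_{\mathbb{A}},\mathbb{A}^1_{\sqrt{q}},b,\beta}(H_{Q+iP,\nu,\tilde\nu})$ vanishes there as well (consistently with the functional equation $\Psi_{Q-ip}=R^{\rm DOZZ}(Q-ip)\Psi_{Q+ip}$ forcing $\Psi_{Q}=0$), so the matrix element is a $0/0$ ratio exactly at the endpoint $P_0=0$ that the proposition covers. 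Your proposed workaround via the positive operator $\mathcal{B}^D$ and the \cite{GRSS} trace only controls the diagonal sum, not the individual off-diagonal entries. The paper resolves this algebraically: each $\mathcal{B}^m_{g_{\mathbb{A}},\mathbb{A}^{1}_{\sqrt{q}},b,\beta}(P,\nu,\tilde{\nu})$ is a rational function of $\Delta_{Q+iP}$ whose only possible poles come from the Schapovalov form and hence do not lie on $P\in\R$, because $\mathcal{A}^m_{g_{\mathbb{A}},\mathbb{A}^{1}_{\sqrt{q}},b,\beta}(\Psi_{Q+iP,\nu,\emptyset},\mathcal{O}\Psi_{Q+iP,\nu,\emptyset})$ is a polynomial in $\Delta_{Q+iP}$ by \cite[Theorem 11.17]{GKRV21}; some such input is needed and is missing from your argument. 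Second, your dominated-convergence step requires a summable majorant over $(\nu,\tilde\nu)$ that is uniform for $P$ in a compact neighbourhood of $P_0$; the fact that the fixed-$q_1$ block is Hilbert--Schmidt for (almost) each $P$, or lies in $L^2(\R_+,dP;{\rm HS})$, does not supply this. You need something like $\sup_{P\in K}\|\mathcal{B}^D_{g_{\mathbb{A}},\mathbb{A}^{1/\sqrt{q_1}}_{\sqrt{q_1}},b,\beta}(P,P,\cdot,\cdot)\|_{\rm Tr}<\infty$, which again rests on the \cite{GRSS} continuity of the torus one-point block in $P$ --- precisely the ingredient the paper places at the centre of its proof and which you only invoke peripherally for the $P_0=0$ case. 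With those two points repaired (entrywise continuity via the rational-function structure, and local uniform boundedness of the trace norm at a larger modulus), your dominated-convergence variant would go through.
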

\begin{proof}
This claim is equivalent to
\begin{align}\label{HSconvergence}
    \lim_{P\to P_0}\langle \mathcal{B}^m_{g_{\mathbb{A}},\mathbb{A}^{1}_{\sqrt{q}},b_1,\beta_1}(P,\cdot,\cdot)\mid  \overline{\mathcal{B}^m_{g_{\mathbb{A}},\mathbb{A}^{1}_{\sqrt{q}},b_2,\beta_2}}(P,\cdot,\cdot)\rangle_{\rm HS}=\langle \mathcal{B}^m_{g_{\mathbb{A}},\mathbb{A}^{1}_{\sqrt{q}},b_1,\beta_1}(P_0,\cdot,\cdot)\mid  \overline{\mathcal{B}^m_{g_{\mathbb{A}},\mathbb{A}^{1}_{\sqrt{q}},b_2,\beta_2}}(P_0,\cdot,\cdot)\rangle_{\rm HS}
\end{align}
From \cite{GRSS}, we know for $\beta_1\in [0,Q)$, torus 1-point conformal block is continuous in $P$, i.e. $\lim_{P\to P_0}||\mathcal{B}^m_{g_{\mathbb{A}},\mathbb{A}^{1}_{\sqrt{q}},b_1,\beta_1}(P,\cdot,\cdot)||_{\rm Tr}=||\mathcal{B}^m_{g_{\mathbb{A}},\mathbb{A}^{1}_{\sqrt{q}},b_1,\beta_1}(P_0,\cdot,\cdot)||_{\rm Tr}$. To show $\mathcal{B}^m_{g_{\mathbb{A}},\mathbb{A}^{1}_{\sqrt{q}},b_1,\beta_1}(P,\cdot,\cdot)$ converges to $\mathcal{B}^m_{g_{\mathbb{A}},\mathbb{A}^{1}_{\sqrt{q}},b_1,\beta_1}(P_0,\cdot,\cdot)$ in the trace norm, we cite the following lemma from \cite[Theorem 1]{Si}.
\begin{lemma}
For $\{x_n\}$ in the trace class, and $x$ also in the trace class. If we have $||x_n||_{tr}\to||x||_{tr}$ as $n\to \infty$ and $x_n$ converge to $x$ in the weak operator topology. Then $\lim_{n\to \infty}||x_n-x||_{tr}=0$.
\end{lemma}

Since $||\mathcal{B}^m_{g_{\mathbb{A}},\mathbb{A}^{1}_{\sqrt{q}},b_1,\beta_1}(P,\cdot,\cdot)||_{op}\leq||\mathcal{B}^m_{g_{\mathbb{A}},\mathbb{A}^{1}_{\sqrt{q}},b_1,\beta_1}(P,\cdot,\cdot)||_{tr}\leq C$, it's enough to check $\mathcal{B}^m_{g_{\mathbb{A}},\mathbb{A}^{1}_{\sqrt{q}},b_1,\beta_1}(P,\cdot,\cdot)$ converges to $\mathcal{B}^m_{g_{\mathbb{A}},\mathbb{A}^{1}_{\sqrt{q}},b_1,\beta_1}(P,\cdot,\cdot)$ in the weak operator topology on a basis, i.e. for all pair of young diagrams $\nu$, $\tilde{\nu}$
\begin{align}
    \lim_{P\to P_0}\mathcal{B}^m_{g_{\mathbb{A}},\mathbb{A}^{1}_{\sqrt{q}},b_1,\beta_1}(P,\nu,\tilde{\nu})=\mathcal{B}^m_{g_{\mathbb{A}},\mathbb{A}^{1}_{\sqrt{q}},b_1,\beta_1}(P_0,\nu,\tilde{\nu})
\end{align}
which is immediately from $\mathcal{B}^m_{g_{\mathbb{A}},\mathbb{A}^{1}_{\sqrt{q}},b_1,\beta_1}(P,\nu,\tilde{\nu})$ is a rational function on $\Delta_{Q+iP}$ with no poles on $P\in\R$.
Actually, the poles of rational function $\mathcal{B}^m_{g_{\mathbb{A}},\mathbb{A}^{1}_{\sqrt{q}},b_1,\beta_1}(P,\nu,\tilde{\nu})$ only comes from Schapovalov form since $w_{\mathbb{A}}(\Delta_{\beta_1},\Delta_{Q+iP},\Delta_{Q+iP},
 \nu,\tilde{\nu} )$ is a polynomial of $\Delta_{Q+iP}$ by \cite[Theorem 11.17]{GKRV21}. 
 Then \eqref{HSconvergence} follows from the strong convergence in the trace norm. For continuity on $\beta_i$, we can treat similarly.
\end{proof}
\begin{corollary}
For $q\in (0,1)$ and $\beta_1\in[0,Q)$, we have $$ \lim_{\beta_2\to 0}\mathcal{F}^{\A}(\Delta_{\beta_1},\Delta_{\beta_2},\Delta_{Q+iP},q,b_1,b_2)=\mathcal{F}^{\T}(\Delta_{\beta_1},\Delta_{Q+iP},q^2).$$
\end{corollary}
\begin{proof}
Since the two-point annulus block is continuous on $\beta_2$, we can use the proposition \eqref{0coefficient} in the next section to simplify the expression, i.e.
$$\mathcal{F}^{\A}(\Delta_{\beta_1},0,\Delta_{Q+iP},q,b_1,b_2)=\mathcal{F}^{\T}(\Delta_{\beta_1},\Delta_{Q+iP},q^2)$$
\end{proof}
\section{The \texorpdfstring{$\gamma$}{gamma} weight torus 1-point conformal block}\label{gamma insertion proof}

In this section, we give an explicit formula for torus  1-point block when the insertion weight is $\gamma$, which solves a conjecture in \cite{Be}. We will first treat the $0$-weight case, then we relate the $\gamma$-weight with $0$-weight by its annulus counterpart.
\begin{proposition}
\label{0coefficient}
For fixed Young diagrams $\nu$ and $\tilde{\nu}$, we have
\begin{align}\label{fixyoung}
    \lim_{\beta \to 0}\mathcal{W}_{\mathbb{A}}(\Delta_{\beta},\Delta_{Q+iP},\Delta_{Q+iP},
 \nu,\tilde{\nu} )= 1_{\{\nu=\tilde{\nu}\}}.
\end{align}
\end{proposition}
\begin{proof}

By the Ward identity, we know the left side of \eqref{fixyoung} is a polynomial of $\Delta_{\beta}$, so the limit must exist.
We define
$S(L_{-\nu},L_{-\tilde{\nu}}):=\lim_{\beta\to 0}w_{\mathbb{A}}(\Delta_{\beta},\Delta_{Q+iP},\Delta_{Q+iP},
 \nu,\tilde{\nu} )$, and we hope to prove $S(L_{-\nu},L_{-\tilde{\nu}})=F_{Q+iP}(\nu,\tilde{\nu})$.
 
Now we claim
\begin{align}\label{transform}
    S(L_{-n}L_{-\nu},L_{-\tilde{\nu}})=S(L_{-\nu},L_{n}L_{-\tilde{\nu}})
\end{align}

Before proving this claim, we first give some remarks on $w_{\mathbb{A}}(\Delta_{\beta},\Delta_{Q+iP},\Delta_{Q+iP},
 \nu,\tilde{\nu} )$. The original definition of $w_{\mathbb{A}}(\Delta_{\beta},\Delta_{Q+iP},\Delta_{Q+iP},
 \nu,\tilde{\nu} )$ in \cite[Corollary 11.10]{GKRV21} comes from uniformization of a plumbed pant \cite[Corollary]{GKRV21} to a complex plane with three holes which center at $(z_1,z_2,z_3)\in\D^3$. But this construction is not easy to get some recursive relation. Here we give another construction, as before, we insert enough artificial points $(\beta_i^{ar},z_i^{ar})$ on $\mathcal{A}^{D}_{g_{\mathbb{A}},\mathbb{A}^{1}_{q},z,\beta,{\boldsymbol{\zeta}}}$ where $q<|z|<1$, and pairing it with $\Psi_{\alpha_1,\nu,\emptyset}$ and $\Psi_{\alpha_2,\tilde{\nu},\emptyset}$ by  m\"obius maps $\psi_1:z\to qz$ and $\psi_2:z\to 1/z$ respectively. Now we hope to compute the Ward's identity and the corresponding coefficient $w'_{\mathbb{A}}(\Delta_{\beta},\Delta_{Q+iP},\Delta_{Q+iP},
 \nu,\tilde{\nu} )$ in this setting.
 
we define for $\boldsymbol{\delta}>\max\{\boldsymbol{\delta}_{1},\boldsymbol{\delta}_{2}\}$,
\begin{align}\label{manypoints}
     &\left(\mathbf{L}_{-{\nu}} V_{\alpha_{1}}(0) \mid \mathbf{L}_{-n} \mathbf{L}_{-\tilde{\nu}} V_{\alpha_{2}}(\infty)\mid V_{\beta}(z)\right)_{t}= \lim_{z'\to \infty}\nonumber \oint_{\left|\mathbf{u}_{1}\right|= \boldsymbol{\delta}_{1}} \oint_{|v|=1 /\boldsymbol {\delta}} \oint_{\left|\mathbf{u}_{2}\right|=1 / \boldsymbol {\delta}_2} \\&\mathbf{u}_{1}^{1-\nu} \mathbf{u}_{2}^{1+\tilde{\nu}} v^{1+n}\left\langle T^{\hat\C}\left(\mathbf{u}_{1}\right) T^{\hat\C}\left(\mathbf{u}_{2}\right) T^{\hat\C}(v) V_{\alpha_{1}}(0) V_{\alpha_{2}}(z') V_{\beta}(z) \prod_{i=1}^m V_{\beta^{ar}_i}(z^{ar}_i)\right\rangle_{\hat\C,g_{dozz},U_{t}} \dd \mathbf{u}_{1} \dd \mathbf{u}_{2} \dd v 
\end{align}
Now we claim
\begin{lemma}
\begin{align}\label{torusward}
&\left(\mathbf{L}_{-\nu}V_{\alpha_{1}}(0) \mid \mathbf{L}_{-n} \mathbf{L}_{-\tilde{\nu}} V_{\alpha_{2}}(\infty)\mid V_{\beta}(z)\right)_{t}\nonumber\\: =&\left(\mathbf{L}_{n} \mathbf{L}_{-\nu} V_{\alpha_{1}}(0) \mid \mathbf{L}_{-\tilde{\nu}} V_{\alpha_{2}}(\infty)\mid V_{\beta}(z)\right)_{t}-\mathbf{D}_{-n}^{(0)}\left(\mathbf{L}_{-\nu} V_{\alpha_{1}}(0) \mid \mathbf{L}_{-\tilde{\nu}} V_{\alpha_{2}}(\infty)\mid V_{\beta}(z)\right)_{t}+\epsilon_{t}(\mathbf{z}) \end{align}
where $$\mathbf{D}_{-n}^{(0)}=-z^{n+1} \partial_{z}-(n+1)z^{n} \Delta_{\beta}+\sum_{i=1}^m \Big(-{(z_{i}^{ar})}^{n+1} \partial_{z_i^{ar}}-(n+1){(z^{ar}_{i})}^{n} \Delta_{\beta_i^{ar}}\Big).$$
If $\max\{\alpha_1,\alpha_2,\beta\}<\bf a$, we have $\epsilon_{t}(\mathbf{z})$ goes to $0$ when $t\to\infty $
\end{lemma}
\begin{proof}
By using Ward's identity on sphere to  $\eqref{manypoints}$, we expend the $T^{\hat\C}(v)$ insertion. Since $|z|<1$ and $|z_i^{ar}|<1$, the metric terms $\frac{\Delta_{\beta}\partial_z\omega(z)}{v-z}=0$, $\frac{\Delta_{\beta_i^{\rm ar}}\partial_{z_i^{\rm ar}}\omega({z_i^{\rm ar}})}{v-{z_i^{\rm ar}}}=0$ . When ignore the $\epsilon_t(z)$ term, for $|z'|>\frac{1}{\boldsymbol{\delta}_{2}}$, we can see the map $$v\to \langle T\left(\mathbf{u}_{1}\right) T\left(\mathbf{u}_{2}\right) T(v) V_{\alpha_{1}}(0) V_{\alpha_{2}}(z') V_{\beta}(z) \prod_{i=1}^m V_{\beta^{ar}_i}(z^{ar}_i)\rangle_{\hat\C,g_{dozz},U_{t}} $$ is meromorphic in the annular region $\{v\in\C\mid\boldsymbol{\delta}<|v|<\frac{1}{\boldsymbol{\delta}}\}$ whose poles are $z$ and $z_i^{\rm ar}$, $1\leq i\leq m$. Computing these residues and take $z'\to\infty$ give the conclusion.
\end{proof}
\begin{lemma}\label{virasorostructure}

$$
\Psi_{\alpha, \nu, \widetilde{\nu}}=\mathbf{L}_{-\nu_{k}} \ldots \mathbf{L}_{-\nu_{1}} \widetilde{\mathbf{L}}_{-\widetilde{\nu}_{k^{\prime}}} \ldots \widetilde{\mathbf{L}}_{-\widetilde{\nu}_{1}} \Psi_{\alpha }
$$
and for each $n \in \mathbb{Z}$, and $\alpha \notin Q \pm\left(\frac{2}{\gamma} \mathbb{N}+\frac{\gamma}{2} \mathbb{N}\right)$,
 we have $\mathbf{L}_n\Psi_{Q+iP,\nu,\emptyset}\in Span\{\Psi_{Q+iP,\nu',\emptyset}\mid |\nu'|=|\nu|-n\}$.
\end{lemma}
\begin{proof}
This follows from \cite[Theorem 1.2]{BGKRV}
\end{proof}
Now we perform the standard analytic continuation argument by choosing $\alpha_1=Q+iP$, $\alpha_2=Q-iP$, $\beta\in(0,Q)$ and all $\beta_i^{\rm ar}=0$. By \cite[proposition 11.16]{GKRV21}, we can take the restriction of $\mathbf{D}_{-n}^{(0)}\mid_{(z,z_1^{ar},..,z_m^{ar})=(z,0,..,0)}=-z^{n+1} \partial_{z}-(n+1){z^{n}} \Delta_{\beta}$.
We recall that
\begin{align}
 \langle  &V_{Q+iP}(0)  V_{\beta}(z) V_{Q-iP}(\infty)  \rangle^{\hat\C}_{g_{\rm DOZZ}}=|z|^{-2\Delta_{\beta}}\frac{1}{2}C_{\gamma,\mu}^{{\rm DOZZ}} (Q+iP,\beta,Q-iP ) \big(\frac{{\rm v}_{g_{\rm DOZZ}}(\hat\C)}{{\det}'(\Delta^{\hat C}_{g_{\rm DOZZ}})}\big)^{\frac{1}{2}}  \nonumber
 \end{align} 
 Then by above two lemmas, we can construct a polynomial $w'_{\mathbb{A}}(\Delta_{\beta},\Delta_{Q+iP},\Delta_{Q+iP},
 \nu,\tilde{\nu} )$ recursively with $w'_{\mathbb{A}}(\Delta_{\beta},\Delta_{Q+iP},\Delta_{Q+iP},
 \emptyset,\emptyset )=1$ which satisfies: for some constant $C'_\A$.
 \begin{align*}
    \mathcal{A}^{D}_{g_{\mathbb{A}},\mathbb{A}^{1}_{q},z,\beta,{\boldsymbol{\zeta}}}(\Psi_{Q+iP,\nu,\emptyset}, \mathbf{C}\Psi_{Q+iP,\tilde{\nu},\emptyset})&=C'_{\A}C_{\gamma,\mu}^{{\rm DOZZ}} (Q+iP,\beta,Q+iP) w'_{\mathbb{A}}(\Delta_{\beta},\Delta_{Q+iP},\Delta_{Q+iP},
 \nu,\tilde{\nu} )\\
   &\times|q |^{-{c}_{\rm L}/12+2\Delta_{Q+ip}}  (\frac{q}{z})^{|\nu|} z^{|\tilde{\nu}|}\nonumber 
\end{align*}
 By taking $\nu=\tilde{\nu}=\emptyset$, we see $C'_\A=C_\A$. Then we get $$w'_{\mathbb{A}}(\Delta_{\beta},\Delta_{Q+iP},\Delta_{Q+iP},
 \nu,\tilde{\nu} )=w_{\mathbb{A}}(\Delta_{\beta},\Delta_{Q+iP},\Delta_{Q+iP},
 \nu,\tilde{\nu} ). $$
 We also observe that when $|\nu|=|\tilde\nu|$, $\mathcal{A}^{D}_{g_{\mathbb{A}},\mathbb{A}^{1}_{q},z,\beta,{\boldsymbol{\zeta}}}(\Psi_{Q+iP,\nu,\emptyset}, \mathbf{C}\Psi_{Q+iP,\tilde{\nu},\emptyset})$ doesn't depend on $z$.
 
 Now we start to prove \eqref{transform} by induction,
\begin{lemma}\label{inductionbasic}
For the case $\nu\neq\emptyset$ and $\tilde{\nu}=\emptyset$,
we have $$\lim_{\beta\to 0} w_{\mathbb{A}}(\Delta_{\beta},\Delta_{Q+iP},\Delta_{Q+iP},
 \nu,\emptyset )=0.$$
\end{lemma}
\begin{proof}
By \cite[Proposition 7.12]{GKRV}, for $\nu=(\nu_1,...,\nu_k)$, we have $$w_{\mathbb{\A}}(\Delta_{\beta},\Delta_{Q+iP},\Delta_{Q+iP},
 \nu,\emptyset )=\prod_{i=1}^k(\nu_i\Delta_{\beta}+\Delta_{Q+iP}-\Delta_{Q-iP}+\sum_{j<i}\nu_j)$$
 The $i=1$ term in the RHS is $\Delta_{\beta}\nu_1$, take $\beta\to 0$ then we prove the claim.
\end{proof}

We can and divide \eqref{torusward} by $C_{\gamma,\mu}^{\rm DOZZ}(Q+iP,\beta,Q-iP)$
and send $\beta\to 0$. By induction on $k=|\nu|+|\tilde{\nu}|$, and use lemma \eqref{inductionbasic} (as induction basis) one can see $$\lim_{\beta\to 0}\frac{\left(\mathbf{L}_{-{\nu}} V_{Q+iP}(0) \mid  \mathbf{L}_{-\tilde{\nu}} V_{Q-iP}(\infty)\mid V_{\beta}(z)\right)}{C_{\gamma,\mu}^{\rm DOZZ}(Q+iP,\beta,Q-iP)}$$ doesn't depend on position $z$. Indeed, the first term in \eqref{torusward} has index $|\nu|+|\tilde{\nu}|+n$, the second term is a linear combination of terms with index $|\nu|+|\tilde{\nu}|-n$ (by lemma \eqref{virasorostructure}), the third term is $\mathbf{D}_{-n}^{(0)}$ act on a term with index $|\nu|+|\tilde{\nu}|$. When taking $\beta\to 0$, it's easy to see the third term vanish (since we suppose it doesn't depend on $z$). Then the first term equals the second term and doesn't depend on $z$.

By Ward's identity, we have $$\frac{\left(\mathbf{L}_{-{\nu}} V_{Q+iP}(0) \mid  \mathbf{L}_{-\tilde{\nu}} V_{Q-iP}(\infty)\mid V_{\beta}(z)\right)}{C_{\gamma,\mu}^{\rm DOZZ}(Q+iP,\beta,Q-iP)}=\frac{1}{2} \big(\frac{{\rm v}_{g_{\rm dozz}}(\hat\C)}{{\det}'(\Delta^{\hat C}_{g_{\rm dozz}})}\big)^{\frac{1}{2}} w_{\mathbb{A}}(\Delta_{\beta},\Delta_{Q+iP},\Delta_{Q+iP},
 \nu,\tilde{\nu} )z^{-\Delta_{\beta}+|\nu|-|\tilde{\nu}|}$$
Then we take the limit $\beta \to 0$ in both sides, we know the LHS doesn't depend on $z$, so we can set $z=1$ on the RHS.
This shows $S(L_{-n}L_{-\nu},L_{-\tilde{\nu}})=S(L_{-\nu},L_{n}L_{-\tilde{\nu}})$ for any $\nu$ and $\tilde{\nu}$.

By the commutative relation of Liouville Virasoro algebra and lemma \eqref{inductionbasic}, we see $S(L_{-\nu},L_{-\tilde{\nu}})=F_{Q+ip}(\nu.\tilde{\nu})$.
\noindent so we get
\begin{align*}
    \lim_{\beta\to 0}\mathcal{W}_{\mathbb{A}}(\Delta_{\beta},\Delta_{Q+iP},
 \nu,\tilde{\nu} )=1_{\{\nu=\tilde{\nu}\}}
\end{align*}
\end{proof}
\begin{remark}
In the previous proposition, we prove the correlation in the free field case can be recovered from the Liouville case by sending the middle-position weight to 0, i.e.
\begin{align}
    \lim_{\beta\to 0}\frac{\mathcal{A}^D_{\A^{1}_{q},g_{\A},b,\beta,\zeta}(\Psi_{Q+iP,\nu_1,\tilde{\nu}_1},\mathbf{C}\Psi_{Q-iP,\nu_2,\tilde{\nu}_2})}{C\times C^{DOZZ}_{\gamma,\mu}(Q+iP,\beta,Q-iP)}=\langle\frac{\Psi^0_{Q+iP,\nu_1,\tilde{\nu}_1}}{\Psi^0_{Q+iP}}\mid\frac{\Psi^0_{Q+iP,\nu_2,\tilde{\nu}_2}}{\Psi^0_{Q+iP}}\rangle_{L^2(\Omega_{\T})}\end{align}
    Where $C=\frac{\pi}{2^{1/2}e} |q |^{-{c}_{\rm L}/12+2\Delta_{Q+iP}}    (q/b)^{|\nu_{1}|}(\bar{q}/\bar{b})^{ |\tilde{\nu}_{1}|} b^{|\nu_{2}|}\bar{b}^{|\tilde\nu_{2}|}$.
\end{remark}
\begin{corollary}\label{realvalue}
For $\beta\in [0,Q)$, $w_{\mathbb{A}}(\Delta_{\beta},\Delta_{Q+iP},\Delta_{Q+iP},
 \nu,\tilde{\nu} )$ is real valued.
\end{corollary}
\begin{proof}
In Lemma \ref{inductionbasic}, we see $w_{\mathbb{A}}(\Delta_{\beta},\Delta_{Q+iP},\Delta_{Q+iP},
 \nu,\emptyset )$ is real valued. Then we choose $q$ real take $z=\sqrt{q}$, and repeat above induction. We see $w_{\mathbb{A}}(\Delta_{\beta},\Delta_{Q+iP},\Delta_{Q+iP},
 \nu,\tilde{\nu} )$ is real valued. We can also see $F_{Q+iP}$ is a real valued matrix from this.
\end{proof}
Now we start to compute the block amplitude for $\gamma$-weight insertion,
\begin{proposition}For $\beta\in (0,Q)$, we have
\begin{align}\label{derivative}
    \partial_{\mu_{\partial}}\Big(\mathcal{A}^{m}_{g_{\mathbb{A}},\mathbb{A}^1_{\sqrt{q}},b,\beta}(\Psi_{Q+iP,\nu,\tilde{\nu}})\Big)=\Big(\partial_{\mu_{\partial}}\mathcal{A}^{m}_{g_{\mathbb{A}},\mathbb{A}^1_{\sqrt{q}},b,\beta}\Big)(\Psi_{Q+iP,\nu,\tilde{\nu}})
\end{align}
 
\end{proposition}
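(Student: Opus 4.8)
The statement of Proposition~\ref{derivative} has two parts. The first is that the $\mu_\partial$-derivative commutes with evaluation against the generalized eigenstate $\Psi_{Q+iP,\nu,\tilde\nu}$; the second is the limiting identity $\lim_{\beta\to 0}\partial_{\mu_\partial}G(Q+iP,\beta)=-2\pi G(Q+iP,\gamma)$.

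The plan for the first part is as follows. Recall from Definition~\ref{typeAAmplitude} that the amplitude $\mathcal{A}^{m}_{g_{\mathbb{A}},\mathbb{A}^1_{\sqrt q},b,\beta}$ depends on $\mu_\partial$ only through the factor $\exp(-\mu_\partial M^{\partial}_{\gamma,g}(\partial^N\Sigma))$ inside the expectation, so formally $\partial_{\mu_\partial}$ pulls down a factor $-M^{\partial}_{\gamma,g}(\partial^N\Sigma)$. Since $\Psi_{Q+iP,\nu,\tilde\nu}\in e^{-\epsilon c_-}L^2(\R\times\Omega_\T)$ for every $\epsilon>0$, and since by Proposition~\ref{estimateamplitude} the amplitude lies in $e^{\epsilon c_-}L^2$ for some $\epsilon>0$ (here $s=\beta/2>0$), the pairing $\mathcal{A}^{m}(\Psi_{Q+iP,\nu,\tilde\nu})$ is an absolutely convergent $L^2$ inner product. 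First I would justify differentiation under the $c$-integral and the probabilistic expectation by producing a uniform (in a neighbourhood of the given $\mu_\partial$) integrable dominating function: the extra GMC mass $M^{\partial}_{\gamma,g}$ has finite moments of all orders against $\P_\T$, and the exponential factor only improves decay, so the derivative of the integrand is dominated in $e^{\epsilon c_-}L^2$. This makes the commutation a routine application of dominated convergence, giving the first displayed identity.

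For the second part, the key observation is that differentiating in $\mu_\partial$ brings down $-M^{\partial}_{\gamma,g}=-\lim_{\varepsilon\to0}\int_{\partial^N}\varepsilon^{\gamma^2/4}e^{\frac{\gamma}{2}\phi_\varepsilon}\,\dd\lambda_g$, which is precisely an integrated boundary vertex insertion of weight $\gamma$. Concretely I would write
\begin{align}\label{gammaintegral}
\partial_{\mu_\partial}\mathcal{A}^{m}_{g_{\mathbb{A}},\mathbb{A}^1_{\sqrt q},b,\beta}(\Psi_{Q+iP})=-\int_{\partial^N\mathbb{A}^1_{\sqrt q}}\mathcal{A}^{m}_{g_{\mathbb{A}},\mathbb{A}^1_{\sqrt q},b,\beta}\big(V_{\frac{\gamma}{2}}(s')\,\Psi_{Q+iP}\big)\,\dd\lambda_g(s'),
\end{align}
so that the new $\gamma$-insertion is integrated over the Neumann boundary circle. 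Dividing by the normalization $\sqrt{\pi/(2^{3/2}e)}\,|q|^{-c_{\rm L}/24+\Delta_{Q+iP}}$ from \eqref{eigenstatedefineof G} and taking $\beta\to0$, the left side becomes $\partial_{\mu_\partial}G(Q+iP,\beta)\to\partial_{\mu_\partial}U(Q+iP)$ by the $\beta\to0$ continuity established in Section~\ref{onepointproof} (the limit $\lim_{\beta\to0}G(Q+iP,\beta)=U(Q+iP)$), while the right side must be identified with $-2\pi G(Q+iP,\gamma)$. The geometric input is that the boundary measure of the outer circle $\partial^{out}$ in the flat metric $g_{\mathbb{A}}$ has total length $2\pi$, and by rotational invariance of the setup the integrand in \eqref{gammaintegral} is independent of the position $s'$ of the $\gamma$-insertion on the circle; this produces the constant $2\pi$ and collapses the integral to a single $\gamma$-insertion, which by definition \eqref{eigenstatedefineof G} (now with a $\gamma$-weight boundary insertion) equals $G(Q+iP,\gamma)$ up to the same normalization factor.

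The main obstacle will be justifying the exchange of the $\beta\to0$ limit with the $\mu_\partial$-differentiation and with the boundary integral in \eqref{gammaintegral}, together with the $\varepsilon\to0$ GMC regularization limit. At $\beta=0$ the amplitude $\mathcal{A}^{m}_{g_{\mathbb{A}},\mathbb{A}^1_{\sqrt q},b,0}$ sits exactly on the boundary of the amplitude bound \eqref{annulusamplitudebound} and is no longer Hilbert--Schmidt, which is precisely the subtlety handled in Section~\ref{onepointproof}; so I would not attempt to pass the limit directly inside the $L^2$ pairing but rather transfer the $\beta\to0$ limit onto the already-identified one-point object, reusing the uniform-in-$\beta$ Hilbert--Schmidt bounds of Lemmas~\ref{blockbound} and \ref{expdecay} and the fusion estimate $\langle V_\beta(b)V_\gamma(z)\rangle\leq |z-b|^{-\beta\gamma/2}$ to dominate the integrand near the coincidence $s'\to b$. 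The $\gamma$-insertion meets the Seiberg bound comfortably, so once the limit is controlled the identification of the right-hand side is clean. I expect the remaining verifications — finiteness of negative GMC moments and uniform integrability — to follow from Girsanov together with the estimates already assembled in Proposition~\ref{estimateamplitude} and Section~\ref{gluingdefinition of G}.
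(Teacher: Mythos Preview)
Your treatment of the first identity (commuting $\partial_{\mu_\partial}$ with the pairing) matches the paper's: both reduce it to dominated convergence after observing that $\partial_{\mu_\partial}\mathcal{A}^{m}_{g_{\mathbb{A}},\mathbb{A}^1_{\sqrt q},b,\beta}$ lies in $e^{\epsilon c_-}L^2$ for some $\epsilon\in(0,\tfrac{\beta+\gamma}{2})$, while $\Psi_{Q+iP,\nu,\tilde\nu}\in\bigcap_{\epsilon>0}e^{-\epsilon c_-}L^2$.

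For the second identity your overall picture is right (differentiating in $\mu_\partial$ inserts an integrated $V_{\gamma/2}$ on the boundary, the outer circle has length $2\pi$, and rotation invariance collapses the integral), but two points need tightening. First, the integrand in your display \eqref{gammaintegral} is \emph{not} independent of $s'$ while $\beta>0$: the $\beta$-insertion at $b$ breaks the rotational symmetry. Rotation invariance only becomes available \emph{after} $\beta\to 0$, and even then it is a property of the pairing with the primary $\Psi_{Q+iP}$ (which is invariant under rotations of the boundary field), not of the amplitude pointwise in $\tilde\varphi$. Second, your appeal to Section~\ref{onepointproof} to pass from $\partial_{\mu_\partial}G(Q+iP,\beta)$ to $\partial_{\mu_\partial}U(Q+iP)$ is circular: that section proves continuity of $G$, not of its $\mu_\partial$-derivative, and the existence of the limit of the derivative is precisely what you are trying to establish here.

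The paper avoids both pitfalls by working at the amplitude level. It first proves the weighted bound $\partial_{\mu_\partial}\mathcal{A}^{m}_{g_{\mathbb{A}},\mathbb{A}^1_{\sqrt q},b,\beta}\in e^{\epsilon c_-}L^2$ uniformly for small $\beta$ (this is the key lemma, obtained by Girsanov and bounding $M^\partial_\gamma\,e^{-\frac{\mu_\partial}{2}M^\partial_\gamma}$), and then shows that $\tfrac{-1}{2\pi}\partial_{\mu_\partial}\mathcal{A}^{m}_{g_{\mathbb{A}},\mathbb{A}^1_{\sqrt q},b,\beta}-\mathcal{A}^{m}_{g_{\mathbb{A}},\mathbb{A}^1_{\sqrt q},b,\gamma}\to 0$ in that weighted space as $\beta\to0$. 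A single application of Cauchy--Schwarz against $\Psi_{Q+iP}$ then gives the limit directly, without any separate appeal to the $\beta\to0$ behaviour of $G$. Your fusion-estimate idea is not needed: the uniform $e^{\epsilon c_-}L^2$ control of the differentiated amplitude already dominates the passage to the limit, and the Hilbert--Schmidt machinery of Lemmas~\ref{blockbound}--\ref{expdecay} plays no role here.
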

 \begin{proof} we start from a simple lemma. Although we only need the $\beta>0$ case, but we prove a stronger version
 \begin{lemma}\label{nicefunction}
  For $\beta>-\gamma$, we have $\partial_{\mu_{\partial}}\mathcal{A}^{m}_{g_{\mathbb{A}},\mathbb{A}^1_{\sqrt{q}},b,\beta}\in e^{\epsilon c_{-}}L^2$ for $0<\epsilon<\frac{\beta+\gamma}{2}$.
 \end{lemma}
 \begin{proof}
 First, we show that for Note $\phi=P\tilde{\varphi}+X_{m}^{\mathbb{A}^1_{\sqrt{q}}}=c+\phi^{\circ}$
\begin{align*}
  \text{the GMC part of }\partial_{\mu_{\partial}}\mathcal{A}^{m}_{g_{\mathbb{A}},\mathbb{A}^1_{\sqrt{q}},b,\beta}(\tilde{\varphi})&=\E[M^{\partial}_\gamma(\phi,\partial^{out} \mathbb{A}_q) e^{\frac{\beta}{2}\phi(b)}e^{-\mu M_{\gamma}(\phi,\mathbb{A}^1_{\sqrt{q}})-\mu_\partial M^{\partial}_\gamma(\phi,\partial^{out} \mathbb{A}^1_q)}]\\
  &\leq e^{(\frac{\gamma}{2}(1-\delta)+\frac{\beta}{2})c}\E[M^{\partial}_\gamma(\phi^{\circ}+\delta c,\partial^{out} \mathbb{A}_q) e^{\frac{\beta}{2}\phi^{\circ}(b)}e^{-\frac{\mu_\partial}{2} M^{\partial}_\gamma(\phi,\partial^{out} \mathbb{A}^1_q)-\frac{\mu_\partial}{2} M^{\partial}_\gamma(\phi,\partial^{out} \mathbb{A}^1_q)}]\\
  &\leq e^{(\frac{\gamma}{2}(1-\delta)+\frac{\beta}{2})c}e^{\frac{\beta}{2}P\varphi(b)}\E[ e^{\frac{\beta}{2}X_{m}^{\mathbb{A}^1_{\sqrt{q}}}}e^{-\frac{\mu_\partial}{2} M^{\partial}_\gamma(\phi,\partial^{out} \mathbb{A}^1_q\backslash O)}]
\end{align*}
where $O$ is a neighbourhood of $b$. The second line to the third line is derived by using H\"older inequality
\begin{align}\label{trick}
    \E[M^{\partial}_\gamma(\phi^{\circ}+\delta c,\partial^{out} \mathbb{A}^1_q) e^{-\frac{\mu_\partial}{2} M^{\partial}_\gamma(\phi,\partial^{out} \mathbb{A}^1_q)}]\leq& C\E[M^{\partial}_\gamma(\phi^{\circ}+\delta c,\partial^{out} \mathbb{A}^1_q) \big(\frac{\mu_\partial}{2} M^{\partial}_\gamma(\phi,\partial^{out} \mathbb{A}^1_q)\Big)^{-\delta}]\\
    &\leq C \E[M_{\gamma}^{\partial}(\phi^{\circ},\partial^{out} \mathbb{A}_q)^{1-\delta}]<\infty
    \end{align}
In the second line, we use the following lemma
\begin{lemma}
$\E[M_{\gamma}^{\partial}(X_m^{\A^1_{\sqrt{q}}},\partial^{out}\A^1_q)]<\infty$
\end{lemma}
\begin{proof}
First we double the surface $\A^1_{\sqrt{q}}$ along the Neumann boundary $\partial^{out}\A^1_q$. Then we know for $s\in\partial^{out}\A^1_q$,  $X_m^{\A^1_{\sqrt{q}}}(s)\stackrel{law}=\sqrt{2}X_D^{\A^{\frac{1}{\sqrt{q}}}_{\sqrt{q}}}(s)$. By \cite[Lemma 3.2]{GRV19}, there exists a smooth function $W^D(z)$ such that $\lim_{\epsilon\to 0}\E[X_D^{\A^{\frac{1}{\sqrt{q}}}_{\sqrt{q}}}(z)^2]+\ln(\epsilon)=W^D(z)$ uniformly in compact set of $\A^{\frac{1}{\sqrt{q}}}_{\sqrt{q}}$. So $\E[M_{\gamma}^{\partial}(X_m^{\A^1_{\sqrt{q}}},\partial^{out}\A^1_q)]=\int_\T \E[e^{\frac{\gamma}{2}X_m^{\A^1_{\sqrt{q}}}(s)-\frac{\gamma^2}{8}\E[X_m^{\A^1_{\sqrt{q}}}(s)^2]}]e^{\frac{\gamma^2}{4}W^D(s)}\frac{\dd s}{|s|}<\infty$.
\end{proof}
By Girsanov theorem, we get term $\E[ e^{-\frac{\mu_\partial}{2} M^{\partial}_\gamma(\phi+u(b),\partial^{out} \mathbb{A}^1_q\backslash O)}]$. When $c<0$, we simply take $\mu_\partial=0$. When $c>0$, we can bound $\E[ e^{-\frac{\mu_\partial}{2} M^{\partial}_\gamma(\phi+u(b),\partial^{out} \mathbb{A}^1_q\backslash O)}]\leq C_R \E[\Big( \frac{\mu_\partial}{2} M^{\partial}_\gamma(\phi+u(b),\partial^{out} \mathbb{A}^1_q\backslash O)\Big)^{-R}]$ as in \cite[Lemma 4.6]{GKRV21} to control the $c>0$ case.
\end{proof}
\noindent For fixed Young diagrams $\nu$ and $\tilde{\nu}$, we have $\Psi_{Q+iP,\nu,\tilde{\nu}}\in \cap_{\epsilon>0}e^{-\epsilon c_-}L^2(\R\times\Omega_{\T}).$
Then $$\sup_{\mu_\partial>\delta>0}|\partial_{\mu_{\partial}}\mathcal{A}^{m}_{g_{\mathbb{A}},\mathbb{A}^1_{\sqrt{q}},b,\beta}(\tilde{\varphi})(\Psi_{Q+iP,\nu,\tilde{\nu}})(\tilde{\varphi})|\in L^1(\R\times\Omega_{\T}),$$
by dominate convergence, we have for $\beta\in (0,Q)$
\begin{align}
    \partial_{\mu_{\partial}}\Big(\mathcal{A}^{m}_{g_{\mathbb{A}},\mathbb{A}^1_{\sqrt{q}},b,\beta}(\Psi_{Q+iP,\nu,\tilde{\nu}})\Big)=\Big(\partial_{\mu_{\partial}}\mathcal{A}^{m}_{g_{\mathbb{A}},\mathbb{A}^1_{\sqrt{q}},b,\beta}\Big)(\Psi_{Q+iP,\nu,\tilde{\nu}})
\end{align}
As a special case, we also have $\partial_{\mu_\partial}\Big(\caA^m_{g_{\A},\A^1_{\sqrt{q}},b,\beta}(\Psi_{Q+iP})\Big)=(\partial_{\mu_\partial}\caA^m_{g_{\A},\A^1_{\sqrt{q}},b,\beta})(\Psi_{Q+iP})$.

\end{proof}

\begin{lemma}
$\beta\to\Big(\partial_{\mu_{\partial}}\mathcal{A}^{m}_{g_{\mathbb{A}},\mathbb{A}^1_{\sqrt{q}},b,\beta}\Big)(c,\varphi)$ is analytic in a complex neighbourhood of $\{\beta\in \R\mid \beta<\frac{2}{\gamma}\}$ as a function in $e^{\delta c_-}L^2(\Omega\times\R)$ for any $\delta<\frac{\Re(\beta)+\gamma}{2}$. 
\end{lemma}
\begin{proof}
Use the same regularization strategy in \eqref{holomorphic}, note $\beta=a+ib$, we define 
\begin{align*}
    &\Big(\partial_{\mu_\partial}\mathcal{A}^{m,k}_{g_{\mathbb{A}},\mathbb{A}^1_{\sqrt{q}},b,\beta}\Big)(c,\varphi)=-\E_{\tilde{\varphi}} \Big[  \epsilon_k^{\frac{\beta^2}{4}} e^{\frac{\beta}{2}(c+\phi_{g,k}(b))}e^{\frac{\gamma}{2}c}M^{\partial}_{\gamma, g}(\phi_g,\R_k )\exp\Big(-\mu  e^{\gamma c}M_{\gamma, g}(\phi_g,\H)-\mu_{\partial} e^{\frac{\gamma}{2}c}M^{\partial}_{\gamma, g}(\phi_g,\R_k )\Big) \Big]\\
    &\leq e^{\frac{\Re(\beta)}{2}c}e^{\frac{\gamma}{2}(1-\epsilon)c}2^\frac{(k+1)b^2}{4}e^{\frac{\epsilon}{2}c}|\E_{\tilde{\varphi}}\Big[\Big((Y_k+\delta Y_k)e^{-\mu_\partial e^{\frac{\gamma}{2}c}(Y_k+\delta Y_k)}-Y_k e^{-\mu_\partial e^{\frac{\gamma}{2}c}Y_k}\Big)e^{-\mu  e^{\gamma c}M_{\gamma, g}(\phi_g+G_{m,k+1}(x,s),\H)}\Big]|
\end{align*}
 As in \cite[Proposition C.1.]{GKRV}, after using Girsanov theorem, we are left to control
 \begin{align}
     e^{\frac{\gamma}{2}c\epsilon}|\E_{\tilde{\varphi}}\Big[(Y_k+\delta Y_k)e^{-\mu_\partial e^{\frac{\gamma}{2}c} (Y_k+\delta Y_k)}-Y_k e^{-\mu_\partial e^{\frac{\gamma}{2}c} Y_k}\Big]|
 \end{align}
in the case $\delta Y_k> 1$,
\begin{align*}
    &e^{\frac{\epsilon}{2}c}\E_{\tilde{\varphi}}\Big[\indic_{\delta Y_k> 1}|(Y_k+\delta Y_k)e^{-\mu_\partial e^{\frac{\gamma}{2}c }(Y_k+\delta Y_k)}-Y_k e^{-\mu_\partial e^{\frac{\gamma}{2}c} Y_k}|\Big]\\
    \leq &C_1 \E_{\tilde{\varphi}}\Big[\indic_{\delta Y_k>1}(Y_k+\delta Y_k)^{1-\epsilon}e^{-\frac{\mu_\partial}{2} e^{\frac{\gamma}{2}c }Y_k}\Big]+C_2 E_{\tilde{\varphi}}\Big[\indic_{\delta Y_k>1}Y_k^{1-\epsilon}e^{-\frac{\mu_\partial}{2} e^{\frac{\gamma}{2}c }Y_k}\Big]\\
    \leq& C_1\E_{\tilde{\varphi}}[(Y_k+\delta Y_k)]^{1-\epsilon}\E_{\tilde{\varphi}}[\indic_{\delta Y_k>1}e^{-\frac{\mu_\partial}{2\epsilon} e^{\frac{\gamma}{2}c }Y_k}]^{\epsilon}+ C_2\E_{\tilde{\varphi}}[(Y_k)]^{1-\epsilon}\E_{\tilde{\varphi}}[\indic_{\delta Y_k>1}e^{-\frac{\mu_\partial}{2\epsilon} e^{\frac{\gamma}{2}c }Y_k}]^{\epsilon}\\
    \leq & C  \E_{\tilde{\varphi}}[(\delta Y_k)^{\eta}]^\epsilon \E_{\tilde{\varphi}}[e^{-\frac{\mu_\partial}{2\epsilon} e^{\frac{\gamma}{2}c }Y_k}]^{\epsilon}
\end{align*}
From the third line to the forth line, we use when $\Re(\beta)<\frac{2}{\gamma}$, the expectation of $(Y_k+\delta Y_k)$ is finite.

For the case $\delta Y_k \leq 1$,
\begin{align*}
   &e^{\frac{\epsilon}{2}c}\E_{\tilde{\varphi}}\Big[\indic_{\delta Y_k\leq 1}|(Y_k+\delta Y_k)e^{-\mu_\partial e^{\frac{\gamma}{2}c }(Y_k+\delta Y_k)}-Y_k e^{-\mu_\partial e^{\frac{\gamma}{2}c} Y_k}|\Big]\\
  \leq  & e^{\frac{\epsilon}{2}c}\E_{\tilde{\varphi}}\Big[\indic_{\delta Y_k\leq 1}\delta Y_k e^{-\mu_\partial e^{\frac{\gamma}{2}c }Y_k}\Big]+ e^{\frac{\epsilon}{2}c}\mu_\partial e^{\frac{\gamma}{2}c}\E_{\tilde{\varphi}}\Big[\indic_{\delta Y_k\leq 1}Y_k \delta Y_k e^{-\mu_\partial e^{\frac{\gamma}{2}c }Y_k}\Big]\\
    \leq & C e^{\frac{\epsilon}{2}c}\E_{\tilde{\varphi}}\Big[\indic_{\delta Y_k\leq 1}\delta Y_k e^{-\frac{\mu_\partial}{2} e^{\frac{\gamma}{2}c }Y_k}\Big] \text{ ( since } e^{\frac{\gamma}{2}c }Y_ke^{-\frac{\mu_\partial}{2} e^{\frac{\gamma}{2}c }Y_k}\leq C)\\
   \leq & C e^{\frac{\epsilon}{2}c}\E_{\tilde{\varphi}}\Big[(\delta Y_k)^{\eta}\Big]\E_{\tilde{\varphi}}\Big[e^{-\frac{\mu_\partial}{2} e^{\frac{\gamma}{2}c }Y_k}\Big] \text{(here $\eta\in (0,1)$ and use FKG inequality)}
\end{align*}
Now we choose $\eta$ as in \cite[Proposition C.1]{GKRV}, we prove the locally uniform convergence.
\end{proof}
From the above lemma, we know $\beta\to\Big(\partial_{\mu_{\partial}}\mathcal{A}^{m}_{g_{\mathbb{A}},\mathbb{A}^1_{\sqrt{q}},b,\beta}\Big)(\Psi_{Q+iP,\nu,\tilde{\nu}})$ is analytic in a complex neighbourhood of  $0$. 
 So the LHS of \eqref{derivative} has the same $\beta\to 0$ limit with RHS, which is 
$-\int_{\T}\mathcal{A}^{m}_{g_{\mathbb{A}},\mathbb{A}^1_{\sqrt{q}},b,\gamma}(\Psi_{Q+iP,\nu,\tilde{\nu}})\dd b$.

Now we analysis the LHS, by Ward's identity \eqref{diskdescendant} and take the derivative of $\mu_\partial$
$$LHS=C_{\D}\partial_{\mu_\partial}G(Q+iP,\beta)w_{\mathbb{A}}(\Delta_{\beta},\Delta_{Q+iP},
 \nu,\tilde{\nu} )|q |^{-{c}_{\rm L}/24+\Delta_{Q+ip}}  (\frac{\sqrt{q}}{b})^{|\nu|} (\sqrt{q}b)^{|\tilde{\nu}|}$$
In the case of $\nu=\tilde{\nu}=\emptyset$, we have $\lim_{\beta\to 0}\partial_{\mu_\partial}\Big(\caA^m_{g_{\A},\A^1_{\sqrt{q}},b,\beta}(\Psi_{Q+iP})\Big)=\lim_{\beta\to 0}(\partial_{\mu_\partial}\caA^m_{g_{\A},\A^1_{\sqrt{q}},b,\beta})(\Psi_{Q+iP})=-2\pi\caA^m_{g_{\A},\A^1_{\sqrt{q}},b,\gamma}(\Psi_{Q+iP})$, so $\lim_{\beta\to0}\partial_{\mu_\partial}G(Q+iP,\beta)=-2\pi G(Q+iP,\gamma)$. 
 
 We know $\lim_{\beta\to 0}w_{\mathbb{A}}(\Delta_{\beta},\Delta_{Q+iP},
 \nu,\tilde{\nu} )$ exists since it's a polynomial of $\Delta_{\beta}$,
 compare it with 
 \begin{align}\label{anotherproof}
\int_{\T}\mathcal{A}^{m}_{g_{\mathbb{A}},\mathbb{A}^1_{\sqrt{q}},b,\gamma}(\Psi_{Q+iP,\nu,\tilde{\nu}})\dd b=&C_{\D}G(Q+iP,\gamma)w_{\mathbb{A}}(\Delta_{\gamma},\Delta_{Q+iP},
 \nu,\tilde{\nu} )|q |^{-{c}_{\rm L}/24+\Delta_{Q+ip}} \int_{\T} (\frac{\sqrt{q}}{b})^{|\nu|} (\sqrt{q}b)^{|\tilde{\nu}|} \dd b\nonumber\\
 =&C_{\D}G(Q+iP,\gamma)w_{\mathbb{A}}(\Delta_{\gamma},\Delta_{Q+iP},
 \nu,\tilde{\nu} )|q |^{-{c}_{\rm L}/24+\Delta_{Q+ip}} q^{|\nu|}\mathbf{1}_{|\nu|=|\tilde{\nu}|}
 \end{align}
 change to the orthogonal basis, we know when $|\nu|=|\tilde{\nu}|$ $$\mathcal{W}_{\mathbb{A}}(\Delta_{\gamma},\Delta_{Q+iP},
 \nu,\tilde{\nu} )=\lim_{\beta\to 0}\mathcal{W}_{\mathbb{A}}(\Delta_{\beta},\Delta_{Q+iP},
 \nu,\tilde{\nu} )=\mathbf{1}.$$
 So we get an explicit formula for torus 1-point block with weight $\gamma$.
 \begin{proposition}\label{torus 1-point block}
  \begin{align}
     \mathcal{F}(\Delta_{\gamma},\Delta_{Q+iP},q)=\sum^{\infty}_{n=0}\Big(\sum_{|\nu|=n}\mathcal{W}_{\mathbb{A}}(\Delta_{\gamma},\Delta_{Q+iP},
 \nu,\nu )\Big)(q^2)^n=\sum^{\infty}_{n=0}P(n)(q^2)^n=\frac{q^{\frac{1}{12}}}{\eta(q^2)}.
 \end{align}

 \end{proposition}

 In the last equality, we use $$\frac{q^{\frac{1}{24}}}{\eta(q)}=
\sum_{n=0}^{\infty} P(n) q^{n} =\left(1+q+q^{2}+\ldots\right)\left(1+q^{2}+q^{4}+\ldots\right)\left(1+q^{3}+\ldots\right) \cdots 
=\prod_{n=1}^{\infty} \frac{1}{\left(1-q^{n}\right)}$$
with this observation, we can also get a formula for annulus 2-point block with both boundary components weighted $\gamma$.
\begin{remark}
For the case $|\nu|\neq |\tilde{\nu}|$, we get another proof of $\lim_{\beta\to 0}\mathcal{W}_{\mathbb{A}}(\Delta_{\beta},\Delta_{Q+iP},
 \nu,\tilde{\nu} )=0 $ from the formula \eqref{anotherproof}. But we don't know the value of $\mathcal{W}_{\mathbb{A}}(\Delta_{\gamma},\Delta_{Q+iP},
 \nu,\tilde{\nu} )$ when $|\nu|\neq |\tilde{\nu}|$.
\end{remark}
\subsection{Proof of theorem \texorpdfstring{\eqref{gammainsertion}}{gamma}}
We start from a lemma, which confirms the $\mu_\partial$ derivative of FZZ formula coincides with our gluing construction of bulk-boundary correlator with boundary insertion $\gamma$.
\begin{lemma}\label{relation of UG}
For $\mu\geq 0$ and $\mu_\partial>0$, we have $\partial_{\mu_\partial} U_{\rm FZZ}(Q+iP)=-2\pi G(Q+iP,\gamma)$.
\end{lemma}
\begin{proof}
We define two maps $F_1:\alpha\to\partial_{\mu_\partial}U_{FZZ}(\alpha)$ and $F_2:\alpha\to -2\pi G(\alpha,\gamma)$. We know $F_1=F_2$ on $\alpha\in(Q-\frac{\gamma}{2},Q) $ and both $F_1$ and $F_2$ are analytic in a connected component contains both $\{\alpha=Q+i\R_+\}$ and $\{\alpha\in(Q-\frac{\gamma}{2},Q)\}$ (the claim on analyticity of $F_1$ follows from the explicit FZZ formula).
\end{proof} 
\noindent when we put different boundary cosmology constants $\mu_{B_1}$, $\mu_{B_2}$ on $\partial\mathbb{A}^{out}$ and $\partial\mathbb{A}^{in}$ respectively, we have
\begin{proposition}
\label{proof of gamma insertion}
For $\mu\geq 0$ and $\mu_{\partial}>0$
$$\partial_{\mu_{B_1}} \partial_{\mu_{B_2}}\langle 1\rangle^{\mathbb{A}^1_{q}}_{g_{\mathbb{A}},\mu, \mu_{\partial}, }=\frac{C_{\D}^2}{2\sqrt{\pi}}\int_{\R_{+}}\partial_{\mu_{B_1}}U_{\rm FZZ}^{\mu_{B_1}}(Q+iP)\partial_{\mu_{B_2}}U_{\rm FZZ}^{\mu_{B_2}}(Q-iP)\frac{q^{\frac{P^2}{2}}}{\eta(q^2)} dP $$
\end{proposition}
\begin{proof}
We write the LHS as \begin{align*}
   \int  \Big(\int_{\T}\mathcal{A}^{m,\mu_{B_1}}_{g_{\mathbb{A}},{\mathbb{A}}^1_{\sqrt{q}},s,\gamma}(\tilde{\bm{\varphi}})\dd s\Big)\times \Big(\int_{\T}\mathcal{A}^{m,\mu_{B_2}}_{g_{\mathbb{A}},{\mathbb{A}}^1_{\sqrt{q}},s,\gamma}(\tilde{\bm{\varphi}})\dd s\Big)\dd\mu_0(\tilde{\bm{\varphi}})
\end{align*}
By \eqref{nicefunction}, we know $\int_{\T}\mathcal{A}^{m,\mu_{B_2}}_{g_{\mathbb{A}},{\mathbb{A}}^1_{\sqrt{q}},s,\gamma}(\tilde{\bm{\varphi}})\dd s\in e^{\epsilon c_-}L^2$ for $\epsilon>0$.
Then we apply the spectrum resolution and use the previous lemma, the conformal block is given by
  \begin{align}
     \sum^{\infty}_{n=0}\Big(\sum_{|\nu|=|\tilde{\nu}|=n}\mathcal{W}_{\mathbb{A}}(\Delta_{\gamma},\Delta_{Q+iP},
 \nu,\tilde{\nu })\mathcal{W}_{\mathbb{A}}(\Delta_{\gamma},\Delta_{Q+iP},\nu,
 \tilde{\nu} )\Big)(q^2)^n=\sum^{\infty}_{n=0}P(n)(q^2)^n=\frac{q^{\frac{1}{12}}}{\eta(q^2)}.
 \end{align}
\end{proof}


In \cite{GRSS}, the authors expresses the torus 1-point conformal block as a GMC integral to analyse the analytic property of conformal block. More precisely, for $\alpha \in\left(-\frac{4}{\gamma}, Q\right), q \in(0,1)$, and $P \in \mathbb{R}$, they define the probabilistic 1-point toric conformal block with $\alpha$-weight insertion by
$$
\mathcal{G}_{\gamma, P}^{\alpha}(q):=\frac{1}{Z} \mathbb{E}\left[\left(\int_{0}^{1} \Theta_{\tau}(x)^{-\frac{\alpha \gamma}{2}} e^{\pi \gamma P x} e^{\frac{\gamma}{2} Y_{\tau}(x)} d x\right)^{-\frac{\alpha}{\gamma}}\right]
$$
Where $Z$ is a renormalisation constant s.t. $\lim _{q \rightarrow 0} \mathcal{G}_{\gamma, P}^{\alpha}(q)=1$ and $\lim _{P \rightarrow+\infty} \mathcal{G}_{\gamma, P}^{\alpha}(q)=1$. We won't dive into the definition of these terms and refer interested readers to the original paper. The main result \cite{GRSS} is that the above satisfies the Zamolodchikov's recursion, this also algebraically implies the GMC construction of torus 1-point conformal block coincides with the conformal block defined in \cite{GKRV21}. For our interest, we take $\alpha=\gamma$, then we get $\frac{1}{Z} \mathbb{E}\left[\left(\int_{0}^{1} \Theta_{\tau}(x)^{-\frac{ \gamma^2}{2}} e^{\pi \gamma P x} e^{\frac{\gamma}{2} Y_{\tau}(x)} d x\right)^{-1}\right]$.
In this section we actually provided a nice formula about the GMC integral.
\begin{proposition}
\begin{align}
    \frac{1}{Z} \mathbb{E}\left[\left(\int_{0}^{1} \Theta_{\tau}(x)^{-\frac{ \gamma^2}{2}} e^{\pi \gamma P x} e^{\frac{\gamma}{2} Y_{\tau}(x)} d x\right)^{-1}\right]=\frac{q^{\frac{1}{12}}}{\eta(q^2)}
\end{align}
\end{proposition}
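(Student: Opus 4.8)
The plan is to leverage the identification between the probabilistic torus $1$-point conformal block of \cite{GRSS} and the conformal block $\mathcal{F}(\Delta_{\gamma},\Delta_{Q+iP},q)$ appearing in this paper, which we have just computed explicitly in Theorem \eqref{gammainsertion}. The statement to prove is a special case $\alpha=\gamma$ of the GMC formula; once the two blocks are matched, the result reduces to the already-established identity $\mathcal{F}(\Delta_{\gamma},\Delta_{Q+iP},q)=\frac{q^{1/12}}{\eta(q^2)}$.

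First I would recall that, by the main result of \cite{GRSS}, the probabilistic block $\mathcal{G}^{\alpha}_{\gamma,P}(q)$ satisfies Zamolodchikov's recursion and hence coincides coefficient-by-coefficient (as a power series in $q$) with the Virasoro torus $1$-point block constructed in \cite{GKRV21}. Our normalization convention (constant term equal to $1$, see the footnote to \eqref{definition of torus block}) matches the normalization $\lim_{q\to 0}\mathcal{G}^{\alpha}_{\gamma,P}(q)=1$ imposed in \cite{GRSS}, so the two series are literally equal:
\begin{equation*}
\mathcal{G}^{\alpha}_{\gamma,P}(q)=\mathcal{F}(\Delta_{\alpha},\Delta_{Q+iP},q^{1/2})
\end{equation*}
up to the substitution relating their modular parameter $\tau$ (with $q_{\mathrm{GRSS}}=e^{2\pi i \tau}$) to our $q$; I would fix this normalization of variables carefully at the outset, since the only genuine bookkeeping here is to ensure that the power of $q$ in which $\eta$ is evaluated comes out as $q^2$ rather than $q$. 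Specializing to $\alpha=\gamma$ and invoking Corollary \eqref{w coefficient} together with the computation
\begin{equation*}
\mathcal{F}(\Delta_{\gamma},\Delta_{Q+iP},q)=\sum_{n=0}^{\infty}P(n)(q^2)^n=\frac{q^{1/12}}{\eta(q^2)},
\end{equation*}
which was established in Section \eqref{gamma insertion proof}, I would then read off the desired identity directly.

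The key nontrivial input is the degeneration computation $\mathcal{W}_{\mathbb{A}}(\Delta_{\gamma},\Delta_{Q+iP},\nu,\nu)=1$ for all $\nu$, obtained via the probabilistic argument through $\partial_{\mu_\partial}$ of the amplitude and Proposition \eqref{derivative}; this is precisely what collapses the block to a pure partition-function generating series $\sum_n P(n)(q^2)^n$. The only remaining obstacle is verifying that the variable $\tau$ of the GMC integral in \cite{GRSS} corresponds to our annulus modulus in the way that produces $\eta(q^2)$: the GMC block is defined in terms of the torus $\mathbb{T}$ with nome $q_{\mathrm{GRSS}}$, whereas our annulus blocks enter through \eqref{annulustwopointtorustwopoint} with a squaring of the modulus, so one must track this squaring (which originates from gluing two copies of the $\mathbb{A}^1_{\sqrt{q}}$ amplitude) to land on $\eta(q^2)$. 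I expect this modular-parameter matching to be the main point requiring care, while the rest is a direct substitution using results already proved.
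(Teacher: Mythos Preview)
Your proposal is correct and follows essentially the same approach as the paper: identify the GMC expression $\mathcal{G}^{\gamma}_{\gamma,P}$ with the torus $1$-point block via the Zamolodchikov-recursion result of \cite{GRSS}, then invoke the computation $\mathcal{F}(\Delta_{\gamma},\Delta_{Q+iP},q)=q^{1/12}/\eta(q^2)$ just obtained from the degeneration $\mathcal{W}_{\mathbb{A}}(\Delta_{\gamma},\Delta_{Q+iP},\nu,\tilde\nu)=1_{\nu=\tilde\nu}$. The paper presents the proposition as an immediate corollary of these two ingredients without spelling out the modular-parameter bookkeeping you flag, so your caution about matching $q$ versus $q^2$ is appropriate but does not reflect a divergence in method.
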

The left hand side doesn't depend on $P$ and $\gamma$, one interesting question is to understand this phenomenon by some path decomposition like in \cite{CHY}.

\section{Proof of the one-point conformal bootstrap formula}
\subsection{A weaker version of conformal bootstrap formula}

We notice that marking one boundary insertion is enough to make sure the correlation function converges. For $q\in\partial^{in}\mathbb{A}_q^1$ with $\beta$-weight insertion, if we still start from $$\mathcal{A}^N_{g_{\mathbb{A}},\mathbb{A}^1_{q},q,\beta}=\frac{1}{2\sqrt{\pi}}\int  \mathcal{A}^m_{g_{\mathbb{A}},{\mathbb{A}}^1_{\sqrt{q}}}(\tilde{\bm{\varphi}})\times \mathcal{A}^m_{g_{\mathbb{A}},{\mathbb{A}}^{\sqrt{q}}_q ,q,\beta}(\tilde{\bm{\varphi}})\dd\mu_0(\tilde{\bm{\varphi}})$$
Then due to $\mathcal{A}^m_{g_{\mathbb{A}},{\mathbb{A}}_{\sqrt{q}}^1 }(\tilde{\bm{\varphi}})$ is not in $L^2(\R\times\Omega_{\T})$, the standard Plancherel type formula no longer holds, and we can not define the conformal block as the Fourier transform of the amplitudes any more. In this section, we first prove a weaker version of conformal bootstrap formula 
\begin{proposition}
\label{weaker bootstrap}
For $\mu\geq 0$, $\mu_{B_1},\mu_{B_2}>0$,  $q\in\partial^{in}\mathbb{A}^1_{q}$ and $\beta\in(0,Q)$, we have
$$\partial_{\mu_{B_1}}\langle V_{\frac{\beta}{2}}(q)\rangle^{\mathbb{A}^1_q}_{g_{\mathbb{A}}}=\frac{C_{\D}^2}{2\sqrt{\pi}}q^{-\frac{1}{12}}\int_{\R^{+}}\partial_{\mu_{B_1}}U^{\mu_{B_1}}_{\rm FZZ}(Q+iP)G^{\mu_{B_2}}(Q-iP,\beta)q^{\frac{P^2}{2}}\mathcal{F}^{\T}(\Delta_{\beta},\Delta_{Q+iP},q^2)dP$$
where 
\end{proposition}

\begin{proof}
First we differentiate $\mu_{B_1}$ on both side of 
$$\mathcal{A}^N_{g_{\mathbb{A}},\mathbb{A}^1_{q},q,\beta}=\frac{1}{2\sqrt{\pi}}\int  \mathcal{A}^m_{g_{\mathbb{A}},{\mathbb{A}}^1_{\sqrt{q}}}(\tilde{\bm{\varphi}})\times \mathcal{A}^m_{g_{\mathbb{A}},{\mathbb{A}}^{\sqrt{q}}_q ,q,\beta}(\tilde{\bm{\varphi}})\dd\mu_0(\tilde{\bm{\varphi}})$$
then by dominate convergence
$$\partial_{\mu_{B_1}}\mathcal{A}^N_{g_{\mathbb{A}},\mathbb{A}^1_{q},q,\beta}=-\frac{1}{2\sqrt{\pi}}\int  \Big(\int_{\T}\mathcal{A}^m_{g_{\mathbb{A}},{\mathbb{A}}^1_{\sqrt{q}},s,\gamma}(\tilde{\bm{\varphi}})\dd s\Big)\times \mathcal{A}^m_{g_{\mathbb{A}},{\mathbb{A}}^{\sqrt{q}}_q ,q,\beta}(\tilde{\bm{\varphi}})\dd\mu_0(\tilde{\bm{\varphi}})$$
Since $\int_{\T}\mathcal{A}^m_{g_{\mathbb{A}},{\mathbb{A}}^1_{\sqrt{q}},s,\gamma}(\tilde{\bm{\varphi}})\dd s\in e^{\epsilon c_-}L^2$ for some $\epsilon >0$, we can apply the spectrum resolution and use the disk descendant \eqref{diskdescendant}
$$\sqrt{\pi}\partial_{\mu_{B_1}}\mathcal{A}^N_{g_{\mathbb{A}},\mathbb{A}^1_{q},q,\beta}=-\int_{\R_+}2\pi  G^{\mu_{B_1}}(Q+iP,\gamma)G^{\mu_{B_2}}(Q-iP,\beta)q^{\frac{P^2}{2}}\mathcal{F}^w(\Delta_{\gamma},\Delta_{\beta},\Delta_{Q+iP},q)\dd P$$
where 
\begin{align}\label{annulus2pointblock}
   \mathcal{F}^w(\Delta_{\gamma},\Delta_{\beta},\Delta_{Q+iP},q):=\sum^{\infty}_{n=0}\Big(\sum_{|\nu|=|\tilde{\nu}|=n}\mathcal{W}_{\mathbb{A}}(\Delta_{\beta},\Delta_{Q+iP},
 \nu,\tilde{\nu} )\mathcal{W}_{\mathbb{A}}(\Delta_{\gamma},\Delta_{Q+iP},
 \nu,\tilde{\nu} )\Big)(q)^{2n}
\end{align}
We use $\mathcal{W}_{\mathbb{T}}(\Delta_{\gamma},\Delta_{Q+iP},\nu,\tilde{\nu} )= 1$ when $|\nu|=|\tilde{\nu}|$, so 
 $$\mathcal{F}^w(\Delta_{\gamma},\Delta_{\beta},\Delta_{Q+iP},q)=\mathcal{F}^{\T}(\Delta_{\beta},\Delta_{Q+iP},q^2)$$
Then we identify $ \partial_{\mu_\partial} U_{\rm FZZ}(Q+iP)=-2\pi G(Q+iP,\gamma)$ by lemma \eqref{relation of UG}, which finishes the proof.
\end{proof}

\subsection{Proof of the theorem \texorpdfstring{\eqref{one point bootstrap}}{one point}}\label{proof of 12}

In this subsection, we will remove the $\partial_{\mu_{B_1}}$ of both sides in the weaker bootstrap formula \eqref{weaker bootstrap}, and finish the proof of \eqref{one point bootstrap}.

First we recall the explicit FZZ formula \eqref{fzz}, We have for $\mu^*_{B_1}>\delta>0$
$$\langle V_{\frac{\beta}{2}}(q)\rangle^{\mathbb{A}^1_q}_{g_{\mathbb{A}},\mu^*_{B_1},\mu_{B_2}}=-\int^{\infty}_{\mu^*_{B_1}}\partial_{\mu_{B_1}}\langle V_{\frac{\beta}{2}}(q)\rangle^{\mathbb{A}^1_q}_{g_{\mathbb{A}}} \dd\mu_{B_1}=-\lim_{L\to\infty}\int^{L}_{\mu^*_{B_1}}\partial_{\mu_{B_1}}\langle V_{\frac{\beta}{2}}(q)\rangle^{\mathbb{A}^1_q}_{g_{\mathbb{A}}} \dd\mu_{B_1}$$
To exchange the order of $\mu_{B_1}$ integral and $P$ integral, we need the following proposition
\begin{proposition}
We have $\int_{\R_{+}}\int^{L}_{\mu^*_{B_1}}|\partial_{\mu_{B_1}}U^{\mu_{B_1}}_{\rm FZZ}(Q+iP)G^{\mu_{B_2}}(Q-iP,\beta)q^{\frac{P^2}{2}}\mathcal{F}^{w}(\Delta_{\gamma},\Delta_{\beta},\Delta_{Q+iP},q)|\dd\mu_{B_1}\dd P\leq C$
\end{proposition}
\begin{proof}
Direct computation shows (here we define $A(P)=:\frac{4}{\gamma} \left({\pi \mu} \frac{\Gamma\left(\frac{\gamma^{2}}{4}\right)}{\Gamma\left(1-\frac{\gamma^{2}}{4}\right)}\right)^{-\frac{iP}{\gamma}} \Gamma\left(1+i\frac{\gamma P}{2}\right) \Gamma\left(i\frac{2P}{\gamma}\right) $)
$$U^{\mu_{B_1}}_{\rm FZZ}(Q+iP)=A(P) \cos (iP \pi s)$$ and $$\partial_{\mu_{B_1}}U^{\mu_{B_1}}_{\rm FZZ}(Q+iP)=A(P) iP\pi\sin (iP \pi s)\frac{\frac{1}{\sqrt{\mu}}\sqrt{\sin \frac{\pi \gamma^{2}}{4}}}{\frac{\gamma\pi}{2}\sqrt{1-\frac{\mu_{B_1}^2}{\mu}\sin \frac{\pi \gamma^{2}}{4}}}=A(P) iP\pi\sin (iP \pi s)\frac{\frac{1}{\sqrt{\mu}}\sqrt{\sin \frac{\pi \gamma^{2}}{4}}}{\frac{\gamma\pi}{2}\sin(\frac{\pi\gamma s}{2})}$$
Then it's easy to see when $s\to 0$, $s\to\partial_{\mu_{B_1}}U^{\mu_{B_1}}_{\rm FZZ}(Q+iP)$ has no singularity. Without loss of generality, we can suppose $\frac{{\mu^*_{B_1}}^{2}}{\mu} \sin \frac{\pi \gamma^{2}}{4} < 1 <\frac{L^{2}}{\mu} \sin \frac{\pi \gamma^{2}}{4} $. So 
\begin{align*}
    \int^{L}_{\mu^*_{B_1}}|\partial_{\mu_{B_1}}U^{\mu_{B_1}}_{\rm FZZ}(Q+iP)|\dd\mu_{B_1}=&\int^{\sqrt{\frac{\mu}{\sin \frac{\pi \gamma^{2}}{4}}}}_{\mu^*_{B_1}}|\partial_{\mu_{B_1}}U^{\mu_{B_1}}_{\rm FZZ}(Q+iP)|\dd\mu_{B_1}+\int_{\sqrt{\frac{\mu}{\sin \frac{\pi \gamma^{2}}{4}}}}^L |\partial_{\mu_{B_1}}U^{\mu_{B_1}}_{\rm FZZ}(Q+iP)|\dd\mu_{B_1}\\&\leq C_1 |A(P)|e^{C_2 P}+C_3|A(P)|\ln(L)
\end{align*} 
For universal constants $C_1, C_2, C_3>0$.
Now we use the estimation of Gamma function \eqref{estimate gamma}, we see that for fixed $L$,
this term grows like $e^{C P}$ when $P$ large enough. Later we will see we can remove an exponential decay $q_0^{\frac{P^2}{2}}$ from the conformal block to control this. We also notice that $A(P)$ has a simple pole at $P=0$. We need to define truncation functions $f(P)$ and $g(P)$ as following
$$f(P) =\left\{\begin{array}{lll}
P & \text { when } & P\in (0,1] \\
1 & \text { when} & P\in (1,+\infty)
\end{array}\right.$$ and
$$g(P) =\left\{\begin{array}{lll}
\frac{1}{P}& \text { when } & P\in (0,1] \\
1 & \text { when} & P\in (1,+\infty)
\end{array}\right.$$
By using $f(P)g(P)=1$ and Cauchy-Schwartz inequality we have
\begin{align*}
    &\int_{\R_{+}}\int^{L}_{\mu^*_{B_1}}|\partial_{\mu_{B_1}}U^{\mu_{B_1}}_{\rm FZZ}(Q+iP)|\dd\mu_{B_1}\times |G^{\mu_{B_2}}(Q-iP,\beta)q^{\frac{P^2}{2}}\mathcal{F}^{w}(\Delta_{\gamma},\Delta_{\beta},\Delta_{Q+iP},q)|\dd P\\
    =&\int_{\R_{+}}\int^{L}_{\mu^*_{B_1}}|\partial_{\mu_{B_1}}U^{\mu_{B_1}}_{\rm FZZ}(Q+iP)|\dd\mu_{B_1} |G^{\mu_{B_2}}(Q-iP,\beta)\langle \int_\T\mathcal{B}^m_{g_{\mathbb{A}},\mathbb{A}^{1}_{\sqrt{q}},b_1,\gamma}(P,\cdot,\cdot)\dd b_1\mid  \overline{\mathcal{B}^m_{g_{\mathbb{A}},\mathbb{A}^{1}_{\sqrt{q}},b_2,\beta}}(P,\cdot,\cdot)\rangle_{\rm HS}|\dd P\\\leq&\int_{\R_{+}}\int^{L}_{\mu^*_{B_1}}|\partial_{\mu_{B_1}}U^{\mu_{B_1}}_{\rm FZZ}(Q+iP)|\dd\mu_{B_1} |G^{\mu_{B_2}}(Q-iP,\beta)|| \int_\T\mathcal{B}^m_{g_{\mathbb{A}},\mathbb{A}^{1}_{\sqrt{q}},b_1,\gamma}(P,\cdot,\cdot)\dd b_1||_{\rm HS} ||\mathcal{B}^m_{g_{\mathbb{A}},\mathbb{A}^{1}_{\sqrt{q}},b_2,\beta}(P,\cdot,\cdot)||_{\rm HS}\dd P\\ \leq&\Big(\int_{\R_{+}}|\int^{L}_{\mu^*_{B_1}}|f(P)\partial_{\mu_{B_1}}U^{\mu_{B_1}}_{\rm FZZ}(Q+iP)|\dd\mu_{B_1}|^2 || \int_\T\mathcal{B}^m_{g_{\mathbb{A}},\mathbb{A}^{1}_{\sqrt{q}},b_1,\gamma}(P,\cdot,\cdot)\dd b_1||^2_{\rm HS} \dd P\Big)^{\frac{1}{2}}\times\\&\Big(\int_{\R_{+}} |g(P)G^{\mu_{B_2}}(Q-iP,\beta)|^2||\mathcal{B}^m_{g_{\mathbb{A}},\mathbb{A}^{1}_{\sqrt{q}},b_2,\beta}(P,\cdot,\cdot)||^2_{\rm HS}\dd P\Big)^{\frac{1}{2}}:=(I_{\gamma})^{\frac{1}{2}}\times (I_{\beta})^{\frac{1}{2}}
\end{align*}
The fact $I_{\gamma}<\infty$ follows from the above estimation and the explicit formula of $ ||\int_\T \mathcal{B}^m_{g_{\mathbb{A}},\mathbb{A}^{1}_{\sqrt{q}},b_1,\gamma}(P,\cdot,\cdot)\dd b_1||^2_{\rm HS}=q^{\frac{P^2}{2}}\frac{q^{\frac{1}{12}}}{\eta(q)} $. Now we prove $I_{\beta}$ is also finite
\begin{lemma}
 For fixed $\beta\in(0,Q)$, $I_{\beta}<\infty$.
\end{lemma}
\begin{proof}
We separate $I_{\beta}$ into two parts, the first part contains $P$ integral from $0$ to $1$ and the second part contains $P$ integral from $1$ to $\infty$, note as $I_{\beta}^{(1)}$ and $I_{\beta}^{(2)}$ respectively.

We know $I_{\beta}^{(2)}<\int_{\R_{+}} |G^{\mu_{B_2}}(Q-iP,\beta)|^2||\mathcal{B}^m_{g_{\mathbb{A}},\mathbb{A}^{1}_{\sqrt{q}},b_2,\beta}(P,\cdot,\cdot)||^2_{\rm HS}\dd P=||\mathcal{A}_{g_{\mathbb{A}},\mathbb{A}^{1}_{\sqrt{q}},b_2,\beta}||_{L^2}\leq C$.

For $I_{\beta}^{(1)}$, first we know $|g(P)G^{\mu_{B_2}}(Q-iP,\beta)|\leq C$ by proposition \eqref{bbestimate1}. We know the torus one-point block is bounded near $P=0$ by \cite{GRSS}, then we can deduce
$$ ||\mathcal{B}^m_{g_{\mathbb{A}},\mathbb{A}^{1}_{\sqrt{q}},b_2,\beta}(P,\cdot,\cdot)||^2_{\rm HS}\leq ||\mathcal{B}^D_{g_{\mathbb{A}},\mathbb{A}^{\frac{1}{\sqrt{q}}}_{\sqrt{q}},b_2,\beta}(P,P,\cdot,\cdot)||^2_{\rm tr}\leq C$$
so we conclude $I_{\beta}^{(1)}$ is finite.
\end{proof}

Now we can use the Fubini-Tonelli theorem, recall that
$$\mathcal{F}^w(\Delta_{\gamma},\Delta_{\beta},\Delta_{Q+iP},q)=\mathcal{F}^{\T}(\Delta_{\beta},\Delta_{Q+iP},q^2)$$
\begin{align*}
    &\int^{L}_{\mu^*_{B_1}}\int_{\R_{+}}\partial_{\mu_{B_1}}U^{\mu_{B_1}}_{\rm FZZ}(Q+iP)G^{\mu_{B_2}}(Q-iP,\beta)q^{\frac{P^2}{2}}\mathcal{F}^{\T}(\Delta_{\beta},\Delta_{Q+iP},q^2)\dd P\dd\mu_{B_1}\\=&\int_{\R_{+}}\int^{L}_{\mu^*_{B_1}}\partial_{\mu_{B_1}}U^{\mu_{B_1}}_{\rm FZZ}(Q+iP)G^{\mu_{B_2}}(Q-iP,\beta)q^{\frac{P^2}{2}}\mathcal{F}^{\T}(\Delta_{\beta},\Delta_{Q+iP},q^2)\dd\mu_{B_1}\dd P\\=&\int_{\R_{+}}(U^{L}_{\rm FZZ}(Q+iP)-U^{\mu_{B_1*}}_{\rm FZZ}(Q+iP))G^{\mu_{B_2}}(Q-iP,\beta)q^{\frac{P^2}{2}}\mathcal{F}^{\T}(\Delta_{\beta},\Delta_{Q+iP},q^2)\dd P
\end{align*}
Note $S(P)=A(P)G^{\mu_{B_2}}(Q-iP,\beta)q^{\frac{P^2}{2}}\mathcal{F}^{\T}(\Delta_{\beta},\Delta_{Q+iP},q^2)$, as above we can prove $S(P)\in L^1(\R_{+})$, so by Riemann–Lebesgue lemma, we have 
$$\lim_{L\to \infty} \int_{\R_{+}}cos(iPs)S(P)\dd P=0$$
since when $L\to\infty$, $s$ goes to imaginary infinite.
\end{proof}
\section{The bosonic LQG partition function of the annulus}\label{annulus lqg}
In this section, we give an exact formula of LQG partition function in annulus topology as an application of previous result. Our goal is to integrate over the moduli parameter $q=e^{-2\pi\tau}\in(0,1)$ the partition function of LCFT coupling with bosonic matter field and ghost field, the  partition function of LQG is defined by (for a review of this part, we refer readers to \cite{GRV19})
$$
\mathcal{Z}^{bos}_{L Q G,\gamma}(\mathbb{A})=\int_{0}^{\infty}  \mathcal{Z}_{\text {Ghost }}(q,g_{\mathbb{A}}) \mathcal{Z}^{bos}_{\text {Matter }}(q,g_{\mathbb{A}}) \mathcal{Z}_{L C F T,\gamma}(q,g_{\mathbb{A}})d \tau
$$
where we have following expressions for the different partition functions,
$$
\begin{aligned}
&\mathcal{Z}_{G F F}(q,g_{\mathbb{A}})=\frac{1}{\sqrt{2}\eta(q^2)} \\
&\mathcal{Z}^{bos}_{\text {Matter }}(q,g_{\mathbb{A}})=\left(\mathcal{Z}_{G F F}\right)^{c_{m}}=(\frac{1}{\sqrt{2}\eta(q^2)})^{c_{m}} \\
&\mathcal{Z}_{\text {Ghost }}(q,g_{\mathbb{A}})=\eta(q^2)^2\\
&\mathcal{Z}_{L C F T,\gamma}(q,g_{\mathbb{A}})=\frac{C_{\D}^2}{2\sqrt{\pi}} \int_{\R^{+}}\partial_{\mu_{B_1}}U_{\rm FZZ}(Q+iP)\partial_{\mu_{B_2}}U_{\rm FZZ}(Q-iP)\frac{q^{\frac{P^2}{2}}}{\eta(q^2)} dP,
\end{aligned}
$$

The real constant $c_{m}$ is the central charge of the matter fields. It is linked to $Q$ by the following relation:
$$
c_{m}-25+6 Q^{2}=0
$$
\begin{proposition}\label{LQG partition}
For $\gamma\in(0,2)$, the partition function of the bosonic Liouville quantum gravity of annulus is given by 
$$
\mathcal{Z}^{bos}_{LQ G,\gamma}(\mathbb{A})=\frac{C_{\D}^2C_{\gamma}}{2^{\frac{c_m+3}{2}}\sqrt{\pi}}\int_{0}^{\infty}  \frac{P^2 sinh(\pi Ps_1)sinh(\pi Ps_2)}{sinh(\pi\frac{\gamma P}{2})sinh(\pi\frac{2P}{\gamma})}(\int_{0}^{\infty}q^{\frac{P^2}{2}}{\eta(q^2)^{6Q^2-24}}d\tau )dP 
$$
and $C_{\gamma}=\frac{16\pi^2 \sin(\frac{\pi\gamma^2}{4})}{\mu\gamma^2sin(\frac{\pi\gamma s_1}{2})\sin(\frac{\pi\gamma s_2}{2})}$
\end{proposition}

\begin{remark}
The $\gamma=2$ case corresponds to the $c_m=1$ bosonic string theory. When $\gamma=2$ the correlation functions are $0$, so we need an extra $\sqrt{\ln(\frac{1}{\epsilon})}$ multiplier to renormalize the GMC, see \cite{GRV19} for closed Riemann surfaces and \cite{Wu1} for Riemann surfaces with boundaries. One may wonder do we still have the bootstrap equation.
It is  possible to prove Segal's axiom for $\gamma=2$. The behavior of bosonic LQG partition function deeply relies on the analyticity of the conformal block on the spectrum parameter $P$, but this kind of analyticity result is not known except the torus 1-point block case in \cite{GRSS} and our result about annulus 2-point block \eqref{continuous2point}.
\end{remark}

\appendix
\section*{Appendix}

\section{Special functions}
{\bf The double gamma function}\\
We will now provide some explanations on the functions $\Gamma_{\frac{\gamma}{2}}(x)$ and $S_{\frac{\gamma}{2}}(x)$ that we have introduced. For all $\gamma \in (0,2) $ and for $\mathrm{Re}(x) >0$, $\Gamma_{\frac{\gamma}{2}}(x)$ is defined by the integral formula,
\begin{equation}
\ln \Gamma_{\frac{\gamma}{2}}(x) = \int_0^{\infty} \frac{dt}{t} \left[ \frac{ e^{-xt} -e^{- \frac{Qt}{2}}   }{(1 - e^{- \frac{\gamma t}{2}})(1 - e^{- \frac{2t}{\gamma}})} - \frac{( \frac{Q}{2} -x)^2 }{2}e^{-t} + \frac{ x -\frac{Q}{2}  }{t} \right],
\end{equation} 
where we have $Q = \frac{\gamma}{2} +\frac{2}{\gamma}$. Since the function $\Gamma_{\frac{\gamma}{2}}(x)$ is continuous it is completely determined by the following two shift equations
\begin{align}
\frac{\Gamma_{\frac{\gamma}{2}}(x)}{\Gamma_{\frac{\gamma}{2}}(x + \frac{\gamma}{2}) }&= \frac{1}{\sqrt{2 \pi}}
\Gamma(\frac{\gamma x}{2}) ( \frac{\gamma}{2} )^{ -\frac{\gamma x}{2} + \frac{1}{2}
}, \\
\frac{\Gamma_{\frac{\gamma}{2}}(x)}{\Gamma_{\frac{\gamma}{2}}(x + \frac{2}{\gamma}) }&= \frac{1}{\sqrt{2 \pi}} \Gamma(\frac{2
x}{\gamma}) ( \frac{\gamma}{2} )^{ \frac{2 x}{\gamma} - \frac{1}{2} },
\end{align}
and by its value in $\frac{Q}{2}$, $\Gamma_{\frac{\gamma}{2}}(\frac{Q}{2} ) =1$. Furthermore $x \mapsto \Gamma_{\frac{\gamma}{2}}(x)$ admits a meromorphic extension to all of $\mathbb{C}$ with single poles at $x = -n\frac{\gamma}{2}-m\frac{2}{\gamma}$  for any $n,m \in \mathbb{N}$ and $\Gamma_{\frac{\gamma}{2}}(x)$ is never equal to $0$.  \\

{\bf The DOZZ formula}\\
We set $l(z)=\frac{\Gamma (z)}{\Gamma (1-z)}$ where $\Gamma$ denotes the standard Gamma function. We introduce Zamolodchikov's special holomorphic function $\Upsilon_{\frac{\gamma}{2}}(z)$ by the following expression for $0<\Re (z)< Q$
\begin{equation}\label{def:upsilon}
\ln \Upsilon_{\frac{\gamma}{2}} (z)  = \int_{0}^\infty  \left ( \Big (\frac{Q}{2}-z \Big )^2  e^{-t}-  \frac{( \sinh( (\frac{Q}{2}-z )\frac{t}{2}  )   )^2}{\sinh (\frac{t \gamma}{4}) \sinh( \frac{t}{\gamma} )}    \right ) \frac{dt}{t}.
\end{equation}
The function $\Upsilon_{\frac{\gamma}{2}}$  is then defined on all $\C$ by analytic continuation of the expression \eqref{def:upsilon} as expression \eqref{def:upsilon}  satisfies the following remarkable functional relations: 
\begin{equation}\label{shiftUpsilon}
\Upsilon_{\frac{\gamma}{2}} (z+\frac{\gamma}{2}) = l( \frac{\gamma}{2}z) (\frac{\gamma}{2})^{1-\gamma z}\Upsilon_{\frac{\gamma}{2}} (z), \quad
\Upsilon_{\frac{\gamma}{2}} (z+\frac{2}{\gamma}) = l(\frac{2}{\gamma}z) (\frac{\gamma}{2})^{\frac{4}{\gamma} z-1} \Upsilon_{\frac{\gamma}{2}} (z).
\end{equation}
The function $\Upsilon_{\frac{\gamma}{2}}$ has no poles in $\C$ and the zeros of $\Upsilon_{\frac{\gamma}{2}}$ are simple (if $\gamma^2 \not \in \Q$) and given by the discrete set $(-\frac{\gamma}{2} \N-\frac{2}{\gamma} \N) \cup (Q+\frac{\gamma}{2} \N+\frac{2}{\gamma} \N )$. 
With these notations, the DOZZ formula is defined for $\alpha_1,\alpha_2,\alpha_3 \in \C$ by the following formula where we set $\bar{\alpha}=\alpha_1+\alpha_2+\alpha_3$ 
\begin{equation}\label{theDOZZformula}
C_{\gamma,\mu}^{{\rm DOZZ}} (\alpha_1,\alpha_2,\alpha_3 )  = (\pi \: \mu \:  l(\frac{\gamma^2}{4})  \: (\frac{\gamma}{2})^{2 -\gamma^2/2} )^{\frac{2 Q -\bar{\alpha}}{\gamma}}   \frac{\Upsilon_{\frac{\gamma}{2}}'(0)\Upsilon_{\frac{\gamma}{2}}(\alpha_1) \Upsilon_{\frac{\gamma}{2}}(\alpha_2) \Upsilon_{\frac{\gamma}{2}}(\alpha_3)}{\Upsilon_{\frac{\gamma}{2}}(\frac{\bar{\alpha}}{2}-Q) 
\Upsilon_{\frac{\gamma}{2}}(\frac{\bar{\alpha}}{2}-\alpha_1) \Upsilon_{\frac{\gamma}{2}}(\frac{\bar{\alpha}}{2}-\alpha_2) \Upsilon_{\frac{\gamma}{2}}(\frac{\bar{\alpha}}{2} -\alpha_3)   }
\end{equation}      
 The DOZZ formula is meromorphic with poles corresponding to the zeroes of the denominator of expression \eqref{theDOZZformula}.\\
 The has following asymptotic when $\Im(x)\to \infty$

 \noindent{\bf The Reflection Coefficient}
 
 We define the reflection coefficient as
 \begin{equation}\label{Rcoefficient}
 R^{\mathrm{DOZZ}}(\alpha)=-\left(\pi \mu l\left(\frac{\gamma^{2}}{4}\right)\right)^{\frac{2(Q-\alpha)}{\gamma}} \frac{\Gamma\left(-\frac{\gamma(Q-\alpha)}{2}\right)}{\Gamma\left(\frac{\gamma(Q-\alpha)}{2}\right)} \frac{\Gamma\left(-\frac{2(Q-\alpha)}{\gamma}\right)}{\Gamma\left(\frac{2(Q-\alpha)}{\gamma}\right)}
 \end{equation}
 and we have the following relation
 \begin{equation}
     C_{\gamma}^{\mathrm{DOZZ}}\left(\alpha_{1}, \alpha_{2}, \alpha_{3}\right)=R^{\mathrm{DOZZ}}\left(\alpha_{1}\right) C_{\gamma}^{\mathrm{DOZZ}}\left(2 Q-\alpha_{1}, \alpha_{2}, \alpha_{3}\right)
 \end{equation}

{\bf Asymptotic of Gamma function}\label{estimate gamma}
 For fixed $x \in \mathbb{R}$ there holds the following asymptotic estimate for $|\Gamma(x+i y)|$ at infinity:
$$
|\Gamma(x+i y)|=\sqrt{2 \pi}|y|^{x-1 / 2} e^{-x-|y| \pi / 2}\left[1+O\left(\frac{1}{|y|}\right)\right] \quad(|y| \rightarrow \infty)
$$

{\bf The FZZ formula}
 It is well known that the (bulk) one-point correlation function $\left\langle e^{\alpha \phi(z)}\right\rangle^{\mathbb{H}}_{\mu,\mu_\partial}$ of LCFT must have the following form
\begin{align}\label{onepointfunction}
    \left\langle e^{\alpha \phi(z)}\right\rangle^{\mathbb{H}}_{\mu,\mu_\partial}=\frac{U(\alpha)}{|z-\bar{z}|^{2 \Delta_{\alpha}}}g(z)^{-\Delta_{\alpha}}\big(\frac{{\rm v}_{g_{\rm dozz}}(\H)}{{\det}'(\Delta_{g_{\rm dozz},N})}\big)^{\frac{1}{2}}
\end{align}
 
where $U(\alpha)$ is called the bulk 1-point structure constant. the following exact formula for $U(\alpha)$ was proved in \cite{ARS21}
\begin{equation}\label{fzz}
    U_{\mathrm{FZZ},s}(\alpha):=\frac{4}{\gamma} \left(\pi \mu \frac{\Gamma\left(\frac{\gamma^{2}}{4}\right)}{\Gamma\left(1-\frac{\gamma^{2}}{4}\right)}\right)^{\frac{Q-\alpha}{\gamma}} \Gamma\left(\frac{\gamma \alpha}{2}-\frac{\gamma^{2}}{4}\right) \Gamma\left(\frac{2 \alpha}{\gamma}-\frac{4}{\gamma^{2}}-1\right) \cos ((\alpha-Q) \pi s)
\end{equation}

where the parameter $\theta$ is related to the ratio of cosmological constants $\frac{\mu_{B}}{\sqrt{\mu}}$ through the relation:
$$
\cos \frac{\pi \gamma s}{2}=\frac{\mu_{B}}{\sqrt{\mu}} \sqrt{\sin \frac{\pi \gamma^{2}}{4}}, \quad \text { with } \quad\left\{\begin{array}{l}
s \in\left[0, \frac{1}{\gamma}\right), \quad \text { when } \quad \frac{\mu_{B}^{2}}{\mu} \sin \frac{\pi \gamma^{2}}{4} \leq 1 \\
s \in i[0,+\infty), \quad \text { when } \quad \frac{\mu_{B}^{2}}{\mu} \sin \frac{\pi \gamma^{2}}{4} \geq 1
\end{array}\right.
$$
 \section{The scaling formula in the boundary case}\label{scaling}
For $g(z)=e^{\omega(z)}|d^2z|$, we define $\tilde{V}_{\alpha,g,\epsilon}$,  $\tilde{V}_{\frac{\beta}{2},g,\epsilon}$by
\begin{equation*}
\tilde{V}_{\alpha,g,\epsilon}(z)= \epsilon^{\alpha^2/2}  e^{\alpha  \Phi_{g,\epsilon}(z)} \quad \tilde{V}_{\frac{\beta}{2},g,\epsilon}(s)= \epsilon^{\beta^2/4}  e^{\frac{\beta}{2}  \Phi_{g,\epsilon}(s)} 
\end{equation*}
For $\mathbf{x}=x_{1}, \cdots, x_{n} \in \H$ and $\mathbf{y}=y_{1}, \cdots, y_{m} \in \R $, we define
$$ 
\tilde{G}(\mathbf{x} ; \mathbf{z};\mathbf{s})_\epsilon:=\left\langle\prod_{i=1}^{n} \tilde{V}_{\gamma, \epsilon}\left(x_{i}\right) \prod_{k=1}^{N} \tilde{V}_{\alpha_{k}, \epsilon}\left(z_{k}\right)\prod_{j=1}^{M} \tilde{V}_{\frac{\beta_j}{2}, \epsilon}\left(s_{k}\right)\right\rangle_{\H,U}
$$ and
$$\tilde{G}^{\partial}(\mathbf{y} ; \mathbf{z};\mathbf{s})_\epsilon:=\left\langle\prod_{i=1}^{m} \tilde{V}_{\frac{\gamma}{2}, \epsilon}\left(y_{i}\right) \prod_{k=1}^{N} \tilde{V}_{\alpha_{k}, \epsilon}\left(z_{k}\right)\prod_{j=1}^{M} \tilde{V}_{\frac{\beta_j}{2}, \epsilon}\left(s_{k}\right)\right\rangle_{\H,U}$$
When $\lim\to 0$ this limit exist and note as $\tilde{G}(\mathbf{x} ; \mathbf{z};\mathbf{s})$ and $\tilde{G}^{\partial}(\mathbf{y} ; \mathbf{z};\mathbf{s})$.
The scaling formula helps us get rid of metric dependence when we perform Gaussian integration by parts and get some union bound when we remove the SET insertions in the generalized correlation function. The uniform integrability mainly comes from the fusion rules studied in \cite{BW}, for the proof, we refer the reader to \cite{Wu1}
\begin{proposition}[The scaling identity]\label{KPZboundary}
For the round metric $g(z)=e^{\omega(z)}|dz^2|$ and we renormalize the Gaussian free field by supposing it has 0 average along the Gaussian curvature ($K_g=1$)
$$
 \int_{\H}\mu \gamma \tilde{G}(x ; \mathbf{z};\mathbf{s}) {\rm dv}_g(x)+\int_{\R}\mu_{\partial} \frac{\gamma}{2}  \tilde{G}(y ; \mathbf{z};\mathbf{s}) {\rm d\lambda}_g(y)=\left(\sum_{k=1}^{N} \alpha_{k}+\sum_{j=1}^{M} \frac{\beta_{j}}{2}-Q\right) \tilde{G}(\mathbf{z};\mathbf{s})
$$
and $\sup_{\epsilon}\tilde{G}(\mathbf{x} ; \mathbf{z};\mathbf{s})_\epsilon\in L^1(\H^n) $ and $\sup_{\epsilon} \tilde{G}^{\partial}(\mathbf{y} ; \mathbf{z};\mathbf{s})_\epsilon\in L^1(\R^m)$.
\end{proposition}

\begin{remark}
This relation also holds if we replace the positive constant $\mu_{\partial}$ by a piecewise constant function $\mu_{\partial}(s).$
\end{remark}
\section{Gaussian integration by parts}
\begin{lemma}
For $g=e^{\omega}|dz|^2$ is the round metric $\frac{4}{(1+|z|^2)^2}|dz|^2$, we define the Gaussian free field $\tilde{X}^{\H}_{g,N}$ on $\H$ which has a zero mean with respect to Ricci curvature of the metric $g=e^{\omega}|dz|^2$, define $\tilde{G}_{g,N}(z,z')=\E[\tilde{X}_{g,N}(z)\tilde{X}_{g,N}(z')]$. Then
\begin{align}
    \tilde{G}_{g,N}(z,z')=\ln\frac{1}{|z-z'||z-\bar{z}'|}-\frac{1}{2}\omega(z)-\frac{1}{2}\omega(z')+C.
\end{align}
\end{lemma}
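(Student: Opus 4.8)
The plan is to identify the covariance $\tilde G_{g,N}$ with the Neumann resolvent kernel recentered against the curvature measure, and then to verify that the claimed explicit expression solves the same boundary value problem, concluding by uniqueness.

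First I would translate the normalization into an analytic statement. Writing $X_{g,N}$ for the standard Neumann GFF (with vanishing $\dd{\rm v}_g$-mean and covariance $G_{g,N}$, the kernel of $R_{g,N}=2\pi(\mathrm{Id}-\Pi_0)$), the field with vanishing curvature-mean is $\tilde X_{g,N}=X_{g,N}-\langle X_{g,N}\rangle_K$, where $\langle f\rangle_K:=\frac1Z\int_\H f\,K_g\dd{\rm v}_g$ and $Z:=\int_\H K_g\dd{\rm v}_g$. Because $\R$ is a great circle of the round sphere it is geodesic, so $k_g\equiv 0$ and the boundary term of the Gauss--Bonnet measure is absent; this is the point where I must check that the two possible readings of ``mean with respect to Ricci curvature'' coincide. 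This gives the double-centering formula $\tilde G_{g,N}(z,z')=G_{g,N}(z,z')-\langle G_{g,N}(z,\cdot)\rangle_K-\langle G_{g,N}(\cdot,z')\rangle_K+\langle\langle G_{g,N}\rangle_K\rangle_K$. Using the defining equation $\Delta_{g,z}G_{g,N}(z,z')=2\pi\delta_{z'}(z)-\frac{2\pi}{{\rm v}_g(\H)}$ and differentiating through the average, the constant and volume terms cancel, and I obtain that for each fixed $z'$ the function $z\mapsto\tilde G_{g,N}(z,z')$ solves $\Delta_{g}\tilde G_{g,N}(\cdot,z')=2\pi\delta_{z'}-\frac{2\pi}{Z}K_g$ with Neumann condition $\partial_n^g\tilde G_{g,N}=0$ on $\R$, together with $\langle\tilde G_{g,N}(\cdot,z')\rangle_K=0$ and the symmetry $\tilde G_{g,N}(z,z')=\tilde G_{g,N}(z',z)$.

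Next I would compute $Z$ and verify the candidate. By Gauss--Bonnet the total scalar curvature of the round sphere is $2\cdot 2\pi\chi(\hat\C)=8\pi$, and by the conjugation symmetry of $g_0$ each hemisphere carries half, so $Z=4\pi$ and the source term above is exactly $2\pi\delta_{z'}-\tfrac12 K_g$. Set $F(z,z'):=\ln\frac{1}{|z-z'||z-\bar z'|}-\tfrac12\omega(z)-\tfrac12\omega(z')+C$. Using $\Delta_g=e^{-\omega}\Delta_{\mathrm{flat}}$, the local identity $\Delta_{\mathrm{flat}}\ln\frac1{|z-w|}=2\pi\delta_w$, and the curvature relation $\Delta_{\mathrm{flat}}\omega=K_g e^{\omega}$ (the case $K_{g_0}=0$ of $K_g=e^{-\omega}(K_{g_0}+\Delta_{g_0}\omega)$), one finds $\Delta_{g,z}F=2\pi\delta_{z'}-\tfrac12 K_g$ in $\H$, since the image charge $\bar z'$ lies outside $\H$. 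The Neumann condition holds because the image kernel is even in $y$ and because $\omega$, depending only on $|z|^2$, satisfies $\partial_n^g\omega=\partial_y\omega|_{\R}=0$. Thus $F(\cdot,z')$ solves the same equation and boundary condition as $\tilde G_{g,N}(\cdot,z')$.

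Finally I would fix the constants. For each $z'$ the Neumann problem determines the solution up to an additive constant, so $\tilde G_{g,N}(z,z')=F(z,z')+a(z')$ for some function $a$; imposing the symmetry of $\tilde G_{g,N}$ and noting the image kernel is itself symmetric (since $|z-\bar z'|=|z'-\bar z|$) forces $a(z')+\tfrac12\omega(z')$ to be independent of $z'$, hence constant, which is absorbed into $C$, while the residual freedom in $C$ is pinned by $\langle\tilde G_{g,N}(\cdot,z')\rangle_K=0$. This yields the claimed formula. The main obstacle is not the local PDE verification, which is routine, but the correct accounting of the curvature normalization: checking that $\R$ is geodesic so that no boundary curvature contributes, evaluating $Z=4\pi$ so that the ratio $\frac{2\pi}{Z}$ produces exactly the coefficient $\tfrac12$, and reconciling the additive constant through symmetry together with the curvature-mean condition.
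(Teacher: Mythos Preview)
Your argument is correct, but it follows a different route from the paper. The paper's proof is a two-line reflection argument: since $\R$ is geodesic for the round metric ($k_g=0$), one doubles $\H$ across $\R$ to obtain the round sphere $\hat\C$, quotes the known sphere formula $\tilde G^{\hat\C}_g(z,z')=\ln\frac{1}{|z-z'|}-\tfrac12\omega(z)-\tfrac12\omega(z')+C$, and then uses the image identity $\tilde G^{\H}_{g,N}(z,z')=\tilde G^{\hat\C}_g(z,z')+\tilde G^{\hat\C}_g(z,\bar z')$. Your approach instead characterizes $\tilde G_{g,N}(\cdot,z')$ by the boundary value problem $\Delta_g u=2\pi\delta_{z'}-\tfrac12 K_g$ with $\partial_n^g u=0$ on $\R$, verifies that the explicit candidate $F$ solves the same problem, and concludes by uniqueness plus symmetry. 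This is more self-contained (no appeal to the sphere result) and makes the role of the curvature normalization transparent, at the cost of being longer. Two minor remarks: first, for the round metric $K_g$ is constant, so the curvature-average coincides with the volume-average and in fact $\tilde X_{g,N}=X_{g,N}$; your double-centering is correct but could be bypassed. Second, in your last paragraph, since your $F$ already contains $-\tfrac12\omega(z')$ and is symmetric, the symmetry argument gives directly that $a(z')$ is constant, not $a(z')+\tfrac12\omega(z')$; this is only a phrasing slip and does not affect the conclusion.
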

\begin{proof}
Since $k_g=0$ on the real line $\R$, we can double $\H$ to get $\hat\C$. This property is clear true for $\hat\C$, then we use $ \tilde{G}^{\H}_{g,N}(z,z')= \tilde{G}^{\hat\C}_{g,N}(z,z')+ \tilde{G}^{\hat\C}_{g,N}(z,\bar{z'})$.
\end{proof}

For a centered Gaussian vector $(X,Y_1,\dots, Y_N)$ and a smooth function $f$ on $\R^N$, the Gaussian integration by parts formula is $$\E[X \,f(Y_1,\dots,Y_N)]=\sum_{k=1}^N\E[XY_k]\E[\partial_{k}f(Y_1,\dots,Y_N)]. $$
Applied to the LCFT this leads to the following formula. For the round metric $g=e^{\omega}|dz|^2$, let $\Phi=c+\tilde{X}^{\H}_{g,N}+\frac{Q}{2}\omega$ be the Liouville field  and $F$ a smooth function on $\R^N$.  Define for $u,v\in\bar{\H}$
\begin{align}
    C^{\H}(u,v)=-\frac{1}{2}\big(\frac{1}{u-v}+\frac{1}{u-\bar{v}}\Big),\ \  C^{\H}_{\epsilon,\epsilon'}(u,v)=\int \rho_{\epsilon}(u-u') \rho_{\epsilon'}(v-v')C^{\H}(u',v')\dd u\dd v
\end{align}
 with $( \rho_{\epsilon})_\epsilon$ a mollifying family of the type $\rho_\epsilon(\cdot)=\epsilon^{-2}\rho(|\cdot|/\epsilon)$ as in Section \eqref{SET}. Note $\tilde{V}_{\alpha,g,\epsilon}(z):= \epsilon^{\alpha^2/2}  e^{\alpha  \Phi_{g,\epsilon}(z)}$ and $\tilde{V}_{\frac{\beta}{2},g,\epsilon}(s)= \epsilon^{\beta^2/4}  e^{\frac{\beta}{2} \Phi_{g,\epsilon}(s)}$.
 
 Notice that $ \partial_u\tilde{G}_{g,N}^{\H}(u,v)=C^{\H}(u,v)-\frac{1}{2} \partial_u\omega(u)$. The virtue of this Liouville field is that the annoying metric-dependent terms $-\frac{1}{2} \partial_u\omega(u)$ drop out from the formulae. This fact is nontrivial and it was proven in \eqref{KPZboundary},  for the case $t=\infty$ with $F$ corresponding to the product of vertex operators. The proof goes the same way to produce \eqref{basicipp} with a finite $t$.
 \begin{proposition}
 For $z$, $x_1,\dots,x_N\in\bar{\H}$, we have
\begin{align} \label{basicipp}
\langle \partial_z\Phi_\epsilon(z) F(\Phi_{\epsilon'}(x_1),\dots,\Phi_{\epsilon'}(x_N))\rangle_t
&=
\sum_{k=1}^NC_{\epsilon,\epsilon'}(z,x_k)\langle \partial_{k}F(\Phi_{\epsilon'}(x_1),\dots,\Phi_{\epsilon'}(x_N))\rangle_t
\\
&-\mu\gamma\int_{\H_t} C_{\epsilon,0}(z,x)\langle \tilde{V}_\gamma(x) F(\Phi_{\epsilon'}(x_1),\dots,\Phi_{\epsilon'}(x_N))\rangle_t \dd x  \nonumber\\
&-\mu_{\partial}\frac{\gamma}{2}\int_{\R} C_{\epsilon,0}(z,s)\langle \tilde{V}_{\frac{\gamma}{2}}(x) F(\Phi_{\epsilon'}(x_1),\dots,\Phi_{\epsilon'}(x_N))\rangle_t \dd s  \nonumber
\end{align}
For $F$ such that all terms are well defined.
 \end{proposition}
\begin{proof}
Follow the same lines in proof of \eqref{KPZboundary}, it's easy to show
\begin{align}
   &\int_{\H} \mu\gamma\langle \tilde{V}_{\gamma}(x) F(\Phi_{\epsilon'}(x_1),\dots,\Phi_{\epsilon'}(x_N))\rangle_t\dd x+ \int_{\R} \mu_{\partial}\frac{\gamma}{2}\langle \tilde{V}_{\frac{\gamma}{2}}(s) F(\Phi_{\epsilon'}(x_1),\dots,\Phi_{\epsilon'}(x_N))\rangle_t\dd s\nonumber\\
   =&\sum_{i=1}^N\langle  \partial_{k}F(\Phi_{\epsilon'}(x_1),\dots,\Phi_{\epsilon'}(x_N))\rangle_t-Q\langle  F(\Phi_{\epsilon'}(x_1),\dots,\Phi_{\epsilon'}(x_N))\rangle_t
\end{align}
Then we can see the collection of metric dependent terms vanish.
\end{proof}

Now let us introduce a systematic way to calculate Ward's identity, first we introduce the notation
 $$(\caO_1,\caO_2,\caO_3)  :=( \partial_z\Phi_\epsilon, \partial_z^2\Phi_\epsilon, (\partial_z\Phi_\epsilon)^2-\E(\partial_z\Phi_\epsilon)^2).$$
 Applying the integration by parts formula to $\partial_z\Phi_\epsilon(u_k)$ (or to $\partial_z^2\Phi_\epsilon(u_k)$ if $i_k=2$ below) we obtain (here $V_{\alpha_i}$ can be either bulk or boundary insertion).
 
 \begin{align}\label{computationsystem}
 \langle \prod_{j=1}^k \caO_{i_j}(u_j)\prod_{i=1}^n\tilde{V}_{\alpha_i }(z_i)\prod_{j=1}^m\tilde{V}_{\frac{\beta_j}{2} }(s_j) \rangle_t=&\sum_{l=1}^{k-1}  \langle \{\caO_{i_k}(u_k)\caO_{i_l}(u_l)\} \prod_{j\neq k,l} \caO_{i_j}(u_j)\prod_{i=1}^n\tilde{V}_{\alpha_i }(z_i)\prod_{j=1}^m\tilde{V}_{\frac{\beta_j}{2} }(s_j) \rangle_t \nonumber\\
 +&\sum_{l=1}^n \alpha_l \langle \{\caO_{i_k}(u_k)\Phi(z_l)\}\prod_{j\neq k} \caO_j(u_j)\prod_{i\neq l}\tilde{V}_{\alpha_i }(z_i)\prod_{j=1}^m\tilde{V}_{\frac{\beta_j}{2} }(s_j) \rangle_t\nonumber\\
 +&\sum_{l=1}^n \frac{\beta_l}{2} \langle \{\caO_{i_k}(u_k)\Phi(s_l)\}\prod_{j\neq k} \caO_j(u_j)\prod_{i\neq l}\tilde{V}_{\alpha_i }(z_i)\prod_{j=1}^m\tilde{V}_{\frac{\beta_j}{2} }(s_j) \rangle_t\nonumber\\
 -&\mu\gamma \int_{\H_t} \langle \{\caO_{i_k}(u_k)\Phi(x)\} \prod_{j\neq k} \caO_j(u_j)\tilde{V}_\gamma(x)\prod_{i\neq l}\tilde{V}_{\alpha_i }(z_i) \prod_{j=1}^m\tilde{V}_{\frac{\beta_j}{2} }(s_j)\rangle_t\dd x\nonumber\\-&\mu_{\partial}\frac{\gamma}{2} \int_{\R} \langle \{\caO_{i_k}(u_k)\Phi(s)\} \prod_{j\neq k} \caO_j(u_j)\tilde{V}_{\frac{\gamma}{2}}(s)\prod_{i\neq l}\tilde{V}_{\alpha_i }(z_i)\prod_{j=1}^m\tilde{V}_{\frac{\beta_j}{2} }(s_j) \rangle_t\dd s
 \end{align}
where $\alpha_0=\alpha$ and $z_0=0$ and the contractions are defined by
\begin{align*}
 \{\caO_1(u)\caO_1(v)\} &=\partial_vC_{\epsilon,\epsilon}(u,v) & & \{\caO_2(u)\caO_2(v)\} =\partial_u\partial^2_{v}C_{\epsilon,\epsilon}(u,v)\\
 \{\caO_1(u)\caO_3(v)\} &=2\caO_1(v)\partial_vC_{\epsilon,\epsilon}(u,v) & &  \{\caO_1(u)\caO_2(v)\} =\partial^2_{v}C_{\epsilon,\epsilon}(u,v)\\
 \{\caO_3(u)\caO_1(v)\} &=\caO_1(u)\partial_vC_{\epsilon,\epsilon}(u,v) & & \{\caO_2(u)\caO_1(v)\} =\partial_{u}\partial_{v}C_{\epsilon,\epsilon}(u,v) \\
\{\caO_3(u)\caO_3(v)\} &=2\caO_1(u)\caO_1(v)\partial_vC_{\epsilon,\epsilon}(u,v)& &  \{\caO_2(u)\caO_3(v)\} =2\caO_1(v)\partial_{u}\partial_{v}C_{\epsilon,\epsilon}(u,v)\\
\{\caO_3(u)\caO_2(v)\} &=\caO_1(u)\partial^2_{v}C_{\epsilon,\epsilon}(u,v)  & &  
\end{align*}
etc. Similarly 
$$\{\caO_1(u)\Phi(x)\} = C_{\epsilon,0}(u,x), \quad\{\caO_2(u)\Phi(x)\} = \partial_u C_{\epsilon,0}(u,x), \quad\{\caO_3(u)\Phi(x)\} =\caO_1(u)C_{\epsilon,0}(u,x). $$
$$\{\caO_1(u)\Phi(s)\} = C_{\epsilon,0}(u,s), \quad\{\caO_2(u)\Phi(x)\} = \partial_u C_{\epsilon,0}(u,s), \quad\{\caO_3(u)\Phi(s)\} =\caO_1(u)C_{\epsilon,0}(u,s). $$


\section{Proof of Cardy's doubling trick}\label{proof cardy}
{\it Proof.} The remaining part of this subsection is devoted to Cardy's doubling trick.  We will suppose that the metric $g$ is the restriction of round metric to the upper half plane, i.e. $g=e^{\omega}|dz|^2=\frac{4}{(1+|z|^2)^2}|dz|^2$. The result for other metrics can be deduced via the Weyl anomaly formula. Our computation follows systematically from \eqref{computationsystem}. In the proof, we always use the notation  $\tilde{V}_{\alpha,g,\epsilon}(z):= \epsilon^{\alpha^2/2}  e^{\alpha  \Phi_{g,\epsilon}(z)}$ and $\tilde{V}_{\frac{\beta}{2},g,\epsilon}(s)= \epsilon^{\beta^2/4}  e^{\frac{\beta}{2} \Phi_{g,\epsilon}(s)}$, then by \eqref{KPZboundary}, we can remove the metric dependency. So in the following proof, we can pretend $\omega=0$. We also simply use suffix $\_t$ to represent $\_{\H,U_t,g}$.

We start with a simple lemma to see how the Gaussain free field on half upper plane linked with the full plane, which proof is given by direct computation.
\begin{lemma}
If $a$, $b$ $\in \mathbb{N}^+$, we have
\begin{equation}
    \E[\,\partial_{x}^aX^{\mathbb{H}}(x)\overline{\partial_{y}^bX^{\mathbb{H}}(y)}\,]=\E[\,\partial_x^aX^{\hat\C}(x)\partial_{\bar{y}}^bX^{\hat \C}(\bar{y})\,]
\end{equation}
\begin{equation}
    \E[\,\partial_{x}^aX^{\mathbb{H}}(x)\partial_{y}^bX^{\mathbb{H}}(y)\,]=\E[\,\partial_x^aX^{\hat\C}(x)\partial_{y}^bX^{\hat \C}(y)\,]
\end{equation}
\begin{equation}
    \E[\,\overline{\partial_{x}^aX^{\mathbb{H}}(x)}  \overline{\partial_{y}^bX^{\mathbb{H}}(y)}\,]=\E[\,\partial_{\bar{x}}^aX^{\hat\C}(\bar{x})\partial_{\bar{y}}^bX^{\hat \C}(\bar{y})\,]
\end{equation}
\end{lemma}

Our computation follows systematically from \eqref{computationsystem}. In the proof, we always use the notation  $\tilde{V}_{\alpha,g,\epsilon}(z):= \epsilon^{\alpha^2/2}  e^{\alpha  \Phi_{g,\epsilon}(z)}$ and $\tilde{V}_{\frac{\beta}{2},g,\epsilon}(s)= \epsilon^{\beta^2/4}  e^{\frac{\beta}{2} \Phi_{g,\epsilon}(s)}$, then by \eqref{KPZboundary}, we can remove the metric dependency. So in the following proof, we can pretend $\omega=0$. We also simply use suffix $\_t$ to represent $\_{\H,U_t,g}$.

So, applying the Gaussian integration by parts to the SET-insertion $T^{\H}(u_k)$  produces plenty of terms which we group in four contributions:
\begin{equation}\label{IPPstress}
\langle  \mathcal{T}(\mathbf{x})   \prod_{i=1}^n\tilde{V}_{\alpha_i  }(z_i)\prod_{j=1}^m\tilde{V}_{\frac{\beta_j}{2} }(s_j) \rangle_t =M(\mathbf{x})+T(\mathbf{x}) + N(\mathbf{x})+ C(\mathbf{x})  .
\end{equation}
The first contribution in \eqref{IPPstress} collects the contractions hitting only one $V_{\alpha_p}$:
\begin{align}\label{IPPstressM}
M(\mathbf{x},\alpha)= & \sum_{p=1}^{n} (\frac{Q\alpha_p}{2}- \frac{\alpha_p^2}{4}) \Big(\frac{1}{(x_{k+\tilde{k}}-z_p)^2} +\frac{1}{(x_{k+\tilde{k}}-\bar{z_p})^2}\Big) \langle    \mathcal{T}(\mathbf{x}^{(k+\tilde{k})})  \tilde{\mathcal{V}}_{\mathbf{\alpha}}(\mathbf{z})\tilde{\mathcal{V}}_{\frac{\mathbf{\beta}}{2} }(\mathbf{s})\rangle_t \\&-\sum_{p=1}^{n}  \frac{\alpha_p^2}{2} \Big(\frac{1}{x_{k+\tilde{k}}-z_p} \frac{1}{x_{k+\tilde{k}}-\bar{z_p}}\Big) \langle    \mathcal{T}(\mathbf{x}^{(k+\tilde{k})}) \tilde{\mathcal{V}}_{\mathbf{\alpha}}(\mathbf{z})\tilde{\mathcal{V}}_{\frac{\mathbf{\beta}}{2} }(\mathbf{s})\rangle_t. 
 \\&:= M_1(\mathbf{x},\alpha)+ M_2(\mathbf{x},\alpha) 
\end{align}
\begin{align}
    M(\mathbf{x},\beta)= & \sum_{q=1}^{m} (\frac{Q\beta_q}{2}- \frac{\beta_q^2}{4}) \Big(\frac{1}{(x_{k+\tilde{k}}-s_q)^2} \Big) \langle    \mathcal{T}(\mathbf{x}^{(k+\tilde{k})})  \tilde{\mathcal{V}}_{\mathbf{\alpha}}(\mathbf{z})\tilde{\mathcal{V}}_{\frac{\mathbf{\beta}}{2} }(\mathbf{s})\rangle_t 
\end{align}
The second contribution collects the terms coming from contractions of SET insertions and producing lower-order SET insertions:
 \begin{align}\label{OtherstressT}
 T(\mathbf{x})= \frac{1}{2}   \sum_{\ell=1}^{k+\tilde{k}-1}\frac{1+6Q^2}{(x_{k+\tilde{k}}-x_\ell)^4}  \langle   \mathcal{T}(\mathbf{x}^{(\ell,k+\tilde{k})})   \tilde{\mathcal{V}}_{\mathbf{\alpha}}(\mathbf{z})\tilde{\mathcal{V}}_{\frac{\mathbf{\beta}}{2} }(\mathbf{s})\rangle_{t}
 \end{align}
 The third contribution is given by terms where all contractions hit only $\tilde{V}_\gamma$ or only $\tilde{V}_{\frac{\gamma}{2}}$:
\begin{align}\nonumber
N^{\mu}(\mathbf{x})=&(\frac{\mu\gamma^2}{4}-\frac{\mu\gamma Q}{2}) \int_{\H_t} \Big(\frac{1}{(x_{k+\tilde{k}}-x)^2}+\frac{1}{(x_{k+\tilde{k}}-\bar{x})^2}\Big)\langle   \mathcal{T}(\mathbf{x}^{(k+\tilde{k})})  \tilde{V}_\gamma(x)\tilde{\mathcal{V}}_{\mathbf{\alpha}}(\mathbf{z})\tilde{\mathcal{V}}_{\frac{\mathbf{\beta}}{2} }(\mathbf{s})\rangle_t \,\dd x
\\
  & \textcolor{red}{+ \mu\frac{\gamma^2}{2}   \int_{\H_t} \frac{1}{(x_{k+\tilde{k}}-x)(x_{k+\tilde{k}}-\bar{x})}\langle   \mathcal{T}(\mathbf{x}^{(k+\tilde{k})})  \tilde{V}_\gamma(x)\tilde{\mathcal{V}}_{\mathbf{\alpha}}(\mathbf{z})\tilde{\mathcal{V}}_{\frac{\mathbf{\beta}}{2} }(\mathbf{s})\rangle_t \,\dd x.} \label{Nterm} 
  \\&=N^{\mu}(\mathbf{x},\partial)+N^{\mu}(\mathbf{x},\bar{\partial})+\textcolor{red}{T_{1}(\mathbf{x})}
\end{align}
and
\begin{align}
N^{\mu_{\partial}}(\mathbf{x})=&(\frac{\mu_{\partial}\gamma^2}{4}-\frac{\mu_{\partial}\gamma Q}{2}) \int_{\R_t} \frac{1}{(x_{k+\tilde{k}}-s)^2}\langle   \mathcal{T}(\mathbf{x}^{(k+\tilde{k})})  \tilde{V}_{\frac{\gamma}{2}}(s)\tilde{\mathcal{V}}_{\mathbf{\alpha}}(\mathbf{z})\tilde{\mathcal{V}}_{\frac{\mathbf{\beta}}{2} }(\mathbf{s})\rangle_t \,\dd s\\&=-\mu_{\partial} \int_{\R_t} \frac{1}{(x_{k+\tilde{k}}-s)^2}\langle   \mathcal{T}(\mathbf{x}^{(k+\tilde{k})})  \tilde{V}_{\frac{\gamma}{2}}(s)\tilde{\mathcal{V}}_{\mathbf{\alpha}}(\mathbf{z})\tilde{\mathcal{V}}_{\frac{\mathbf{\beta}}{2} }(\mathbf{s}) \rangle_t \,\dd s
\end{align}
Finally, $C$ gathers all the other terms 
\begin{align}
\textcolor{black}{C_1(\mathbf{x} )}:=&  - \sum_{\ell\not =\ell'=1}^{k+\tilde{k}-1}\frac{ Q^2}{(x_{k+\tilde{k}}-x_\ell)^3(x_{k+\tilde{k}}-x_{\ell'})^3}  \langle      \mathcal{T}(\mathbf{x}^{({k+\tilde{k}},\ell,\ell')})   \tilde{\mathcal{V}}_{\mathbf{\alpha}}(\mathbf{z})\tilde{\mathcal{V}}_{\frac{\mathbf{\beta}}{2} }(\mathbf{s}) \rangle_{t}
\nonumber\\
\textcolor{black}{C_2(\mathbf{x} )}:=&  \sum_{\ell\not =\ell'=1}^{k+\tilde{k}-1}\frac{2Q^2}{(x_{k+\tilde{k}}-x_\ell)^3(x_\ell-x_{\ell'})^3}  \langle     \mathcal{T}(\mathbf{x}^{(k+\tilde{k},\ell,\ell')})   \tilde{\mathcal{V}}_{\mathbf{\alpha}}(\mathbf{z})\tilde{\mathcal{V}}_{\frac{\mathbf{\beta}}{2} }(\mathbf{s})\rangle_{t} 
\nonumber\\
\textcolor{black}{C_3(\mathbf{x} )}:=&    \sum_{\ell\not =\ell'=1}^{k+\tilde{k}-1}\frac{ Q}{(x_{k+\tilde{k}}-x_\ell)^3(x_{k+\tilde{k}}-x_{\ell'})^2}  \langle    \partial_{x_{\ell'}}\Phi(x_{\ell'})  \mathcal{T}(\mathbf{x}^{(k+\tilde{k},\ell,\ell')})   \tilde{\mathcal{V}}_{\mathbf{\alpha}}(\mathbf{z})\tilde{\mathcal{V}}_{\frac{\mathbf{\beta}}{2} }(\mathbf{s}) \rangle_{t}
\nonumber\\
\textcolor{black}{C_4(\mathbf{x} )}:=& - \sum_{\ell\not =\ell'=1}^{k+\tilde{k}-1}\frac{2Q}{(x_{k+\tilde{k}}-x_\ell)^3(x_\ell-x_{\ell'})^2}  \langle    \partial_{x_{\ell'}}\Phi(x_{\ell'})   T(\mathbf{x}^{(\ell,k+\tilde{k},\ell')})   \tilde{\mathcal{V}}_{\mathbf{\alpha}}(\mathbf{z})\tilde{\mathcal{V}}_{\frac{\mathbf{\beta}}{2} }(\mathbf{s}) \rangle_{t} 
\nonumber\\
\textcolor{black}{C_{5}(\mathbf{x} }):=&  \sum_{\ell=1}^{k+\tilde{k} -1}\sum_{p=1}^{n}\frac{ Q\alpha_p}{ (x_{k+\tilde{k}}-x_{\ell})^3} (\frac{1}{x_\ell-z_p}+\frac{1}{x_\ell-\bar{z_p}}) \langle     \mathcal{T}(\mathbf{x}^{(k+\tilde{k},\ell )})   \tilde{\mathcal{V}}_{\mathbf{\alpha}}(\mathbf{z})\tilde{\mathcal{V}}_{\frac{\mathbf{\beta}}{2} }(\mathbf{s})\rangle_{t} 
\nonumber\\
&+ \sum_{\ell=1}^{k+\tilde{k} -1}\sum_{q=1}^{m}\frac{ Q\beta_q}{ (x_{k+\tilde{k}}-x_{\ell})^3} (\frac{1}{x_\ell-s_q}) \langle     \mathcal{T}(\mathbf{x}^{(k+\tilde{k},\ell )})   \tilde{\mathcal{V}}_{\mathbf{\alpha}}(\mathbf{z})\tilde{\mathcal{V}}_{\frac{\mathbf{\beta}}{2} }(\mathbf{s}) \rangle_{t} 
\nonumber\\
\textcolor{black}{C_{6}(\mathbf{x},\mu )}:=&-  \mu Q\gamma  \sum_{\ell=1}^{k+\tilde{k}-1}\int_{\H_t}\frac{ 1}{(x_{k+\tilde{k}}-x_\ell)^3 } (\frac{1}{x_\ell-x} +\frac{1}{x_\ell-\bar{x}})\langle   \mathcal{T}(\mathbf{x}^{(k+\tilde{k},\ell )})   \tilde{V}_\gamma(x)\tilde{\mathcal{V}}_{\mathbf{\alpha}}(\mathbf{z})\tilde{\mathcal{V}}_{\frac{\mathbf{\beta}}{2} }(\mathbf{s}) \rangle_{t} \,\dd x
\nonumber\\
\textcolor{black}{C_{6}(\mathbf{x} ,\mu_{\partial})}:=&-  \mu_\partial Q\gamma  \sum_{\ell=1}^{k+\tilde{k}-1}\int_{\H_t}\frac{ 1}{(x_{k+\tilde{k}}-x_\ell)^3 } (\frac{1}{x_\ell-s} )\langle   \mathcal{T}(\mathbf{x}^{(k+\tilde{k},\ell )})   \tilde{V}_{\frac{\gamma}{2}}(s)\tilde{\mathcal{V}}_{\mathbf{\alpha}}(\mathbf{z})\tilde{\mathcal{V}}_{\frac{\mathbf{\beta}}{2} }(\mathbf{s}) \rangle_{t} \,\dd s
\nonumber\\
\textcolor{black}{C_7(\mathbf{x})}:=&   \sum_{\ell\not=\ell'=1}^{k+\tilde{k}-1}\frac{ Q}{(x_{k+\tilde{k}}-x_\ell)^2(x_{k+\tilde{k}}-x_{\ell'})^3}  \langle \partial_{x_{\ell}}\Phi(x_{\ell})    \mathcal{T}(\mathbf{x}^{(k+\tilde{k},\ell,\ell')})   \tilde{\mathcal{V}}_{\mathbf{\alpha}}(\mathbf{z})\tilde{\mathcal{V}}_{\frac{\mathbf{\beta}}{2} }(\mathbf{s})\rangle_{t} 
\nonumber\\
\textcolor{black}{C_{8}(\mu,\mathbf{x} )}:=&-   \mu  \gamma  \sum_{\ell=1}^{k+\tilde{k}-1}\int_{\H_t} \frac{ 1}{(x_{k+\tilde{k}}-x_\ell)^2 } (\frac{1}{x_{k+\tilde{k}}-x}+\frac{1}{x_{k+\tilde{k}}-\bar{x}}) \langle  \partial_{x_{\ell}}\Phi(x_{\ell})   \mathcal{T}(\mathbf{x}^{(k+\tilde{k},\ell )})   V_\gamma(x)\tilde{\mathcal{V}}_{\mathbf{\alpha}}(\mathbf{z})\tilde{\mathcal{V}}_{\frac{\mathbf{\beta}}{2} }(\mathbf{s}) \rangle_{t} \,\dd x
\nonumber\\
\textcolor{black}{C_{8}(\mu_{\partial},\mathbf{x} )}:=&-   \mu_{\partial}  \gamma  \sum_{\ell=1}^{k+\tilde{k}-1}\int_{\R_t} \frac{ 1}{(x_{k+\tilde{k}}-x_\ell)^2 } (\frac{1}{x_{k+\tilde{k}}-s}) \langle  \partial_{x_{\ell}}\Phi(x_{\ell})   \mathcal{T}(\mathbf{x}^{(k+\tilde{k},\ell )})   V_{\frac{\gamma}{2}}(s)\tilde{\mathcal{V}}_{\mathbf{\alpha}}(\mathbf{z})\tilde{\mathcal{V}}_{\frac{\mathbf{\beta}}{2} }(\mathbf{s}) \rangle_{t} \,\dd s
\nonumber\\
\textcolor{black}{C_9(\mathbf{x} )}:=&     \sum_{\ell=1}^{k+\tilde{k}-1} \sum_{p=1}^{n}\frac{ \alpha_p}{(x_{k+\tilde{k}}-x_\ell)^2} (\frac{1}{x_{k+\tilde{k}}-z_p}+\frac{1}{x_{k+\tilde{k}}-\bar{z_p}})\langle \partial_{x_{\ell}}\Phi(x_{\ell})       \mathcal{T}(\mathbf{x}^{(k+\tilde{k},\ell)})   \tilde{\mathcal{V}}_{\mathbf{\alpha}}(\mathbf{z})\tilde{\mathcal{V}}_{\frac{\mathbf{\beta}}{2} }(\mathbf{s}) \rangle_{t} 
\nonumber\\
&+\sum_{\ell=1}^{k+\tilde{k}-1} \sum_{q=1}^{m}\frac{ \beta_q}{(x_{k+\tilde{k}}-x_\ell)^2} (\frac{1}{x_{k+\tilde{k}}-s_q})  \langle  \partial_{x_{\ell}}\Phi(x_{\ell})    \mathcal{T}(\mathbf{x}^{(k+\tilde{k},\ell)})   \tilde{\mathcal{V}}_{\mathbf{\alpha}}(\mathbf{z})\tilde{\mathcal{V}}_{\frac{\mathbf{\beta}}{2} }(\mathbf{s})\rangle_{t} 
\nonumber\\
\textcolor{black}{C_{10}(\mathbf{x} )}:=&  \sum_{p\not =p'=1}^{n}- \frac{\alpha_p\alpha_{p'}}{4}(\frac{ 1}{x_{k+\tilde{k}}-z_p}+\frac{ 1}{x_{k+\tilde{k}}-\bar{z_p}}) (\frac{ 1}{x_{k+\tilde{k}}-z_{p'}}+\frac{ 1}{x_{k+\tilde{k}}-\bar{z_{p'}}})  \langle    \mathcal{T}(\mathbf{x}^{(k+\tilde{k} )})   \tilde{\mathcal{V}}_{\mathbf{\alpha}}(\mathbf{z})\tilde{\mathcal{V}}_{\frac{\mathbf{\beta}}{2} }(\mathbf{s}) \rangle_{t} 
\nonumber\\
&+\sum_{p=1,q=1}^{n,m}- \frac{\alpha_p\beta_q}{4}(\frac{ 1}{x_{k+\tilde{k}}-z_p}+\frac{ 1}{x_{k+\tilde{k}}-\bar{z_p}}) (\frac{ 1}{x_{k+\tilde{k}}-s_q})  \langle    \mathcal{T}(\mathbf{x}^{(k+\tilde{k} )})   \tilde{\mathcal{V}}_{\mathbf{\alpha}}(\mathbf{z})\tilde{\mathcal{V}}_{\frac{\mathbf{\beta}}{2} }(\mathbf{s}) \rangle_{t} 
\nonumber\\
&+\sum_{q\not= q'=1}^{m}- \frac{\beta_q\beta_{q'}}{4}(\frac{ 1}{x_{k+\tilde{k}}-s_q})(\frac{ 1}{x_{k+\tilde{k}}-s_{q'}})  \langle    \mathcal{T}(\mathbf{x}^{(k+\tilde{k} )})   \tilde{\mathcal{V}}_{\mathbf{\alpha}}(\mathbf{z})\tilde{\mathcal{V}}_{\frac{\mathbf{\beta}}{2} }(\mathbf{s}) \rangle_{t} 
\nonumber\\
\end{align}
\begin{align}
\textcolor{black}{C_{11}(\mathbf{x} ,\mu,\mu}):= -  &\frac{\mu^2  \gamma^2}{4}   \int_{\H_t}\int_{\H_t}(\frac{ 1}{x_{k+\tilde{k}}-x}+\frac{ 1}{x_{k+\tilde{k}}-\bar{x}}) (\frac{ 1}{x_{k+\tilde{k}}-x'}+\frac{ 1}{x_{k+\tilde{k}}-\bar{x}'})   \langle    \mathcal{T}(\mathbf{x}^{(k+\tilde{k}  )})   \tilde{V}_\gamma(x)\tilde{V}_\gamma(x')\tilde{\mathcal{V}}_{\mathbf{\alpha}}(\mathbf{z})\tilde{\mathcal{V}}_{\frac{\mathbf{\beta}}{2} }(\mathbf{s}) \rangle_{t} \,\dd x\,\dd x'\
\nonumber\\
\textcolor{black}{C_{11}(\mathbf{x} ,\mu,\mu_{\partial}}):= &-  \frac{\mu\mu_{\partial}  \gamma^2}{4}   \int_{\H_t}\int_{\R_t}(\frac{ 1}{x_{k+\tilde{k}}-x}+\frac{ 1}{x_{k+\tilde{k}}-\bar{x}}) (\frac{ 1}{x_{k+\tilde{k}}-s})   \langle    \mathcal{T}(\mathbf{x}^{(k+\tilde{k}  )})   \tilde{V}_\gamma(x)\tilde{V}_\frac{\gamma}{2}(s)\tilde{\mathcal{V}}_{\mathbf{\alpha}}(\mathbf{z})\tilde{\mathcal{V}}_{\frac{\mathbf{\beta}}{2} }(\mathbf{s}) \rangle_{t} \,\dd x\,\dd s\
\nonumber\\
\textcolor{black}{C_{11}(\mathbf{x} ,\mu_{\partial},\mu_{\partial}}):=& -  \frac{\mu_{\partial}^2 \gamma^2}{4}   \int_{\R_t}\int_{\R_t}(\frac{ 1}{x_{k+\tilde{k}}-s}) (\frac{ 1}{x_{k+\tilde{k}}-s'})   \langle    \mathcal{T}(\mathbf{x}^{(k+\tilde{k}  )})   \tilde{V}_\frac{\gamma}{2}(s')\tilde{V}_\frac{\gamma}{2}(s)\tilde{\mathcal{V}}_{\mathbf{\alpha}}(\mathbf{z})\tilde{\mathcal{V}}_{\frac{\mathbf{\beta}}{2} }(\mathbf{s})\rangle_{t} \,\dd s\,\dd s'\
\nonumber\\
C_{12}(\mathbf{x} ):=&    \sum_{\ell\not=\ell'=1}^{k+\tilde{k}-1}\frac{ 1}{(x_{k+\tilde{k}}-x_\ell)^2(x_{k+\tilde{k}}-x_{\ell'})^2}  \langle \partial_{x_{\ell}}\Phi(x_{\ell})    \partial_{x_{\ell'}}\Phi(x_{\ell'})     \mathcal{T}(\mathbf{x}^{(k+\tilde{k},\ell,\ell')})   \tilde{\mathcal{V}}_{\mathbf{\alpha}}(\mathbf{z})\tilde{\mathcal{V}}_{\frac{\mathbf{\beta}}{2} }(\mathbf{s}) \rangle_{t} 
\nonumber\\
\textcolor{black}{C_{13}(\mathbf{x},\mu )}:=&  \mu Q\gamma  \sum_{\ell=1}^{k+\tilde{k}-1}\int_{\H_t}\frac{ 1}{(x_{k+\tilde{k}}-x_\ell)^3 }(\frac{1}{x_{k+\tilde{k}}-x}+\frac{1}{x_{k+\tilde{k}}-\bar{x}} ) \langle   \mathcal{T}(\mathbf{x}^{(k+\tilde{k},\ell )})   \tilde{V}_\gamma(x)\tilde{\mathcal{V}}_{\mathbf{\alpha}}(\mathbf{z})\tilde{\mathcal{V}}_{\frac{\mathbf{\beta}}{2} }(\mathbf{s}) \rangle_{t} \,\dd x
\nonumber\\
\textcolor{black}{C_{13}(\mathbf{x},\mu_{\partial} )}:=&  \mu_{\partial} Q\gamma  \sum_{\ell=1}^{k+\tilde{k}-1}\int_{\R_t}\frac{ 1}{(x_{k+\tilde{k}}-x_\ell)^3 }(\frac{1}{x_{k+\tilde{k}}-s} ) \langle   \mathcal{T}(\mathbf{x}^{(k+\tilde{k},\ell )})   \tilde{V}_{\frac{\gamma}{2}}(s)\tilde{\mathcal{V}}_{\mathbf{\alpha}}(\mathbf{z})\tilde{\mathcal{V}}_{\frac{\mathbf{\beta}}{2} }(\mathbf{s}) \rangle_{t} \,\dd s
\nonumber\\
\textcolor{green}{C_{14}(\mathbf{x} ,\mu):=}& \textcolor{green}{\sum_{p=1}^{n}  \frac{\mu  \gamma\alpha_p}{2} \int_{\H_t}(\frac{1}{x_{k+\tilde{k}}-x}+\frac{1}{x_{k+\tilde{k}}-\bar{x}} ) (\frac{1}{x_{k+\tilde{k}}-z_p}+\frac{1}{x_{k+\tilde{k}}-\bar{z_p}} ) \langle    \mathcal{T}(\mathbf{x}^{(k+\tilde{k} )})   \tilde{V}_\gamma(x)\tilde{\mathcal{V}}_{\mathbf{\alpha}}(\mathbf{z})\tilde{\mathcal{V}}_{\frac{\mathbf{\beta}}{2} }(\mathbf{s}) \rangle_{t} \,\dd x
\nonumber}\\
&\textcolor{green}{+\sum_{q=1}^{m}\frac{\mu  \gamma\beta_q}{2} \int_{\H_t}(\frac{1}{x_{k+\tilde{k}}-x}+\frac{1}{x_{k+\tilde{k}}-\bar{x}} ) (\frac{1}{x_{k+\tilde{k}}-s_q} ) \langle    \mathcal{T}(\mathbf{x}^{(k+\tilde{k} )})   \tilde{V}_\gamma(x)\tilde{\mathcal{V}}_{\mathbf{\alpha}}(\mathbf{z})\tilde{\mathcal{V}}_{\frac{\mathbf{\beta}}{2} }(\mathbf{s}) \rangle_{\mathcal{D}} \,\dd x
\nonumber}\\
\textcolor{cyan}{C_{14}(\mathbf{x} ,\mu_{\partial}):=}&   \textcolor{cyan}{\sum_{p=1}^{n}\frac{\mu_{\partial}  \gamma\alpha_p}{2} \int_{\R_t}(\frac{1}{x_{k+\tilde{k}}-s} ) (\frac{1}{x_{k+\tilde{k}}-z_p}+\frac{1}{x_{k+\tilde{k}}-\bar{z_p}} ) \langle    \mathcal{T}(\mathbf{x}^{(k+\tilde{k} )})   \tilde{V}_{\frac{\gamma}{2}}(s)\tilde{\mathcal{V}}_{\mathbf{\alpha}}(\mathbf{z})\tilde{\mathcal{V}}_{\frac{\mathbf{\beta}}{2} }(\mathbf{s}) \rangle_{t} \,\dd s
\nonumber}\\
&\textcolor{cyan}{+\sum_{q=1}^{m}\frac{\mu_{\partial}  \gamma\beta_q}{2} \int_{\R_t}(\frac{1}{x_{k+\tilde{k}}-s} ) (\frac{1}{x_{k+\tilde{k}}-s_q} ) \langle    \mathcal{T}(\mathbf{x}^{(k+\tilde{k} )})   \tilde{V}_{\frac{\gamma}{2}}(s)\tilde{\mathcal{V}}_{\mathbf{\alpha}}(\mathbf{z})\tilde{\mathcal{V}}_{\frac{\mathbf{\beta}}{2} }(\mathbf{s}) \rangle_{t} \,\dd s
\nonumber}\\
\textcolor{black}{C_{15}(\mathbf{x} )}:=& -\sum_{\ell=1}^{k+\tilde{k}-1} \sum_{p=1}^{n}\frac{ Q\alpha_p}{(x_{k+\tilde{k}}-x_{\ell})^3} (\frac{1}{x_{k+\tilde{k}}-z_p}+\frac{1}{x_{k+\tilde{k}}-\bar{z_p}} ) \langle     \mathcal{T}(\mathbf{x}^{(k+\tilde{k},\ell )})   \tilde{\mathcal{V}}_{\mathbf{\alpha}}(\mathbf{z})\tilde{\mathcal{V}}_{\frac{\mathbf{\beta}}{2} }(\mathbf{s}) \rangle_{t} .\\
&-\sum_{\ell=1}^{k+\tilde{k}-1} \sum_{q=1}^{m}\frac{ Q\beta_q}{(x_{k+\tilde{k}}-x_{\ell})^3} (\frac{1}{x_{k+\tilde{k}}-s_q} ) \langle     \mathcal{T}(\mathbf{x}^{(k+\tilde{k},\ell )})   \tilde{\mathcal{V}}_{\mathbf{\alpha}}(\mathbf{z})\tilde{\mathcal{V}}_{\frac{\mathbf{\beta}}{2} }(\mathbf{s}) \rangle_{t} .\label{Cterms}
\end{align}
  
\paragraph{The $N$-term.}
\begin{lemma}
Consider a domain $D \subset \mathbb{H}$ and two function $f, g$ defined on $D .$ Then one has:
$$
\begin{aligned}
&\int_{D} \partial_{z} f(z) g(z) d z=-\int_{D} f(z) \partial_{z} g(z) d z+\frac{i}{2} \int_{\partial D} f(z) g(z) d \bar{z} \\
&\int_{D} \partial_{\bar{z}} f(z) g(z) d z=-\int_{D} f(z) \partial_{\bar{z}} g(z) d z-\frac{i}{2} \int_{\partial D} f(z) g(z) d z
\end{aligned}
$$
The integrations along $\partial D$ are in the counterclockwise direction.
\end{lemma} 
 The first step is to rewrite the $N$-contribution to make it cancel with some $C$-terms. For this, we regularize the vertex insertions  (besides the $\tilde{V}_\gamma$ insertion, we also regularize the $\tilde{V}_{\alpha_i}$'s for later need) 
in $N(\mathbf{u},\mathbf{z})$ given by \eqref{Nterm}, and performing an integration by parts (Green formula) in the $x$ integral  we get  
\begin{align*} 
 N^{\mu}(\mathbf{x},\partial) =& - \mu  \lim_{\epsilon\to 0} \int_{\H_t} \partial_x\frac{1}{x_{k+\tilde{k}}-x }\langle   \mathcal{T}(\mathbf{x}^{(k+\tilde{k})})  \tilde{V}_{\gamma,\epsilon}(x)\tilde{\mathcal{V}}_{\mathbf{\alpha},\epsilon}(\mathbf{z})\tilde{\mathcal{V}}_{\frac{\mathbf{\beta}}{2},\epsilon }(\mathbf{s}) \rangle_t \,\dd x\\
=& \lim_{\epsilon\to 0}B_{t,\epsilon}(\mathbf{x},\partial) + \mu \gamma  \lim_{\epsilon\to 0} \int_{\H_t}  \frac{1}{x_{k+\tilde{k}}-x } \langle   T(\mathbf{x}^{(k+\tilde{k})})  \partial_x\Phi_{\epsilon}(x)\tilde{V}_{\gamma,\epsilon}(x)\tilde{\mathcal{V}}_{\mathbf{\alpha},\epsilon}(\mathbf{z})\tilde{\mathcal{V}}_{\frac{\mathbf{\beta}}{2},\epsilon }(\mathbf{s})\rangle_t \,\dd x\\
:= &\lim_{\epsilon\to 0}B_{t,\epsilon}(\mathbf{x},\partial) + \tilde N^{\mu}(\mathbf{x},\partial)
  \end{align*} 
 where 
 \begin{align}\label{bryterm1}
B_{t,\epsilon}(\mathbf{x},\partial) :=&
 -\frac{i}{2}\mu\oint_{\partial U_t}
\frac{1}{x_{k+\tilde{k}}-x} \langle   \mathcal{T}(\mathbf{x}^{(k+\tilde{k})})  \tilde{V}_{\gamma,\epsilon}(x)\tilde{\mathcal{V}}_{\mathbf{\alpha}}(\mathbf{z})\tilde{\mathcal{V}}_{\frac{\mathbf{\beta}}{2} }(\mathbf{s}) \rangle_t \,\dd\bar x\\
&\textcolor{blue}{-\frac{i}{2}\mu\int_{\R_t}\frac{1}{x_{k+\tilde{k}}-s}\langle   T(\mathbf{u}^{(k)})  \tilde{V}_{\gamma,\epsilon}(s)\tilde{\mathcal{V}}_{\mathbf{\alpha}}(\mathbf{z})\tilde{\mathcal{V}}_{\frac{\mathbf{\beta}}{2} }(\mathbf{s}) \rangle_t \,\dd s}\\
:=&B_{t,\epsilon}^1(\mathbf{x},\partial)+\textcolor{blue}{B_{t,\epsilon}^2(\mathbf{x},\partial)}
\end{align}
and we write
\begin{align*}
\partial_x\tilde{V}_{\gamma,\epsilon}(x)=\gamma\partial_x\Phi_\epsilon(x)\tilde{V}_{\gamma,\epsilon}(x).
\end{align*}
\begin{remark}
    By fusion type estimate \eqref{fusion}, it's easy to see $B_{t,\epsilon}^1(\mathbf{x},\partial)$ has $\epsilon \to 0$ limit. The $B_{t,\epsilon}^2(\mathbf{x},\partial)$ limit should be problematic, but it cancels with $B_{t,\epsilon}^2(\mathbf{x},\Bar{\partial})$ at the fixed $\epsilon$ level.
\end{remark}

In $\tilde N^{\mu}(\mathbf{x},\partial)$ we integrate by parts the $ \partial_{x}\Phi_\epsilon(x)$ and end up with
  \begin{align}\nonumber
\tilde N^{\mu}(\mathbf{x},\partial)
=&  -\mu Q \gamma\sum_{\ell=1}^{k+\tilde{k}-1}  \int_{\H_t}  \frac{1}{(x_{k+\tilde{k}}-x)(x-x_\ell)^3 } \langle   \mathcal{T}(\mathbf{x}^{(\ell,k+\tilde{k})}) \tilde{V}_\gamma(x)\tilde{\mathcal{V}}_{\mathbf{\alpha}}(\mathbf{z})\tilde{\mathcal{V}}_{\frac{\mathbf{\beta}}{2} }(\mathbf{s}) \rangle_t \,\dd x
\\
& +\mu  \gamma\sum_{\ell=1}^{k+\tilde{k}-1}  \int_{\H_t}  \frac{1}{(x_{k+\tilde{k}}-x)(x-x_\ell)^2 } \langle \partial_{x_\ell}X(x_\ell)  \mathcal{T}(\mathbf{x}^{(\ell,k+\tilde{k})})  \tilde{V}_\gamma(x)\tilde{\mathcal{V}}_{\mathbf{\alpha}}(\mathbf{z})\tilde{\mathcal{V}}_{\frac{\mathbf{\beta}}{2} }(\mathbf{s}) \rangle_t \,\dd x \nonumber
\\
&  +\frac{\mu^2  \gamma^2  }{2}  \int_{\H_t} \int_{\H_t}  \frac{1}{(x_{k+\tilde{k}}-x)  } \Big(\frac{1}{x-x'}+\frac{1}{x-\bar{x'}}\Big)\langle  \mathcal{T}(\mathbf{x}^{(k+\tilde{k})})  \tilde{V}_\gamma(x)\tilde{V}_{\gamma}(x')\tilde{\mathcal{V}}_{\mathbf{\alpha}}(\mathbf{z})\tilde{\mathcal{V}}_{\frac{\mathbf{\beta}}{2} }(\mathbf{s}) \rangle_t \,\dd x\dd x' \nonumber
\\
&  +\frac{\mu\mu_{\partial} \gamma^2  }{2}  \int_{\H_t} \int_{\R_t}  \frac{1}{(x_{k+\tilde{k}}-x)  } \Big(\frac{1}{x-s}\Big)\langle  \mathcal{T}(\mathbf{x}^{(k+\tilde{k})})  \tilde{V}_\gamma(x)\tilde{V}_{\frac{\gamma}{2}}(s)\tilde{\mathcal{V}}_{\mathbf{\alpha}}(\mathbf{z})\tilde{\mathcal{V}}_{\frac{\mathbf{\beta}}{2} }(\mathbf{s}) \rangle_t \,\dd x \dd t\nonumber\\
&\textcolor{green}{+ {\mu  \gamma  }\lim_{\epsilon\to 0} \Big( \sum_{p=1}^n \int_{\H_t}  \frac{\alpha_p}{x_{k+\tilde{k}}-x }C_{\epsilon,0}(x,z_p) +\sum_{q=1}^m \int_{\H_t}  \frac{\beta_q}{2(x_{k+\tilde{k}}-x) }C_{\epsilon,0}(x,s_q) \Big) \langle  \mathcal{T}(\mathbf{x}^{(k+\tilde{k})})  \tilde{V}_\gamma(x)\tilde{\mathcal{V}}_{\mathbf{\alpha},\epsilon}(\mathbf{z})\tilde{\mathcal{V}}_{\frac{\mathbf{\beta}}{2},\epsilon }(\mathbf{s}) \rangle_t \,\dd x\nonumber}
\\
&\textcolor{red}{-\frac{\mu\gamma^2}{2}\int_{\H_t}\frac{1}{(x_{k+\tilde{k}}-x)}\frac{1}{x-\bar{x}}\langle   T(\mathbf{x}^{(k+\tilde{k})})  \tilde{V}_{\gamma}(x)\tilde{\mathcal{V}}_{\mathbf{\alpha}}(\mathbf{z})\tilde{\mathcal{V}}_{\frac{\mathbf{\beta}}{2} }(\mathbf{s})\rangle_t \,\dd x}
\\
=:&\textcolor{black}{C'_6(\mathbf{x},\mu,\partial)}+\textcolor{black}{C'_8(\mathbf{x},\mu,\partial)}+ \textcolor{black}{C'_{11}(\mathbf{x},\mu,\mu,\partial)}+ \textcolor{black}{C'_{11}(\mathbf{x},\mu,\mu_\partial,\partial)}+\textcolor{green}{C'_{14}(\mathbf{x},\mu,\partial)}+\textcolor{red}{T_{1}'(\mathbf{x},\partial)}\label{T4leq}
\end{align}
Similarly for $\bar{x}$ derivative, we have
\\
\begin{align*} 
 N^{\mu}(\mathbf{x},\bar{\partial}) =& - \mu  \lim_{\epsilon\to 0} \int_{\H_t} \partial_{\bar{x}}\frac{1}{x_{k+\tilde{k}}-\bar{x} }\langle   \mathcal{T}(\mathbf{x}^{(k+\tilde{k})})  \tilde{V}_{\gamma,\epsilon}(x)\tilde{\mathcal{V}}_{\mathbf{\alpha}}(\mathbf{z})\tilde{\mathcal{V}}_{\frac{\mathbf{\beta}}{2} }(\mathbf{s}) \rangle_t \,\dd x\\
=& B_{t,\epsilon}(\mathbf{x},\bar{\partial}) + \mu \gamma  \lim_{\epsilon\to 0} \int_{\H_t}  \frac{1}{x_{k+\tilde{k}}-\bar{x} } \langle   T(\mathbf{x}^{(k+\tilde{k})})  \partial_{\bar{x}}\Phi_{\epsilon}(x)\tilde{V}_{\gamma,\epsilon}(x)\tilde{\mathcal{V}}_{\mathbf{\alpha}}(\mathbf{z})\tilde{\mathcal{V}}_{\frac{\mathbf{\beta}}{2} }(\mathbf{s})\rangle_t \,\dd x\\:= &B_{t,\epsilon}(\mathbf{x},\bar{\partial}) + \tilde N^{\mu}(\mathbf{x},\bar{\partial})
  \end{align*} 
 where
 \begin{align}\label{bryterm2}
 B_{t,\epsilon}(\mathbf{x},\bar{\partial}) :=&
 +\frac{i}{2}\mu\oint_{\partial U_t}
\frac{1}{x_{k+\tilde{k}}-\bar{x}} \langle   \mathcal{T}(\mathbf{x}^{(k+\tilde{k})})  \tilde{V}_{\gamma,\epsilon}(x)\tilde{\mathcal{V}}_{\mathbf{\alpha}}(\mathbf{z})\tilde{\mathcal{V}}_{\frac{\mathbf{\beta}}{2} }(\mathbf{s})\rangle_t \,\dd x\\
&\textcolor{blue}{+\frac{i}{2}\mu\int_{\R_t}\frac{1}{x_{k+\tilde{k}}-s}\langle   T(\mathbf{u}^{(k)})  \tilde{V}_{\gamma,\epsilon}(s)\tilde{\mathcal{V}}_{\mathbf{\alpha}}(\mathbf{z})\tilde{\mathcal{V}}_{\frac{\mathbf{\beta}}{2} }(\mathbf{s}) \rangle_t \,\dd s}\\
&:=B_{t,\epsilon}^1(\mathbf{x},\bar{\partial})+\textcolor{blue}{B_{t,\epsilon}^2(\mathbf{x},\bar{\partial})}
\end{align}
Obviously, we can see that
\begin{equation}
    \textcolor{blue}{B_{t,\epsilon}^2(\mathbf{x},\partial)+B_{t,\epsilon}^2(\mathbf{x},\bar{\partial})=0}
\end{equation}
and we write
\begin{align*}
\partial_{\bar{x}}\tilde{V}_{\gamma,\epsilon}(x)=\gamma\partial_{\bar{x}}\Phi_\epsilon(x)\tilde{V}_{\gamma,\epsilon}(x).
\end{align*}

In $\tilde N^{\mu}(\mathbf{x},\partial)$ we integrate by parts the $ \partial_{\bar{x}}\Phi_\epsilon(x)$ and end up with
  \begin{align}\nonumber
\tilde N^{\mu}(\mathbf{x},\bar{\partial})
=&  -\mu Q \gamma\sum_{\ell=1}^{k+\tilde{k}-1}  \int_{\H_t}  \frac{1}{(x_{k+\tilde{k}}-\bar{x})(\bar{x}-x_\ell)^3 } \langle   \mathcal{T}(\mathbf{x}^{(\ell,k+\tilde{k})}) \tilde{V}_\gamma(x)\tilde{\mathcal{V}}_{\mathbf{\alpha}}(\mathbf{z})\tilde{\mathcal{V}}_{\frac{\mathbf{\beta}}{2} }(\mathbf{s})\rangle_t \,\dd x
\\
& +\mu  \gamma\sum_{\ell=1}^{k+\tilde{k}-1}  \int_{\H_t}  \frac{1}{(x_{k+\tilde{k}}-\bar{x})(\bar{x}-x_\ell)^2 } \langle \partial_{x_\ell}X(x_\ell)  \mathcal{T}(\mathbf{x}^{(\ell,k+\tilde{k})})  \tilde{V}_\gamma(x)\tilde{\mathcal{V}}_{\mathbf{\alpha}}(\mathbf{z})\tilde{\mathcal{V}}_{\frac{\mathbf{\beta}}{2} }(\mathbf{s}) \rangle_t \,\dd x \nonumber
\\
&  +\frac{\mu^2  \gamma^2  }{2}  \int_{\H_t} \int_{\H_t}  \frac{1}{(x_{k+\tilde{k}}-\bar{x})  } \Big(\frac{1}{\bar{x}-x'}+\frac{1}{\bar{x}-\bar{x'}}\Big)\langle  \mathcal{T}(\mathbf{x}^{(k+\tilde{k})})  \tilde{V}_\gamma(x)\tilde{V}_{\gamma}(x')\tilde{\mathcal{V}}_{\mathbf{\alpha}}(\mathbf{z})\tilde{\mathcal{V}}_{\frac{\mathbf{\beta}}{2} }(\mathbf{s}) \rangle_t \,\dd x\dd x' \nonumber
\\
&  +\frac{\mu\mu_{\partial} \gamma^2  }{2}  \int_{\H_t} \int_{\R_t}  \frac{1}{(x_{k+\tilde{k}}-\bar{x})  } \Big(\frac{1}{\bar{x}-s}\Big)\langle  \mathcal{T}(\mathbf{x}^{(k+\tilde{k})})  \tilde{V}_\gamma(x)\tilde{V}_{\frac{\gamma}{2}}(s)\tilde{\mathcal{V}}_{\mathbf{\alpha}}(\mathbf{z})\tilde{\mathcal{V}}_{\frac{\mathbf{\beta}}{2} }(\mathbf{s}) \rangle_t \,\dd x \dd t\nonumber\\
&\textcolor{green}{+ {\mu  \gamma  }\lim_{\epsilon\to 0}  \Big(\sum_{p=1}^n \int_{\H_t}  \frac{\alpha_p}{x_{k+\tilde{k}}-\bar{x} }C_{\epsilon,0}(\bar{x},z_p) +\sum_{q=1}^m \int_{\H_t}  \frac{\beta_q}{2(x_{k+\tilde{k}}-\bar{x}) }C_{\epsilon,0}(\bar{x},s_q) \Big) \langle  \mathcal{T}(\mathbf{x}^{(k+\tilde{k})})  \tilde{V}_\gamma(x)\tilde{\mathcal{V}}_{\mathbf{\alpha}}(\mathbf{z})\tilde{\mathcal{V}}_{\frac{\mathbf{\beta}}{2} }(\mathbf{s}) \rangle_t \,\dd x\nonumber}
\\
&\textcolor{red}{-\frac{\mu\gamma^2}{2}\int_{\H_t}\frac{1}{(x_{k+\tilde{k}}-\bar{x})}\frac{1}{\bar{x}-x}\langle   T(\mathbf{x}^{(k+\tilde{k})})  \tilde{V}_{\gamma}(x)\tilde{\mathcal{V}}_{\mathbf{\alpha}}(\mathbf{z})\tilde{\mathcal{V}}_{\frac{\mathbf{\beta}}{2} }(\mathbf{s})\rangle_t \,\dd x}
\\
=:&\textcolor{black}{C'_6(\mathbf{x},\mu,\bar{\partial})}+\textcolor{black}{C'_8(\mathbf{x},\mu,\bar{\partial})}+ \textcolor{black}{C'_{11}(\mathbf{x},\mu,\mu,\bar{\partial})}+ \textcolor{black}{C'_{11}(\mathbf{x},\mu,\mu_\partial,\bar{\partial})}+\textcolor{green}{C'_{14}(\mathbf{x},\mu,\bar{\partial})}+\textcolor{red}{T_{1}'(\mathbf{x},\bar{\partial})}\label{T4leq}
\end{align}
where again we took the  $\epsilon\to 0$ limit in the terms where it was obvious. In particular this identity proves that the limit on the RHS, denoted by $C'_{14}(\mathbf{u},\mathbf{z})$, exists. The numbering of these terms and the ones below will be used when comparing with \eqref{Cterms}.
Then we have 
\begin{align}
    \textcolor{red}{T'_{1}(\mathbf{x},\partial)+T'_{1}(\mathbf{x},\bar{\partial})+T_{1}(\mathbf{x})=0}
\end{align}
To deal with the $N^{\mu_{\partial}}(\mathbf{x})$, we need the following fusion estimate lemma,
\begin{lemma}
$$\lim_{s\to\infty} \frac{1}{x_{k+\tilde{k}}-s}\langle   \mathcal{T}(\mathbf{x}^{(k+\tilde{k})})  \tilde{V}_{\frac{\gamma}{2}}(s)\tilde{\mathcal{V}}_{\mathbf{\alpha}}(\mathbf{z})\tilde{\mathcal{V}}_{\frac{\mathbf{\beta}}{2} }(\mathbf{s})\rangle_t =0$$
\end{lemma}
\begin{proof}
By scaling relation \eqref{scaling}, we only need to estimate $\langle   V_{\frac{\gamma}{2}}(s)\tilde{\mathcal{V}}_{\mathbf{\alpha}}(\mathbf{z})\tilde{\mathcal{V}}_{\frac{\mathbf{\beta}}{2} }(\mathbf{s})\rangle_t$ but we know this term already goes to $0$ when $s\to \infty$
\end{proof}
We also note 
\begin{align}\label{Q_tterm}
    Q_t:=&\frac{\mu_\partial}{x_{k+\tilde{k}}-(s+e^{-t})}\langle   \mathcal{T}(\mathbf{x}^{(k+\tilde{k})})  \tilde{V}_{\frac{\gamma}{2}}(s+e^{-t})\tilde{\mathcal{V}}_{\mathbf{\alpha}}(\mathbf{z})\tilde{\mathcal{V}}_{\frac{\mathbf{\beta}}{2} }(\mathbf{s})\rangle_t\nonumber\\
    -&\frac{\mu_\partial}{x_{k+\tilde{k}}-(s-e^{-t})}\langle   \mathcal{T}(\mathbf{x}^{(k+\tilde{k})})  \tilde{V}_{\frac{\gamma}{2}}(s-e^{-t})\tilde{\mathcal{V}}_{\mathbf{\alpha}}(\mathbf{z})\tilde{\mathcal{V}}_{\frac{\mathbf{\beta}}{2} }(\mathbf{s})\rangle_t
\end{align}

Now for $N^{\mu_{\partial}}(\mathbf{x})$, we have the following,
\begin{align}\nonumber
&N^{\mu_\partial}(\mathbf{x})+Q_t=-\mu_{\partial} \int_{\R_t}\partial_s \frac{1}{x_{k+\tilde{k}}-s}\langle   \mathcal{T}(\mathbf{x}^{(k+\tilde{k})})  \tilde{V}_{\frac{\gamma}{2}}(s)\tilde{\mathcal{V}}_{\mathbf{\alpha}}(\mathbf{z})\tilde{\mathcal{V}}_{\frac{\mathbf{\beta}}{2} }(\mathbf{s})\rangle_t \,\dd s
\\
=&  -\mu_\partial Q \gamma\sum_{\ell=1}^{k+\tilde{k}-1}  \int_{\R_t}  \frac{1}{(x_{k+\tilde{k}}-s)(s-x_\ell)^3 } \langle   \mathcal{T}(\mathbf{x}^{(\ell,k+\tilde{k})}) \tilde{V}_{\frac{\gamma}{2}}(s)\tilde{\mathcal{V}}_{\mathbf{\alpha}}(\mathbf{z})\tilde{\mathcal{V}}_{\frac{\mathbf{\beta}}{2} }(\mathbf{s}) \rangle_t \,\dd x
\\
& +\mu_{\partial} \gamma\sum_{\ell=1}^{k+\tilde{k}-1}  \int_{\R_t}  \frac{1}{(x_{k+\tilde{k}}-s)(s-x_\ell)^2 } \langle \partial_{x_\ell}X(x_\ell)  \mathcal{T}(\mathbf{x}^{(\ell,k+\tilde{k})})  \tilde{V}_{\frac{\gamma}{2}}(s)\tilde{\mathcal{V}}_{\mathbf{\alpha}}(\mathbf{z})\tilde{\mathcal{V}}_{\frac{\mathbf{\beta}}{2} }(\mathbf{s}) \rangle_t \,\dd x \nonumber
\\
&  +\frac{\mu \mu_\partial  \gamma^2  }{2}  \int_{\R_t} \int_{\H_t}  \frac{1}{(x_{k+\tilde{k}}-s)  } \Big(\frac{1}{s-x}+\frac{1}{s-\bar{x}}\Big)\langle  \mathcal{T}(\mathbf{x}^{(k+\tilde{k})})  \tilde{V}_{\frac{\gamma}{2}}(s)\tilde{V}_{\gamma}(x)\tilde{\mathcal{V}}_{\mathbf{\alpha}}(\mathbf{z})\tilde{\mathcal{V}}_{\frac{\mathbf{\beta}}{2} }(\mathbf{s}) \rangle_t \,\dd x\dd s \nonumber
\\
&  +\frac{\mu_{\partial}^2 \gamma^2  }{2}  \int_{\R_t} \int_{\R_t}  \frac{1}{(x_{k+\tilde{k}}-s)  } \Big(\frac{1}{s-s'}\Big)\langle  \mathcal{T}(\mathbf{x}^{(k+\tilde{k})})  \tilde{V}_{\frac{\gamma}{2}}(s)\tilde{V}_{\frac{\gamma}{2}}(s')\tilde{\mathcal{V}}_{\mathbf{\alpha}}(\mathbf{z})\tilde{\mathcal{V}}_{\frac{\mathbf{\beta}}{2} }(\mathbf{s}) \rangle_t \,\dd s \dd s'\nonumber\\
&\textcolor{cyan}{+ {\mu_\partial  \frac{\gamma}{2}  }\lim_{\epsilon\to 0}  \Big(\sum_{p=1}^n \int_{\H_t}  \frac{\alpha_p}{x_{k+\tilde{k}}-s }C_{\epsilon,0}(s,z_p) +  \sum_{q=1}^m \int_{\H_t}  \frac{\beta_q}{2(x_{k+\tilde{k}}-s )}C_{\epsilon,0}(s,s_q) \Big)\langle  \mathcal{T}(\mathbf{x}^{(k+\tilde{k})})  \tilde{V}_{\frac{\gamma}{2}}(s)\tilde{\mathcal{V}}_{\mathbf{\alpha}}(\mathbf{z})\tilde{\mathcal{V}}_{\frac{\mathbf{\beta}}{2} }(\mathbf{s}) \rangle_t \,\dd s\nonumber}
\\
=:& \textcolor{black}{C'_6(\mathbf{x},\mu_\partial)}+\textcolor{black}{C'_8(\mathbf{x},\mu_\partial)}+ \textcolor{black}{C'_{11}(\mathbf{x},\mu,\mu_\partial)}+ \textcolor{black}{C'_{11}(\mathbf{x},\mu_\partial,\mu_\partial)}+\textcolor{cyan}{C'_{14}(\mathbf{x},\mu_\partial)}\label{T4leq}
\end{align}


 We want to compare the expression \eqref{IPPstress} to  the derivatives of the function $\langle   \mathcal{T}(\mathbf{x}^{(k+\tilde{k})})   \tilde{\mathcal{V}}_{\mathbf{\alpha}}(\mathbf{z})\tilde{\mathcal{V}}_{\frac{\mathbf{\beta}}{2} }(\mathbf{s}) \rangle_{t}$.  We have
\begin{lemma}\label{deriv2}
Let
\begin{align*}
I_\epsilon(\mathbf{x},\partial):=
\sum_{p=1}^n\frac{1}{x_{k+\tilde{k}}-z_p}&  \partial_{z_p}\langle   \mathcal{T}(\mathbf{x}^{(k+\tilde{k})}) \tilde{\mathcal{V}}_{\mathbf{\alpha},\epsilon}(\mathbf{z})\tilde{\mathcal{V}}_{\frac{\mathbf{\beta}}{2},\epsilon }(\mathbf{s}) \rangle_t .
\end{align*}  
\begin{align*}
I_\epsilon(\mathbf{x},\bar{\partial}):=
\sum_{p=1}^n\frac{1}{x_{k+\tilde{k}}-\bar{z_p}}&  \partial_{\bar{z_p}}\langle \mathcal{T}  (\mathbf{x}^{(k+\tilde{k})}) \tilde{\mathcal{V}}_{\mathbf{\alpha},\epsilon}(\mathbf{z})\tilde{\mathcal{V}}_{\frac{\mathbf{\beta}}{2},\epsilon }(\mathbf{s}) \rangle_t .
 \end{align*} 
 \begin{align*}
I_\epsilon(\mathbf{x},\beta):=
\sum_{q=1}^m\frac{1}{x_{k+\tilde{k}}-s_q}&  \partial_{s_q}\langle   \mathcal{T}(\mathbf{x}^{(k+\tilde{k})}) \tilde{\mathcal{V}}_{\mathbf{\alpha},\epsilon}(\mathbf{z})\tilde{\mathcal{V}}_{\frac{\mathbf{\beta}}{2} ,\epsilon}(\mathbf{s})\rangle_t .
  \end{align*} 
  Then all $\lim_{\epsilon\to 0}I_\epsilon:= I$
exists and defines a continuous function in $ \caO^{\rm ext}_{\H,U}$.
For $\varphi$ with compactly support in $\{\mathbf{z}\mid (\bf z,\bf s,\bf u,\bf v)\in \caO^{\rm ext}_{\H,U}\}$ and $\psi$ with compactly support in $\{\mathbf{s}\mid (\bf z,\bf s,\bf u,\bf v)\in \caO^{\rm ext}_{\H,U}\}$
we have
\begin{align}
    &\int I(x,\partial)\bar\varphi(\bf z) \dd\bf z=\int \langle   \mathcal{T}(\mathbf{x}^{(k+\tilde{k})}) \tilde{\mathcal{V}}_{\mathbf{\alpha},\epsilon}(\mathbf{z})\tilde{\mathcal{V}}_{\frac{\mathbf{\beta}}{2} ,\epsilon}(\mathbf{s})\rangle_t \hat D_{\partial}^*\bar\varphi(\bf z) \dd\bf z\\
    &\int I(x,\bar\partial)\bar\varphi(\bf z) \dd\bf z=\int \langle   \mathcal{T}(\mathbf{x}^{(k+\tilde{k})}) \tilde{\mathcal{V}}_{\mathbf{\alpha},\epsilon}(\mathbf{z})\tilde{\mathcal{V}}_{\frac{\mathbf{\beta}}{2} ,\epsilon}(\mathbf{s})\rangle_t \hat D_{\bar\partial}^*\bar\varphi(\bf z) \dd\bf z\\
    &\int I(x,\beta)\bar\psi(\bf s) \dd\bf s=\int \langle   \mathcal{T}(\mathbf{x}^{(k+\tilde{k})}) \tilde{\mathcal{V}}_{\mathbf{\alpha},\epsilon}(\mathbf{z})\tilde{\mathcal{V}}_{\frac{\mathbf{\beta}}{2} ,\epsilon}(\mathbf{s})\rangle_t \hat D_{\beta}^*\bar\psi(\bf s) \dd\bf s
    \end{align}
    where $\hat D_{\partial}=\sum_{p=1}^n\frac{1}{x_{k+\tilde{k}}-z_p}  \partial_{z_p}$, $ \hat D_{\bar\partial}=\sum_{p=1}^n\frac{1}{x_{k+\tilde{k}}-\bar{z_p}} \partial_{\bar{z_p}}$ and $\hat D_{\beta}^*\bar=\sum_{q=1}^m\frac{1}{x_{k+\tilde{k}}-s_q}  \partial_{s_q} $
\end{lemma}
\begin{proof}
We have
   \begin{align*}
I_\epsilon(\mathbf{x},\partial)=\sum_{p=1}^n\frac{\alpha_p}{x_{k+\tilde{k}}-z_p}&  \langle   \mathcal{T}(\mathbf{x}^{(k+\tilde{k})}) \partial_{z_p}\Phi_\epsilon(z_p)\tilde{\mathcal{V}}_{\mathbf{\alpha},\epsilon}(\mathbf{z})\tilde{\mathcal{V}}_{\frac{\mathbf{\beta}}{2},\epsilon }(\mathbf{s}) \rangle_t   = D_\epsilon(\mathbf{x},\partial)+L_\epsilon(\mathbf{x},\partial)
  \end{align*}  
  where we integrate by parts the $\partial_{z_p}\Phi_\epsilon(z_p)$ and $D_\epsilon({\bf x},\partial)$ collects the terms with an obvious $\epsilon\to 0$ limit :
    \begin{align*}
 D&(\mathbf{x},\partial)  =- \sum_{p=1}^n\sum_{\ell=1}^{k+\tilde{k}-1}\frac{Q\alpha_p}{(x_{k+\tilde{k}}-z_p)(z_p-x_\ell)^3}  \langle   \mathcal{T}(\mathbf{x}^{(\ell,k+\tilde{k})})  \tilde{\mathcal{V}}_{\mathbf{\alpha}}(\mathbf{z})\tilde{\mathcal{V}}_{\frac{\mathbf{\beta}}{2} }(\mathbf{s}) \rangle_t  
   \\
&+ \sum_{p=1}^n\sum_{\ell=1}^{k+\tilde{k}-1}\frac{\alpha_p}{(x_{k+\tilde{k}}-z_p)(z_p-x_\ell)^2}  \langle  \partial_{x_\ell}\Phi(x_\ell) \mathcal{T}(\mathbf{x}^{(\ell,k+\tilde{k})})  \tilde{\mathcal{V}}_{\mathbf{\alpha}}(\mathbf{z})\tilde{\mathcal{V}}_{\frac{\mathbf{\beta}}{2} }(\mathbf{s}) \rangle_t     
 \\
&+\Big(\sum_{p\not= p'=1}^n - \frac{\alpha_p\alpha_{p'}}{2}\frac{1}{(x_{k+\tilde{k}}-z_p)}(\frac{1}{z_p-z_p'} +\frac{1}{z_p-\bar{z_p'}}) +\sum_{p=1,q=1}^{n,m} - \frac{\alpha_p\beta_q}{2}\frac{1}{(x_{k+\tilde{k}}-z_p)}(\frac{1}{z_p-s_q})\Big)\langle  T(\mathbf{x}^{(k+\tilde{k})})  \tilde{\mathcal{V}}_{\mathbf{\alpha}}(\mathbf{z})\tilde{\mathcal{V}}_{\frac{\mathbf{\beta}}{2} }(\mathbf{s})\rangle_t
   \\
   &-\sum_{p=1}^n \frac{\alpha_p^2}{2}\frac{1}{(x_{k+\tilde{k}}-z_p)}\frac{1}{z_p-\bar{z_p}}\langle  T(\mathbf{x}^{(k+\tilde{k})})  \tilde{\mathcal{V}}_{\mathbf{\alpha}}(\mathbf{z})\tilde{\mathcal{V}}_{\frac{\mathbf{\beta}}{2} }(\mathbf{s}) \rangle_t\\
:=&D_5(\mathbf{x},\partial)+D_9(\mathbf{x},\partial)+D_{10}(\mathbf{x},\partial)+T_2(\mathbf{x},\partial)
  \end{align*}  
whereas 
\begin{align*}
 L_\epsilon(\mathbf{x},\partial)
&=\textcolor{green}{-\mu\gamma\sum_{p=1}^n  
\alpha_p\int_{\H_t}
\frac{1}{x_{k+\tilde{k}}-z_p}C_{\epsilon,0}(z_p,x)  \langle  T(\mathbf{x}^{(k+\tilde{k})}) \tilde{V}_\gamma(x)\tilde{\mathcal{V}}_{\mathbf{\alpha},\epsilon}(\mathbf{z})\tilde{\mathcal{V}}_{\frac{\mathbf{\beta}}{2} ,\epsilon}(\mathbf{s}) \rangle_t     \,\dd x   .}\\
&\textcolor{cyan}{-\mu_{\partial}\frac{\gamma}{2}\sum_{p=1}^n 
\alpha_p\int_{\R_t}
\frac{1}{x_{k+\tilde{k}}-z_p}C_{\epsilon,0}(z_p,s)  \langle  T(\mathbf{x}^{(k+\tilde{k})}) \tilde{V}_\frac{\gamma}{2}(s)\tilde{\mathcal{V}}_{\mathbf{\alpha},\epsilon}(\mathbf{z})\tilde{\mathcal{V}}_{\frac{\mathbf{\beta}}{2} ,\epsilon}(\mathbf{s}) \rangle_t     \,\dd s} \\
  &:=\textcolor{green}{ L_\epsilon(\mathbf{x},\mu,\partial)}+ \textcolor{cyan}{L_\epsilon(\mathbf{x},\mu_\partial,\partial)}\\
  \end{align*}  \\
And, similarly, we derive the formula for $\bar{z_p}$
  \begin{align*}
I_\epsilon(\mathbf{x},\bar{\partial})=\sum_{p=1}^n\frac{\alpha_p}{x_{k+\tilde{k}}-\bar{z_p}}&  \langle   \mathcal{T}(\mathbf{x}^{(k+\tilde{k})}) \partial_{\bar{z_p}}\Phi_\epsilon(z_p)\tilde{\mathcal{V}}_{\mathbf{\alpha},\epsilon}(\mathbf{z})\tilde{\mathcal{V}}_{\frac{\mathbf{\beta}}{2} ,\epsilon}(\mathbf{s})\rangle_t   = D_\epsilon(\mathbf{x},\bar{\partial})+L_\epsilon(\mathbf{x},\bar{\partial})
  \end{align*}  
  where we integrate by parts the $\partial_{\bar{z_p}}\Phi_\epsilon(z_p)$ and $D_\epsilon({\bf x},\bar{\partial})$ collects the terms with an obvious $\epsilon\to 0$ limit:
    \begin{align*}
 D(\mathbf{x},\bar{\partial})  =&- \sum_{p=1}^n\sum_{\ell=1}^{k+\tilde{k}-1}\frac{Q\alpha_p}{(x_{k+\tilde{k}}-\bar{z_p})(\bar{z_p}-x_\ell)^3}  \langle   \mathcal{T}(\mathbf{x}^{(\ell,k+\tilde{k})})  \tilde{\mathcal{V}}_{\mathbf{\alpha}}(\mathbf{z})\tilde{\mathcal{V}}_{\frac{\mathbf{\beta}}{2} }(\mathbf{s}) \rangle_t   
   \\
&+ \sum_{p=1}^n\sum_{\ell=1}^{k+\tilde{k}-1}\frac{\alpha_p}{(x_{k+\tilde{k}}-\bar{z_p})(\bar{z_p}-x_\ell)^2}  \langle  \partial_{x_\ell}\Phi(x_\ell) \mathcal{T}(\mathbf{x}^{(\ell,k+\tilde{k})})  \tilde{\mathcal{V}}_{\mathbf{\alpha}}(\mathbf{z})\tilde{\mathcal{V}}_{\frac{\mathbf{\beta}}{2} }(\mathbf{s}) \rangle_t     
 \\
&-\Big(\sum_{p\not= p'=1}^n  \frac{\alpha_p\alpha_{p'}}{2}\frac{1}{(x_{k+\tilde{k}}-\bar{z_p})}(\frac{1}{\bar{z_p}-z_p'} +\frac{1}{\bar{z_p}-\bar{z_p'}}) +\sum_{p=1,q=1}^{n,m} \frac{\alpha_p\beta_q}{2}\frac{1}{(x_{k+\tilde{k}}-\bar{z_p})}(\frac{1}{\bar{z_p}-s_q})\Big)\langle  \mathcal{T}(\mathbf{x}^{(k+\tilde{k})})  \tilde{\mathcal{V}}_{\mathbf{\alpha}}(\mathbf{z})\tilde{\mathcal{V}}_{\frac{\mathbf{\beta}}{2} }(\mathbf{s}) \rangle_t
   \\
   &-\sum_{p=1}^n \frac{\alpha_p^2}{2}\frac{1}{(x_{k+\tilde{k}}-\bar{z_p})}\frac{1}{\bar{z_p}-z_p}\langle  T(\mathbf{x}^{(k+\tilde{k})})  \tilde{\mathcal{V}}_{\mathbf{\alpha}}(\mathbf{z})\tilde{\mathcal{V}}_{\frac{\mathbf{\beta}}{2} }(\mathbf{s}) \rangle_t\\
:=&D_5(\mathbf{x},\bar{\partial})+D_9(\mathbf{x},\bar{\partial})+D_{10}(\mathbf{x},\bar{\partial})+T_2(\mathbf{x},\bar{\partial})
  \end{align*}  
whereas 
\begin{align*}
 L_\epsilon(\mathbf{x},\bar{\partial})
&=\textcolor{green}{-\mu\gamma\sum_{p=1}^n  
\alpha_p\int_{\H_t}
\frac{1}{x_{k+\tilde{k}}-z_p}C_{\epsilon,0}(z_p,x)  \langle  \mathcal{T}(\mathbf{x}^{(k+\tilde{k})}) \tilde{V}_\gamma(x)\tilde{\mathcal{V}}_{\mathbf{\alpha},\epsilon}(\mathbf{z})\tilde{\mathcal{V}}_{\frac{\mathbf{\beta}}{2} ,\epsilon}(\mathbf{s}) \rangle_t     \,\dd x   }\\
&\textcolor{cyan}{-\mu_{\partial}\frac{\gamma}{2}\sum_{p=1}^n 
\alpha_p\int_{\R_t}
\frac{1}{x_{k+\tilde{k}}-z_p}C_{\epsilon,0}(z_p,s)  \langle  \mathcal{T}(\mathbf{x}^{(k+\tilde{k})}) \tilde{V}_\frac{\gamma}{2}(s)\tilde{\mathcal{V}}_{\mathbf{\alpha},\epsilon}(\mathbf{z})\tilde{\mathcal{V}}_{\frac{\mathbf{\beta}}{2} ,\epsilon}(\mathbf{s}) \rangle_t     \,\dd s} \\
  &:=\textcolor{green}{L_\epsilon(\mathbf{x},\mu,\bar{\partial})} + \textcolor{cyan}{L_\epsilon(\mathbf{x},\mu_\partial,\bar{\partial})}\\
  \end{align*}  \\
  And, similarly, we derive the formula for $s_q$
  \begin{align*}
I_\epsilon(\mathbf{x},\beta)=\sum_{q=1}^m\frac{\beta_q}{2(x_{k+\tilde{k}}-s_q)}&  \langle   \mathcal{T}(\mathbf{x}^{(k+\tilde{k})}) \partial_{s_q}\Phi_\epsilon(s_q)\tilde{\mathcal{V}}_{\mathbf{\alpha},\epsilon}(\mathbf{z})\tilde{\mathcal{V}}_{\frac{\mathbf{\beta}}{2} ,\epsilon}(\mathbf{s}) \rangle_t   = D_\epsilon(\mathbf{x},\beta)+L_\epsilon(\mathbf{x},\beta)
  \end{align*}  
  where we integrate by parts the $\partial_{s_q}\Phi_\epsilon(s_q)$ and $D_\epsilon({\bf x},\beta)$ collects the terms with an obvious $\epsilon\to 0$ limit:
    \begin{align*}
 D(\mathbf{x},\beta)  =&- \sum_{q=1}^m\sum_{\ell=1}^{k+\tilde{k}-1}\frac{Q\beta_q}{(x_{k+\tilde{k}}-s_q)(s_q-x_\ell)^3}  \langle   \mathcal{T}(\mathbf{x}^{(\ell,k+\tilde{k})})  \tilde{\mathcal{V}}_{\mathbf{\alpha}}(\mathbf{z})\tilde{\mathcal{V}}_{\frac{\mathbf{\beta}}{2} }(\mathbf{s}) \rangle_t   
   \\
&+ \sum_{q=1}^m\sum_{\ell=1}^{k+\tilde{k}-1}\frac{\beta_q}{(x_{k+\tilde{k}}-s_q)(s_q-x_\ell)^2}  \langle  \partial_{x_\ell}\Phi(x_\ell) \mathcal{T}(\mathbf{x}^{(\ell,k+\tilde{k})})  \tilde{\mathcal{V}}_{\mathbf{\alpha}}(\mathbf{z})\tilde{\mathcal{V}}_{\frac{\mathbf{\beta}}{2} }(\mathbf{s}) \rangle_t     
 \\
& -\Big(\sum_{p=1,q=1}^{n,m} \frac{\alpha_p\beta_q}{2}\frac{1}{(x_{k+\tilde{k}}-s_q)}(\frac{1}{s_q-z_p} +\frac{1}{s_q-\bar{z_p}}) +\sum_{q\not q'=1}^{m} \frac{\beta_q\beta_{q'}}{2}\frac{1}{(x_{k+\tilde{k}}-s_q)}(\frac{1}{s_q-s_{q'}})\Big) \langle \mathcal{T} (\mathbf{x}^{(k+\tilde{k})})  \tilde{\mathcal{V}}_{\mathbf{\alpha}}(\mathbf{z})\tilde{\mathcal{V}}_{\frac{\mathbf{\beta}}{2} }(\mathbf{s}) \rangle_t
   \\
:=&D_5(\mathbf{x},\beta)+D_9(\mathbf{x},\beta)+D_{10}(\mathbf{x},\beta)
  \end{align*}  
whereas 
\begin{align*}
 L_\epsilon(\mathbf{x},\beta)
&=\textcolor{green}{-\mu\gamma\sum_{q=1}^m  
\frac{\beta_q}{2}\int_{\H_t}
\frac{1}{x_{k+\tilde{k}}-s_q}C_{\epsilon,0}(s_q,x)  \langle  \mathcal{T}(\mathbf{x}^{(k+\tilde{k})}) \tilde{V}_\gamma(x)\tilde{\mathcal{V}}_{\mathbf{\alpha},\epsilon}(\mathbf{z})\tilde{\mathcal{V}}_{\frac{\mathbf{\beta}}{2} ,\epsilon}(\mathbf{s})\rangle_t     \,\dd x}   \\
&\textcolor{cyan}{-\mu_{\partial}\frac{\gamma}{2}\sum_{q=1}^m 
\frac{\beta_q}{2}\int_{\R_t}
\frac{1}{x_{k+\tilde{k}}-s_q}C_{\epsilon,0}(s_q,s)  \langle  \mathcal{T}(\mathbf{x}^{(k+\tilde{k})}) \tilde{V}_\frac{\gamma}{2}(s)\tilde{\mathcal{V}}_{\mathbf{\alpha},\epsilon}(\mathbf{z})\tilde{\mathcal{V}}_{\frac{\mathbf{\beta}}{2} ,\epsilon}(\mathbf{s}) \rangle_t     \,\dd s }\\
  &:= \textcolor{green}{L_\epsilon(\mathbf{x},\mu,\beta)}+ \textcolor{cyan}{L_\epsilon(\mathbf{x},\mu_\partial,\beta)}\\
  \end{align*}  \\
Since $C_{0,0}(z_p,x) =-\frac{1}{2}\Big(\frac{1}{z_p-x}+\frac{1}{z_p-\bar{x}}\Big)$  and since it is not clear that $\frac{1}{z_p-x} \langle  \mathcal{T}(\mathbf{u}^{(k+\tilde{k})})  V_\gamma(x)\prod_{i=1}^nV_{\alpha_i }(z_i) \rangle_t    $ is integrable the $\epsilon\to 0$ limit of $L_\epsilon$ is problematic. However, we can compare it with the term $C'_{14}$ in \eqref{T4leq} and $C_{14}$.  Since 
\begin{align}
    &-\frac{1}{u_k-x}(\frac{1}{x-z_p}+\frac{1}{x-\bar{z_p}})-\frac{1}{u_k-\bar{x}}(\frac{1}{\bar{x}-z_p}+\frac{1}{\bar{x}-\bar{z_p}})+(\frac{1}{u_k-z_p}+\frac{1}{u_k-\bar{z_p}})(\frac{1}{u_k-x}+\frac{1}{u_k-\bar{x}})\nonumber\\=&\frac{1}{u_k-x}(\frac{x-u_k}{(u_k-z_p)(x-z_p)}+\frac{x-u_k}{(u_k-\bar{z_p})(x-\bar{z_p})})+\frac{1}{u_k-\bar{x}}(\frac{\bar{x}-u_k}{(u_k-z_p)(\bar{x}-z_p)}+\frac{\bar{x}-u_k}{(u_k-\bar{z_p})(\bar{x}-\bar{z_p})})\nonumber\\=&\frac{1}{u_k-z_p}(\frac{1}{z_p-x}+\frac{1}{z_p-\bar{x}})+\frac{1}{u_k-\bar{z_p}}(\frac{1}{\bar{z_p}-x}+\frac{1}{\bar{z_p}-\bar{x}})\nonumber\\
\end{align}
we conclude that $ L_\epsilon$ converges:
\begin{align*}
\textcolor{green}{\lim_{\epsilon\to 0} \Big(L_\epsilon(\mathbf{x},\mu,\partial)+L_\epsilon(\mathbf{x},\mu,\bar{\partial})+(L_\epsilon(\mathbf{x},\mu,\beta)\Big)
=C_{14}(\mathbf{x},\mu)+C'_{14}(\mathbf{x},\mu,\partial)+C'_{14}(\mathbf{x},\mu,\bar{\partial})}.
\end{align*}  
\begin{align*}
\textcolor{cyan}{\lim_{\epsilon\to 0} \Big(L_\epsilon(\mathbf{x},\mu_\partial,\partial)+L_\epsilon(\mathbf{x},\mu_\partial,\bar{\partial})+(L_\epsilon(\mathbf{x},\mu_{\partial},\beta)\Big)
=C_{14}(\mathbf{x},\mu_\partial)+C'_{14}(\mathbf{x},\mu_\partial).}
\end{align*}  
Here we also note that 
\begin{align}
M_2(\mathbf{x},\alpha)=T_2(\mathbf{x},\partial)+T_2(\mathbf{x},\bar{\partial})
\end{align}
\end{proof}
Now we turn to the derivatives of the SET-insertions. Let 
\begin{equation}\label{Sterm}
S(\mathbf{x}):=\sum_{\ell=1}^{k+\tilde{k}-1}\frac{1}{x_{k+\tilde{k}}-x_{\ell}}\partial_{x_{\ell}}\langle   \mathcal{T}(\mathbf{x}^{(k+\tilde{k})}) \tilde{\mathcal{V}}_{\mathbf{\alpha}}(\mathbf{z})\tilde{\mathcal{V}}_{\frac{\mathbf{\beta}}{2} }(\mathbf{s}) \rangle_{\mathcal{D}}.
\end{equation}
Then we claim that
\begin{align*}
S(\mathbf{x})=\sum_{i=1}^{}S_i(x)
\end{align*}
 with the terms $S_i$ given by
\begin{align*}
\textcolor{black}{S_1(\mathbf{x})}:=& - \sum_{\ell\not=\ell'=1}^{k+\tilde{k}-1}\frac{12 Q^2}{(x_{k+\tilde{k}}-x_{\ell})(x_\ell-x_{\ell'})^5} \langle   \mathcal{T}(\mathbf{x}^{(k+\tilde{k},\ell,\ell')})  \tilde{\mathcal{V}}_{\mathbf{\alpha}}(\mathbf{z})\tilde{\mathcal{V}}_{\frac{\mathbf{\beta}}{2} }(\mathbf{s}) \rangle_{t}  
\\
\textcolor{black}{S_{3}(\mathbf{x})}:=&  -6Q  \sum_{\ell\not=\ell'=1}^{k+\tilde{k}-1}   \frac{ 1}{(x_{k+\tilde{k}}-x_{\ell})(x_{\ell}- x_{\ell'})^4} \langle   \partial_{x_{\ell}}\Phi(x_{\ell}) \mathcal{T}(\mathbf{x}^{(k+\tilde{k} ,\ell,\ell')})    \tilde{\mathcal{V}}_{\mathbf{\alpha}}(\mathbf{z})\tilde{\mathcal{V}}_{\frac{\mathbf{\beta}}{2} }(\mathbf{s}) \rangle_{t} 
\\
\textcolor{black}{S_{5}(\mathbf{x})}:=& - \sum_{\ell=1}^{k+\tilde{k}-1} \sum_{p=1}^n    \frac{ Q\alpha_p}{(x_{k+\tilde{k}}-x_{\ell})}(\frac{1}{(x_\ell-z_p)^3}+\frac{1}{(x_\ell-\bar{z_p})^3}) \langle  \mathcal{T}(\mathbf{x}^{(k+\tilde{k},\ell )})   \tilde{\mathcal{V}}_{\mathbf{\alpha}}(\mathbf{z})\tilde{\mathcal{V}}_{\frac{\mathbf{\beta}}{2} }(\mathbf{s}) \rangle_{t}
\\
&- \sum_{\ell=1}^{k+\tilde{k}-1} \sum_{q=1}^m    \frac{ Q\beta_q}{(x_{k+\tilde{k}}-x_{\ell})}(\frac{1}{(x_\ell-s_q)^3}) \langle  \mathcal{T}(\mathbf{x}^{(k+\tilde{k},\ell )})   \tilde{\mathcal{V}}_{\mathbf{\alpha}}(\mathbf{z})\tilde{\mathcal{V}}_{\frac{\mathbf{\beta}}{2} }(\mathbf{s}) \rangle_{t}
\\
\textcolor{black}{S_{6}(\mathbf{x},\mu)}:=&  Q\mu\gamma  \sum_{\ell=1}^{k+\tilde{k}-1}  \int_{\H_t} \frac{ 1}{(x_{k+\tilde{k}}-x_{\ell})} (\frac{1}{(x_{\ell}- x)^3}+\frac{1}{(x_\ell-\bar{x})^3})\langle  \mathcal{T}(\mathbf{x}^{(k+\tilde{k} ,\ell)})   \tilde{V}_\gamma(x)\tilde{\mathcal{V}}_{\mathbf{\alpha}}(\mathbf{z})\tilde{\mathcal{V}}_{\frac{\mathbf{\beta}}{2} }(\mathbf{s})\rangle_{t}\,\dd x
\\
\textcolor{black}{S_{6}(\mathbf{x},\mu_\partial)}:=&  Q\mu_\partial\gamma  \sum_{\ell=1}^{k+\tilde{k}-1}  \int_{\R_t} \frac{ 1}{(x_{k+\tilde{k}}-x_{\ell})} (\frac{1}{(x_{\ell}- s)^3}\langle  \mathcal{T}(\mathbf{x}^{(k+\tilde{k} ,\ell)})   \tilde{V}_{\frac{\gamma}{2}}(s)\tilde{\mathcal{V}}_{\mathbf{\alpha}}(\mathbf{z})\tilde{\mathcal{V}}_{\frac{\mathbf{\beta}}{2} }(\mathbf{s}) \rangle_{t}\,\dd s
\\
\textcolor{black}{S_7(\mathbf{x})  }:=& 6Q \sum_{\ell\not=\ell'=1}^{k+\tilde{k}-1}\frac{1}{(x_{k+\tilde{k}}-x_{\ell})(x_\ell-x_{\ell'})^4} \langle  \partial_{x_\ell'}\Phi(x_{\ell'}) \mathcal{T}(\mathbf{x}^{(k+\tilde{k},\ell,\ell')})  \tilde{\mathcal{V}}_{\mathbf{\alpha}}(\mathbf{z})\tilde{\mathcal{V}}_{\frac{\mathbf{\beta}}{2} }(\mathbf{s}) \rangle_{t}
\\
\textcolor{black}{S_{8}(\mathbf{x},\mu)}:=& + \mu\gamma \sum_{\ell=1}^{k+\tilde{k}-1}  \int_{\H_t} \frac{ 1}{(x_{k+\tilde{k}}-x_{\ell})}(\frac{1}{(x_{\ell}- x)^2}+ \frac{1}{(x_{\ell}- \bar{x})^2})\langle   \partial_{x_\ell}\Phi(x_{\ell} )\mathcal{T}(\mathbf{x}^{(k+\tilde{k},\ell )})   \tilde{V}_\gamma(x)\tilde{\mathcal{V}}_{\mathbf{\alpha}}(\mathbf{z})\tilde{\mathcal{V}}_{\frac{\mathbf{\beta}}{2} }(\mathbf{s}) \rangle_{t}\,\dd x
\\
\textcolor{black}{S_{8}(\mathbf{x},\mu_\partial)}:=& + \mu_\partial\gamma \sum_{\ell=1}^{k+\tilde{k}-1}  \int_{\R_t} \frac{ 1}{(x_{k+\tilde{k}}-x_{\ell})}(\frac{1}{(x_{\ell}- s)^2})\langle   \partial_{x_\ell}\Phi(x_{\ell} )\mathcal{T}(\mathbf{x}^{(k+\tilde{k},\ell )})   \tilde{V}_{\frac{\gamma}{2}}(s)\tilde{\mathcal{V}}_{\mathbf{\alpha}}(\mathbf{z})\tilde{\mathcal{V}}_{\frac{\mathbf{\beta}}{2} }(\mathbf{s}) \rangle_{t}\,\dd s
\\
\textcolor{black}{S_{9}(\mathbf{x})}:=& -  \sum_{\ell=1}^{k+\tilde{k}-1}\sum_{p=1}^n  \frac{\alpha_p}{(x_{k+\tilde{k}}-x_{\ell})}(\frac{1}{(x_{\ell}-z_p)^2}+\frac{1}{(x_{\ell}-\bar{z_p})^2}) \langle  \partial_{x_{\ell}}\Phi(x_{\ell} ) T(\mathbf{x}^{(k+\tilde{k},\ell  )})   \tilde{\mathcal{V}}_{\mathbf{\alpha}}(\mathbf{z})\tilde{\mathcal{V}}_{\frac{\mathbf{\beta}}{2} }(\mathbf{s}) \rangle_{t} 
\\
&-  \sum_{\ell=1}^{k+\tilde{k}-1}\sum_{q=1}^m  \frac{\beta_q}{(x_{k+\tilde{k}}-x_{\ell})}(\frac{1}{(x_{\ell}-s_q)^2}) \langle  \partial_{x_{\ell}}\Phi(x_{\ell} ) T(\mathbf{x}^{(k+\tilde{k},\ell  )})    \tilde{\mathcal{V}}_{\mathbf{\alpha}}(\mathbf{z})\tilde{\mathcal{V}}_{\frac{\mathbf{\beta}}{2} }(\mathbf{s})\rangle_{t} \\
\textcolor{black}{S_{12}(\mathbf{x})}:=&  4 \sum_{\ell\not=\ell'=1}^{k+\tilde{k}-1}   \frac{ 1}{(x_{k+\tilde{k}}-x_{\ell})( x_\ell-x_{\ell'})^3} \langle  \partial_{x_\ell}\Phi(x_{\ell} )\partial_{x_{\ell'}}\Phi(x_{\ell'} )\mathcal{T}(\mathbf{x}^{(k+\tilde{k},\ell,\ell' )})  \tilde{\mathcal{V}}_{\mathbf{\alpha}}(\mathbf{z})\tilde{\mathcal{V}}_{\frac{\mathbf{\beta}}{2} }(\mathbf{s})\rangle_{t} 
\end{align*}
Again, the numbering will be used to compare with \eqref{Cterms}. Also, to establish this formula, we can regularize the SET-insertion $T(u_\ell)$, differentiate it, then use Gaussian integration by parts and then pass to the limit as $\epsilon\to 0$. Notice that the convergence of all these terms is obvious as variables $u_\ell$ for $\ell=1,\dots,k+\tilde{k}-1$ belong to $\mathcal{D}$. The same strategy can be applied to establish that
\begin{equation}\label{Lterm}
L(\mathbf{x}):=\sum_{\ell=1}^{k+\tilde{k}-1}\frac{2}{(x_{k+\tilde{k}}-x_{\ell})^2} \langle   \mathcal{T}(\mathbf{x}^{(k+\tilde{k})})    \tilde{\mathcal{V}}_{\mathbf{\alpha}}(\mathbf{z})\tilde{\mathcal{V}}_{\frac{\mathbf{\beta}}{2} }(\mathbf{s}) \rangle_{t}
\end{equation}
can be written as  $L(\mathbf{x} )=\sum_{i=1}^{}L_i(\mathbf{x})$ with the terms $L_i$'s given by
\begin{align*}
\textcolor{black}{ L_1(\mathbf{x})} :=& \sum_{\ell\not=\ell'=1}^{k+\tilde{k}-1}\frac{6Q^2}{(x_{k+\tilde{k}}-x_\ell)^2(x_\ell-x_{\ell'})^4} \langle  \mathcal{T} (\mathbf{x}^{(k+\tilde{k},\ell,\ell')}) \tilde{\mathcal{V}}_{\mathbf{\alpha}}(\mathbf{z})\tilde{\mathcal{V}}_{\frac{\mathbf{\beta}}{2} }(\mathbf{s}) \rangle_{t}  
\\
\textcolor{black}{L_3(\mathbf{x}) }:=& -2 Q  \sum_{\ell\not=\ell'=1}^{k+\tilde{k}-1} \frac{1}{(x_{k+\tilde{k}}-x_\ell)^2(x_\ell-x_{\ell'})^3} \langle      \partial_{x_\ell}\Phi(x_{\ell} )   \mathcal{T}(\mathbf{x}^{(k+\tilde{k},\ell ,\ell')})    \tilde{\mathcal{V}}_{\mathbf{\alpha}}(\mathbf{z})\tilde{\mathcal{V}}_{\frac{\mathbf{\beta}}{2} }(\mathbf{s}) \rangle_{t}   
\\
\textcolor{black}{L_5(\mathbf{x}) }:=&\sum_{\ell=1}^{k+\tilde{k}-1} \sum_{p=1}^n \frac{Q\alpha_p}{(x_{k+\tilde{k}}-x_{\ell})^2}  (\frac{1}{(x_\ell-z_p)^2} +\frac{1}{(x_\ell-\bar{z_p})^2})\langle      \mathcal{T}(\mathbf{x}^{(k+\tilde{k},\ell )})   \tilde{\mathcal{V}}_{\mathbf{\alpha}}(\mathbf{z})\tilde{\mathcal{V}}_{\frac{\mathbf{\beta}}{2} }(\mathbf{s}) \rangle_{t}  
\\
&+\sum_{\ell=1}^{k+\tilde{k}-1} \sum_{q=1}^m \frac{Q\beta_q}{(x_{k+\tilde{k}}-x_{\ell})^2}  (\frac{1}{(x_\ell-s_q)^2} )\langle      \mathcal{T}(\mathbf{x}^{(k+\tilde{k},\ell )})   \tilde{\mathcal{V}}_{\mathbf{\alpha}}(\mathbf{z})\tilde{\mathcal{V}}_{\frac{\mathbf{\beta}}{2} }(\mathbf{s}) \rangle_{t}  
\\
\textcolor{black}{L_{6}(\mathbf{x} ,\mu)} :=& -\mu\gamma Q\sum_{\ell=1}^{k+\tilde{k}-1} \int_{\H_t} \frac{1}{(x_{k+\tilde{k}}-x_\ell)^2} (\frac{1}{(x_\ell-x)^2}+\frac{1}{(x_\ell-\bar{x})^2})\langle      \mathcal{T}(\mathbf{x}^{(k+\tilde{k},\ell )})  \tilde{V}_\gamma(x)\tilde{\mathcal{V}}_{\mathbf{\alpha}}(\mathbf{z})\tilde{\mathcal{V}}_{\frac{\mathbf{\beta}}{2} }(\mathbf{s}) \rangle_{t}  \,\dd x  
\\
\textcolor{black}{L_{6}(\mathbf{x} ,\mu_\partial)} :=& -\mu_\partial\gamma Q\sum_{\ell=1}^{k+\tilde{k}-1} \int_{\R_t} \frac{1}{(x_{k+\tilde{k}}-x_\ell)^2} (\frac{1}{(x_\ell-s)^2})\langle      \mathcal{T}(\mathbf{x}^{(k+\tilde{k},\ell )})  \tilde{V}_{\frac{\gamma}{2}}(s)\tilde{\mathcal{V}}_{\mathbf{\alpha}}(\mathbf{z})\tilde{\mathcal{V}}_{\frac{\mathbf{\beta}}{2} }(\mathbf{s}) \rangle_{t}  \,\dd s  
\\
\textcolor{black}{ L_7(\mathbf{x})} :=&-4Q \sum_{\ell\not=\ell'=1}^{k+\tilde{k}-1}\frac{1}{(x_{k+\tilde{k}}-x_\ell)^2(x_\ell-x_{\ell'})^3}     \langle     \partial_{x_\ell'}\Phi(x_{\ell'} )\mathcal{T}(\mathbf{x}^{(k+\tilde{k},\ell,\ell')})   \tilde{\mathcal{V}}_{\mathbf{\alpha}}(\mathbf{z})\tilde{\mathcal{V}}_{\frac{\mathbf{\beta}}{2} }(\mathbf{s})\rangle_{t}  
\\
\textcolor{black}{L_{8}(\mathbf{x},\mu )} :=&-\mu\gamma   \sum_{\ell=1}^{k+\tilde{k}-1}\int_{\H_t} \frac{1}{(x_{k+\tilde{k}}-x_{\ell})^2}(\frac{1}{(x_{\ell}-x)}+\frac{1}{(x_{\ell}-\bar{x})}) \langle      \partial_{x_\ell}\Phi(x_{\ell} )    \mathcal{T}(\mathbf{x}^{(k+\tilde{k} ,\ell)})   \tilde{V}_\gamma(x)\tilde{\mathcal{V}}_{\mathbf{\alpha}}(\mathbf{z})\tilde{\mathcal{V}}_{\frac{\mathbf{\beta}}{2} }(\mathbf{s})\rangle_{t}  \,\dd x
\\
\textcolor{black}{L_{8}(\mathbf{x},\mu_\partial )} :=&-\mu_\partial\gamma   \sum_{\ell=1}^{k+\tilde{k}-1}\int_{\R_t} \frac{1}{(x_{k+\tilde{k}}-x_{\ell})^2}(\frac{1}{(x_{\ell}-s)}) \langle      \partial_{x_\ell}\Phi(x_{\ell} )    \mathcal{T}(\mathbf{x}^{(k+\tilde{k} ,\ell)})   \tilde{V}_{\frac{\gamma}{2}}(s)\tilde{\mathcal{V}}_{\mathbf{\alpha}}(\mathbf{z})\tilde{\mathcal{V}}_{\frac{\mathbf{\beta}}{2} }(\mathbf{s}) \rangle_{t}  \,\dd s
\\
\textcolor{black}{L_{9}(\mathbf{x})}:=& \sum_{\ell=1}^{k+\tilde{k}-1} \sum_{p=1}^n \frac{ \alpha_p}{(x_{k+\tilde{k}}-x_{\ell})^2}  ( \frac{1}{(x_\ell-z_p) }+\frac{1}{(x_\ell-\bar{z_p}) })\langle         \partial_{x_\ell}\Phi(x_{\ell} )      \mathcal{T} (\mathbf{x}^{(k+\tilde{k} ,\ell  )})    \tilde{\mathcal{V}}_{\mathbf{\alpha}}(\mathbf{z})\tilde{\mathcal{V}}_{\frac{\mathbf{\beta}}{2} }(\mathbf{s})\rangle_{t}    
\\
&+\sum_{\ell=1}^{k+\tilde{k}-1} \sum_{q=1}^m \frac{ \beta_q}{(x_{k+\tilde{k}}-x_{\ell})^2}  ( \frac{1}{(x_\ell-s_q) }) \langle         \partial_{x_\ell}\Phi(x_{\ell} )      \mathcal{T} (\mathbf{x}^{(k+\tilde{k} ,\ell  )})    \tilde{\mathcal{V}}_{\mathbf{\alpha}}(\mathbf{z})\tilde{\mathcal{V}}_{\frac{\mathbf{\beta}}{2} }(\mathbf{s}) \rangle_{t}    
\\
\textcolor{black}{L_{12}(\mathbf{x} ) }:=&- 2\sum_{\ell\not=\ell'=1}^{k+\tilde{k}-1} \frac{1}{(x_{k+\tilde{k}}-x_{\ell})^2(x_\ell-x_{\ell'})^2} \langle      \partial_{x_\ell}\Phi(x_{\ell} )  \partial_{x_{\ell'}}\Phi(x_{\ell'} )   \mathcal{T}(\mathbf{x}^{(k+\tilde{k},\ell,\ell' )})   \tilde{\mathcal{V}}_{\mathbf{\alpha}}(\mathbf{z})\tilde{\mathcal{V}}_{\frac{\mathbf{\beta}}{2} }(\mathbf{s}) \rangle_{t}  .
\end{align*}

\paragraph{Ward algebra.}
Now we are going to show that the appropriate combination of all these expressions combines to produce Ward's identity. Let us consider the expression
\begin{align*} 
K(\mathbf{u},{\bf z}):=&\langle   \mathcal{T}(\mathbf{x}) \tilde{\mathcal{V}}_{\mathbf{\alpha}}(\mathbf{z})\tilde{\mathcal{V}}_{\frac{\mathbf{\beta}}{2} }(\mathbf{s})\rangle_{t}  -I(\mathbf{x})- M(\mathbf{x})- T(\mathbf{x})-S(\mathbf{x} )-L(\mathbf{x} )  
\\
=&C(\mathbf{x})   + N(\mathbf{x})    -I(\mathbf{x})-S(\mathbf{x})-L(\mathbf{x})  .
\end{align*}
{\bf We will use the notation $C_k$ to represent the sum of all possible $C_k(\mathbf{x},\mu,\mu_\partial,\partial,\bar{\partial})$ terms.} and similarly for $C'_k$, $D_k$, $S_k$ and $L_k$. \\
 Also, we have obtained the relation
\begin{align}
N -I &=B+C'_6+C'_8+C'_{11}   -D_5-D_9-D_{10}-C_{14},\label{N-I}
\end{align}
in such a way that $K$ can be rewritten as
\begin{align*} 
K =& A_1+A_3+A_5+A_6+A_7+A_8+A_9+A_{10}+A_{11}+A_{12}+B
\end{align*}
with

 \begin{align}
A_1:=&C_1+C_2-S_1-L_1\label{theone}\\
A_3:=&C_3+C_4/2-S_3/2-S_7/2-L_3/2-L_7/2\label{thethree}\\
A_5:=&C_5+C_{15}-D_5-S_5-L_5\label{thefive}\\
 A_6:=& C_6+C_{13}-C'_6-S_6-L_6\label{thesix}\\
 A_7:=&C_7+C_4/2-S_3/2-S_7/2-L_3/2-L_7/2\label{theseven}\\
A_8:=&C_{8}+C'_{8}-S_{8}-L_{8} \label{theeight}\\
A_9:=& C_9-D_9-S_9-L_9 \label{thenine}\\
A_{10}:=& C_{10}-D_{10}\\
A_{11}:=&C_{11}+C'_{11}\\
A_{12}:= &C_{12}-S_{12}-L_{12}\label{fuck!!!!}.
\end{align}
Finally, we claim that all the $A_i$'s vanish.  Indeed, this is straightforward for $A_{10}, A_{11}$: it comes from the relation 
$$\frac{1}{(u_k-z_p)(u_k-z_{p'})}=\frac{1}{z_p-z_{p'}}\Big(\frac{1}{u_k-z_p}-\frac{1}{u_k-z_{p'}}\Big)$$ and a re-indexation of the double sum for $A_{10}$.

All the other terms results from  algebraic identities:  for \eqref{theone}, we use
\begin{align*}
\frac{1}{(z-u_\ell)^3 (z-u_{\ell'})^3}=&\frac{1}{(z-u_\ell)^3 (u_\ell-u_{\ell'})^3}-\frac{3}{(z-u_\ell)^2 (u_\ell-u_{\ell'})^4}+\frac{6}{(z-u_\ell) (u_\ell-u_{\ell'})^5}
\\ 
&-\frac{1}{(z-u_{\ell'})^3 (u_\ell-u_{\ell'})^3}-\frac{3}{(z-u_{\ell'})^2 (u_\ell-u_{\ell'})^4}-\frac{6}{(z-u_\ell) (u_{\ell'}-u_{\ell})^5}.
\end{align*}
For \eqref{thethree} and \eqref{theseven} we use 
\begin{align*}\frac{1}{(z-u'_{\ell'})^3(z-u'_{r'})^2}=&\frac{1}{(z-u'_{\ell'})^3(u'_{\ell'}-u'_{r'})^2}-\frac{2}{(z-u'_{\ell'})^2(u'_{\ell'}-u'_{r'})^3} 
\\
&+\frac{3}{(z-u'_{\ell'})(u'_{\ell'}-u'_{r'})^4} +\frac{1}{(z-u'_{r'})^2(u'_{r'}-u'_{\ell'})^3}- \frac{3}{(z-u'_{r'}) (u'_{r'}-u'_{\ell'})^4}.
\end{align*} 
For \eqref{thefive} and \eqref{thesix} we use
$$  \frac{1}{(u_k-u_\ell)^3(u_k-x)}= \frac{1}{(u_k-u_\ell)^3(u_\ell-x)} - \frac{1}{(u_k-u_\ell)^2(x-u_\ell)^2}+\frac{1}{(u_k-u_\ell)(u_\ell-x)^3}-\frac{1}{(u_k-x)(u_\ell-x)^3}. $$
For \eqref{theeight} and \eqref{thenine} we use
$$ \frac{1}{(u_k-u_\ell)^2(u_k-x)}= \frac{1}{(u_k-u_\ell)^2(u_\ell-x)} - \frac{1}{(u_k-u_\ell)(x-u_\ell)^2}+\frac{1}{(x-u_\ell)^2(u_k-x)} .$$
For \eqref{fuck!!!!} we use the identity
$$\frac{1}{(z-u_\ell)^2(z-u'_{\ell'})^2}=\frac{1}{(z-u_\ell)^2(u_\ell-u'_{\ell'})^2} -\frac{2}{(z-u_\ell) (u_\ell-u_{\ell'})^3}+\frac{1}{(u_\ell-u_{\ell'})^2(z-u'_{\ell'})^2}-\frac{2}{(u'_{\ell'}-u_\ell)^3(z-u'_{\ell'}) }.$$
This concludes the proof of Proposition \ref{WardTH}.\qed


\section{The Liouville vertex operator and identification of torus 1-point conformal block}\label{Identification}
In this section, we identify some already exisiting notations of torus 1-point conformal block
\begin{itemize}
    \item 1. The torus conformal block constructed by path integral method and analytic continuation in \cite{GKRV21};
    \item 2. The torus conformal block constructed by GMC integral in \cite{GRSS};
    \item 3. The torus conformal block formulated by Liouville vertex operator in \cite{Ca};
    \item 4. Nekrasov partition function in \cite{AGT} times a factor.
\end{itemize}
The fact $(2)\Longleftrightarrow(4)$ is the main theorem of \cite{GRSS}. $(3)\Longleftrightarrow(4)$ is the so-called AGT conjecture and proved in \cite{Neg}. Finally, $(1)\Longleftrightarrow(3)$ follows from the identity \eqref{torusward} (send $t\to \infty$) and the definition of Liouville vertex operator in \cite[Section 2.2]{Ca}. The detail of the argument is actually contained in proposition \eqref{0coefficient}.

\hspace{10 cm}

\end{document}